\newcommand\void[1]       {}
\theoremstyle{definition}
\newtheorem{thm}{Theorem}[section]
\newtheorem{prop}[thm]{Proposition}
\newtheorem{cor}[thm]{Corollary}
\newtheorem{lem}[thm]{Lemma}
\newtheorem{ass}[thm]{Assumption}
\newtheorem{conv}[thm]{Convention}
\newtheorem{conj}[thm]{Conjecture}
\theoremstyle{definition}
\newtheorem{defn}[thm]{Definition}
\newtheorem{expl}[thm]{Example}
\newtheorem{rem}[thm]{Remark}
\newtheorem{notation}[thm]{Notation}
\numberwithin{equation}{section}
\numberwithin{thm}{section}
\newcommand\be            {\begin{equation}}
\newcommand\ee            {\end{equation}}
\newcommand\bea           {\begin{eqnarray}}
\newcommand\eea         {\end{eqnarray}}
\newcommand\bnu          {\begin{enumerate}}
\newcommand\enu          {\end{enumerate}}
\newlength{\fighskip} \fighskip=2pt
\newlength{\figvskip} \figvskip=3pt
\newcommand{\pf}{\begin{proof}}
\newcommand{\epf}{\end{proof}}
\newcommand\Cb            {\mathbb{C}}
\newcommand\Rb            {\mathbb{R}}
\newcommand\Hb            {\mathbb{H}}
\newcommand\mO           {\mathcal{O}}
\newcommand\BB {\mathsf{B}}
\newcommand\A           {\EuScript{A}}
\newcommand\B           {\EuScript{B}}
\newcommand\C           {\EuScript{C}}
\newcommand\D           {\EuScript{D}}
\newcommand\E          {\EuScript{E}}
\newcommand\CL          {\EuScript{L}}
\newcommand\M          {\EuScript{M}}
\newcommand\N         {\EuScript{N}}
\newcommand\CP         {\EuScript{P}}
\newcommand\V        {\EuScript{V}}
\newcommand\X         {\EuScript{X}}
\newcommand\Y         {\EuScript{Y}}
\newcommand\id            {\mathrm{id}}
\newcommand\op          {\mathrm{op}}
\newcommand\ev          {\mathrm{ev}}
\newcommand\coev      {\mathrm{coev}}
\newcommand\vect    {\EuScript{V}\mathrm{ec}}
\newcommand\Fun     {\mathrm{Fun}}
\newcommand\LMod  {\mathrm{LMod}}
\newcommand\RMod  {\mathrm{RMod}}
\newcommand\BMod {\mathrm{BMod}}
\newcommand\forget  {\mathbf{f}}
\newcommand\bk       {\mathbb{k}}
\newcommand\Alg    {\mathrm{Alg}}
\newcommand\unit    {\mathbb{1}}
\newcommand{\rev} {\mathrm{rev}}
\newcommand{\colim}{\mathrm{Colim}}
\newcommand{\UMTC} {\mathrm{UMTC}}
\newcommand{\UMTCE} {\mathrm{UMTC}_{/{\EuScript{E}}}}
\newcommand{\homm} {\mathrm{Hom}}
\newcommand{\cat}{\mathrm{Cat}_{\EuScript{E}}^{\mathrm{fs}}} 
\newcommand{\NN}{\C^{\rev} \boxtimes_{\E} \C}
\newcommand{\ccat}{\mathrm{Cat}}
\newcommand{\fcat}{\mathrm{Cat}^{\mathrm{fs}}}
\newcommand{\bal}{\mathrm{bal}}
\newcommand{\str}{\mathrm{str}}
\newcommand{\ori}{\mathrm{or}}
\newcommand{\Mfld}{\M\mathrm{fld}}
\newcommand{\Disk}{\D\mathrm{isk}}
\newcommand{\ps}{\mathrm{ps}}
\newcommand{\boe}{\boxtimes_{\E}}
\newcommand{\uty}{\mathrm{uty}}
\newcommand{\Algn}{\mathrm{\Alg}_{E_0}}
\newcommand\bit {\begin{itemize}}
\newcommand\eit {\end{itemize}}
\newcommand{\FPdim}{\mathrm{FPdim}}
\tikzset{shorten <>/.style={shorten >=#1,shorten <=#1}}
\title{Algebras over a symmetric fusion category and integrations}
\author[a,b]{Xiao-Xue Wei \thanks{Email: \href{mailto:xxwei@mail.ustc.edu.cn}{\tt xxwei@mail.ustc.edu.cn}}}
\affil[a]{School of Mathematical Sciences, \authorcr University of Science and Technology of China, Hefei, 230026, China} 
\affil[b]{Shenzhen Institute for Quantum Science and Engineering, \authorcr Southern University of Science and Technology, Shenzhen, 518055, China}
\date{\vspace{-5ex}}
\begin{document}
\date{}
\maketitle

\begin{abstract}
We study the symmetric monoidal 2-category of finite semisimple module categories over a symmetric fusion category. In particular, we study $E_n$-algebras in this 2-category and compute their $E_n$-centers for $n=0,1,2$. We also compute the factorization homology of stratified surfaces with coefficients given by $E_n$-algebras in this 2-category for $n=0,1,2$ satisfying certain anomaly-free conditions.
\end{abstract}

\tableofcontents

\section{Introduction}
The mathematical theory of factorization homology is a powerful tool in the study of topological quantum field theories (TQFT). It was first developed by Lurie \cite{Lu} under the name of `topological chiral homology', which records its origin from Beilinson and Drinfeld's theory of chiral homology \cite{BD, FG}. It was further developed by many people (see for example \cite{CG,AF, AFT1, AFT2, AFR, BBJ, BBJ2}) and gained its current name from Francis \cite{F}. 

\medskip
Although the general theory of factorization homology has been well established, explicitly computing the factorization homology in any concrete examples turns out to be a non-trivial challenge. On a connected compact 1-dimensional manifold (or a 1-manifold), i.e. $S^1$, the factorization homology is just the usual Hochschild homology. On a compact 2-manifold, the computation is already highly nontrivial (see for example \cite{BBJ, BBJ2, Francis}). Motivated by the study of topological orders in condensed matter physics, Ai, Kong and Zheng carried out in \cite{LiangFH} the computation of perhaps the simplest (yet non-trivial) kind of factorization homology, i.e. integrating a unitary modular tensor category (UMTC) $\A$ (viewed as an $E_2$-algebra) over a compact 2-manifold $\Sigma$, denoted by $\int_\Sigma \A$. In physics, the category $\A$ is the category of anyons (or particle-like topological defects) in a 2d (spatial dimension) anomaly-free topological order (see \cite{Wen} for a review). The result of this integration is a global observable defined on $\Sigma$. It turns out that this global observable is precisely the ground state degeneracy (GSD) of the 2d topological order on $\Sigma$. This fact remains to be true even if we introduce defects of codimension 1 and 2 as long as these defects are also anomaly-free. Mathematically, this amounts to computing the factorization homology on a disk-stratified 2-manifold with coefficient defined by assigning to each 2-cell a unitary modular tensor category, to each 1-cell a unitary fusion category (an $E_1$-algebra) and to each 0-cell an $E_0$-algebra, satisfying certain anomaly-free conditions (see \cite[Sec.\,4]{LiangFH}).

If the category $\A$ is not modular, i.e. the associated topological order is anomalous, the integral $\int_\Sigma \A$ gives a global observable beyond GSD. Mathematically, it is interesting to compute $\int_\Sigma \A$ for any braided monoidal category $\A$. In this work, we focus on a special situation that also has a clear physical meaning. It was shown in \cite{TL}, a finite onsite symmetry of a 2d symmetry enriched topological (SET) order can be mathematically described by a symmetric fusion category $\E$, and the category of anyons in this SET order can be described by a UMTC over $\E$, which is roughly a unitary braided fusion category with M\"{u}ger center given by $\E$ (see Def.\,\ref{defn-UMTCE} for a precise definition). This motivates us to compute the factorization homology on 2-manifolds but valued in the symmetric monoidal 2-category of finite semisimple module categories over $\E$, denoted by $\cat$. The symmetric tensor product in $\cat$ is defined by the relative tensor product $\boxtimes_\E$. We first study $E_i$-algebras in $\cat$ and their $E_i$-centers for $i=0,1,2$. Then we derive the anomaly-free conditions for $E_i$-algebras in $\cat$ for $i=0,1,2$. In the end, we compute the factorization homology on disk-stratified 2-manifolds with coefficients defined by assigning anomaly-free $E_i$-algebras in $\cat$ to each $i$-cells for $i=0,1,2$. The main results of this work are Thm.\,\ref{main-thm1}, Thm.\,\ref{main-thm2} and Thm.\,\ref{main-thm3}.

\medskip 
The layout of this paper is as follows. In Sec.\,2, we introduce the tensor product $\boe$ and the symmetric monoidal 2-category $\cat$. 
In Sec.\,3, we study $E_i$-algebras in $\cat$ and compute their $E_i$-centers for $i=0,1,2$. In Sec.\,4, we study the modules over a multifusion category over $\E$ and modules over a braided fusion category over $\E$. And we prove that two fusion categories over $\E$ are Morita equivalent in $\cat$ if and only if their $E_1$-centers are equivalent. 
In Sec.\,5, we recall the theory of factorization homology and compute the factorization homology of stratified surfaces with coefficients given by $E_i$-algebras in $\cat$ for $i=0,1,2$ satisfying certain anomaly-free conditions. 

\bigskip
\noindent $\textbf{Acknowledgement}$ I thank Liang Kong for introducing me to this interesting subject. I also thank Zhi-Hao Zhang for helpful discussion. I am supported by NSFC under Grant No. 11971219 and Guangdong Provincial Key Laboratory (Grant No.2019B121203002).

\section{The symmetric monoidal 2-category $\cat$}

\begin{notation}
All categories considered in this paper are small categories.
Let $\bk$ be an algebraically closed field of characteristic zero.
Let $\E$ be a symmetric fusion category over $\bk$ with a braiding $r$.
The category $\vect$ denotes the category of finite dimensional vector spaces over $\bk$ and $\bk$-linear maps.
\end{notation}

Let $\A$ be a monoidal category.
We denote $\A^{\op}$ the monoidal category which has the same tensor product of $\A$,  but the morphism space is given by $\homm_{\A^{\op}}(a, b) \coloneqq \homm_{\A}(b, a)$ for any objects $a, b \in \A$,
and $\A^{\rev}$ the monoidal category which has the same underlying category $\A$ but equipped with the reversed tensor product $a \otimes^{\rev} b \coloneqq b \otimes a$ for $a, b \in \A$. 
 A monoidal category $\A$ is rigid if every object $a \in \A$ has a left dual $a^L$ and a right dual $a^R$.
The duality functors $\delta^L: a \mapsto a^L$ and $\delta^R: a \mapsto a^R$ induce monoidal equivalences $\A^{\op} \simeq \A^{\rev}$.

A braided monoidal category $\A$ is a monoidal category $\A$ equipped with a braiding $c_{a,b}: a \otimes b \rightarrow b \otimes a$ for any $a, b \in \A$.
We denote $\overline{\A}$ the braided monoidal category which has the same monoidal category of $\A$  but equipped with the anti-braiding $\bar{c}_{a,b} = c^{-1}_{b,a}$. 

A fusion subcategory of a fusion category we always mean a full tensor subcategory closed under taking of direct summands. Any fusion category $\A$ contains a trivial fusion subcategory $\vect$.  

\subsection{Module categories}
   Let $\fcat$ be the 2-category of finite semisimple $\bk$-linear abelian categories, $\bk$-linear functors, and natural transformations. The 2-category $\fcat$ equipped with Deligne's tensor product $\boxtimes$, the unit $\vect$ is a symmetric monoidal 2-category.

Let $\C, \D$ be multifusion categories.
We define the 2-category $\LMod_{\C}(\fcat)$ as follows.
\begin{itemize}
\item Its objects are left $\C$-modules in $\fcat$. A left $\C$-module $\M$ in $\fcat$ is an object $\M$ in $\fcat$ equipped with a $\bk$-bilinear functor $\odot: \C \times \M \rightarrow \M$, a natural isomorphism $\lambda_{c, c', m}: (c \otimes c') \odot m \simeq c \odot (c' \odot m)$, and a unit isomorphism $l_m: \unit_{\C} \odot m \simeq m$ for all $c, c' \in \C, m \in \M$ and the tensor unit $\unit_{\C} \in \C$ satisfying some natural conditions.
\item Its 1-morphisms are left $\C$-module functors. 
For left $\C$-modules $\M$, $\N$ in $\fcat$, a left $\C$-module functor from $\M$ to $\N$ is a pair $(F, s^F)$, where $F: \M \rightarrow \N$ is a $\bk$-linear functor and  $s^F_{c,m}: F(c \odot m) \simeq c \odot F(m)$, $c \in \C$, $m \in \M$, is a natural isomorphism, satisfying some natural conditions.
\item Its 2-morphisms are left $\C$-module natural transformations. A left $\C$-module natural transformation between two left $\C$-module functors $(F, s^F), (G, s^G): \M \rightrightarrows \N$ is a natural transformation $\alpha: F \Rightarrow G$ such that the following diagram commutes for $c \in \C, m \in \M$:
\begin{equation}
\label{left-E-m-n}
\begin{split}
\xymatrix{
F(c \odot m) \ar[r]^{s^F} \ar[d]_{\alpha_{c \odot m}} & c \odot F(m) \ar[d]^{1 \odot \alpha_m} \\
G(c \odot m) \ar[r]^{s^G} & c \odot G(m)
}\end{split}
\end{equation}
\end{itemize}
Similarly, one can define the 2-category $\RMod_{\D}(\fcat)$ of right $\D$-modules in $\fcat$ and the 2-category $\BMod_{\C|\D}(\fcat)$ of $\C$-$\D$ bimodules in $\fcat$.
 We use $\Fun(\M, \N)$ to denote the category of $\bk$-linear functors from $\M$ to $\N$ and natural transformations.
 We use $\Fun_{\C}(\M, \N)$ (or $\Fun_{|\C}(\M, \N)$) to denote the category of left (or right) $\C$-module functors from $\M$ to $\N$ and left (or right) $\C$-module natural transformations.

\begin{rem}
There is a bijective correspondence between $\bk$-linear categories (or $\bk$-linear functors) and $\vect$-modules (or $\vect$-module functors).
For objects $\C, \M$ in $\fcat$, if $\odot: \C \times \M \to \M$ is a $\bk$-bilinear functor, it is a balanced $\vect$-module functor.
And a $\bk$-bilinear functor $\odot: \C \times \M \to \M$ is equivalent to a $\bk$-linear functor $\C \boxtimes \M \to \M$ by the universal functor $\boxtimes: \C \times \M \to \C \boxtimes \M$.
\end{rem}

\subsection{Tensor product}
The following definitions are standard (see for example \cite[Def.\,3.1]{ENO}, \cite[Def.\,2.2.1]{Liang}). 
    \begin{defn}
    \label{be-module functor}
     Let $\M \in \RMod_{\E}(\fcat)$, $\N \in \LMod_{\E}(\fcat)$ and $\D \in \fcat$.
        A \emph{balanced $\E$-module functor} is a $\bk$-bilinear functor $F: \M \times \N \rightarrow \D$ equipped with a natural isomorphism $b_{m, e, n}: F(m \odot e, n) \simeq F(m, e \odot n)$ for $m \in \M, n \in \N, e \in \E$, called the \emph{balanced $\E$-module structure} on $F$, such that the diagram 
\begin{equation}
\label{wbx1}
\begin{split}
\xymatrix{
   F(m \odot (e_1 \otimes e_2), n) \ar[rr]^{b_{m, e_1 \otimes e_2, n}} \ar[d]_{\simeq} &  & F(m, (e_1 \otimes e_2) \odot n) \ar[d]^{\simeq} \\
   F((m \odot e_1) \odot e_2, n) \ar[r]^{b_{m \odot e_1, e_2, n}} & F(m \odot e_1, e_2 \odot n) \ar[r]^{b_{m, e_1, e_2 \odot n}} & F(m, e_1 \odot (e_2 \odot n))
   }
\end{split}
\end{equation} 
 commutes for $e_1, e_2 \in \E, m \in \M, n \in \N$. 
   \end{defn}
   A \emph{balanced $\E$-module natural transformation} between two balanced $\E$-module functors $F, G: \M \times \N \rightrightarrows \D$ is a 
   natural transformation $\alpha: F \Rightarrow G$ such that the diagram 
    \[ \xymatrix{
    F(m \odot e, n) \ar[r]^{b^F_{m,e,n}} \ar[d]_{\alpha_{m \odot e, n}} & F(m, e \odot n) \ar[d]^{\alpha_{m, e \odot n}} \\
    G(m \odot e, n) \ar[r]_{b^G_{m,e,n}} & G(m, e \odot n)
    } \]
commutes for all $m \in \M, e \in \E, n \in \N$, where $b^F$ and $b^G$ are the balanced $\E$-module structures on $F$ and $G$ respectively.  
    We use $\Fun_{\E}^{\mathrm{bal}}(\M, \N; \D)$ to denote the category of balanced $\E$-module functors from $\M \times \N$ to $\D$, and balanced $\E$-module natural transformations.

   \begin{defn}
    \label{semi2}
    Let $\M \in \RMod_{\E}(\fcat)$ and $\N \in \LMod_{\E}(\fcat)$.
    The \emph{tensor product} of $\M$ and $\N$ over $\E$ is an object $\M \boxtimes_{\E} \N$ in $\fcat$, together with a balanced $\E$-module functor $\boxtimes_{\E}: \M \times \N \rightarrow \M \boxtimes_{\E} \N$, such that, for every object $\D$ in $\fcat$, composition with $\boxtimes_{\E}$ induces an equivalence of categories
$\Fun(\M \boxtimes_{\E} \N, \D) \simeq \Fun_{\E}^{\bal}(\M, \N; \D)$.
 \end{defn}

\begin{rem}
The tensor product of $\M$ and $\N$ over $\E$ is an object $\M \boxtimes_{\E} \N$ in $\fcat$ unique up to equivalence, together with a balanced $\E$-module functor $\boxtimes_{\E}: \M \times \N \rightarrow \M \boxtimes_{\E} \N$, such that for every object $\D$ in $\fcat$, for any $f \in \Fun^{\bal}_{\E}(\M, \N; \D)$, there exists a pair $(\underline{f}, \eta)$ unique up to isomorphism, such that $f \simeq^{\eta} \underline{f} \circ \boxtimes_{\E}$, i.e.
 \[  \xymatrix{
\M \times \N \ar[r]^{\boxtimes_{\E}} \ar[rd]_{f} & \M \boxtimes_{\E} \N \ar[d]^{\exists ! \underline{f}} \\
& \D   \ultwocell<>{<2>\eta}
} \]
 where $\underline{f}$ is a $\bk$-linear functor in $\Fun(\M \boxtimes_{\E} \N, \D)$, and $\eta: f \Rightarrow \underline{f} \circ \boxtimes_{\E}$ is a balanced $\E$-module natural transformation in $\Fun^{\bal}_{\E}(\M, \N; \D)$.
The notation $\simeq^{\eta}$ means that the natural isomorphism is induced by $\eta$.
 Given two objects $f, g$ and a morphism $a: f \Rightarrow g$ in $\Fun_{\E}^{\bal}(\M, \N; \D)$, there exist unique objects $\underline{f}$, $\underline{g} \in \Fun(\M \boxtimes_{\E} \N, \D)$ such that $f \simeq^{\eta} \underline{f} \circ \boxtimes_{\E}$ and $g \simeq^{\xi} \underline{g} \circ \boxtimes_{\E}$. For any choice of $(a, \eta, \xi, \underline{f}, \underline{g})$, there exists a unique morphism $b: \underline{f} \Rightarrow \underline{g}$ in $\Fun(\M \boe \N, \D)$ such that $\xi \circ a \circ \eta^{-1} = b * \id_{\boxtimes_{\E}}$.
\end{rem}

\subsection{The symmetric monoidal 2-category $\cat$}
A left $\E$-module $\M$ in $\fcat$ is automatically a $\E$-bimodule category with the right $\E$-action defined as $m \odot e \coloneqq e \odot m$, for $m \in \M$, $e \in \E$. 
\begin{defn}
    The 2-category $\cat$ consists of the following data.
    \begin{itemize}
    \item Its objects are left $\E$-modules in $\fcat$.
     \item Its 1-morphisms are left $\E$-module functors.
     \item Its 2-morphisms are left $\E$-module natural transformations.  
     \item The identity 1-morphism $1_{\M}$ for each object $\M$ is identity functor $1_{\M}$. 
     \item The identity 2-morphism $1_F$ for each left $\E$-module functor $F: \M \rightarrow \N$ is the identity natural transformation $1_F$.  
     \item The vertical composition is the vertical composition of left $\E$-module natural transformations.
    \item Horizontal composition of 1-morphisms is the composition of left $\E$-module functors.
    \item Horizontal composition of 2-morphisms is the horizontal composition of left $\E$-module natural transformations. 
    \end{itemize}
 \end{defn}       
    It is routine to check the above data satisfy the axioms (i)-(vi) of \cite[Prop.\,2.3.4]{yau}.    
    We define a pseudo-functor $\boe: \cat \times \cat \rightarrow \cat$ in Sec.\,\ref{m-cate1}. And the following theorem is proved in Sec.\,\ref{m-cate1} and Sec.\,\ref{m-cate2}.
    \begin{thm}
    The 2-category $\cat$ is a symmetric monoidal 2-category.
    \end{thm}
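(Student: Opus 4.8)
The plan is to verify that $\cat$ with the pseudo-functor $\boe$ defined in Section~\ref{m-cate1}, the unit object $\E$, and the symmetry induced by the obvious swap satisfies all the coherence data and axioms of a symmetric monoidal 2-category, in the sense of \cite{yau}. The natural strategy is to bootstrap everything from the universal property of $\boxtimes_{\E}$ in Definition~\ref{semi2}: every structure 1-morphism (associator, unitors, braiding) and every structure 2-morphism (pentagonator, triangulators, hexagonators, syllepsis, and the modification making the braiding symmetric) will be produced by exhibiting the relevant composite of balanced $\E$-module functors on the nose and then invoking the universal property to descend it through $\boxtimes_{\E}$, uniqueness then giving the needed invertibility and naturality for free.

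First I would establish the monoidal skeleton. The associativity equivalence $\alpha_{\M,\N,\CP}\colon (\M\boe\N)\boe\CP \xrightarrow{\sim} \M\boe(\N\boe\CP)$ is built by observing that both sides corepresent the same kind of ``triple balanced $\E$-module functor out of $\M\times\N\times\CP$'': one iterates Definition~\ref{semi2} twice and uses that $\boxtimes_{\E}$ is right exact in each variable (so it commutes with the second relative tensor product), exactly as in the associativity of Deligne's $\boxtimes$ for $\fcat$ combined with the balancing condition~\eqref{wbx1}. The left and right unitors come from the fact that $\M\boe\E\simeq\M$ (using the $\E$-module structure map $\odot$, whose universal factorization is an equivalence because $\E$ is the unit and its action is part of the module data). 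Pseudo-naturality of $\alpha$ in all three variables, and the pentagon 2-morphism together with its coherence, then follow because any two parallel 1-morphisms built this way agree after precomposition with the (iterated) $\boxtimes_{\E}$, so they are canonically isomorphic by the uniqueness clause of the remark following Definition~\ref{semi2}; the pentagon identity itself becomes an equality of 2-morphisms that can be checked after restricting along $\boxtimes_{\E}\times\boxtimes_{\E}\times\cdots$, where it reduces to the pentagon for $\fcat$.

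Next I would add the braiding. Since $\E$ is \emph{symmetric} with braiding $r$, the swap functor $\M\times\N\to\N\times\M$ is compatible with the balanced $\E$-module structures — the left and right $\E$-actions are identified via $m\odot e := e\odot m$ as noted just before the definition of $\cat$, and $r$ intertwines them coherently — so the composite $\M\times\N\xrightarrow{\mathrm{swap}}\N\times\M\xrightarrow{\boxtimes_\E}\N\boe\M$ is a balanced $\E$-module functor and descends to $\beta_{\M,\N}\colon\M\boe\N\xrightarrow{\sim}\N\boe\M$. The two hexagonators relating $\beta$ to $\alpha$, the syllepsis, and the extra axiom upgrading a sylleptic structure to a symmetric one are all obtained the same way: present the relevant 2-cells on representables and descend. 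The key input that makes the symmetry (rather than merely a braiding) work is $r^2=\id$ together with the symmetry of Deligne's $\boxtimes$ on $\fcat$, which feeds directly into the descended syllepsis being an identity.

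The main obstacle is bookkeeping rather than conceptual: one must check that each balanced $\E$-module functor written down really does satisfy the coherence diagram~\eqref{wbx1} (and that each 2-cell is genuinely a balanced $\E$-module natural transformation), and then that the finite list of coherence axioms in \cite[Def.]{yau} for a symmetric monoidal 2-category — pentagon, triangle, hexagons, syllepsis axioms, and their interchange with the pseudo-functoriality 2-cells of $\boe$ — hold after descent. I expect each such check to reduce, via the uniqueness half of the universal property, to the already-known coherence of $(\fcat,\boxtimes,\vect)$ as a symmetric monoidal 2-category; the real work is organizing the many descent arguments and verifying that the right exactness of $\boxtimes_{\E}$ (needed so that $-\boe-$ is well-defined as a pseudo-functor and commutes with itself) is used consistently. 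I would therefore structure the proof as: (1) recall $\boe$ and its pseudo-functoriality from Section~\ref{m-cate1}; (2) construct $\alpha$, the unitors, and verify the monoidal 2-category axioms by descent; (3) construct $\beta$ and verify the braided, sylleptic, and symmetric axioms by descent, isolating the single lemma ``a balanced $\E$-module (natural transformation) descends uniquely'' and citing it repeatedly.
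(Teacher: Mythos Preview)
Your proposal is correct and follows essentially the same approach as the paper: construct every piece of structure (associator, unitors, pentagonator, 2-unitors, braiding, hexagonators, syllepsis) by writing down the evident balanced $\E$-module functor or natural transformation and descending it through the universal property of $\boxtimes_{\E}$, with uniqueness providing naturality and invertibility. The paper carries this out by drawing the explicit pasting diagrams in $\ccat^{\ps}((\cat)^n,\cat)$ and declaring the remaining axiom checks ``routine,'' whereas you frame the verification as a reduction to the known coherence of $(\fcat,\boxtimes,\vect)$; these are two presentations of the same descent argument rather than genuinely different proofs.
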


\section{Algebras and centers in $\cat$}

In this section, Sec.\,3.1, Sec.\,3.2 and Sec.\,3.3 study $E_0$-algebras, $E_1$-algebras and $E_2$-algebras in $\cat$, respectively. Sec.\,3.4, Sec.\,3.5 and Sec.\,3.6 study $E_0$-centers, $E_1$-centers and $E_2$-centers in $\cat$, respectively.

 \subsection{$E_0$-algebras}

\begin{defn}
We define the 2-category $\Alg_{E_0}(\cat)$ of $E_0$-algebras in $\cat$ as follows.
\begin{itemize}
\item Its objects are $E_0$-algebras in $\cat$.
An $E_0$-algebra in $\cat$ is a pair $(\A, A)$, where $\A$ is an object in $\cat$ and  $A : \E \to \A$ is a 1-morphism in $\cat$.
\item For two $E_0$-algebras $(\A, A)$ and $(\B, B)$, a 1-morphism $F: (\A, A) \to (\B, B)$ in $\Alg_{E_0}(\cat)$ is a 1-morphism $F: \A \to \B$ in $\cat$ and an invertible 2-morphism $F^0: B \Rightarrow F \circ A$ in $\cat$.
\item For two 1-morphisms $F,G : (\A, A) \rightrightarrows (\B, B)$ in $\Alg_{E_0}(\cat)$, a 2-morphism $\alpha : F \Rightarrow G$ in $\Alg_{E_0}(\cat)$ is a 2-morphism $\alpha : F \Rightarrow G$ in $\cat$ such that $(\alpha * 1_{A}) \circ F^0 = G^0$, i.e.
\be
\begin{array}{c}
\xymatrix{
\E \ar[r]^{A} \ar@/_2ex/[dr]_{B} \drtwocell<\omit>{^<-0.5>F^0} & \A \dtwocell<3>^{G}_{\;\;F}{^\alpha} \\
 & \B
}
\end{array}
=
\begin{array}{c}
\xymatrix{
\E \ar[r]^{A} \ar[dr]_{B} \drtwocell<\omit>{^<-2>G^0} & \A \ar[d]^{G} \\
 & \B
}
\end{array} 
\ee
\end{itemize}  
\end{defn}


\subsection{$E_1$-algebras}

Let $\A$ and $\B$ be two monoidal categories. A monoidal functor from $\A$ to $\B$ is a pair $(F, J^F)$, where $F: \A \to \B$ is a functor and $J^F_{x, y}: F(x \otimes y) \simeq F(x) \otimes F(y)$, $x, y \in \A$, is a natural isomorphism such that $F(\unit_{\A}) = \unit_{\B}$ and a natural diagram commutes.
A \emph{monoidal natural transformation} between two monoidal functors $(F, J^F), (G, J^G): \A \rightrightarrows \B$ is a natural transformation $\alpha: F \Rightarrow G$ such that the following diagram commutes for all $x, y \in \A$:
\begin{equation}
\label{monoidal-na-tr}
\begin{split}
 \xymatrix{
F(x \otimes y) \ar[r]^{J^F_{x,y}} \ar[d]_{\alpha_{x \otimes y}} & F(x) \otimes F(y) \ar[d]^{\alpha_x, \alpha_y} \\
G(x \otimes y) \ar[r]_{J^G_{x,y}} & G(x) \otimes G(y)
} 
\end{split}
\end{equation}

Given a monoidal category $\M$, the \emph{Drinfeld center} of $\M$ is a braided monoidal category $Z(\M)$.
The objects of $Z(\M)$ are pairs $(x, z)$, where $x \in \M$ and $z_{x, m}: x \otimes m \simeq m \otimes x, m \in \M$ is a natural isomorphism such that the following diagram commutes for $m, m' \in \M$:
\[ 
\xymatrixcolsep{0.5pc}
\xymatrixrowsep{1.2pc}
\xymatrix{
x \otimes m \otimes m' \ar[rr]^{z_{x, m \otimes m'}} \ar[rd]_{z_{x, m}, 1}&& m \otimes m' \otimes x  \\
& m \otimes x \otimes m'  \ar[ru]_{1, z_{x, m'}}&
} \]

Recall the two equivalent definitions of a central functor in Def.\,\ref{central defn 1} and Def.\,\ref{central defn 2}. The definitions of a fusion category over $\E$ and a braided fusion category over $\E$ are in \cite{DNO}.
\begin{defn} 
The 2-category $\Alg_{E_1}(\cat)$ consists of the following data.
\begin{itemize}
\item Its objects are multifusion categories over $\E$.  
\emph{A multifusion category over $\E$} is a multifusion category $\A$ equipped with a $\bk$-linear central functor $T_{\A}: \E \rightarrow \A$.  
Equivalently, a multifusion category over $\E$ is a multifusion category $\A$ equipped with a $\bk$-linear braided monoidal functor $T'_{\A}: \E \to Z(\A)$.

\item Its 1-morphisms are monoidal functors over $\E$.
A \emph{monoidal functor over $\E$} between two multifusion categories $\A$, $\B$ over $\E$ is a $\bk$-linear monoidal functor $(F, J): \A \rightarrow \B$ equipped with a monoidal natural isomorphism $u_e: F(T_{\A}(e)) \rightarrow T_{\B}(e)$ in $\B$ for each $e \in \E$, called the structure of monoidal functor over $\E$ on $F$, such that the diagram 
\begin{equation}
\begin{split}
\xymatrix{
F(T_{\A}(e) \otimes x) \ar[r]^{J_{T_{\A}(e), x}} \ar[d]_{F(z_{e, x})} & F(T_{\A}(e)) \otimes F(x) \ar[r]^{u_e, 1} & T_{\B}(e) \otimes F(x) \ar[d]^{\hat{z}_{e, F(x)}}\\
F(x \otimes T_{\A}(e)) \ar[r]^{J_{x, T_{\A}(e)}} & F(x) \otimes F(T_{\A}(e)) \ar[r]^{1, u_e} & F(x) \otimes T_{\B}(e)
}
\end{split}
\end{equation}
commutes for $e \in \E, x \in \A$. Here $z$ and $\hat{z}$ are the central structures of the central functors $T_{\A}: \E \to \A$ and $T_{\B}: \E \to \B$ respectively.

\item Its 2-morphisms are monoidal natural transformations over $\E$.
A \emph{monoidal natural transformation over $\E$} between two monoidal functors $F, G: \A \rightrightarrows \B$ over $\E$ is a monoidal natural transformation $\alpha: F \Rightarrow G$ such that the following diagram commutes for $e \in \E$:
\begin{equation} 
\label{2-morphism-AlgE1}
\xymatrixcolsep{0.5pc}
\xymatrixrowsep{1.2pc}
\xymatrix{
F(T_{\A}(e)) \ar[rr]^{\alpha_{T_{\A}(e)}} \ar[rd]_{u_e}&& G(T_{\A}(e)) \ar[ld]^{v_e} \\
& T_{\B}(e) &
} 
\end{equation}
where $u$ and $v$ are the structures of monoidal functors over $\E$ on $F$ and $G$, respectively.
\end{itemize}
\end{defn}

\begin{rem}
If $\A$ is a multifusion category over $\E$ such that $T'_{\A}: \E \to Z(\A)$ is fully faithful, then $\A$ is a indecomposable.
If $\E = \vect$,  the functor $\vect \to Z(\A)$ is fully faithful if and only if $\A$ is indecomposable.
The condition "$\E \to Z(\A) \to \A$ is fully faithful" implies the condition "$\E \to Z(\A)$ is fully faithful".
\end{rem}

\begin{lem}
\label{EAB}
Let $\A$ and $\B$ be two monoidal categories. Suppose that $T_{\A}: \E \rightarrow \A$, $T_{\B}: \E \rightarrow \B$ and $F: \A \rightarrow \B$ are monoidal functors, and $u: F \circ T_{\A} \Rightarrow T_{\B}$ is a monoidal natural isomorphism. 
Then $\A, \B$ are left $\E$-module categories, $T_{\A}, T_{\B}$ and $F$ are left $\E$-module functors, and $u$ is a left $\E$-module natural isomorphism.
\end{lem}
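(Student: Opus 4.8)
The plan is to read off all the module structures directly from the given monoidal functors, and then to deduce each module-category / module-functor / module-natural-transformation axiom from the corresponding monoidal axiom together with MacLane's coherence theorem, which lets us treat $\A$ and $\B$ as strict when chasing diagrams. Concretely, I would define the left $\E$-action on $\A$ by $e \odot a \coloneqq T_\A(e) \otimes a$ and on $\B$ by $e \odot b \coloneqq T_\B(e) \otimes b$, the action on $\E$ itself being $\otimes$. The associativity constraint $\lambda_{e,e',a} \colon (e \otimes e') \odot a \xrightarrow{\sim} e \odot (e' \odot a)$ is the composite of $J^{T_\A}_{e,e'} \otimes \id_a$ with the associator of $\A$, and the unit constraint $l_a \colon \unit_\E \odot a \xrightarrow{\sim} a$ is $T_\A(\unit_\E) \otimes a = \unit_\A \otimes a \xrightarrow{\sim} a$, using the convention $F(\unit) = \unit$ for monoidal functors adopted in the paper. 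The pentagon axiom for $\odot$, once drawn out, is a pasting of one instance of the associativity axiom of the monoidal functor $T_\A$, two instances of MacLane's pentagon in $\A$, and naturality squares for the associator, hence it commutes; the unit axiom is checked the same way from the unitality of $T_\A$.

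Next, the 1-morphisms. Regarded as a functor between the $\E$-modules $\E$ and $\A$, the functor $T_\A$ carries the module-functor constraint $s^{T_\A}_{e,e'} \coloneqq J^{T_\A}_{e,e'} \colon T_\A(e \otimes e') \xrightarrow{\sim} T_\A(e) \otimes T_\A(e')$, and its coherence axiom is exactly the associativity axiom of the monoidal functor $T_\A$; likewise for $T_\B$. For $F \colon \A \to \B$ I would take $s^F_{e,a} \colon F(e \odot a) \xrightarrow{\sim} e \odot F(a)$ to be $(u_e \otimes \id) \circ J^F_{T_\A(e),a} \colon F(T_\A(e) \otimes a) \to F(T_\A(e)) \otimes F(a) \to T_\B(e) \otimes F(a)$. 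Its coherence axiom splits into one instance of the associativity axiom of the monoidal functor $F$, one instance of the axiom \eqref{monoidal-na-tr} expressing that $u$ is monoidal (applied to the pair $(e,e')$), and naturality of $J^F$, so it commutes.

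Finally, $u$ itself. Both $F \circ T_\A$ and $T_\B$ are now left $\E$-module functors $\E \to \B$, the former by composing the module structures just built. Unwinding the definitions, the square \eqref{left-E-m-n} expressing that $u \colon F \circ T_\A \Rightarrow T_\B$ is a left $\E$-module natural transformation (with $\M = \E$, $\N = \B$, and the object of $\E$ in the left-hand variable) is literally the diagram \eqref{monoidal-na-tr} for $u$ evaluated at $(e,e')$. Since every component $u_e$ is invertible, $u$ is a module natural isomorphism.

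The only real work is the bookkeeping in the first two steps: exhibiting the pentagon for $\odot$ and the module-functor coherence for $F$ as pastings of the relevant monoidal axioms with MacLane pentagons and naturality squares. I expect the main (purely organizational) obstacle to be keeping the associators of $\A$ and $\B$ under control during those two verifications; invoking coherence to strictify $\A$ and $\B$ reduces both to short diagram chases, and no genuinely new idea is required.
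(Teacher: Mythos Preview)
Your proposal is correct and follows essentially the same approach as the paper: you define the $\E$-actions via $e \odot a = T_\A(e) \otimes a$, the module structure on $F$ as $(u_e \otimes \id) \circ J^F_{T_\A(e),a}$, and observe that the module naturality of $u$ is exactly diagram~\eqref{monoidal-na-tr}. The paper's proof is simply a terser version that states these definitions and omits the routine coherence checks you spell out.
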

\begin{proof}
The left $\E$-module structure on $\A$ is defined as $e \odot a \coloneqq T_{\A}(e) \otimes a$ for all $e \in \E$ and $a \in \A$.  The left $\E$-module structure on $T_{\A}$ is induced by the monoidal structure of $T_{\A}$.
The left $\E$-module structure $s^F$ on $F$ is induced by 
$F(e \odot a) = F(T_{\A}(e) \otimes a) \rightarrow F(T_{\A}(e)) \otimes F(a) \xrightarrow{u_e, 1} T_{\B}(e) \otimes F(a) = e \odot F(a)$.
The left $\E$-module structure on $F \circ T_{\A}$ is induced by $F(T_{\A}(\tilde{e} \otimes e)) \rightarrow F(T_{\A}(\tilde{e}) \otimes T_{\A}(e)) \xrightarrow{s^F} T_{\B}(\tilde{e}) \otimes F(T_{\A}(e)) = \tilde{e} \odot F(T_{\A}(e))$ for $e, \tilde{e} \in \E$.
The natural isomorphism $u$ satisfy the diagram (\ref{left-E-m-n}) by the diagram (\ref{monoidal-na-tr}) of the monoidal natural isomorphism $u$.
\end{proof}

\begin{rem}
\label{eq-fusion|E}
A monoidal functor $F: \A \rightarrow \B$ over $\E$ is a left $\E$-module functor.
If $\A$ is a multifusion category over $\E$ and $F: \A \to \B$ is an equivalence of multifusion categories, $\B$ is a multifusion category over $\E$.
The central structure $\sigma$ on the monoidal functor $F \circ T_{\A}: \E \to \B$ is induced by
\[ \xymatrix{
F(T_{\A}(e)) \otimes b \ar@{.>}[d]_{\sigma_{e, b}} \ar[r]^{\simeq} & F(T_{\A}(e)) \otimes F(a) \ar[r]  \ar@{.>}[d]^{\sigma_{e, F(a)}} & F(T_{\A}(e) \otimes a) \ar[d]^{c_{e, a}}  \\
b \otimes F(T_{\A}(e)) & F(a) \otimes F(T_{\A}(e))  \ar[l]^{\simeq} &  F(a \otimes T_{\A}(e)) \ar[l]
} \]
for $e \in \E, b \in \B$, where $c$ is the central structure of the functor $T_{\A}: \E \to \A$.  Notice that for any object $b \in \B$, there is an object $a \in \A$ such that $b \simeq F(a)$ by the equivalence of $F$.
\end{rem}

\begin{expl}
\label{ex1}
If $\C$ is a multifusion category over $\E$, $\C^{\rev}$ is a multifusion category over $\E$ by the central functor $\E = \overline{\E} \xrightarrow{T_{\C}} \overline{Z(\C)} \cong Z(\C^{\rev})$.
\end{expl}

\begin{expl}
\label{ex2}
Let $\M$ be a left $\E$-module in $\fcat$. $\Fun_{\E}(\M, \M)$ is a multifusion category by \cite[Cor.\,9.3.3]{Etingof}. 
Moreover, $\Fun_{\E}(\M, \M)$ is a multifusion category over $\E$.
We define a functor $T: \E \rightarrow \Fun_{\E}(\M, \M), e \mapsto T^e \coloneqq e \odot -$.
The left $\E$-module structure on $T^e$ is defined as $e \odot (\tilde{e} \odot m) \rightarrow (e \otimes \tilde{e}) \odot m \xrightarrow{r_{e, \tilde{e}}, 1} (\tilde{e} \otimes e) \odot m \rightarrow \tilde{e} \odot (e \odot m)$ for $\tilde{e} \in \E$, $m \in \M$.
The monoidal structure $J^T$ on $T$ is induced by $T^{e \otimes e'} = (e \otimes e') \odot -  \simeq e \odot (e' \odot -) = T^e \circ T^{e'}$ for $e, e' \in \E$.
The central structure $\sigma$ on $T$ is induced by $T^e \circ G(m) = e \odot G(m) \simeq G(e \odot m) = G \circ T^e(m)$ for all $e \in \E, G \in \Fun_{\E}(\M, \M)$ and $m \in \M$.
\end{expl}

\begin{expl}
\label{ex3}
Let $\C$ and $\D$ be multifusion categories over $\E$.
$\C \boe \D$ is a multifusion category over $\E$. 
We define a monoidal functor $T_{\C \boe \D}: \E \simeq \E \boe \E \xrightarrow{T_{\C} \boe T_{\D}} \C \boe \D$ by $e \mapsto e \boe \unit_{\E} \mapsto T_{\C}(e) \boe T_{\D}(\unit_{\E}) = T_{\C}(e) \boe \unit_{\D}$ for $e \in \E$.
 And the central structure $\sigma$ on $T_{\C \boe \D}$ is induced by
\[ 
\xymatrix{
T_{\C \boe \D}(e) \otimes (c \boe d) \ar@{.>}[d]_{\sigma_{e, c \boe d}} \ar@{=}[r] & (T_{\C}(e) \boe \unit_{\D}) \otimes (c \boe d) \ar@{=}[r] & (T_{\C}(e) \otimes c) \boe (\unit_{\D} \otimes d) \ar[d]^{z_{e, c} \boe \hat{z}_{\unit_{\D}, d}}  \\
(c \boe d) \otimes T_{\C \boe \D}(e) \ar@{=}[r] & (c \boe d) \otimes (T_{\C} (e) \boe \unit_{\D})  & (c \otimes T_{\C}(e)) \boe (d \otimes \unit_{\D}) \ar@{=}[l]
} \]
for $e \in \E$, $c \boe d \in \C \boe \D$, where $z$ and $\hat{z}$ are the central structures of the functors $T_{\C}: \E \to \C$ and $T_{\D}: \E \to \D$ respectively.
Notice that $T_{\C \boe \D}(e) \simeq 1_{\C} \boe T_{\D}(e)$. 
\end{expl}

An algebra $A$ in a tensor category $\A$ is called \emph{separable} if the multiplication morphism $m: A \otimes A \to A$ splits as a morphism of $A$-bimodules. Namely, there is an $A$-bimodule map $e: A \to A \otimes A$ such that $m \circ e = \id_A$.

\begin{expl}
\label{Ex-A-bimodule}
Let $\C$ be a multifusion category over $\E$ and $A$ a separable algebra in $\C$. 
The category ${{}_A\!{\C}_A}$ of $A$-bimodules in $\C$ is a multifusion category by \cite[Prop.\,2.7]{DMNO}.
 Moreover, ${{}_A\!{\C}_A}$ is a multifusion category over $\E$. We define a functor $I: \E \to {{}_A\!{\C}_A}$, $e \mapsto T_{\C}(e) \otimes A$. 
The left $A$-module structure on the right $A$-module $T_{\C}(e) \otimes A$ is defined as $A \otimes T_{\C}(e) \otimes A \xrightarrow{c^{-1}_{e, A}, 1} T_{\C}(e) \otimes A \otimes A \to T_{\C}(e) \otimes A$, where $c$ is the central structure of the functor $T_{\C}: \E \to \C$.
The monoidal structure on $I$ is defined as $T_{\C}(e_1 \otimes e_2) \otimes A \simeq T_{\C}(e_1) \otimes T_{\C}(e_2) \otimes A \cong T_{\C}(e_1) \otimes A \otimes_A T_{\C}(e_2) \otimes A$ for $e_1, e_2 \in \E$.
The central structure on $I$ is induced by
\[ I(e) \otimes_A x = T_{\C}(e) \otimes A \otimes_A x \xrightarrow{c_{e, A \otimes_A x}} A \otimes_A x \otimes T_{\C}(e) \cong x \otimes_A A \otimes T_{\C}(e) \xrightarrow{1, c^{-1}_{e, A}} x \otimes_A T_{\C}(e) \otimes A = x \otimes_A I(e) \]
for $e \in \E, x \in {{}_A\!{\C}_A}$.
\end{expl}

\subsection{$E_2$-algebras}

Let $\A$ be a subcategory of a braided fusion category $\C$.
The \emph{centralizer of $\A$ in $\C$}, denoted by $\A'|_{\C}$, is defined by the full subcategory of objects $x \in \C$ such that $c_{a, x} \circ c_{x, a} = \id_{x \otimes a}$ for all $a \in \A$, where $c$ is the braiding of $\C$.
The \emph{M\"{u}ger center} of $\C$, denoted by $\C'$ or $\C'|_{\C}$, is the centralizer of $\C$ in $\C$. 
Let $\B$ be a fusion category over $\E$ such that $\E \to Z(\B)$ is fully faithful. The centralizer of $\E$ in $Z(\B)$ is denoted by $Z(\B, \E)$ or $\E'|_{Z(\B)}$.

\begin{defn} 
The 2-category $\Alg_{E_2}(\cat)$ consists of the following data.
\begin{itemize}
\item Its objects are braided fusion categories over $\E$. A \emph{braided fusion category over $\E$} is a braided fusion category $\A$ equipped with a $\bk$-linear braided monoidal embedding $T_{\A}: \E \rightarrow \A'$.
A braided fusion category $\A$ over $\E$ is non-degenerate if $T_{\A}$ is an equivalence. 
\item Its 1-morphisms are braided monoidal functors over $\E$.
A \emph{braided monoidal functor over $\E$} between two braided fusion categories $\A, \B$ over $\E$ is a $\bk$-linear braided monoidal functor  $F: \A \rightarrow \B$ equipped with a monoidal natural isomorphism $u_e: F(T_{\A}(e)) \simeq T_{\B}(e)$ in $\B$ for all $e \in \E$.

\item For two braided monoidal functors $F,G: \A \rightrightarrows \B$ over $\E$, a 2-morphism from $F$ to $G$ is a monoidal natural transformation $\alpha: F \Rightarrow G$ such that the diagram (\ref{2-morphism-AlgE1}) commutes.
\end{itemize}
\end{defn}

\begin{rem}
Let $\A$ be a braided fusion category over $\E$ and $\eta: \A \simeq \B$ is an equivalence of braided fusion categories. Then $\B$ is a braided fusion category over $\E$.
\end{rem}

\begin{expl}
If $\D$ is a braided fusion category over $\E$, $\overline{\D}$ is a braided fusion category over $\E$ by the braided monoidal embedding $\E = \overline{\E} \xrightarrow{T_{\D}} \overline{\D'} = \overline{\D}'$.
\end{expl}

\begin{expl}
\label{center-c}
Let $\C$ be a fusion category over $\E$ such that $\E \to Z(\C)$ is fully faithful. 
$Z(\C, \E)$ is a non-degenerate braided fusion category over $\E$.
Next check that $Z(\C, \E)' = \E$. On one hand, if $e \in \E$, we have $T_{\C}(e) \in Z(\C, \E)'$. On the other hand, since $Z(\C)' = \vect \subset \E$, we have $Z(\C, \E)'|_{Z(\C, \E)} \subset Z(\C, \E)'|_{Z(\C)} = (\E'|_{Z(\C)})'|_{Z(\C)} = \E$. 
The central structure on $T_{Z(\C, \E)}: \E \rightarrow Z(\C, \E)'$ is defined as $T_{\C}$. 

If $\C$ is a non-degenerate braided fusion category over $\E$, there is a braided monoidal equivalence $Z(\C, \E) \simeq \C \boxtimes_{\E} \overline{\C}$ over $\E$ by \cite[Cor.\,4.4]{DNO}.
\end{expl}

\subsection{$E_0$-centers}
A \emph{contractible groupoid} is a non-empty category in which there is a unique morphism between any two objects.
An object $\X$ in a monoidal 2-category $\BB$ is called a \emph{terminal object} if for each $\Y \in \BB$, the hom category $\BB(\Y, \X)$ is a contractible groupoid.
Here the hom category $\BB(\Y, \X)$ denotes the category of 1-morphisms from $\Y$ to $\X$ and 2-morphisms in $\BB$.

\begin{defn}
Let $\A = (\A, A) \in \Algn(\cat)$. A \emph{left unital $\A$-action on $\X \in \cat$} is a 1-morphism $F : \A \boe \X \to \X$ in $\cat$ together with an invertible 2-morphism $\alpha$ in $\cat$ as depicted in the following diagram:
\[
\begin{array}{c}
\xymatrix@=4ex{
 & \A \boe  \X \ar[dr]^{F} \\
\E \boe \X \ar[ur]^{A \boe 1_{\X}} \ar[rr] \rrtwocell<\omit>{<-2>\alpha} & & \X
}
\end{array} ,
\]
where the unlabeled arrow is given by the left $\E$-action on $\X$.
\end{defn}

\begin{defn}
Let $\X \in \cat$. The 2-category $\Algn(\cat)_{\X}$ of left unital actions on $\X$ in $\Algn(\cat)$ is defined as follows.
\bit
\item The objects are left unital actions on $\X$.
\item Let $((\A, A), F, \alpha_{\A})$ be a left unital $(\A, A)$-action on $\X$ and $((\B, B), G, \alpha_{\B})$ be a left unital $(\B, B)$-action on $\X$.
A 1-morphism $(P, \rho): ((\A, A), F,\alpha_{\A}) \to ((\B, B), G, \alpha_{\B})$ in $\Algn(\cat)_{\X}$ is a 1-morphism $P: (\A, A) \to (\B, B)$ in $\Algn(\cat)$, equipped with an invertible 2-morphism $\rho: G \circ (P \boe 1_{\X}) \Rightarrow F$ in $\cat$,  
such that the following pasting diagram equality holds.
\[
\begin{array}{c}
\xymatrix{
 & \B \boe \X \ar@/^4ex/[ddr]^{G} \\
 & \A \boe \X \ar[dr]^{F} \ar[u]|{P \boe 1_{\X}} \rtwocell<\omit>{\rho} & \\
\E \boe \X \ar[rr] \ar@/^4ex/[uur]^{B \boe 1_{\X}} \ar[ur]^{A \boe 1_{\X}} \rrtwocell<\omit>{<-3>\;\;\alpha_{\A}} \uurtwocell<\omit>{P^0 \boe 1\;\;\;} & & \X
}
\end{array}
=
\begin{array}{c}
\xymatrix{
 & \B \boe \X \ar[dr]^{G} \\
\E \boe \X \ar[ur]^{B \boe 1_{\X}} \ar[rr] \rrtwocell<\omit>{<-3>\;\;\alpha_{\B}} & & \X
}
\end{array} 
\]
Here we choose the identity 2-morphism $\id: (P \boe 1_{\X}) \circ (A \boe 1_{\X}) \Rightarrow (P \circ A) \boe 1_{\X}$ for convenience.
\item Given two 1-morphisms $(P,\rho), (Q,\sigma) : ((\A, A), F,\alpha_{\A}) \rightrightarrows ((\B, B), G,\alpha_{\B})$, a 2-morphism $\alpha : (P,\rho) \Rightarrow (Q,\sigma)$ in $\Algn(\cat)_{\X}$ is a 2-morphism $\alpha : P \Rightarrow Q$ in $\Algn(\cat)$ such that the following pasting diagram equality holds.
\[
\begin{array}{c}
\xymatrix{
\A \boe \X \rrtwocell^{P \boe 1_{\X}}_{Q \boe 1_{\X}}{\;\;\;\;\;\alpha \boe 1} \ar[dr]_{F} \drrtwocell<\omit>{<1>\sigma} & & \B \boe \X \ar[dl]^{G} \\
 & \X &
}
\end{array}
=
\begin{array}{c}
\xymatrix{
\A \boe \X \ar[rr]^{P \boe 1_{\X}} \ar[dr]_{F} \drrtwocell<\omit>{<-1>\rho} & & \B \boe \X \ar[dl]^{G} \\
 & \X &
}
\end{array} 
\]
\eit
An \emph{$E_0$-center of the object $\X$ in $\cat$} is a terminal object in $\Algn(\cat)_{\X}$.
\end{defn}

\begin{thm}
The $E_0$-center of a category $\X \in \cat$ is given by the multifusion category $\Fun_{\E}(\X, \X)$ over $\E$.
\end{thm}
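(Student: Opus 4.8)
The plan is to show that $\Fun_{\E}(\X,\X)$, viewed as an $E_0$-algebra via the functor $T \colon \E \to \Fun_{\E}(\X,\X)$, $e \mapsto e \odot -$ from Example \ref{ex2}, carries a canonical left unital action on $\X$, and that this action is terminal in $\Algn(\cat)_{\X}$. First I would exhibit the action itself: the evaluation functor $\ev \colon \Fun_{\E}(\X,\X) \boe \X \to \X$, $(G, x) \mapsto G(x)$ (which is well defined as a $\bk$-linear functor out of the relative tensor product because $(G,x) \mapsto G(x)$ is a balanced $\E$-module functor, using the $\E$-module structure $s^G$ on $G$), together with the invertible $2$-morphism $\alpha$ whose component at $(e,x)$ is the canonical isomorphism $\ev(T(e) \boe x) = (e \odot -)(x) = e \odot x$. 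Thus $\alpha$ is essentially an identity, and $(\Fun_{\E}(\X,\X), \ev, \alpha)$ is an object of $\Algn(\cat)_{\X}$.

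Next I would verify terminality. Given any other left unital action $((\A,A), F, \alpha_{\A})$, I must produce a $1$-morphism $(P,\rho)$ from it to $(\Fun_{\E}(\X,\X), \ev, \alpha)$, unique up to a unique $2$-isomorphism, and check there are no nontrivial $2$-automorphisms. The functor $P \colon \A \to \Fun_{\E}(\X,\X)$ is forced: by the universal property defining $\ev$, a left $\E$-module functor $F \colon \A \boe \X \to \X$ is the same datum as a $\bk$-linear monoidal functor $\A \to \Fun(\X,\X)$ landing in $\Fun_{\E}(\X,\X)$ — concretely $P(a) \coloneqq F(a \boe -)$, with monoidal structure from the (implicit, since $F$ is only a $1$-morphism of the $E_0$-algebra) compatibility, and $\E$-module-over-$\E$ structure on $P$ supplied by $\alpha_{\A}$, which identifies $P \circ A = F((A \boe 1_{\X})(-)) $ with $e \odot - = T(e)$ and hence gives $P^0 \colon T \Rightarrow P \circ A$. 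The $2$-morphism $\rho \colon \ev \circ (P \boe 1_{\X}) \Rightarrow F$ is then the evident natural isomorphism $\ev(P(a), x) = F(a \boe x) = F(a \boe x)$, i.e. again essentially an identity, and the pasting-diagram equality in the definition of a $1$-morphism of $\Algn(\cat)_{\X}$ reduces to the statement that $\alpha_{\A}$ and $\alpha$ are compatible via $P^0$, which holds by construction. Uniqueness of $(P,\rho)$ up to unique $2$-isomorphism, and the absence of nontrivial $2$-cells, then follow from the universal property of $\boxtimes_{\E}$ (Definition \ref{semi2}): any competing $(Q,\sigma)$ gives, via $\sigma$, a natural isomorphism $Q(a) \cong F(a \boe -) = P(a)$, and the $2$-morphism condition pins this down uniquely.

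The main obstacle I anticipate is bookkeeping the coherence data rather than any conceptual difficulty: one must check that $P$ is genuinely monoidal and a monoidal functor \emph{over} $\E$ (so that it is a $1$-morphism in $\Algn(\cat)$, not merely in $\cat$), that the $\E$-module structures $s^{P(a)}$ assemble into the structure constants making $P(a)$ land in $\Fun_{\E}(\X,\X)$, and that the single nontrivial pasting equality unwinds correctly — all of which amount to diagram chases of the kind appearing in Lemma \ref{EAB} and Example \ref{ex2}. A secondary point to handle carefully is that $\ev$ must be checked to descend along $\boxtimes_{\E}$ (the balanced structure being $F(m \odot e, n) = G(e \odot n)$-type isomorphisms built from $s^G$), so that the diagrams in the definition of $\Algn(\cat)_{\X}$ typecheck on the relative tensor product and not just on the Deligne product. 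Once these are in place, terminality is immediate from the universal property.
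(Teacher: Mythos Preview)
Your overall architecture is the same as the paper's: exhibit the evaluation action of $\Fun_{\E}(\X,\X)$ on $\X$ with $\alpha$ essentially the identity, then for an arbitrary unital $\A$-action $(F,\alpha_{\A})$ set $P(a)\coloneqq F(a\boe -)$, $P^0\coloneqq \alpha_{\A}^{-1}$, $\rho\coloneqq\id$, and finally show any two $(Q_i,\sigma_i)$ are connected by the unique $2$-cell $\beta_a=(\sigma_2)^{-1}_{a,-}\circ(\sigma_1)_{a,-}$. That is exactly what the paper does, so you are on the right track.

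There is, however, a genuine confusion in your write-up that you should fix before it causes trouble. You repeatedly speak of $P$ having to be ``monoidal'' or ``a monoidal functor over~$\E$'', and say this is what it means to be a $1$-morphism in $\Algn(\cat)$. That is wrong: an $E_0$-algebra $(\A,A)$ is \emph{not} a monoidal category, only an object of $\cat$ with a pointed map $A\colon\E\to\A$, and a $1$-morphism $(\A,A)\to(\B,B)$ in $\Algn(\cat)$ is just a left $\E$-module functor $P\colon\A\to\B$ together with an invertible $2$-cell $P^0\colon B\Rightarrow P\circ A$. No monoidal structure on $P$ is required or even meaningful at this stage. So the ``obstacle'' you flag (checking $P$ is monoidal) is a phantom; what you actually need to check is that each $P(a)=F(a\boe -)$ carries a left $\E$-module structure (coming from the $\E$-linearity of $F$) so that $P$ lands in $\Fun_{\E}(\X,\X)$, and that $P$ itself is a left $\E$-module functor. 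The monoidal/multifusion structure on $\Fun_{\E}(\X,\X)$ enters only at the very end, when one verifies that the $E_1$-algebra structure induced on the $E_0$-center by iterating the action agrees with composition of functors; the paper carries this out explicitly and you should add that step.
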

\begin{proof}
Suppose $(\A, A)$ is an $E_0$-algebra in $\cat$ and $(F, u)$ as depicted in the following diagram 
\[ \xymatrix@=3ex{
\rrtwocell<>{<4>u}& \A \boe \X \ar[rd]^(0.56){F} & \\
\E \boe \X \ar[rr] \ar[ru]^(0.42){A \boe 1_{\X}} & & \X
} \]
is a unital $\A$-action on $\X$.
In other words, $F: \A \boe \X \rightarrow \X$ is a left $\E$-module functor and $u_{e, x}: F(A(e) \boe x) \rightarrow e \odot x$, $e \in \E, x \in \X$ is a natural isomorphism in $\cat$.

Recall that $(\Fun_{\E}(\X, \X), T)$ is an $E_0$-algebra in $\cat$ by Expl.\,\ref{ex2}.
\[ \xymatrix@=3ex{
\rrtwocell<>{<4>v} & \Fun_{\E}(\X, \X) \boe \X \ar[rd]^(0.6){G} & \\
\E \boe \X \ar[ru]^(0.4){T \boe 1_{\X}} \ar[rr] & & \X
} \]
Define a functor 
\[G: \Fun_{\E}(\X, \X) \boe \X \rightarrow \X, \qquad f \boe x \mapsto f(x)\] 
and a natural isomorphism 
\[v_{e, x} = \id_{e \odot x}: G(T^e \boe x) = T^e(x) = e \odot x \rightarrow e \odot x, \quad e \in \E, x \in \X.\] 
Then $((\Fun_{\E}(\X, \X), T), G, v)$ is a left unital $\Fun_{\E}(\X, \X)$-action on $\X$. 

We want to show that $\Alg_{E_0}(\cat)_{\X}(\A, \Fun_{\E}(\X, \X))$ is a contractible groupoid.
First we want to show there exists a 1-morphism $(P, \rho): \A \rightarrow \Fun_{\E}(\X, \X)$ in $\Alg_{E_0}(\cat)_{\X}$. We define a functor $P: \A \rightarrow \Fun_{\E}(\X, \X)$ by $P(a) \coloneqq  F(a \boe -)$ for all $a \in \A$ and an invertible 2-morphism $P^0_e: T^e = e \odot - \Rightarrow P(A(e)) = F(A(e) \boe -)$ as $u^{-1}_e$ for all $e \in \E$. 
The natural isomorphism $\rho$ can be defined by
\[\rho_{a, x} = \id_{F(a \boe x)}: G(P(a) \boe x) = P(a)(x) = F(a \boe x) \rightarrow F(a \boe x)\] 
for $a \in \A, x \in \X$.
Then it suffices to show that the composition of morphisms
\[ G(T^e \boe x) = e \odot x \xRightarrow{(P^0_e)_x = u^{-1}_{e, x}} F(A(e) \boe x) \xRightarrow{\rho_{A(e), x}= \id_{F(A(e) \boe x)}} F(A(e) \boe x) \xRightarrow{u_{e, x}} e \odot x \]
is equal to $v_{e, x} = \id_{e \odot x}$ by the definitions of $P^0$ and $\rho$. 

Then we want to show that if there are two 1-morphisms $(Q_i, \sigma_i): \A \rightarrow \Fun_{\E}(\X, \X)$ in $\Alg_{E_0}(\cat)_{\X}$ for $i=1,2$, there is a unique 2-morphism $\beta: (Q_1, \sigma_1) \Rightarrow (Q_2, \sigma_2)$ in $\Alg_{E_0}(\cat)_{\X}$.
The 2-morphism $\beta$ in $\Alg_{E_0}(\cat)_{\X}$ is a natural isomorphism $\beta: Q_1 \Rightarrow Q_2$ such that the equalities
\begin{equation} \label{eq:E0_cat_1}
\bigl( T \xRightarrow{Q_1^0} Q_1 \circ A \xRightarrow{\beta * 1_A} Q_2 \circ A \bigr) = \bigl( T \xRightarrow{Q_2^0} Q_2 \circ A \bigr)
\end{equation}
and
\begin{equation} \label{eq:E0_cat_2}
\bigl( Q_1(a)(x) \xrightarrow{(\beta_a)_x} Q_2(a)(x) \xrightarrow{(\sigma_2)_{a,x}} F(a \boe x) \bigr) = \bigl( Q_1(a)(x) \xrightarrow{(\sigma_1)_{a,x}} F(a \boe x) \bigr)
\end{equation}
hold for $a \in \A, x \in \X$.
The second condition (\ref{eq:E0_cat_2}) implies that $(\beta_a)_x = (\sigma_2)_{a, x}^{-1} \circ (\sigma_1)_{a, x}$.
This proves the uniqueness of $\beta$.
For the existence of $\beta$, we want to show that $\beta$ satisfy the first condition (\ref{eq:E0_cat_1}), i.e. $\beta$ is a 2-morphism in $\Alg_{E_0}(\cat)$.
Since $(Q_i, \sigma_i)$ are 1-morphisms in $\Alg_{E_0}(\cat)_{\X}$, the composed morphism 
\[ e \odot x = T^e(x) \xrightarrow{(Q^0_i)_{e, x}} Q_i(A(e))(x) \xrightarrow{(\sigma_i)_{A(e), x}} F(A(e) \boe x) \xrightarrow{u_{e, x}} e \odot x \]
is equal to $v_{e,x} = \id_{e \odot x}$.
It follows that the composition of morphisms
\[ e \odot x \xrightarrow{(Q^0_1)_{e, x}} Q_1(A(e))(x) \xrightarrow{(\sigma_1)_{A(e), x}} F(A(e) \boe x) \xrightarrow{(\sigma_2)^{-1}_{A(e), x}} Q_2(A(e))(x) \xrightarrow{(Q^0_2)^{-1}_{e,x}} e \odot x \]
is equal to $\id_{e \odot x}$, i.e. $(Q^0_2)^{-1}_{e,x} \circ (\beta_{A(e)})_x \circ (Q^0_1)_{e,x} = \id_{e \odot x}$.
This is precisely the first condition (\ref{eq:E0_cat_1}).
Hence the natural transformation $\beta: Q_1 \Rightarrow Q_2$ defined by $(\beta_a)_x = (\sigma_2)^{-1}_{a, x} \circ (\sigma_1)_{a,x}$ is the unique 2-morphism $\beta: (Q_1, \sigma_1) \Rightarrow (Q_2, \sigma_2)$.

Finally, we also want to verify that the $E_1$-algebra structure on the $E_1$-center $\Fun_{\E}(\X, \X)$ coincides with the usual monoidal structure of $\Fun_{\E}(\X, \X)$ defined by the composition of functors.
Recall that the $E_1$-algebra structure is induced by the iterated action
\[ \Fun_{\E}(\X, \X) \boe \Fun_{\E}(\X, \X) \boe \X \xrightarrow{1 \boe G} \Fun_{\E}(\X, \X) \boe \X \xrightarrow{G} \X \]
By the construction given above, the induced tensor product $\Fun_{\E}(\X, \X) \boe \Fun_{\E}(\X, \X) \to \Fun_{\E}(\X, \X)$ is given by $f \boe g \mapsto G(f \boe G(g \boe -)) = f(g(-)) = f \circ g$.
Hence, the $E_1$-algebra structure on $\Fun_{\E}(\X, \X)$ is the composition of functors, which is the usual monoidal structure on $\Fun_{\E}(\X, \X)$.
\end{proof}

\subsection{$E_1$-centers}

\begin{defn}
Let $\X \in \Alg_{E_1}(\cat)$. The \emph{$E_1$-center of $\X$ in $\cat$} is the $E_0$-center of $\X$ in $\Alg_{E_1}(\cat)$.
\end{defn}

\begin{thm}
\label{thm-3.19}
Let $\B$ be a multifusion category over $\E$. Then the $E_1$-center of $\B$ in $\cat$ is the braided multifusion category $Z(\B, \E)$ over $\E$.
\end{thm}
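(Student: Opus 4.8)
The plan is to compute the $E_1$-center by unwinding the definition: it is the $E_0$-center of $\B$ taken inside the $2$-category $\Alg_{E_1}(\cat)$, which means a terminal object in the $2$-category $\Alg_{E_1}(\cat)_{\B}$ of left unital actions on $\B$ in $\Alg_{E_1}(\cat)$. So I would first carefully spell out what a left unital action on $\B$ in $\Alg_{E_1}(\cat)$ is: a multifusion category $\A$ over $\E$ together with a monoidal functor over $\E$, $F\colon \A\boe\B\to\B$ (using Expl.\,\ref{ex3} for the $E_1$-structure on $\A\boe\B$), equipped with an invertible monoidal $2$-morphism $\alpha$ over $\E$ making the action compatible with the canonical left $\E$-module (equivalently $T_\B$-multiplication) structure on $\B$. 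Unpacking the balanced universal property of $\boe$ (Def.\,\ref{semi2}), such an $F$ corresponds to a monoidal functor $\A\to\Fun_{|\B}(\B,\B)$, i.e.\ into $\B$-bimodule endofunctors; but the unital constraint $\alpha$ forces the $\B$-action on the image to be the regular one, and by standard rigidity arguments ($\Fun_{|\B}(\B,\B)\simeq Z(\B)$ for $\B$ multifusion) this becomes a braided monoidal functor $\A\to Z(\B)$ landing in the centralizer of $\E=T_\B(\E)$, hence a braided monoidal functor over $\E$, $\A\to Z(\B,\E)$.

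Concretely the key steps are: (1) identify $\Fun_{\E|\B}(\B,\B)$-style data — show that a left unital $\A$-action on $\B$ in $\cat$ together with monoidal-over-$\E$ structure is the same as a monoidal functor $\A\to Z(\B,\E)$ over $\E$, by combining the $E_0$-center computation already proved (the $E_1$-algebra structure on $\Fun_\E(\X,\X)$ is composition) with the well-known identification $Z(\B)\simeq\Fun_{\B|\B}(\B,\B)$; here I'd use that the extra $\E$-module-functor compatibility of $F$ pins down the half-braiding's interaction with $T_\B$, landing everything in $Z(\B,\E)$, which by the discussion preceding $\Alg_{E_2}$ is a well-defined braided multifusion category over $\E$ (its Müger center being $\E$ as computed in Expl.\,\ref{center-c} when $\E\to Z(\B)$ is fully faithful; in general one takes the centralizer $\E'|_{Z(\B)}$). (2) Equip $Z(\B,\E)$ with a canonical left unital action on $\B$: the forgetful functor $Z(\B,\E)\to Z(\B)\to\B$ is monoidal over $\E$, it gives a monoidal functor $Z(\B,\E)\boe\B\to\B$ via the half-braidings, and the unit constraint $\alpha$ is built from $T_\B$; check this is an $E_1$-algebra object in $\Alg_{E_1}(\cat)_{\B}$. (3) Prove terminality: given any left unital action $((\A,A),F,\alpha)$, produce the $1$-morphism $P\colon\A\to Z(\B,\E)$, $a\mapsto (F(a\boe-\,)\text{ evaluated on }\unit_\B,\ \text{half-braiding from }\alpha)$, with its monoidal-over-$\E$ structure, and show the hom-category $\Alg_{E_1}(\cat)_{\B}(\A,Z(\B,\E))$ is a contractible groupoid — existence of $P$ from the constructions, and uniqueness of the connecting $2$-morphism $\beta$ forced, exactly as in the $E_0$-center proof, by the action-compatibility equation, which expresses $\beta$ in terms of the $\alpha$'s and the structure isomorphisms.

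I would organize the verification in (3) to parallel the $E_0$-center proof almost verbatim: define $P$, define its unitor $P^0$ from $\alpha^{-1}$, define the connecting $2$-morphism $\rho$ as an identity, check the pasting equality reduces to the definition of $P^0$; then for two actions $(Q_i,\sigma_i)$ show the $2$-morphism $\beta$ must satisfy $(\beta_a)_x=(\sigma_2)^{-1}_{a,x}\circ(\sigma_1)_{a,x}$ and that this $\beta$ automatically respects the $E_1$-structure (monoidality over $\E$) using that $Q_i$ are monoidal-over-$\E$. Finally I'd note that the resulting $E_1$-algebra structure on $Z(\B,\E)$ — coming from the iterated action $Z(\B,\E)\boe Z(\B,\E)\boe\B\to\B$ — is the usual tensor product of half-braided objects, and the induced $E_2$-structure (which the next subsections will use) is the Drinfeld braiding on $Z(\B,\E)$; this matches the claim that it is the \emph{braided} multifusion category $Z(\B,\E)$ over $\E$.

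The main obstacle I expect is step (1)/(3): correctly translating the universal property of $\boe$ and the monoidal-functor-over-$\E$ axioms into a clean statement ``left unital $\A$-action on $\B$ in $\Alg_{E_1}(\cat)$ $=$ monoidal functor over $\E$ from $\A$ to $Z(\B,\E)$'', and in particular tracking how the half-braiding emerges from the $2$-morphism $\alpha$ and how the hexagon/compatibility axioms for $\alpha$ become precisely the axioms for the half-braiding together with its centralizing of $\E$. Getting the coherence bookkeeping right (the pasting diagrams involving $T_{\A\boe\B}$ from Expl.\,\ref{ex3} and the associativity of the action) is where the real work lies; the contractibility argument is then formal and mirrors the $E_0$ case.
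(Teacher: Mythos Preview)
Your overall strategy matches the paper's: exhibit the canonical $Z(\B,\E)$-action on $\B$, show any left unital $\A$-action factors through it via $P(a)=F(a\boe\unit_\B)$ with a half-braiding, and prove contractibility of the hom-category by the same $\beta=(\sigma_2)^{-1}\circ\sigma_1$ argument as in the $E_0$ case. The final check that the induced $E_2$-structure is the usual Drinfeld braiding is also what the paper does.

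Two points of imprecision to fix. First, the half-braiding on $F(a\boe\unit_\B)$ does \emph{not} come from the unital $2$-morphism $\alpha$ (your $u$) alone: it is built from the \emph{monoidal structure} $J^F$ of $F$ together with $u$, via
\[
F(a\boe\unit_\B)\otimes b \xrightarrow{1,\,u^{-1}} F(a\boe\unit_\B)\otimes F(\unit_\A\boe b)\xrightarrow{J^F} F(a\boe b)\xrightarrow{(J^F)^{-1}} F(\unit_\A\boe b)\otimes F(a\boe\unit_\B)\xrightarrow{u,\,1} b\otimes F(a\boe\unit_\B).
\]
This is the crux of the $E_1$ (as opposed to $E_0$) argument: it is precisely the monoidality of $F$ that produces the half-braiding, and the ``over $\E$'' compatibility of $J^F$ with the central structures is what forces the result to land in $Z(\B,\E)$ rather than all of $Z(\B)$. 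Your plan to route through $\Fun_{|\B}(\B,\B)\simeq Z(\B)$ is a legitimate alternative framing, but be aware that the right-$\B$-module structure on $F(a\boe-)$ is itself extracted from $J^F$ and $u$, not given a priori by the universal property of $\boe$. Second, the connecting $2$-morphism $\rho$ is \emph{not} the identity (unlike the $E_0$ case): in the paper it is $\rho_{a,b}\colon F(a\boe\unit_\B)\otimes b\xrightarrow{1,\,u^{-1}}F(a\boe\unit_\B)\otimes F(\unit_\A\boe b)\xrightarrow{J^F}F(a\boe b)$, and verifying that $(P,\rho)$ is a $1$-morphism in $\Alg_{E_0}(\Alg_{E_1}(\cat))_\B$ requires checking it is a monoidal natural isomorphism over $\E$.
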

\begin{proof}
Let $\A$ be a multifusion category over $\E$.
A left unital $\A$-action on $\B$ in $\Alg_{E_1}(\cat)$ is a monoidal functor $F: \A \boe \B \rightarrow \B$ over $\E$ and a monoidal natural isomorphism $u$ over $\E$ shown below:
\[ \xymatrix@=3ex{
\rrtwocell<>{<4>u}& \A \boe \B \ar[rd]^(0.56){F} & \\
\E \boe \B \ar[rr]_{\odot} \ar[ru]^(0.42){T_{\A}, 1} & & \B
} \]
More precisely, $F$ is a functor equipped with natural isomorphisms $J^F: F(a_1 \boe b_1) \otimes F(a_2 \boe b_2) \xrightarrow{\simeq} F((a_1 \boe b_1) \otimes (a_2 \boe b_2))$, $a_1, a_2 \in \A$, $b_1, b_2 \in \B$, and $I^F: \unit_{\B} \xrightarrow{\simeq} F(\unit_{\A} \boe \unit_{\B})$ satisfying certain commutative diagrams.
The monoidal structure on the functor $\odot: \E \boe \B \to \B$, $e \boe b \mapsto e \odot b = T_{\B}(e) \otimes b$ is induced by
$T_{\B}(e_1 \otimes e_2) \otimes (b_1 \otimes b_2) \simeq T_{\B}(e_1) \otimes T_{\B}(e_2) \otimes b_1 \otimes b_2 \xrightarrow{1, z_{e_2, b_1}, 1} T_{\B}(e_1) \otimes b_1 \otimes T_{\B}(e_2) \otimes b_2$ for $e_1, e_2 \in \E, b_1, b_2 \in \B$, where $(T_{\B}(e_2), z) \in Z(\B)$. 
The structure of monoidal functor over $\E$ on $\odot$ is defined as $\odot(T_{\E \boe \B}(e)) = \odot(e \boe \unit_{\B}) = e \odot \unit_{\B} = T_{\B}(e) \otimes \unit_{\B} \simeq T_{\B}(e)$.
And $u$ is a monoidal natural isomorphism $u_{e, b}: F(T_{\A}(e) \boe b) \xrightarrow{\simeq} e \odot b \coloneqq T_{\B}(e) \otimes b$, $e \in \E, b \in \B$. Also one can show that $I^F = u^{-1}_{\unit_{\E}, \unit_{\B}}$.
The structure of monoidal functor over $\E$ on $F$ is $u_{e, \unit_{\B}}: F(T_{\A \boe \B}(e)) = F(T_{\A}(e) \boe \unit_{\B}) \simeq T_{\B}(e) \otimes \unit_{\B} \simeq T_{\B}(e)$.

There is an obviously left unital $Z(\B, \E)$-action on $\B$
\[ \xymatrix@=3ex{
\rrtwocell<>{<4>v}& Z(\B, \E) \boe \B \ar[rd]^(0.6){G} & \\
\E \boe \B \ar[rr]_{\odot} \ar[ru]^(0.4){T_{\B}, 1} & & \B
} \]
defined by $G: Z(\B, \E) \boe \B \xrightarrow{\forget, 1} \B \boe \B \xrightarrow{\otimes} \B$ and $v_{e, b} \coloneqq \id_{T_{\B}(e) \otimes b}: G(T_{\B}(e) \boe b) = T_{\B}(e) \otimes b \to e \odot b$, for $e \in \E, b \in \B$.
The structure of monoidal functor over $\E$ on $G$ is defined as 
$G(T_{\B}(e) \boe \unit_{\B}) = T_{\B}(e) \otimes \unit_{\B} \simeq T_{\B}(e)$.

First we want to show that $F(a \boe \unit_{\B}) \in Z(\B, \E)$ for $a \in \A$.
Notice that $F(\unit_{\A} \boe b) = F(T_{\A}(\unit_{\E}) \boe b) \xrightarrow{u_{\unit_{\E}, b}} \unit_{\E} \odot b = b$.
Since $F$ is a monoidal functor over $\E$, it can be verified that the natural transformation $\gamma$ (shown below)
\[ \xymatrix{
F(a \boe \unit_{\B}) \otimes b \ar[r]^(0.4){1, u^{-1}_{\unit_{\E}, b}} \ar@{.>}[d]_{\gamma_{a, b}}& F(a \boe \unit_{\B}) \otimes F(\unit_{\A} \boe b) \ar[r]^{J^F} & F((a \otimes \unit_{\A}) \boe (\unit_{\B} \otimes b)) \ar[d]^{\simeq, \simeq} \\
b \otimes F(a \boe \unit_{\B})   & F(\unit_{\A} \boe b) \otimes F(a \boe \unit_{\B}) \ar[l]^(0.6){u_{\unit_{\E}, b}, 1}& F((\unit_{\A} \otimes a) \boe (b \otimes \unit_{\B}))  \ar[l]^{J^F}
} \]
is a half-braiding on $F(a \boe \unit_{\B}) \in \B$, for $a \in \A, b \in \B$.
It is routine to check that the composition $T_{\B}(e) \otimes F(a \boe \unit_{\B}) \to F(a \boe \unit_{\B}) \otimes T_{\B}(e) \to T_{\B}(e) \otimes F(a \boe \unit_{\B})$ equals to identity. Then $F(a \boe \unit_{\B})$ belongs to $Z(\B, \E)$.

We define a monoidal functor $P: \A \to Z(\B, \E)$ by $P(a) \coloneqq (F(a \boe \unit_{\B}), \gamma_{a, -})$ with the monoidal structure induced by that of $F$:
\[ J^P: \big( P(a_1) \otimes P(a_2) = F(a_1 \boe \unit_{\B}) \otimes F(a_2 \boe \unit_{\B}) \xrightarrow{J^F} F((a_1 \otimes a_2) \boe (\unit_{\B} \otimes \unit_{\B})) = F((a_1 \otimes a_2) \boe \unit_{\B}) = P(a_1 \otimes a_2) \big) \]
\[ I^P: \big( \unit_{\B} \xrightarrow{I^F} F(\unit_{\A} \boe \unit_{\B}) = P(\unit_{\A}) \big) \]
The structure of monoidal functor over $\E$ on $P$ is defined as $u_{e, \unit_{\B}}: P(T_{\A}(e)) = F(T_{\A}(e) \boe \unit_{\B}) \simeq T_{\B}(e) = T_{Z(\B, \E)}(e)$ for $e \in \E$.

Then we show that there exists a 1-morphism $(P, \rho): \A \rightarrow Z(\B, \E)$ in $\Alg_{E_0}(\Alg_{E_1}(\cat))_{\B}$.
The invertible natural isomorphism $P^0: T_{\B} \Rightarrow P \circ T_{\A}$ is defined by $T_{\B}(e) = e \odot \unit_{\B} \xrightarrow{u^{-1}_{e, \unit_{\B}}} F(T_{\A}(e) \boe \unit_{\B}) = P(T_{\A}(e))$ for $e \in \E$.
The monoidal natural isomorphism $\rho: G \circ (P \boe 1_{\B}) \Rightarrow F$ is defined by 
\[ \rho_{a,b}: F(a \boe \unit_{\B}) \otimes b \xrightarrow{1, u^{-1}_{\unit_{\E}, b}} F(a \boe \unit_{\B}) \otimes F(\unit_{\A} \boe b) \xrightarrow{J^F} F((a \otimes \unit_{\A}) \boe (\unit_{\B} \otimes b)) = F(a \boe b) \]
for $a \in \A, b \in \B$. 
It is routine to check that the composition of 2-morphisms $P^0, \rho$ and $u$ is equal to the 2-morphism $v$.

Then we show that if there are two 1-morphisms $(Q_i, \sigma_i): \A \to Z(\B, \E)$ in $\Alg_{E_0}(\Alg_{E_1}(\cat))_{\B}$ for $i=1, 2$, then there exists a unique 2-morphism $\beta: (Q_1, \sigma_1) \Rightarrow (Q_2, \sigma_2)$ in $\Alg_{E_0}(\Alg_{E_1}(\cat))_{\B}$. Such a $\beta$ is a natural transformation $\beta: Q_1 \Rightarrow Q_2$ such that the equalities
\begin{equation} \label{eq:E1_cat1}
\big( Q_1(a) \otimes b \xrightarrow{\beta_a, 1} Q_2(a) \otimes b \xrightarrow{(\sigma_2)_{a, b}} F(a \boe b) \big) = \big( Q_1(a) \otimes b \xrightarrow{(\sigma_1)_{a, b}} F(a \boe b) \big) 
\end{equation}
and
\begin{equation} \label{eq:E1_cat2}
 \big( T_{\B} \xRightarrow{Q_1^0} Q_1 \circ T_{\A} \xRightarrow{\beta * 1} Q_2 \circ T_{\A} \big) = \big( T_{\B} \xRightarrow{Q^0_2} Q_2 \circ T_{\A} \big) 
 \end{equation}
hold for $a \in \A, b \in \B$.
The first condition (\ref{eq:E1_cat1}) implies that $\beta_a: Q_1(a) \to Q_2(a)$ is equal to the composition
\[ Q_1(a) = Q_1(a) \otimes \unit_{\B} \xrightarrow{(\sigma_1)_{a, \unit_{\B}}} F(a \boe \unit_{\B}) \xRightarrow{(\sigma_2)^{-1}_{a, \unit_{\B}}} Q_2(a) \otimes \unit_{\B} = Q_2(a) \]
This proves the uniqueness of $\beta$.
It is routine to check that $\beta_a$ is a morphism in $Z(\B, \E)$ and $\beta$ satisfy the second condition (\ref{eq:E1_cat2}).

Finally, we also want to verify that the $E_2$-algebra structure on the $E_1$-center $Z(\B, \E)$ coincides with the usual braiding structure on $Z(\B, \E)$.
The $E_2$-algebra structure is given by the monoidal functor $H: Z(\B, \E) \boe Z(\B, \E) \to Z(\B, \E)$, which is induced by the iterated action 
\[ Z(\B, \E) \boe Z(\B, \E) \boe \B \xrightarrow{1, G} Z(\B, \E) \boe \B \xrightarrow{G} \B \]
with the monoidal structure given by 
\[ x_1 \otimes x_2 \otimes y_1 \otimes y_2 \otimes b_1 \otimes b_2 \xrightarrow{\gamma_{y_2, b_1}} x_1 \otimes x_2 \otimes y_1 \otimes b_1 \otimes y_2 \otimes b_2 \xrightarrow{\gamma_{x_2, y_1 \otimes b_1}} x_1 \otimes y_1 \otimes b_1 \otimes x_2 \otimes y_2 \otimes b_2   \]
for $x_1 \boe y_1 \boe b_1, x_2 \boe y_2 \boe b_2$ in $Z(\B, \E) \boe Z(\B, \E) \boe \B$.
Then by the construction given above, the induced functor $H: Z(\B, \E) \boe Z(\B, \E) \to Z(\B, \E)$ maps $x \boe y$ to the object $G((1 \boe G)(x \boe y \boe \unit_{\B})) = x \otimes y \otimes \unit_{\B} = x \otimes y$ with the half-braiding
\[ x \otimes y \otimes b \xrightarrow{\gamma_{y, b}} x \otimes b \otimes y \xrightarrow{\gamma_{x, b}} b \otimes x \otimes y \]
Thus the functor $H$ coincides with the tensor product of $Z(\B, \E)$.
For $x_1 \boe y_1, x_2 \boe y_2 \in Z(\B, \E) \boe Z(\B, \E)$, the monoidal structure of $H$ is induced by
\[ H((x_1 \boe y_1) \otimes (x_2 \boe y_2)) = x_1 \otimes x_2 \otimes y_1 \otimes y_2 \xrightarrow{\gamma_{x_2, y_1}} x_1 \otimes y_1 \otimes x_2 \otimes y_2 = H(x_1 \boe y_1) \otimes H(x_2 \boe y_2) \]
Equivalently, the braiding structure on $Z(\B, \E)$ is given by $x \otimes y \xrightarrow{\gamma_{x, y}} y \otimes x$, which is the usual braiding structure on $Z(\B, \E)$.
\end{proof}

\subsection{$E_2$-centers}

\begin{defn}
Let $\X \in \Alg_{E_2}(\cat)$. The \emph{$E_2$-center of $\X$ in $\cat$} is the $E_0$-center of $\X$ in $\Alg_{E_2}(\cat)$.
\end{defn}

\begin{thm}
Let $\C$ be a braided fusion category over $\E$.
The $E_2$-center of $\C$ is the symmetric fusion category $\C'$ over $\E$.
\end{thm}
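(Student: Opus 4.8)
The strategy mirrors the proofs of the two preceding theorems: we must show that $\C'$, equipped with the obvious data, is a terminal object in the $2$-category $\Alg_{E_0}(\Alg_{E_2}(\cat))_{\C}$ of left unital actions on $\C$. The plan is to first unwind what a left unital $\A$-action on $\C$ in $\Alg_{E_2}(\cat)$ amounts to: it is a braided monoidal functor $F : \A \boe \C \to \C$ over $\E$ together with a monoidal natural isomorphism $u_{e,c} : F(T_\A(e) \boe c) \xrightarrow{\simeq} e \odot c = T_\C(e) \otimes c$ over $\E$, compatible with the braidings of $\A \boe \C$ and $\C$. Just as in Thm.\,\ref{thm-3.19}, restricting along $\A \simeq \A \boe \vect \hookrightarrow \A \boe \C$ produces a monoidal functor $a \mapsto F(a \boe \unit_\C)$; I would first check that its image lands in the M\"uger center $\C'$ (rather than merely in $Z(\C,\E)$ as before), because the extra hypothesis that $F$ is \emph{braided} forces the half-braiding $\gamma_{a,-}$ of Thm.\,\ref{thm-3.19} to be symmetric, i.e. $c_{F(a\boe\unit_\C),\,b}\circ c_{b,\,F(a\boe\unit_\C)} = \id$ for all $b \in \C$. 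The fact that $T_\C$ lands in $\C'$ (part of the definition of a braided fusion category over $\E$) shows that $\C'$ is itself a braided fusion category over $\E$, and in fact symmetric since $\C'|_{\C'} \subseteq \C'|_\C = \C'$; this gives the canonical object of the slice $2$-category, via $G : \C' \boe \C \xrightarrow{\forget,1} \C \boe \C \xrightarrow{\otimes} \C$ with $v_{e,c} = \id$.

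Next I would construct the comparison $1$-morphism. Define $P : \A \to \C'$ by $P(a) \coloneqq F(a \boe \unit_\C)$ with monoidal structure $J^P$ inherited from $J^F$ and over-$\E$ structure $u_{e,\unit_\C}$, exactly as in Thm.\,\ref{thm-3.19}; the new point to verify is that $P$ is \emph{braided}, which follows by feeding the braided structure of $F$ into the defining hexagon, and that the half-braiding on $P(a)$ as an object of $\C'$ agrees with the ambient braiding of $\C$ restricted to $\C'$ (this is automatic since objects of $\C'$ have a unique half-braiding compatible with being central, namely the braiding itself). The invertible $2$-morphisms $P^0 : T_\C \Rightarrow P \circ T_\A$ (given by $u^{-1}_{e,\unit_\C}$) and $\rho : G \circ (P \boe 1_\C) \Rightarrow F$ (given by $J^F$ composed with $1 \otimes u^{-1}_{\unit_\E, b}$) are defined verbatim as in the $E_1$-center proof, and the pasting-diagram equality relating $P^0, \rho, u, v$ is the same routine check.

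For the uniqueness part, given two $1$-morphisms $(Q_i, \sigma_i) : \A \to \C'$ in the slice, the compatibility condition $(\sigma_2)_{a,b} \circ (\beta_a \otimes 1_b) = (\sigma_1)_{a,b}$ forces $\beta_a = (\sigma_2)^{-1}_{a,\unit_\C} \circ (\sigma_1)_{a,\unit_\C}$, proving uniqueness; one then checks $\beta_a$ is a morphism in $\C'$ (it is, being a composite of morphisms in $\C'$), that $\beta$ is monoidal and braided, and that it satisfies the $P^0$-compatibility condition $(\beta * 1_{T_\A}) \circ Q_1^0 = Q_2^0$ — all by the same arguments as before. Finally I would remark that there is no further structure to pin down: the $E_0$-center of an $E_2$-algebra carries no additional operadic structure beyond being an object of $\Alg_{E_2}(\cat)$, so unlike the previous two theorems there is no "verify the induced structure is the usual one" coda (or, if one wishes, one notes the induced $E_2$-structure on $\C'$ is just its given braiding, which is symmetric). \textbf{The main obstacle} I anticipate is the first step: carefully extracting, from the hypothesis that $F : \A \boe \C \to \C$ is a braided monoidal functor over $\E$, the precise symmetry identity that places $F(a \boe \unit_\C)$ in $\C'$ — this requires being careful about the braiding on the relative Deligne tensor product $\A \boe \C$ (how it interacts with $\boe$ and with the $\E$-balancing) and tracking it through the hexagon axioms, whereas the rest of the proof is a structural repetition of Thm.\,\ref{thm-3.19}.
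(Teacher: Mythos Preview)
Your proposal is correct and follows essentially the same approach as the paper: set up the canonical $\C'$-action via $G(z \boe x) = z \otimes x$, use the braided property of $F$ to show $c_{x,F(a\boe\unit_\C)} \circ c_{F(a\boe\unit_\C),x} = \id$ so that $P(a) \coloneqq F(a\boe\unit_\C)$ lands in $\C'$, then define $\rho$ via $J^F$ and $u^{-1}_{\unit_\E,-}$ exactly as in Thm.\,\ref{thm-3.19} and declare the uniqueness check routine. You have correctly identified the one genuinely new step (the double-braiding identity from braidedness of $F$) and the absence of a final ``induced structure'' verification; the paper's proof is in fact terser than yours on the points you flag for care.
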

\begin{proof}
Let $\A$ be a braided fusion category over $\E$. A left unital $\A$-action on $\C$ is a braided monoidal functor $F: \A \boe \C \to \C$ over $\E$ and a monoidal natural isomorphism $u$ over $\E$ shown below:
\[ \xymatrix@=3ex{
\rrtwocell<>{<4>u}& \A \boe \C \ar[rd]^(0.56){F} & \\
\E \boe \C \ar[rr]_{\odot} \ar[ru]^(0.42){T_{\A}, 1} & & \C
} \]
More precisely, $F$ is a monoidal functor over $\E$ (recall the proof of Thm.\,\ref{thm-3.19}) such that the diagram 
\[ \xymatrix{
F(a_1 \boe x_1) \otimes F(a_2 \boe x_2) \ar[r]^{J^F} \ar[d]_{c_{F(a_1 \boe x_1), F(a_2 \boe x_2)}} & F((a_1 \otimes a_2) \boe (x_1 \otimes x_2)) \ar[d]^{\tilde{c}_{a_1, a_2}, c_{x_1, x_2}} \\
F(a_2 \boe x_2) \otimes F(a_1 \boe x_1) \ar[r]_{J^F} & F((a_2 \otimes a_1) \boe (x_2 \otimes x_1))
} \]
commutes for $a_1, a_2 \in \A$, $x_1, x_2 \in \C$, where $\tilde{c}$ and $c$ are the half-braidings of $\A$ and $\C$ respectively.
The braided structure on $\E \boe \C$ is defined as 
$T_{\C}(e_1 \otimes e_2) \otimes x_1 \otimes x_2 \xrightarrow{r_{e_1, e_2}, c_{x_1, x_2}} T_{\C}(e_2 \otimes e_1) \otimes x_2 \otimes x_1$, for $e_1 \boe x_1, e_2 \boe x_2 \in \E \boe \C$. Check that $\odot: \E \boe \C \to \C$ is a braided functor.

There is a left unital $\C'$-action on $\C$ 
\[ \xymatrix@=3ex{
\rrtwocell<>{<4>v} & \C' \boe \C \ar[rd]^(0.6){G} & \\
\E \boe \C \ar[ru]^(0.4){T_{\C}, 1} \ar[rr] & & \C
} \]
given by
$G: \C' \boe \C \to \C, (z, x) \mapsto z \otimes x$
and $v_{e, x} \coloneqq \id_{e \odot x} : G(T_{\C}(e) \boe x) = T_{\C}(e) \otimes x \to e \odot x$.

Next we want to show that there exists a 1-morphism $(P, \rho): \A \to \C'$ in $\Alg_{E_0}(\Alg_{E_2}(\cat))_{\C}$. Since $F$ is a braided monoidal functor over $\E$, the commutative diagram
\[ \xymatrix{
F(a \boe \unit_{\C}) \otimes x \ar[d]_{c_{F(a \boe \unit_{\C}), x}} \ar[r]^(0.4){1, u^{-1}_{\unit_{\E},x}} & F(a \boe \unit_{\C}) \otimes F(\unit_{\A} \boe x) \ar[d]_{c_{F(a \boe \unit_{\C}), F(\unit_{\A} \boe x)}} \ar[r]^(0.6){J^F} & F(a  \boe x) \ar[d] \ar@/^2pc/[dd]^{1} \\
x \otimes F(a \boe \unit_{\C}) \ar[d]_{c_{x, F(a \boe \unit_{\C})}} \ar[r]^(0.4){u^{-1}_{\unit_{\E},x}, 1}  & F(\unit_{\A} \boe x) \otimes F(a \boe \unit_{\C}) \ar[d]_{c_{F(\unit_{\A} \boe x), F(a \boe \unit_{\C})}} \ar[r]^(0.6){J^F} & F(a \boe x)  \ar[d] \\
F(a \boe \unit_{\C}) \otimes x \ar[r]_(0.4){1, u^{-1}_{\unit_{\E},x}} & F(a \boe \unit_{\C}) \otimes F(\unit_{\A} \boe x) \ar[r]^(0.6){J^F} & F(a \boe x)
} \]
 implies that the equality $c_{x, F(a \boe \unit_{\C})} \circ c_{F(a \boe \unit_{\C}), x} = \id_{F(a \boe \unit_{\C}) \otimes x}$ holds for $a \in \A$, $x \in \C$, i.e. $F(a \boe \unit_{\C}) \in \C'$. Then we define the functor $P$ by $P(a) \coloneqq F(a \boe \unit_{\C})$, and the monoidal structure of $P$ is induced by that of $F$.
The monoidal natural isomorphism $\rho: G \circ (P \boe 1_{\C}) \Rightarrow F$ is  defined by 
\[ \rho_{e, x}: F(a \boe \unit_{\C}) \otimes x  \xrightarrow{1 \otimes u^{-1}_{\unit_{\E}, x}} F(a \boe \unit_{\C}) \otimes F(\unit_{\A} \boe x) \xrightarrow{J^F} F(a \boe x) \]
Then $(P, \rho)$ is a 1-morphism in $\Alg_{E_0}(\Alg_{E_2}(\cat))_{\C}$.

It is routine to check that if there are two 1-morphisms $(Q_i, \sigma_i): \A \rightarrow \C'$, $i = 1, 2$, in $\Alg_{E_0}(\Alg_{E_2}(\cat))_{\C}$, there exists a unique 2-morphism $\beta: (Q_1, \sigma_1) \Rightarrow (Q_2, \sigma_2)$ in $\Alg_{E_0}(\Alg_{E_2}(\cat))_{\C}$. 
\end{proof}

\section{Representation theory and Morita theory in $\cat$}

In this section, Sec.\,4.1 and Sec.\,4.2 study the modules over a multifusion category over $\E$ and bimodules in $\cat$.
Sec.\,4.3 and Sec.\,4.4 prove that two fusion categories over $\E$ are Morita equivalent in $\cat$ if and only if their $E_1$-centers are equivalent.
Sec.\,4.5 studies the modules over a braided fusion category over $\E$.

\subsection{Modules over a multifusion category over $\E$}   
Let $\C$ and $\D$ be multifusion categories over $\E$.
We use $z$ and $\hat{z}$ to denote the central structures of the central functors $T_{\C}: \E \to \C$ and $T_{\D}: \E \to \D$ respectively.

\begin{defn}
The 2-category $\LMod_{\C}(\cat)$ consists of the following data.
\begin{itemize}
\item A class of objects in $\LMod_{\C}(\cat)$. 
 An object $\M \in \LMod_{\C}(\cat)$ is an object $\M \in \cat$ equipped with a monoidal functor $\phi: \C \rightarrow \Fun_{\E}(\M, \M)$ over $\E$. 
 
 Equivalently, an object $\M \in \LMod_{\C}(\cat)$ is an object $\M$ both in $\cat$ and $\LMod_{\C}(\fcat)$ equipped with a monoidal natural isomorphism $u^{\C}_e: T_{\C}(e) \odot - \simeq e \odot -$ in $\Fun_{\E}(\M, \M)$ for each $e \in \E$, such that the functor $(c \odot -, s^{c \odot -})$ belongs to $\Fun_{\E}(\M, \M)$ for each $c \in \C$, and the diagram
\begin{equation}
\label{rem}
\begin{split}
\xymatrix{
(T_{\C}(e) \otimes c) \odot - \ar[r] \ar[d]_{z_{e, c}, 1} & T_{\C}(e) \odot (c \odot -) \ar[r]^(0.55){(u^{\C}_e)_{c \odot -}} & e \odot (c \odot -) \ar[d]^{s^{c \odot -}_{e,-}} \\
(c \otimes T_{\C}(e)) \odot - \ar[r] & c \odot (T_{\C}(e) \odot -) \ar[r]^(0.55){1, u^{\C}_e} & c \odot (e \odot -)
}
\end{split}
\end{equation}
commutes for $e \in \E, c \in \C, - \in \M$.
We use a pair $(\M, u^{\C})$ to denote an object $\M$ in $\LMod_{\C}(\cat).$
 
\item For objects $(\M, u^{\C}), (\N, \bar{u}^{\C})$ in $\LMod_{\C}(\cat)$, a 1-morphism $F: \M \rightarrow \N$ in $\LMod_{\C}(\cat)$ is both a left $\C$-module functor $(F, s^F): \M \rightarrow \N$ and a left $\E$-module functor $(F, t^F): \M \rightarrow \N$ such that the following diagram commutes for $e \in \E, m \in \M$:
\begin{equation}
\label{d2}
\begin{split}
 \xymatrix{
F(T_{\C}(e) \odot m) \ar[r]^(0.55){(u^{\C}_e)_m} \ar[d]_{s^F_{T_{\C}(e), m}} &  F(e \odot m)\ar[d]^{t^F_{e, m}} \\
T_{\C}(e) \odot F(m) \ar[r]_(0.55){(\bar{u}^{\C}_e)_{F(m)}} & e \odot F(m)
} 
\end{split}
\end{equation}

\item For 1-morphisms $F, G: \M \rightrightarrows \N$ in $\LMod_{\C}(\cat)$, 
a 2-morphism from $F$ to $G$ is a left $\C$-module natural transformation from $F$ to $G$. A left $\C$-module natural transformation is automatically a left $\E$-module natural transformation.
\end{itemize}
\end{defn}

In the above definition, we take $\phi(c) \coloneqq c \odot -$ for $c \in \C$.
A left $\D^{\rev}$-module $\M$ is automatically a right $\D$-module, with the right $\D$-action defined by $m \odot d \coloneqq d \odot m$ for $m \in \M, d \in \D$.

\begin{defn}
The 2-category $\RMod_{\D}(\cat)$ consists of the following data.
\begin{itemize} 
\item A class of objects in $\RMod_{\D}(\cat)$.
An object $\M \in \RMod_{\D}(\cat)$ is an object $\M \in \cat$ equipped with a monoidal functor $\phi: \D^{\rev} \rightarrow \Fun_{\E}(\M, \M)$ over $\E$.

Equivalently, an object $\M \in \RMod_{\D}(\cat)$ is an object $\M$ both in $\cat$ and $\RMod_{\D}(\fcat)$ equipped with a monoidal natural isomorphism $u^{\D}_e: - \odot T_{\D}(e) \simeq e \odot -$ in $\Fun_{\E}(\M, \M)$ for each $e \in \E$ such that the functor $(- \odot d, s^{- \odot d})$ belongs to $\Fun_{\E}(\M, \M)$ for each $d \in \D$, and the diagram
\begin{equation}
\label{rem2}
\begin{split}
 \xymatrix{
- \odot (d \otimes T_{\D}(e)) \ar[r] \ar[d]_{1, \hat{z}^{-1}_{e, d}} & (- \odot d) \odot T_{\D}(e) \ar[r]^(0.55){(u^{\D}_e)_{- \odot d}} & e \odot (- \odot d) \ar[d]^{s^{- \odot d}_{e,-}} \\
- \odot (T_{\D}(e) \otimes d) \ar[r] & (- \odot T_{\D}(e)) \odot d \ar[r]_(0.55){u^{\D}_e, 1} & (e \odot -) \odot d
} 
\end{split}
\end{equation}
commutes for $e \in \E, d \in \D, - \in \M$.
 We use a pair $(\M, u^{\D})$ to denote an object $\M$ in $\RMod_{\D}(\cat)$.

\item  
For objects $(\M, u^{\D})$, $(\N, \bar{u}^{\D})$ in $\RMod_{\D}(\cat)$, 
a 1-morphism $F: \M \rightarrow \N$ in $\RMod_{\D}(\cat)$ is both a right $\D$-module functor $(F, \tilde{s}^F): \M \rightarrow \N$ and a left $\E$-module functor $(F, t^F): \M \rightarrow \N$ such that the following diagram commutes for $e \in \E, m \in \M$:
\begin{equation}
\label{d3}
\begin{split}
\xymatrix{
F(m \odot T_{\D}(e)) \ar[r]^(0.55){(u^{\D}_e)_m} \ar[d]_{\tilde{s}^F_{m, T_{\D}(e)}} & F(e \odot m) \ar[d]^{t^F_{e, m}}  \\
F(m) \odot T_{\D}(e) \ar[r]_(0.55){(\bar{u}^{\D}_e)_{F(m)}} & e \odot F(m)
}
\end{split}
\end{equation}

\item For 1-morphisms $F, G: \M \rightrightarrows \N$ in $\RMod_{\D}(\cat)$, a 2-morphism from $F$ to $G$ is a right $\D$-module natural transformation from $F$ to $G$.
\end{itemize}
\end{defn} 

\begin{rem}
Let $(\M, u^{\D})$ belongs to $\RMod_{\D}(\cat)$.
We explain the monoidal natural isomorphism $u_e: - \odot T_{\D}(e) \simeq e \odot -$ in $\Fun_{\E}(\M, \M)$.
The monoidal structure on $F: \E \to \Fun_{\E}(\M, \M), e \mapsto F^e \coloneqq - \odot T_{\D}(e)$ is defined as
$J_{e_1, e_2}: F^{e_1 \otimes e_2} = - \odot T_{\D}(e_1 \otimes e_2) \xrightarrow{r_{e_1, e_2}} - \odot T_{\D}(e_2 \otimes e_1) \to (- \odot T_{\D}(e_2)) \odot T_{\D}(e_1) = F^{e_1} \circ F^{e_2}$,
for $e_1, e_2 \in \E$.
The monoidal structure on $T: \E \to \Fun_{\E}(\M, \M), e \mapsto T^e \coloneqq e \odot -$ is defined as 
$T^{e_1 \otimes e_2} = (e_1 \otimes e_2) \odot - \to e_1 \odot (e_2 \odot -) = T^{e_1} \circ T^{e_2}$, for $e_1, e_2 \in \E$.
For each $e \in \E$, $u_e: - \odot T_{\D}(e) \to e \odot -$ is an isomorphism in $\Fun_{\E}(\M, \M)$. That is, $u_e$ is a left $\E$-module natural isomorphism. 
The monoidal natural isomorphism $u_{e}: - \odot T_{\D}(e) \to e \odot -$ satisfies the diagram
\[ \xymatrix{
- \odot T_{\D}(e_1 \otimes e_2) \ar[r]^{r_{e_1, e_2}} \ar[d]_{u_{e_1 \otimes e_2}} & - \odot T_{\D}(e_2 \otimes e_1) \ar[r] & (- \odot T_{\D}(e_2)) \odot T_{\D}(e_1) \ar[d]^{u_{e_1} * u_{e_2}} \\
(e_1 \otimes e_2) \odot - \ar[rr]& & e_1 \odot (e_2 \odot -)
} \]
where $u_{e_1}*u_{e_2}$ is defined as
\[ \xymatrix{
F^{e_1} \circ F^{e_2} = (- \odot T_{\D}(e_2)) \odot T_{\D}(e_1) \ar[rd]|{u_{e_1}*u_{e_2}} \ar[r]^(0.6){u_{e_2},1} \ar[d]_{(u_{e_1})_{- \odot T_{\D}(e_2)}} & (e_2 \odot -) \odot T_{\D}(e_1) \ar[d]^{(u_{e_1})_{e_2 \odot -}} \\
e_1 \odot (- \odot T_{\D}(e_2)) \ar[r]_{1,u_{e_2}} & e_1 \odot (e_2 \odot -) = T^{e_1} \circ T^{e_2}
} \]
For any $d_1, d_2 \in \D$, the functors $(- \odot d_1, s^{- \odot d_1})$, $(- \odot d_2, s^{- \odot d_2})$ and $(- \odot (d_1 \otimes d_2), s^{- \odot (d_1 \otimes d_2)})$ belong to $\Fun_{\E}(\M, \M)$.
Consider the diagram:
\begin{equation}
\label{eMd-bimodule}
\begin{split}
 \xymatrix{
e \odot (m \odot (d_1 \otimes d_2)) \ar[rr]^{s^{- \odot (d_1 \otimes d_2)}_{e, m}} \ar[d]_{1, \lambda^{\M}_{m, d_1, d_2}} & & (e \odot m) \odot (d_1 \otimes d_2) \ar[d]^{\lambda^{\M}_{e \odot m, d_1, d_2}} \\
e \odot ((m \odot d_1) \odot d_2) \ar[r]_{s^{- \odot d_2}_{e, m \odot d_1}} & (e \odot (m \odot d_1)) \odot d_2 \ar[r]_{s^{- \odot d_1}_{e, m}, 1} & ((e \odot m) \odot d_1) \odot d_2
} 
\end{split}
\end{equation}
where $\lambda^{\M}$ is the module associativity constraint of $\M$ in $\RMod_{\D}(\fcat)$.
Since the diagrams (\ref{rem2}) and (\ref{center1}) commute and $m \odot -: \D \to \M$ is the functor for all $m \in \M$, the above diagram commutes. 
Then the natural isomorphism $- \odot (d_1 \otimes d_2) \Rightarrow (- \odot d_1) \odot d_2$ is the left $\E$-module natural isomorphism.

Let $(\M, u^{\C})$ belong to $\LMod_{\C}(\cat)$. 
For any $c_1, c_2 \in \C$, the functors $(c_1 \odot -, s^{c_1 \odot -})$, $(c_2 \odot -, s^{c_2 \odot -})$ and $((c_1 \otimes c_2) \odot -, s^{(c_1 \otimes c_2) \odot -})$ belong to $\Fun_{\E}(\M, \M)$. 
Since the diagrams (\ref{rem}) and (\ref{center1}) commutes and $- \odot m: \C \to \M$ is the functor for all $m \in \M$, the natural isomorphism $(c_1 \otimes c_2) \odot - \Rightarrow c_1 \odot (c_2 \odot -)$ is the left $\E$-module natural isomorphism.
\end{rem}

\begin{rem}
Assume that $(\M, u^{\D})$ belongs to $\RMod_{\D}(\cat)$. The right $\E$-module structure on $\M$ is defined as $m \bar{\odot} e \coloneqq m \odot T_{\D}(e)$, $\forall m \in \M, e \in \E$.
The module associativity constraint is defined as $\lambda^1_{m, e_1, e_2}: m \odot T_{\D}(e_1 \otimes e_2) \to m \odot (T_{\D}(e_1) \otimes T_{\D}(e_2)) \to (m \odot T_{\D}(e_1)) \odot T_{\D}(e_2)$, $\forall m \in \M, e_1, e_2 \in \E$.
Another right $\E$-module structure on $\M$ is defined as $m \odot e \coloneqq e \odot m$, $\forall e \in \E, m \in \M$. The module associativity constraint is defined as $\lambda^2_{m, e_1, e_2}: m \odot (e_1 \otimes e_2) = (e_1 \otimes e_2) \odot m \xrightarrow{r_{e_1, e_2}, 1} (e_2 \otimes e_1) \odot m \to e_2 \odot (e_1 \odot m) = (m \odot e_1) \odot e_2$, $\forall m \in \M, e_1, e_2 \in \E$.

Check that the identity functor $\id: \M \to \M$ equipped with the natural isomorphism
$s^{\id}_{m, e}: \id(m \bar{\odot} e) = m \odot T_{\D}(e) \xrightarrow{(u^{\D}_e)_m} e \odot m = \id(m) \odot e$
is a right $\E$-module functor by the monoidal natural isomorphism $u^{\D}_e: - \odot T_{\D}(e) \to e \odot -$.
\end{rem}

\begin{prop}
Let $(\M, u^{\C})$ belong to $\LMod_{\C}(\cat)$. The diagram
\begin{equation}
\label{diag-ee2}
\begin{split}
\xymatrix{
 \tilde{e} \odot (T_{\C}(e) \odot m) \ar[r]^(0.55){1, (u^{\C}_e)_m} \ar[d]_{s^{T_{\C}(e) \odot -}_{\tilde{e},m}} & \tilde{e} \odot (e \odot m) \ar[r] & (\tilde{e} \otimes e) \odot m  \ar[d]^{r_{\tilde{e},e}, 1} \\
 T_{\C}(e) \odot (\tilde{e} \odot m) \ar[r]_(0.55){(u^{\C}_e)_{\tilde{e} \odot m}} & e \odot (\tilde{e} \odot m) \ar[r] & (e \otimes \tilde{e}) \odot m
}
\end{split}
\end{equation}
commutes for $e, \tilde{e} \in \E$, $m \in \M$. 
Let $(\M, u^{\D})$ belong to $\RMod_{\D}(\cat)$. The diagram
\begin{equation}
\label{diag-e-e}
\begin{split}
 \xymatrix{
 (e \odot m) \odot T_{\D}(\tilde{e}) \ar[r]^(0.55){(u^{\D}_{\tilde{e}})_{e \odot m}} \ar[d]_{s^{- \odot T_{\D}(\tilde{e})}_{e,m}} & \tilde{e} \odot (e \odot m) \ar[r] & (\tilde{e} \otimes e) \odot m  \ar[d]^{r_{\tilde{e},e}, 1} \\
 e \odot (m \odot T_{\D}(\tilde{e})) \ar[r]_(0.55){1, (u^{\D}_{\tilde{e}})_m} &  e \odot (\tilde{e} \odot m) \ar[r] & (e \otimes \tilde{e}) \odot m
} 
\end{split}
\end{equation}
commutes for $e, \tilde{e} \in \E, m \in \M$.
Here the functors $(T_{\C}(e) \odot -, s^{T_{\C}(e) \odot -})$ and $(- \odot T_{\D}(\tilde{e}), s^{- \odot T_{\D}(\tilde{e})})$ belong to $\Fun_{\E}(\M, \M)$.
\end{prop}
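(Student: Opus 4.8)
The strategy is to read off both diagrams from a single fact: by the definition of an object of $\LMod_{\C}(\cat)$ (resp.\ $\RMod_{\D}(\cat)$), the natural isomorphism $u^{\C}_e$ (resp.\ $u^{\D}_e$) is a $1$-morphism in $\Fun_{\E}(\M, \M)$, hence a left $\E$-module natural isomorphism, so it makes the square (\ref{left-E-m-n}) commute; combining this with the \emph{explicit} formula for the left $\E$-module structure on the endofunctor $e \odot - = T^e$ recorded in Expl.\,\ref{ex2} gives exactly (\ref{diag-ee2}) and (\ref{diag-e-e}).

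In detail, for (\ref{diag-ee2}) I would view $u^{\C}_e$ as a morphism $(T_{\C}(e) \odot -,\, s^{T_{\C}(e) \odot -}) \Rightarrow (e \odot -,\, s^{T^e})$ in $\Fun_{\E}(\M, \M)$, the target being $T^e$ with precisely the module structure of Expl.\,\ref{ex2}. Applying (\ref{left-E-m-n}) with $c = \tilde e$ yields
\[
s^{T^e}_{\tilde e, m} \circ (u^{\C}_e)_{\tilde e \odot m} \;=\; (1 \odot (u^{\C}_e)_m) \circ s^{T_{\C}(e) \odot -}_{\tilde e, m} \qquad (\tilde e \in \E,\ m \in \M).
\]
Now I would substitute the description $s^{T^e}_{\tilde e, m} = \bigl( e \odot (\tilde e \odot m) \to (e \otimes \tilde e) \odot m \xrightarrow{\,r_{e, \tilde e},\, 1\,} (\tilde e \otimes e) \odot m \to \tilde e \odot (e \odot m) \bigr)$ from Expl.\,\ref{ex2}, invert it, and use that $\E$ is symmetric so that $r_{e, \tilde e}^{-1} = r_{\tilde e, e}$; after post-composing with a module-associativity isomorphism $e \odot (\tilde e \odot m) \xrightarrow{\sim} (e \otimes \tilde e) \odot m$ of $\M$ the displayed identity turns into the commutativity of (\ref{diag-ee2}). (The only bookkeeping point is that the arrow drawn as $s^{T_{\C}(e) \odot -}_{\tilde e, m}$ in (\ref{diag-ee2}) points opposite to the structure morphism, i.e.\ it is its inverse; as everything in sight is invertible this is harmless.)

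For (\ref{diag-e-e}) the argument is the mirror image: apply (\ref{left-E-m-n}) to $u^{\D}_{\tilde e} \colon (- \odot T_{\D}(\tilde e),\, s^{- \odot T_{\D}(\tilde e)}) \Rightarrow (\tilde e \odot -,\, s^{T^{\tilde e}})$ with $c = e$, substitute the Expl.\,\ref{ex2} formula for $s^{T^{\tilde e}}_{e, m}$ — which already contains the factor $r_{\tilde e, e}$ appearing in (\ref{diag-e-e}) — and post-compose with a module-associativity isomorphism of $\M$; this gives (\ref{diag-e-e}) directly, with no further use of symmetry. The closing clause of the proposition, that $(T_{\C}(e) \odot -, s^{T_{\C}(e) \odot -})$ and $(- \odot T_{\D}(\tilde e), s^{- \odot T_{\D}(\tilde e)})$ lie in $\Fun_{\E}(\M, \M)$, is part of the data of an object of $\LMod_{\C}(\cat)$, resp.\ $\RMod_{\D}(\cat)$, so nothing need be shown there. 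I expect no real obstacle: this is a diagram chase whose only mild subtlety is tracking the directions of the structure and coherence isomorphisms and the single appeal to $r_{e, \tilde e}^{-1} = r_{\tilde e, e}$.
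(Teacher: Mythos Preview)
Your argument is correct and is a genuinely different (and shorter) route than the paper's. The paper proves (\ref{diag-ee2}) by building a large hexagonal diagram whose cells are justified by: the monoidality of $u^{\C}$ (top and bottom hexagons), axiom~(\ref{rem}) applied with $c = T_{\C}(e)$ (left hexagon), the braided-functor condition~(\ref{center3}) for $T_{\C}$ (middle-right square), and plain naturality of $u^{\C}$ (rightmost square). In other words, the paper passes through the central structure $z$ and the monoidal compatibility of $u^{\C}$, essentially re-deriving the $\E$-module compatibility of $u^{\C}_e$ from these ingredients. You instead use that compatibility directly: since $u^{\C}_e$ is by definition a morphism in $\Fun_{\E}(\M,\M)$ with target $T^e$ carrying the structure of Expl.\,\ref{ex2}, the square~(\ref{left-E-m-n}) is available immediately, and unwinding $s^{T^e}$ plus one appeal to the symmetry $r_{e,\tilde e}^{-1} = r_{\tilde e, e}$ finishes. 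Your route is cleaner and uses strictly less (no~(\ref{rem}), no monoidality of $u^{\C}$, no $z$); the paper's route has the modest advantage of displaying how the axioms~(\ref{rem}) and~(\ref{center3}) conspire, which is informative for later arguments that do need those pieces. Your remarks about the direction of $s^{T_{\C}(e)\odot -}$ in~(\ref{diag-ee2}) and about the final clause being definitional are both accurate.
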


\begin{proof}
Consider the diagram:
\[ \xymatrix@=4ex{
\tilde{e} \odot (T_{\C}(e) \odot m) \ar@/_4.3pc/[ddd]_{s^{T_{\C}(e) \odot -}_{\tilde{e}, m}} \ar[r]^{1, (u^{\C}_e)_m} & \tilde{e} \odot (e \odot m) \ar[r] & (\tilde{e} \otimes e) \odot m \ar@/^3pc/[ddd]^{r_{\tilde{e}, e}, 1} \\
 T_{\C}(\tilde{e}) \odot (T_{\C}(e) \odot m) \ar[u]_{(u^{\C}_{\tilde{e}})_{T_{\C}(e) \odot m}} & (T_{\C}(\tilde{e}) \otimes T_{\C}(e)) \odot m  \ar[d]_{z_{\tilde{e}, T_{\C}(e)}} \ar[l] & T_{\C}(\tilde{e} \otimes e) \odot m \ar[u]^{u^{\C}_{\tilde{e} \otimes e}} \ar[d]_{r_{\tilde{e},e}, 1}  \ar[l]  \\
T_{\C}(e) \odot (T_{\C}(\tilde{e}) \odot m) \ar[d]^{1, (u^{\C}_{\tilde{e}})_{m}} & (T_{\C}(e) \otimes T_{\C}(\tilde{e})) \odot m  \ar[l]& T_{\C}(e \otimes \tilde{e}) \odot m \ar[d]_{u^{\C}_{e \otimes \tilde{e}}}  \ar[l]  \\
T_{\C}(e) \odot (\tilde{e} \odot m) \ar[r]_{(u^{\C}_e)_{\tilde{e} \odot m}} & e \odot (\tilde{e} \odot m) \ar[r] & (e \otimes \tilde{e}) \odot m 
} \]
The top and bottom hexagon diagrams commute by the monoidal natural isomorphism $u^{\C}_e: T_{\C}(e) \odot - \simeq e \odot -$.
The leftmost hexagon commutes by the diagram (\ref{rem}).
The middle-right square commutes by the central functor $T_{\C}: \E \to \C$. 
The rightmost square commutes by the naturality of $u^{\C}_e$.
Then the outward diagram commutes.
One can check the diagram (\ref{diag-e-e}) commutes.
\end{proof}

For objects $\M, \N$ in $\LMod_{\C}(\cat)$ (or $\RMod_{\D}(\cat)$), we use $\Fun^{\E}_{\C}(\M, \N)$ (or $\Fun^{\E}_{|\D}(\M, \N)$) to denote the category of 1-morphisms $\M \to \N$, 2-morphisms in $\LMod_{\C}(\cat)$ (or $\RMod_{\D}(\cat)$).
\begin{expl}
\label{fun-C-E}
$\Fun^{\E}_{\C}(\M, \M)$ is a multifusion category by \cite[Cor.\,9.3.3]{Etingof}. Moreover, $\Fun^{\E}_{\C}(\M, \M)$ is a multifusion category over $\E$. 
A functor $\hat{T}: \E \rightarrow \Fun^{\E}_{\C}(\M, \M)$ is defined as $e \mapsto \hat{T}^e \coloneqq T_{\C}(e) \odot -$. The left $\C$-module structure on $\hat{T}^e$ is defined as $s_{c, m}: T_{\C}(e) \odot (c \odot m) \rightarrow (T_{\C}(e) \otimes c) \odot m \xrightarrow{z_{e, c},1} (c \otimes T_{\C}(e)) \odot m \rightarrow c \odot (T_{\C}(e) \odot m)$ for $c \in \C$, $m \in \M$.
The left $\E$-module structure on $\hat{T}^e$ is defined as $T_{\C}(e) \odot (\tilde{e} \odot m) \xrightarrow{1, (u^{\C}_{\tilde{e}})^{-1}_m} T_{\C}(e) \odot (T_{\C}(\tilde{e}) \odot m) \xrightarrow{s_{T_{\C}(\tilde{e}), m}} T_{\C}(\tilde{e}) \odot (T_{\C}(e) \odot m) \xrightarrow{(u^{\C}_{\tilde{e}})_{T_{\C}(e) \odot m}} \tilde{e} \odot (T_{\C}(e) \odot m)$ for $\tilde{e} \in \E, m \in \M$.
 Then $\hat{T}^e$ belongs to $\Fun^{\E}_{\C}(\M, \M)$.

The monoidal structure on $\hat{T}$ is induced by $T_{\C}(e_1 \otimes e_2) \odot - \simeq (T_{\C}(e_1) \otimes T_{\C}(e_2)) \odot - \simeq T_{\C}(e_1) \odot (T_{\C}(e_2) \odot -)$ for $e_1, e_2 \in \E$.
The central structure on $\hat{T}$ is a natural isomorphism $\sigma_{e, g}: \hat{T}^e \circ g(m) = T_{\C}(e) \odot g(m) \simeq g(T_{\C}(e) \odot m) = g \circ \hat{T}^e(m)$ for any $e \in \E$, $g \in \Fun^{\E}_{\C}(\M, \M)$, $m \in \M$.
The left (or right) $\E$-module structure on $\Fun^{\E}_{\C}(\M, \M)$ is defined as $(e \odot f)(-) \coloneqq T_{\C}(e) \odot f(-)$, (or $(f \odot e)(-) \coloneqq f(T_{\C}(e) \odot -)$ ), for $e \in \E$, $f \in \Fun^{\E}_{\C}(\M, \M)$ and $- \in \M$.
\end{expl}

\begin{prop}
\label{F-in-Cat_E}
Let $(\M, u)$ and $(\N, \bar{u})$ belong to $\LMod_{\C}(\cat)$. $f: \M \rightarrow \N$ is a 1-morphism in $\LMod_{\C}(\fcat)$. Then $f$ belongs to $\LMod_{\C}(\cat)$.
\end{prop}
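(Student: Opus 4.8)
What we must supply is a left $\E$-module functor structure $t^f$ on the underlying functor $f$ such that $(f, s^f, t^f)$ satisfies the compatibility square (\ref{d2}); here $\M$ and $\N$ carry their $\C$-actions $\odot$, their $\E$-actions as objects of $\cat$, and the monoidal natural isomorphisms $u^{\C}_e \colon T_{\C}(e) \odot - \simeq e \odot -$ and $\bar{u}^{\C}_e \colon T_{\C}(e) \odot - \simeq e \odot -$, respectively. The plan is to obtain $t^f$ by transporting $s^f$ along these isomorphisms: for $e \in \E$ and $m \in \M$ I would set
\[
t^f_{e,m} \ :=\ \Bigl(\, f(e \odot m) \xrightarrow{\; f((u^{\C}_e)^{-1}_m) \;} f(T_{\C}(e) \odot m) \xrightarrow{\; s^f_{T_{\C}(e), m} \;} T_{\C}(e) \odot f(m) \xrightarrow{\; (\bar{u}^{\C}_e)_{f(m)} \;} e \odot f(m) \,\Bigr).
\]
This is natural in $m$, being a composite of natural isomorphisms, and with this choice the square (\ref{d2}) commutes immediately, since $f\bigl((u^{\C}_e)^{-1}_m\bigr) \circ f\bigl((u^{\C}_e)_m\bigr) = f\bigl((u^{\C}_e)^{-1}_m \circ (u^{\C}_e)_m\bigr) = \id$. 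Thus the compatibility condition is free, and the real content is to check that $(f, t^f)$ is a left $\E$-module functor.

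The conceptual reason this should succeed is restriction of scalars: precomposing the $\C$-action with $T_{\C} \colon \E \to \C$ equips $\M$ and $\N$ with $\E$-module structures whose action functors are $T_{\C}(e) \odot -$ and whose associativity constraints are assembled from the $\C$-module associativity and the monoidal constraint $J^{T_{\C}}$ of $T_{\C}$, and $(f, s^f)$ is tautologically a left $\E$-module functor between these restricted modules, with structure $s^f_{T_{\C}(e), m}$. The hypothesis that $\M$ (resp.\ $\N$) lies in $\LMod_{\C}(\cat)$ is exactly what makes $(\id_{\M}, u^{\C})$ (resp.\ $(\id_{\N}, \bar{u}^{\C})$) an equivalence of left $\E$-modules from the restricted structure to the one carried as an object of $\cat$: the coherence between $u^{\C}$ and the two $\E$-module associativities is precisely the content of $\{u^{\C}_e\}_e$ being a \emph{monoidal} natural isomorphism in $\Fun_{\E}(\M, \M)$ together with the diagram (\ref{rem}) (and, for the cells involving the symmetry $r$ of $\E$, the diagram (\ref{diag-ee2})). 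Then $t^f$ is literally the conjugate of $s^f$ by these two equivalences, so it inherits the module-functor axioms.

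Concretely, to verify the associativity axiom for the left $\E$-module functor $(f, t^f)$ at $(e_1, e_2, m)$ I would expand each $t^f$ into its three-step composite and tile the resulting diagram with: the module-functor associativity axiom for $(f, s^f)$ at $(T_{\C}(e_1), T_{\C}(e_2), m)$; the naturality squares of $u^{\C}$ and of $\bar{u}^{\C}$; the monoidality of $\{u^{\C}_e\}_e$ and $\{\bar{u}^{\C}_e\}_e$, which matches the $\E$-module associativities of $\M$ and $\N$ with the $\C$-module associativity composed with $J^{T_{\C}}$; and the diagrams (\ref{rem}) and (\ref{diag-ee2}) for $\M$ and $\N$, which close the cells where an action by some $T_{\C}(e_i)$ is commuted past an action by a genuine $\C$-object or by another $T_{\C}(e_j)$. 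The unit axiom for $(f, t^f)$ reduces in the same way to the unit axiom of $(f, s^f)$ together with the unit compatibility of $u^{\C}$ and $\bar{u}^{\C}$, and there is nothing to check for $2$-morphisms since a left $\C$-module natural transformation is automatically a left $\E$-module one by the definition of $\LMod_{\C}(\cat)$. The step where I expect real care is the bookkeeping of the previous paragraph inside this tiling --- arranging the pasting so that the passage between the two $\E$-module structures on $\M$ (and on $\N$) is mediated coherently by $u^{\C}$ (resp.\ $\bar{u}^{\C}$); once that is correctly installed, all remaining cells commute by naturality.
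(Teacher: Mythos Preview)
Your proposal is correct and takes essentially the same approach as the paper: the paper's proof simply defines the left $\E$-module structure on $f$ by the identical composite $f(e \odot m) \xrightarrow{f((u_e)^{-1}_m)} f(T_{\C}(e) \odot m) \xrightarrow{s_{T_{\C}(e), m}} T_{\C}(e) \odot f(m) \xrightarrow{(\bar{u}_e)_{f(m)}} e \odot f(m)$ and leaves the verification implicit. Your restriction-of-scalars explanation and the tiling of the associativity diagram are exactly the details the paper omits, so you have supplied more than the paper does rather than less.
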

\begin{proof}
Notice that for a 1-morphism $f: \M \to \N$ in $\LMod_{\C}(\cat)$, the left $\C$-action on $f$ is compatible with the left $\E$-action on $f$.
Assume $(f, s): \M \to \N$ is a left $\C$-module functor. 
The left $\E$-module structure on $f$ is given by $f(e \odot m) \xrightarrow{(u^{-1}_e)_m} f(T_{\C}(e) \odot m) \xrightarrow{s_{T_{\C}(e), m}} T_{\C}(e) \odot f(m) \xrightarrow{(\bar{u}_e)_{f(m)}} e \odot f(m)$.
\end{proof}

\begin{rem}
\label{p3}
The forgetful functor $\forget: \Fun_{\C}^{\E}(\M, \N) \to \Fun_{\C}(\M, \N)$, $(f, s, t) \mapsto (f, s)$ induces an equivalence in $\cat$, where $s$ and $t$ are the left $\C$-module structure and the left $\E$-module structure on $f$ respectively.
Notice that $t$ equals to the composition of $u^{-1}, s$ and $\bar{u}$.

Let $(\M, u)$ and $(\M, \id)$ belong to $\LMod_{\C}(\cat)$. Then the identity functor $\id_{\M}: (\M, u) \to (\M, \id)$ induces an equivalence in $\LMod_{\C}(\cat)$.
\end{rem}

\begin{expl}
\label{E-module-cat1}
Let $A$ be a separable algebra in $\C$. We use $\C_A$ to denote the category of right $A$-modules in $\C$. 
By \cite[Prop.\,2.7]{DMNO}, the category $\C_A$ is a finite semisimple abelian category.  $\C_A$ has a canonical left $\C$-module structure.
The left $\E$-module structure on $\C_A$ is defined as $e \odot x \coloneqq T_{\C}(e) \otimes x$ for any $e \in \E$, $x \in \C_A$. 
Then $(\C_A, \id)$ belongs to $\LMod_{\C}(\cat)$.
\end{expl}

We use ${{}_A\!{\C}_A}$ to denote the category of $A$-bimodules in $\C$.
By Prop.\,\ref{A-fun-bim-eq}, $\Fun_{\C}(\C_A, \C_A)$ is equivalent to $({{}_A\!{\C}_A})^{\rev}$ as multifusion categories over $\E$.

\begin{prop}
\label{p1}
Let $\M \in \LMod_{\C}(\cat)$. There is a separable algebra $A$ in $\C$ such that $\M \simeq \C_A$ in $\LMod_{\C}(\cat)$. 
\end{prop}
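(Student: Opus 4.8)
The plan is to combine the classical representation theory of module categories over a multifusion category with two structural facts already available, Expl.\,\ref{E-module-cat1} and Prop.\,\ref{F-in-Cat_E}, so that the $\E$-enrichment is obtained essentially for free once the underlying $\C$-module equivalence is in place.

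First I would forget the $\E$-module structure and view $\M$ merely as a finite semisimple left $\C$-module category in $\fcat$. Choose a generator $m \in \M$ — for instance $m = \bigoplus_i m_i$, a direct sum of representatives of the isomorphism classes of simple objects of $\M$, which is a generator since $\unit_{\C} \odot n \simeq n$ for every simple $n \in \M$. Form the internal Hom object $A \coloneqq \underline{\homm}_{\C}(m,m) \in \C$ with its canonical algebra structure, and set $G \coloneqq \underline{\homm}_{\C}(m,-) \colon \M \to \C_A$. By the standard theory of module categories over a multifusion category (see e.g.\ \cite[Ch.\,7]{Etingof}), $G$ is an equivalence of left $\C$-module categories. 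Since $\M$ is semisimple, $\C_A \simeq \M$ is semisimple, and hence $A$ is separable: this is a standard fact (and can also be seen directly, the semisimple splittings of the identities of $\M$ producing a separability idempotent for $A$). Thus $G$ realizes an equivalence $\M \simeq \C_A$ in $\LMod_{\C}(\fcat)$ with $A$ a separable algebra in $\C$.

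Next I would upgrade this equivalence to $\LMod_{\C}(\cat)$. By Expl.\,\ref{E-module-cat1}, $(\C_A, \id)$ is an object of $\LMod_{\C}(\cat)$, with $\E$-action $e \odot x = T_{\C}(e) \otimes x$. Since $\M$ is given as an object of $\LMod_{\C}(\cat)$ and $G \colon \M \to \C_A$ is a 1-morphism in $\LMod_{\C}(\fcat)$, Prop.\,\ref{F-in-Cat_E} promotes $G$ to a 1-morphism in $\LMod_{\C}(\cat)$, its left $\E$-module structure being the composite of $u^{-1}$, $s^G$ and $\bar{u}$ as in Rem.\,\ref{p3}. The same argument applies to a $\C$-module quasi-inverse $H \colon \C_A \to \M$, and the unit and counit isomorphisms $H \circ G \simeq \id_{\M}$ and $G \circ H \simeq \id_{\C_A}$ are left $\C$-module natural isomorphisms, hence 2-isomorphisms in $\LMod_{\C}(\cat)$. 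Therefore $\M \simeq \C_A$ in $\LMod_{\C}(\cat)$.

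The main obstacle is the first, classical step: checking that the internal-Hom construction and the equivalence $\M \simeq \C_A$ go through in the multifusion (rather than merely fusion) setting, and nailing down the separability of $A$. Once that is in hand, the second step requires no further $\E$-bookkeeping — it is immediate from Expl.\,\ref{E-module-cat1} and Prop.\,\ref{F-in-Cat_E}. If one prefers to sidestep the multifusion subtleties, an alternative is to first invoke Rem.\,\ref{p3} to normalize $u^{\C} = \id$, so that the $\E$-action on $\M$ becomes literally the restriction of the $\C$-action along $T_{\C}$, and run the internal-Hom argument there.
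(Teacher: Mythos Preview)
Your proposal is correct and follows essentially the same approach as the paper: obtain the equivalence $\M \simeq \C_A$ in $\LMod_{\C}(\fcat)$ (the paper simply cites \cite[Thm.\,7.10.1]{Etingof} for this, while you sketch the internal-Hom argument), and then invoke Prop.\,\ref{F-in-Cat_E} to upgrade it to $\LMod_{\C}(\cat)$. Your extra care in promoting the quasi-inverse and the unit/counit isomorphisms is a nice elaboration of what the paper leaves implicit.
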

\begin{proof} 
By \cite[Thm.\,7.10.1]{Etingof}, there is an equivalence $\eta: \M \simeq \C_A$ in $\LMod_{\C}(\fcat)$ for some separable algebra $A$ in $\C$. 
By Prop.\,\ref{F-in-Cat_E}, $\eta$ is an equivalence in $\LMod_{\C}(\cat)$.
\end{proof}


\begin{defn}
An object $\M$ in $\LMod_{\C}(\cat)$ is \emph{faithful} if there exists $m \in \M$ such that $\unit_{\C}^i \odot m \nsimeq 0$ for every nonzero subobject $\unit_{\C}^i$ of the unit object $\unit_{\C}$.
\end{defn}

\begin{rem}
Notice that $\unit_{\E} \odot m \simeq T_{\C}(\unit_{\E}) \odot m = \unit_{\C} \odot m \nsimeq 0$.
If $\C$ is an indecomposable multifusion category over $\E$, any nonzero $\M$ in $\LMod_{\C}(\cat)$ is faithful.
\end{rem}

\begin{prop}
\label{double-centralizer}
Suppose $\M$ is a faithful object in $\LMod_{\C}(\cat)$. There is an equivalence $\C \simeq \Fun^{\E}_{\Fun_{\C}^{\E}(\M, \M)}(\M, \M)$ of multifusion categories over $\E$.
\end{prop}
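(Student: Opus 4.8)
The plan is to reduce this statement to the classical double centralizer theorem for module categories over a multifusion category (\cite[Thm.\,7.12.11]{Etingof} or the version for module categories in \cite{EO}), and then to promote the resulting equivalence of multifusion categories to an equivalence of multifusion categories \emph{over} $\E$. First I would invoke Prop.\,\ref{p1} to replace $\M$ by $\C_A$ for some separable algebra $A$ in $\C$, so that the claim becomes an equivalence $\C \simeq \Fun^{\E}_{\Fun^{\E}_{\C}(\C_A,\C_A)}(\C_A,\C_A)$ of multifusion categories over $\E$. Next I would apply Rem.\,\ref{p3} (the forgetful functor $\forget\colon \Fun^{\E}_{\C}(\M,\N)\to\Fun_{\C}(\M,\N)$ is an equivalence in $\cat$), which lets me identify the underlying multifusion categories: $\Fun^{\E}_{\C}(\C_A,\C_A)\simeq\Fun_{\C}(\C_A,\C_A)$, and the latter is equivalent to $({}_A\C_A)^{\rev}$ as a multifusion category over $\E$ by the discussion preceding Prop.\,\ref{p1} (i.e. Prop.\,\ref{A-fun-bim-eq}). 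Then $\Fun^{\E}_{\Fun^{\E}_{\C}(\C_A,\C_A)}(\C_A,\C_A)\simeq\Fun_{({}_A\C_A)^{\rev}}(\C_A,\C_A)$, and by the classical double centralizer theorem the canonical functor $\C\to\Fun_{({}_A\C_A)^{\rev}}(\C_A,\C_A)$ sending $c\mapsto (c\otimes-)$ is an equivalence of multifusion categories; here faithfulness of $\M$ is exactly the hypothesis needed to invoke the classical statement (it guarantees $\C_A$ is a faithful $\C$-module, equivalently that $A$ generates $\C$ appropriately).

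The remaining work is to check that this canonical equivalence is compatible with the structure of multifusion category over $\E$, i.e. it intertwines the central functors $T_{\C}\colon\E\to\C$ and $T_{\Fun^{\E}_{\Fun^{\E}_{\C}(\M,\M)}(\M,\M)}\colon\E\to\Fun^{\E}_{\Fun^{\E}_{\C}(\M,\M)}(\M,\M)$ together with their central structures. Tracing through the definitions: the central functor on the double centralizer side sends $e\in\E$ to the functor $\hat{\hat{T}}^e = T_{\C}(e)\odot-$ on $\M$ (by Expl.\,\ref{fun-C-E} applied twice), while the canonical equivalence sends $T_{\C}(e)\in\C$ to exactly the functor $T_{\C}(e)\otimes-=T_{\C}(e)\odot-$ on $\C_A$. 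So the two composites agree on objects, and one checks that the central structures match: the central structure on $T_{\C}$ is the half-braiding $z$, and the central structure on $\hat{\hat T}$ from Expl.\,\ref{fun-C-E} is induced precisely by $z$ via the $\C$-module and $\E$-module structures, so they are identified under the canonical equivalence. This is a diagram chase using the explicit descriptions of the module structures in Expl.\,\ref{fun-C-E} together with the coherence diagram (\ref{rem}).

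I would organize the proof as: (1) cite Prop.\,\ref{p1} to assume $\M=\C_A$; (2) cite Rem.\,\ref{p3} and Prop.\,\ref{A-fun-bim-eq} to compute $\Fun^{\E}_{\C}(\M,\M)\simeq({}_A\C_A)^{\rev}$ as multifusion categories over $\E$; (3) apply the same two facts again to compute $\Fun^{\E}_{({}_A\C_A)^{\rev}}(\C_A,\C_A)$ and invoke the classical double centralizer theorem to get the underlying equivalence $\C\xrightarrow{\sim}\Fun_{({}_A\C_A)^{\rev}}(\C_A,\C_A)$; (4) verify this equivalence is a monoidal functor over $\E$, i.e. its structure isomorphism $u_e\colon \text{(image of }T_{\C}(e))\simeq T_{\mathrm{target}}(e)$ is compatible with the central structures via diagram (\ref{rem}), which upgrades it to an equivalence of multifusion categories over $\E$ by Rem.\,\ref{eq-fusion|E}. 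The main obstacle I expect is step (4): the bookkeeping of central structures through the two nested $\Fun^{\E}$ constructions is delicate, since each layer introduces a half-braiding coming from $z$ and one must check these compose correctly and cancel against the half-braiding appearing in $\Fun^{\E}_{\C}(\M,\M)$'s central functor. The classical double centralizer input (step 3) is standard and not the difficulty; it is the $\E$-equivariance that requires care.
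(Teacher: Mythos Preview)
Your proposal is correct and follows essentially the same route as the paper: reduce to $\M\simeq\C_A$ via Prop.\,\ref{p1}, strip off the $\E$-decoration using Rem.\,\ref{p3}, invoke the classical double centralizer theorem \cite[Thm.\,7.12.11]{Etingof}, and then verify compatibility with the over-$\E$ structures. The only difference is presentational: the paper carries out the classical step in the concrete bimodule model ${}_{A^R\otimes A}\C_{A^R\otimes A}$ (with the explicit equivalence $\Phi\colon x\mapsto A^R\otimes x\otimes A$) and checks the over-$\E$ structure there using Expl.\,\ref{Ex-A-bimodule}, whereas you stay in the functor-category picture and trace through Expl.\,\ref{fun-C-E}; these two descriptions are matched by Prop.\,\ref{A-fun-bim-eq}, so the content is the same. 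One small caution: your invocation of Rem.\,\ref{eq-fusion|E} in step~(4) is not quite the right tool---that remark transports an over-$\E$ structure along an equivalence, but here both sides already carry over-$\E$ structures and you must check the equivalence respects them, which is precisely the diagram chase you describe.
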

\begin{proof}
By Prop.\,\ref{p1}, there is a separable algebra $A$ in $\C$ such that $\M \simeq \C_A$.
By \cite[Thm.\,7.12.11]{Etingof}, the category $\Fun_{\Fun_{\C}(\M, \M)}(\M, \M)$ is equivalent to the category of $A^R \otimes A$-bimodules in the category of $A$-bimodules. The latter category is equivalent to the category ${{}_{A^R \otimes A} \!{\C}_{A^R \otimes A}}$ of $A^R \otimes A$-bimodules. 
Then the functor $\Phi: \C \rightarrow {{}_{A^R \otimes A} \!{\C}_{A^R \otimes A}}$, $x \mapsto A^R \otimes x \otimes A$ is an equivalence by the faithfulness of $\M$. 
The monoidal structure on $\Phi$ is defined as 
\[ \Phi(x \otimes y) = A^R \otimes x \otimes y \otimes A  \simeq A^R \otimes x \otimes A \otimes_{A^R \otimes A} A^R \otimes y \otimes A = \Phi(x) \otimes_{A^R \otimes A} \Phi(y) \]
for $x, y \in \C$, where the equivalence is due to $A \otimes_{A^R \otimes A} A^R \simeq \unit_{\C}$.
Recall the central structure on the monoidal functor $I: \E \to {{}_{A^R \otimes A} \!{\C}_{A^R \otimes A}}$ in Expl.\,\ref{Ex-A-bimodule}.
The structure of monoidal functor over $\E$ on $\Phi$ is induced by $\Phi(T_{\C}(e)) = A^R \otimes T_{\C}(e) \otimes A \xrightarrow{z^{-1}_{e, A^R},1} T_{\C}(e) \otimes A^R \otimes A =I(e)$ for $e \in \E$.

By Rem.\,\ref{p3} and Prop.\,\ref{A-fun-bim-eq}, we have the equivalences $\Fun^{\E}_{\Fun_{\C}^{\E}(\M, \M)}(\M, \M) \simeq \Fun_{\Fun_{\C}(\M, \M)}(\M, \M) \simeq {{}_{A^R \otimes A} \!{\C}_{A^R \otimes A}} \simeq \C$ of multifusion categories over $\E$. 
\end{proof}

\subsection{Bimodules in $\cat$}
 Let $\C$ and $\D$ be multifusion categories over $\E$.
We use $z$ and $\hat{z}$ to denote the central structures of the central functors $T_{\C}: \E \to \C$ and $T_{\D}: \E \to \D$ respectively.
\begin{defn}
The 2-category $\BMod_{\C|\D}(\cat)$ consists of the following data.
\begin{itemize}
\item A class of objects in $\BMod_{\C|\D}(\cat)$. An object $\M \in \BMod_{\C|\D}(\cat)$ is an object $\M$ both in $\cat$ and $\BMod_{\C|\D}(\fcat)$ equipped with monoidal natural isomorphisms $u^{\C}_e: T_{\C}(e) \odot - \simeq e \odot -$ and $u^{\D}_e: - \odot T_{\D}(e) \simeq e \odot -$ in $\Fun_{\E}(\M, \M)$ for each $e \in \E$ such that 
the functor $(c \odot - \odot d, s^{c \odot - \odot d})$ belongs to $\Fun_{\E}(\M, \M)$ for each $c \in \C$, $d \in \D$,
and the diagrams
\begin{equation}
\label{dig-bimodule1}
\begin{split}
 \xymatrix{
(T_{\C}(e) \otimes c) \odot - \odot d  \ar[r] \ar[d]_{z_{e,c}, 1, 1} & T_{\C}(e) \odot (c \odot - \odot d) \ar[r]^(0.55){(u^{\C}_e)_{c \odot - \odot d}} & e \odot (c \odot - \odot d) \ar[d]^{s^{c \odot - \odot d}} \\
(c \otimes T_{\C}(e)) \odot - \odot d \ar[r] & c \odot (T_{\C}(e) \odot -) \odot d \ar[r]^(0.55){1, u^{\C}_e, 1} & c \odot (e \odot -) \odot d
} 
\end{split}
\end{equation}
\begin{equation}
\label{dig-bimodule2} 
\begin{split}
\xymatrix{
c \odot - \odot (d \otimes T_{\D}(e)) \ar[r] \ar[d]_{1,1, \hat{z}^{-1}_{e, d}} & (c \odot - \odot d) \odot T_{\D}(e) \ar[r]^(0.55){(u^{\D}_e)_{c \odot - \odot d}} & e \odot (c \odot - \odot d) \ar[d]^{s^{c \odot - \odot d}} \\
c \odot - \odot (T_{\D}(e) \otimes d) \ar[r] & c \odot (- \odot T_{\D}(e)) \odot d \ar[r]^(0.55){1, u^{\D}_e, 1} & c \odot (e \odot -) \odot d
} 
\end{split}
\end{equation}
commute for all $e \in \E, c \in \C, d \in \D$.
 We use a triple $(\M, u^{\C}, u^{\D})$ to denote an object $\M$ in $\BMod_{\C|\D}(\cat)$.

\item For objects $(\M, u^{\C}, u^{\D})$, $(\N, \bar{u}^{\C}, \bar{u}^{\D})$ in $\BMod_{\C|\D}(\cat)$, 
a 1-morphism $F: \M \rightarrow \N$ in $\BMod_{\C|\D}(\cat)$ is
a 1-morphism $F: \M \rightarrow \N$ both in $\cat$ and $\BMod_{\C|\D}(\fcat)$ such that the diagrams (\ref{d2}) and (\ref{d3}) commute.

\item For 1-morphisms $F, G: \M \rightrightarrows \N$ in $\BMod_{\C|\D}(\cat)$, 
a 2-morphism from $F$ to $G$ is a $\C$-$\D$ bimodule natural transformation from $F$ to $G$. 
\end{itemize}
For objects $\M, \N$ in $\BMod_{\C|\D}(\cat)$, we use $\Fun^{\E}_{\C|\D}(\M, \N)$ to denote the category of 1-morphisms $\M \to \N$, 2-morphisms in $\BMod_{\C|\D}(\cat)$.
\end{defn}

Let $(\M, u^{\C}, u^{\D}), (\N, \bar{u}^{\C}, \bar{u}^{\D})$ belong to $\BMod_{\C|\D}(\cat)$.
A monoidal natural isomorphism $v^{\M}$ is defined as $v^{\M}_e: T_{\C}(e) \odot - \xRightarrow{u^{\C}_e} e \odot - \xRightarrow{(u^{\D}_e)^{-1}} - \odot T_{\D}(e)$ for $e \in \E$, $- \in \M$.
Similarly, a monoidal natural isomorphism $v^{\N}$ is defined as $v^{\N}_e := (\bar{u}^{\D}_e)^{-1} \circ \bar{u}^{\C}_e$. 
A 1-morphism $F: \M \rightarrow \N$ in $\BMod_{\C|\D}(\cat)$ satisfies the following diagram for $e \in \E, m \in \M$:
\begin{equation}
\label{CD-bim-fun}
\begin{split}
 \xymatrix{
F(T_{\C}(e) \odot m) \ar[r]^{(v^{\M}_e)_m} \ar[d] & F(m \odot T_{\D}(e)) \ar[d] \\
T_{\C}(e) \odot F(m) \ar[r]_{(v^{\N}_e)_{F(m)}} & F(m) \odot T_{\D}(e)
} 
\end{split}
\end{equation}

\begin{rem}
\label{CD-bim-fun-rem}
Plugging $c=\unit_{\C}$ into the diagram (\ref{dig-bimodule1}) and $d= \unit_{\D}$ into the diagram (\ref{dig-bimodule2}), the diagrams
\begin{equation}
\label{diag-bi1} 
\begin{array}{c}
\xymatrix{
T_{\C}(e) \odot (m \odot d) \ar[r] \ar[d]_{(u^{\C}_e)_{m \odot d}} & (T_{\C}(e) \odot m) \odot d  \ar[d]^{(u^{\C}_e)_m, 1} \\
e \odot (m \odot d) \ar[r]_{s^{- \odot d}} & (e \odot m) \odot d
}
\end{array}
\qquad 
\begin{array}{c}
\xymatrix{
(c \odot m) \odot T_{\D}(e) \ar[r] \ar[d]_{(u^{\D}_e)_{c \odot m}} & c \odot (m \odot T_{\D}(e)) \ar[d]^{1, (u^{\D}_e)_m} \\
e \odot (c \odot m) \ar[r]_{s^{c \odot -}} & c \odot (e \odot m)
}
\end{array}
 \end{equation}
 commute for $m \in \M$.
 Since the diagrams (\ref{diag-bi1}) and (\ref{diag-ee2}) commute, the diagram
\begin{equation}
\label{diag-bi2}
\begin{split}
 \xymatrix{
(T_{\C}(e) \odot m) \odot T_{\D}(\tilde{e}) \ar[d] \ar[r]^(0.55){(u^{\C}_e)_m, 1} &  (e \odot m) \odot T_{\D}(\tilde{e}) \ar[r]^(0.55){(u^{\D}_{\tilde{e}})_{e \odot m}} & \tilde{e} \odot (e \odot m) \ar[r] & (\tilde{e} \otimes e)\odot m \ar[d]^{r_{e, \tilde{e}}, 1} \\
T_{\C}(e) \odot (m \odot T_{\D}(\tilde{e})) \ar[r]_(0.55){1, (u^{\D}_{\tilde{e}})_m} & T_{\C}(e) \odot (\tilde{e} \odot m) \ar[r]_(0.55){(u^{\C}_e)_{\tilde{e} \odot m}} & e \odot (\tilde{e} \odot m) \ar[r] & (e \otimes \tilde{e}) \odot m
} 
\end{split}
\end{equation}
commutes for $e, \tilde{e} \in \E$, $m \in \M$.
Since the diagrams (\ref{diag-bi1}), (\ref{rem}) and (\ref{rem2})
commute, the diagrams
\[ \xymatrix{
T_{\C}(e) \odot (m \odot d) \ar[r]^{(v^{\M}_{e})_{m \odot d}} \ar[d] & (m \odot d) \odot T_{\D}(e) \ar[r] & m \odot (d \otimes T_{\D}(e))\ar[d]^{1, \hat{z}^{-1}_{e, d}} \\
(T_{\C}(e) \odot m) \odot d \ar[r]_{(v^{\M}_e)_m, 1} &(m \odot T_{\D}(e)) \odot d \ar[r] & m \odot (T_{\D}(e) \otimes d)
} \]
\[ \xymatrix{
(T_{\C}(e) \otimes c) \odot m \ar[r] \ar[d]_{z_{e, c}, 1} & T_{\C}(e) \odot (c \odot m)  \ar[r]^{(v^{\M}_e)_{c \odot m}} & (c \odot m) \odot T_{\D}(e) \ar[d] \\
(c \otimes T_{\C}(e)) \odot m \ar[r] &c \odot (T_{\C}(e) \odot m) \ar[r]_{1, (v^{\M}_e)_m} & c \odot (m \odot T_{\D}(e))
} \]
commute for $e \in \E, d \in \D, c \in \C, m \in \M$.
\end{rem}

\begin{prop}
Let $\A, \B$ be multifusion categories over $\E$. There is an equivalence of 2-categories
\[  \LMod_{\A \boxtimes_{\E} \B^{\rev}}(\cat) \simeq \BMod_{\A|\B}(\cat) \]
\end{prop}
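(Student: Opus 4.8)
The plan is to establish the equivalence by exhibiting functors in both directions and checking they are mutually quasi-inverse, working level by level (objects, 1-morphisms, 2-morphisms) as is standard for an equivalence of 2-categories. The backbone of the argument is the corresponding classical fact $\LMod_{\A \boxtimes_{\E} \B^{\rev}}(\fcat) \simeq \BMod_{\A|\B}(\fcat)$, which I would either cite or reconstruct from the universal property of $\boxtimes_{\E}$ (Def.\,\ref{semi2}): a left $\A \boxtimes_{\E} \B^{\rev}$-action $\odot: (\A \boxtimes_{\E} \B^{\rev}) \boxtimes \M \to \M$ corresponds, via $\A \boxtimes_{\E} \B^{\rev} \simeq \A \boxtimes_{\E} \B^{\rev}$ and the balanced functor $\boxtimes_{\E}$, to a $\bk$-bilinear action $\A \times \B^{\rev} \times \M \to \M$, i.e. $(a, b, m) \mapsto (a \boxtimes_{\E} b) \odot m$, which is exactly the data of a left $\A$-action $c \odot m := (c \boxtimes_{\E} \unit_{\B}) \odot m$ and a right $\B$-action $m \odot d := (\unit_{\A} \boxtimes_{\E} d) \odot m$ that commute (the commutativity and the coherence being encoded in the balanced structure and module associativity over $\E$). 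Here the key point that the two $\E$-module structures agree is precisely where $\boxtimes_{\E}$ — rather than $\boxtimes$ — enters: $(T_{\A}(e) \boxtimes_{\E} \unit_{\B}) \odot m$ and $(\unit_{\A} \boxtimes_{\E} T_{\B}(e)) \odot m$ are identified by the balanced $\E$-module structure on $\odot$, since $T_{\B^{\rev}}(e)$ on the $\B^{\rev}$-side is $T_{\B}(e)$ as an object.

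Next I would upgrade this classical equivalence to the $\E$-enriched setting. Starting from $(\M, u) \in \LMod_{\A \boxtimes_{\E} \B^{\rev}}(\cat)$, the underlying $\fcat$-equivalence produces $\M$ with commuting $\A$- and $\B$-actions and a left $\E$-module structure; the monoidal natural isomorphism $u^{\A \boxtimes_{\E} \B^{\rev}}_e: T_{\A \boxtimes_{\E} \B^{\rev}}(e) \odot - \simeq e \odot -$ restricts, along the two factor embeddings $\A \hookrightarrow \A \boxtimes_{\E} \B^{\rev}$ and $\B^{\rev} \hookrightarrow \A \boxtimes_{\E} \B^{\rev}$ (using $T_{\A \boxtimes_{\E} \B^{\rev}}(e) \simeq T_{\A}(e) \boxtimes_{\E} \unit_{\B} \simeq \unit_{\A} \boxtimes_{\E} T_{\B}(e)$ as recorded in Expl.\,\ref{ex3}), to the required pair $u^{\A}_e: T_{\A}(e) \odot - \simeq e \odot -$ and $u^{\B}_e: - \odot T_{\B}(e) \simeq e \odot -$. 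One then checks that the single coherence diagram for an object of $\LMod_{\A \boxtimes_{\E} \B^{\rev}}(\cat)$ — the analogue of \eqref{rem} for the algebra $\A \boxtimes_{\E} \B^{\rev}$ — splits, upon plugging $c = c' \boxtimes_{\E} \unit_{\B}$ and $c = \unit_{\A} \boxtimes_{\E} d$ respectively, into exactly the two diagrams \eqref{dig-bimodule1} and \eqref{dig-bimodule2} defining an object of $\BMod_{\A|\B}(\cat)$; conversely, given \eqref{dig-bimodule1} and \eqref{dig-bimodule2} together with the fact that every object of $\A \boxtimes_{\E} \B^{\rev}$ is a direct sum of summands of objects $a \boxtimes_{\E} b = (a \boxtimes_{\E} \unit_{\B}) \otimes (\unit_{\A} \boxtimes_{\E} b)$, one reassembles the single diagram for $\A \boxtimes_{\E} \B^{\rev}$. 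The condition that $(c \odot - \odot d, s^{c \odot - \odot d}) \in \Fun_{\E}(\M, \M)$ matches the condition that $((a \boxtimes_{\E} b) \odot -, s) \in \Fun_{\E}(\M, \M)$ for the same reason. The 1-morphism and 2-morphism levels are then more routine: a 1-morphism in $\LMod_{\A \boxtimes_{\E} \B^{\rev}}(\cat)$ is an $\E$-module functor that is $\A \boxtimes_{\E} \B^{\rev}$-linear, and \eqref{d2} for the algebra $\A \boxtimes_{\E} \B^{\rev}$ decomposes — again by restricting along the two factors — into \eqref{d2} and \eqref{d3} for $\A$ and $\B$; likewise bimodule natural transformations match.

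To organize this cleanly I would phrase both sides in the "equivalent" language from the definitions of $\LMod_{\C}(\cat)$ and $\BMod_{\C|\D}(\cat)$, namely as a monoidal functor over $\E$ into $\Fun_{\E}(\M, \M)$: an object of $\LMod_{\A \boxtimes_{\E} \B^{\rev}}(\cat)$ is $\M \in \cat$ with $\phi: \A \boxtimes_{\E} \B^{\rev} \to \Fun_{\E}(\M, \M)$ monoidal over $\E$, and by the universal property of $\boxtimes_{\E}$ (and the fact that $\Fun_{\E}(\M,\M)$ is a multifusion category over $\E$, Expl.\,\ref{ex2}), such $\phi$ correspond to pairs of monoidal functors over $\E$, $\phi_{\A}: \A \to \Fun_{\E}(\M,\M)$ and $\phi_{\B}: \B^{\rev} \to \Fun_{\E}(\M,\M)$, with commuting images — which is exactly the data of an object of $\BMod_{\A|\B}(\cat)$. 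This reduces the object-level statement to a universal-property bookkeeping argument rather than a diagram chase. I expect the main obstacle to be verifying the coherence axioms carefully: namely that the balanced $\E$-module structure needed to factor $\phi$ through $\A \boxtimes_{\E} \B^{\rev}$ is precisely supplied by the compatibility of $u^{\A}$ and $u^{\B}$ (i.e. that $v^{\M}_e = (u^{\B}_e)^{-1} \circ u^{\A}_e: T_{\A}(e)\odot - \simeq - \odot T_{\B}(e)$ furnishes the balancing), and dually that a balanced structure yields such a compatible pair; once that identification is pinned down, the remaining checks — associativity/unit constraints, the single big coherence diagram splitting into \eqref{dig-bimodule1}–\eqref{dig-bimodule2}, and the 1-/2-morphism compatibilities — are mechanical. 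I would also remark that all of this is natural in $\M$, so the constructions assemble into 2-functors, and their composites are 2-naturally isomorphic to the identities, giving the claimed equivalence of 2-categories.
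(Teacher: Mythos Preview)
Your proposal is correct and follows essentially the same approach as the paper: both interpret an object of $\LMod_{\A \boxtimes_{\E} \B^{\rev}}(\cat)$ as a monoidal functor $\phi: \A \boxtimes_{\E} \B^{\rev} \to \Fun_{\E}(\M,\M)$ over $\E$, restrict along the two factor embeddings $a \mapsto a \boxtimes_{\E} \unit_{\B}$ and $b \mapsto \unit_{\A} \boxtimes_{\E} b$ to obtain the $\A$- and $\B$-actions, and use $T_{\A \boxtimes_{\E} \B^{\rev}}(e) = T_{\A}(e) \boxtimes_{\E} \unit_{\B} \simeq \unit_{\A} \boxtimes_{\E} T_{\B}(e)$ (Expl.\,\ref{ex3}) to extract $u^{\A}$ and $u^{\B}$; the converse direction $\phi^{a \boxtimes_{\E} b} := a \odot - \odot b$ and the 1-/2-morphism checks are likewise identical in spirit. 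Your additional remarks on the balanced $\E$-module structure and the role of $v^{\M}_e$ make explicit what the paper leaves implicit, but do not change the strategy.
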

\begin{proof}
An object $\M \in \LMod_{\A \boe \B^{\rev}}(\cat)$ is an object $\M \in \cat$ equipped with a monoidal functor $\phi: \A \boxtimes_{\E} \B^{\rev} \rightarrow \Fun_{\E}(\M, \M)$ over $\E$.
 Given an object $\M$ in $\LMod_{\A \boe \B^{\rev}}(\cat)$, we want to define an object $(\M, u^{\A}, u^{\B})$ in $\BMod_{\A|\B}(\cat)$.
 The left $\A$-action on $\M$ is defined as $a \odot m \coloneqq \phi^{a \boe \unit_{\B}}(m)$ for $a \in \A$, $m \in \M$, and the unit $\unit_{\B} \in \B^{\rev}$. And the right $\B$-action on $\M$ is defined as $m \odot b \coloneqq \phi^{\unit_{\A} \boe b}(m)$ for $b \in \B$, $m \in \M$, and the unit $\unit_{\A} \in \A$.  
 By Expl.\,\ref{ex3}, we have $T_{\A \boe \B^{\rev}}(e) = T_{\A}(e) \boe \unit_{\B}$ and $T_{\A \boe \B^{\rev}}(e) \simeq \unit_{\A} \boe T_{\B}(e)$. 
 Recall the central structure on $T: \E \to \Fun_{\E}(\M, \M)$ in Expl.\ref{ex2}. 
 The structure of monoidal functor over $\E$ on $\phi$ gives the monoidal natural isomorphisms $u^{\A}$ and $u^{\B}$ and the commutativity of diagrams (\ref{dig-bimodule1}) and (\ref{dig-bimodule2}).
 
 Given objects $\M, \N$ and a 1-morphism $f: \M \rightarrow \N$ in $\LMod_{\A \boe \B^{\rev}}(\cat)$, $f$ satisfy the diagrams (\ref{d2}) and (\ref{d3}).
For two 1-morphisms $f, g: \M \rightrightarrows \N$ in $\LMod_{\A \boe \B^{\rev}}(\cat)$, a 2-morphism $\alpha: f \Rightarrow g$ in $\LMod_{\A \boe \B^{\rev}}(\cat)$ is a left $\A \boe \B^{\rev}$-module natural transformation. 
If $\B^{\rev} = \E$, $\alpha$ is a left $\A$-module natural transformation.
If $\A = \E$, $\alpha$ is a right $\B$-module natural transformation. 
 
Conversely, given an object $(\M, u^{\A}, u^{\B})$ in $\BMod_{\A|\B}(\cat)$, we want to define a monoidal functor $\phi: \A \boe \B^{\rev} \rightarrow \Fun_{\E}(\M, \M)$ over $\E$. 
For $a \boe b \in \A \boe \B^{\rev}$, we define $\phi^{a \boe b} \coloneqq (a \boe b) \odot - \coloneqq a \odot - \odot b$ for $- \in \M$. 
For $a_1 \boe b_1$, $a_2 \boe b_2 \in \A \boe \B^{\rev}$, the monoidal structure on $\phi$ is defined as
$\phi^{(a_1 \boe b_1) \otimes (a_2 \boe b_2)} = \phi^{(a_1 \otimes a_2) \boe (b_1 \otimes^{\rev} b_2)} = (a_1 \otimes a_2) \odot - \odot (b_2 \otimes b_1) \simeq a_1 \odot (a_2 \odot - \odot b_2) \odot b_1 = \phi^{a_1 \boe b_1} \circ \phi^{a_2 \boe b_2}$. 
The structure of monoidal functor over $\E$ on $\phi$ is defined as $\phi^{T_{\A \boe \B^{\rev}}(e)} = \phi^{T_{\A}(e) \boe \unit_{\B}} = T_{\A}(e) \odot - \odot \unit_{\B} \xrightarrow{u^{\A}, 1} e \odot - \odot \unit_{\B}  \simeq e \odot - = T^e$ for $e \in \E$. 

Given an object $(\N, \bar{u}^{\A}, \bar{u}^{\B})$ and a 1-morphism $f: \M \rightarrow \N$ in $\BMod_{\A|\B}(\cat)$, we want to define a 1-morphism $f$ in $\LMod_{\A \boe \B^{\rev}}(\cat)$.
The left $\A \boe \B^{\rev}$-module structure on $f$ is defined as $f((a \boe b) \odot m) = f(a \odot m \odot b) \xrightarrow{s^f} a \odot f(m \odot b) \xrightarrow{1, t^f} a \odot f(m) \odot b = (a \boe b) \odot f(m)$ for $a \boe b \in \A \boe \B^{\rev}$, $m \in \M$, where $s^f$ and $t^f$ are the left $\A$-module structure and the right $\B$-module structure on $f$ respectively. It is routine to check that $f$ satisfy the diagram (\ref{d2}).

For 1-morphisms $f, g: \M \rightrightarrows \N$ in $\BMod_{\A|\B}(\cat)$, a 2-morphism $\alpha: f \Rightarrow g$ in $\BMod_{\A|\B}(\cat)$ is an $\A$-$\B$ bimodule natural transformation. It is routine to check that $\alpha: f \Rightarrow g$ is a left $\A \boe \B^{\rev}$-module natural transformation. 
\end{proof}

\begin{expl}
Let $\C$ be a multifusion category over $\E$. 
The left $\E$-module structure on $\C$ is defined as $e \odot c \coloneqq T_{\C}(e) \otimes c$ for $e \in \E, c \in \C$. 
For $c \in \C$, the functor $(c \otimes -, s^{c \otimes -}): \C \to \C$ belongs to $\Fun_{\E}(\C, \C)$, where the natural isomorphism
$s^{c \otimes -}_{e,-}: c \otimes (e \odot -) = c \otimes T_{\C}(e) \otimes - \xrightarrow{z^{-1}_{e,c},1} T_{\C}(e) \otimes c \otimes - = e \odot (c \otimes -) $.
Then $(\C, \id_e: T_{\C}(e) \otimes - = e \odot -)$ belongs to $\LMod_{\C}(\cat)$.

For $c \in \C$, the functor $(- \otimes c, s^{- \otimes c}): \C \to \C$ belongs to $\Fun_{\E}(\C, \C)$, where the natural isomorphism 
$s^{- \otimes c}_{e,-}: (e \odot -) \otimes c = (T_{\C}(e) \otimes -) \otimes c \xrightarrow{\simeq} T_{\C}(e) \otimes (- \otimes c) = e \odot (- \otimes c)$.
The category $\C$ equipped with the monoidal natural isomorphism $u_e: - \otimes T_{\C}(e) \xrightarrow{z_{e,-}^{-1}} T_{\C}(e) \otimes - = e \odot -$ belongs to $\RMod_{\C}(\cat)$.

For $c, \tilde{c} \in \C$, the functor $c \otimes - \otimes \tilde{c}: \C \to \C$ equipped with the natural isomorphism
\[ s^{c \otimes - \otimes \tilde{c}}_{e,-}: c \otimes (e \odot -) \otimes \tilde{c} = c \otimes T_{\C}(e) \otimes - \otimes \tilde{c} \xrightarrow{z_{e, c}^{-1},1,1} T_{\C}(e) \otimes c \otimes - \otimes \tilde{c} = e \odot (c \otimes - \otimes \tilde{c})  \]
beongs to $\Fun_{\E}(\M, \M)$.
Then $(\C, \id_e, u_e)$ belongs to $\BMod_{\C|\C}(\cat)$. 
\end{expl}

\begin{thm}
\label{C-C-bi-functor}
Let $\C$ be a multifusion category over $\E$ such that $\E \to Z(\C)$ is fully faithful. There is an equivalence of multifusion categories over $\E$:
\[ \Fun^{\E}_{\C | \C}(\C, \C) \simeq Z(\C, \E)\]
\end{thm}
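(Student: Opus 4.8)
The plan is to obtain the equivalence by restricting the classical identification $\Fun_{\C|\C}(\C,\C)\simeq Z(\C)$ to the subcategory cut out by the symbol $\E$, and then to match the $\E$-over structures on both sides.

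First I would recall the classical construction for the regular $\C$-bimodule category $\C$: sending a $\C$-$\C$-bimodule functor $F$ to $F(\unit_\C)$ equipped with the half-braiding $\gamma^F_c:=s^F_{c,\unit_\C}\circ(\tilde s^F_{\unit_\C,c})^{-1}\colon F(\unit_\C)\otimes c\to c\otimes F(\unit_\C)$, where $s^F$ and $\tilde s^F$ are the left and right $\C$-module structures of $F$, defines a monoidal equivalence $\Phi_0\colon\Fun_{\C|\C}(\C,\C)\xrightarrow{\sim}Z(\C)$ (see, e.g., \cite{Etingof}); a quasi-inverse sends $(Z,\gamma)$ to $c\mapsto Z\otimes c$, with the tautological right $\C$-module structure and with left $\C$-module structure built from $\gamma$.

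Next I would analyze the forgetful functor $\Fun^{\E}_{\C|\C}(\C,\C)\to\Fun_{\C|\C}(\C,\C)$. On the regular bimodule $\C$ one has $u^{\C}=\id$ and $u^{\D}_e=z^{-1}_{e,-}$ (the Example preceding the theorem), so for a 1-morphism in $\BMod_{\C|\C}(\cat)$ the diagram (\ref{d2}) forces the $\E$-module structure $t^F$ to be the restriction of $s^F$ along $T_{\C}$; with this forced $t^F$, the diagram (\ref{d3}) becomes a condition on $F$, namely the commutativity of (\ref{CD-bim-fun}), while the 2-morphisms in $\BMod_{\C|\C}(\cat)$ are just $\C$-$\C$-bimodule natural transformations. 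Hence the forgetful functor identifies $\Fun^{\E}_{\C|\C}(\C,\C)$ with the full subcategory of $\Fun_{\C|\C}(\C,\C)$ of those $F$ satisfying (\ref{CD-bim-fun}). Writing $F$ through $\Phi_0^{-1}$ as $c\mapsto Z\otimes c$ with $Z=(F(\unit_\C),\gamma^F)$, a short diagram chase — using the hexagon identity $z_{e,Z\otimes m}=(1_Z\otimes z_{e,m})\circ(z_{e,Z}\otimes 1_m)$ — shows that (\ref{CD-bim-fun}) for $\M=\N=\C$ is equivalent to the single identity $z_{e,F(\unit_\C)}\circ\gamma^F_{T_{\C}(e)}=\id$ for all $e\in\E$. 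This says exactly that the double braiding of $(F(\unit_\C),\gamma^F)$ with $T'_{\C}(e)=(T_{\C}(e),z_{e,-})$ in $Z(\C)$ is trivial, i.e.\ $F(\unit_\C)\in\E'|_{Z(\C)}=Z(\C,\E)$. Therefore $\Phi_0$ restricts to an equivalence $\Fun^{\E}_{\C|\C}(\C,\C)\xrightarrow{\sim}Z(\C,\E)$.

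Finally I would upgrade this to an equivalence of multifusion categories over $\E$. Since $Z(\C,\E)$ is a full tensor subcategory of $Z(\C)$ (a centralizer is closed under $\otimes$), the restriction of the monoidal functor $\Phi_0$ is again monoidal. For the distinguished functors from $\E$: using Expl.\,\ref{fun-C-E} together with the equivalence $\BMod_{\C|\C}(\cat)\simeq\LMod_{\C\boxtimes_{\E}\C^{\rev}}(\cat)$, the $\E$-over structure on $\Fun^{\E}_{\C|\C}(\C,\C)$ is the functor $\hat T\colon e\mapsto T_{\C}(e)\otimes-$, and computing its half-braiding gives $\gamma^{\hat T^e}_c=z_{e,c}$, so $\Phi_0(\hat T^e)=(T_{\C}(e),z_{e,-})=T_{Z(\C,\E)}(e)$. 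I expect the diagram chase of the previous paragraph to be the main obstacle: one must unwind (\ref{CD-bim-fun}) into the centralizing condition and, conversely, verify that this condition makes both (\ref{d2}) and (\ref{d3}) hold for the forced $t^F$, keeping track of all the half-braidings $z$ and the associativity/hexagon constraints.
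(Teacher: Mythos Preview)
Your proposal is correct and follows essentially the same route as the paper's proof: both restrict the classical equivalence $\Fun_{\C|\C}(\C,\C)\simeq Z(\C)$ and then identify the extra compatibility (\ref{CD-bim-fun}) with the centralizing condition $z_{e,F(\unit_\C)}\circ\gamma^F_{T_\C(e)}=\id$, landing in $Z(\C,\E)$. The paper is slightly terser—it writes $F=d\otimes-$ via the right module structure and evaluates (\ref{CD-bim-fun}) at $m=\unit_\C$ directly—while you spell out more carefully why the forgetful functor to $\Fun_{\C|\C}(\C,\C)$ is fully faithful (essentially Rem.\,\ref{p3}) and how the $\E$-over structures match; but the argument is the same.
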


\begin{proof}
Let us recall the proof of a monoidal equivalence $\Fun_{\C \boxtimes \C^{\rev}}(\C, \C) \simeq Z(\C)$ in \cite[Prop.\,7.13.8]{Etingof}.
Let $F$ belong to $\Fun_{\C \boxtimes \C^{\rev}}(\C, \C)$. Since $F$ is a right $\C$-module functor, we have $F = d \otimes -$ for some $d \in \C$. Since $F$ is a left $\C$-module functor, we have a natural isomorphism
\[ d \otimes (x \otimes y) = F(x \otimes y) \xrightarrow{s_{x, y}} x \otimes F(y) = x \otimes (d \otimes y) \quad x, y \in \C \]
Taking $y = \unit_{\C}$, we obtain a natural isomorphism $\gamma_d = s_{-, \unit_{\C}}: d \otimes - \xrightarrow{\simeq} - \otimes d$. The compatibility conditions of $\gamma_d$ correspond to the axioms of module functors. Then $(d, \gamma_d)$ belongs to $Z(\C)$.
 And the composition of $\C$-bimodule functors of $\C$ corresponds to the tensor product of objects of $Z(\C)$.

Moreover, $F$ belongs to $\Fun^{\E}_{\C | \C}(\C, \C)$. 
Taking $m =\unit_{\C}, F= d \otimes -$ in the diagram (\ref{CD-bim-fun}), the following square commutes:
\[
\xymatrixcolsep{0.5pc}
\xymatrixrowsep{0.8pc}
\xymatrix{
d \otimes (T_{\C}(e) \otimes \unit_{\C}) \ar[d]_{\gamma_d,1} \ar[rr]^{1, z_{e, \unit_{\C}}} & & d \otimes (\unit_{\C} \otimes T_{\C}(e)) \ar[d]^{} \\
 T_{\C}(e) \otimes (d \otimes \unit_{\C}) \ar[rr]^{z_{e, d \otimes \unit_{\C}}} \ar[rd]_{z_{e,d},1} && (d \otimes \unit_{\C}) \otimes T_{\C}(e) \\
 & d \otimes T_{\C}(e) \otimes \unit_{\C} \ar[ru]_{1,z_{e, \unit_{\C}}} &
}\]
The triangle commutes by the diagram (\ref{center1}).
Then we obtain $z_{e,d} \circ \gamma_d = \id_{d \otimes T_{\C}(e)}$, i.e. $(d, \gamma_{d}) \in Z(\C, \E)$.
It is routine to check that the functor $\Fun^{\E}_{\C | \C}(\C, \C) \rightarrow Z(\C, \E)$ is a monoidal functor over $\E$.
\end{proof}

\begin{expl}
\label{expl-ABC-E}
Let $A, B$ be separable algebras in a multifusion category $\C$ over $\E$.
We use ${{}_A\!{\C}_B}$ to denote the category of $A$-$B$ bimodules in $\C$.
 The left $\E$-module structure on ${{}_A\!{\C}_B}$ is defined as $e \odot x \coloneqq T_{\C}(e) \otimes x$ for $e \in \E$, $x \in {{}_A\!{\C}_B}$.
We use $q_x$ and $p_x$ to denote the left $A$-action and right $B$-action on $x$ respectively. 
The right $B$-action on $T_{\C}(e) \otimes x$ is induced by $T_{\C}(e) \otimes x \otimes B \xrightarrow{1, p_x} T_{\C}(e) \otimes x$. The left $A$-action on $T_{\C}(e) \otimes x$ is induced by $A \otimes T_{\C}(e) \otimes x \xrightarrow{z^{-1}_{e, A}, 1} T_{\C}(e) \otimes A \otimes x \xrightarrow{1, q_x} T_{\C}(e) \otimes x$. 
The module associativity constraint is given by $\lambda_{e_1, e_2, x}: (e_1 \otimes e_2) \odot x = T_{\C}(e_1 \otimes e_2) \otimes x \to T_{\C}(e_1) \otimes T_{\C}(e_2) \otimes x = e_1 \odot (e_2 \odot x)$, for $e_1, e_2 \in \E, x \in {}_A\!\C_B$.
 The unit isomorphism is given by $l_x: \unit_{\E} \odot x = T_{\C}(\unit_{\E}) \otimes x = \unit_{\C} \otimes x \to x$.
 Check that $\lambda_{e_1, e_2, x}$ and $l_x$ belong to ${}_A\!\C_B$.

The right $\E$-action on ${}_A\!\C_B$ is defined as $x \odot e \coloneqq x \otimes T_{\C}(e)$, $e \in \E, x \in {}_A\!\C_B$.
The left $A$-action on $x \otimes T_{\C}(e)$ is defined as $A \otimes x \otimes T_{\C}(e) \xrightarrow{q_x, 1} x \otimes T_{\C}(e)$.
The right $B$-action on $x \otimes T_{\C}(e)$ is defined as $x \otimes T_{\C}(e) \otimes B \xrightarrow{1, z_{e, B}} x \otimes B \otimes T_{\C}(e) \xrightarrow{p_x, 1} x \otimes T_{\C}(e)$.
The module associativity constraint is defined as 
$\lambda_{x, e_1, e_2}: x \odot (e_1 \otimes e_2) = x \otimes T_{\C}(e_1 \otimes e_2) \to x \otimes T_{\C}(e_1) \otimes T_{\C}(e_2) = (x \odot e_1) \odot e_2$, for $x \in {}_A\!\C_B$, $e_1, e_2 \in \E$.
The unit isomorphism is defined as $r_x: x \odot \unit_{\E} = x \otimes T_{\C}(\unit_{\E}) = x \otimes \unit_{\C} \to x$.
Check that $\lambda_{x, e_1, e_2}$ and $r_x$ belong to ${}_A\!\C_B$.
Check that ${}_A\!\C_B$ equipped with the monoidal natural isomorphism $v_e: T_{\C}(e) \otimes x \xrightarrow{z_{e, x}} x \otimes T_{\C}(e)$ belongs to $\BMod_{\E|\E}(\cat)$.

Also one can check that ${}_A\!\C$ belongs to $\BMod_{\E|\C}(\cat)$ and $\C_B$ belongs to $\BMod_{\C|\E}(\cat)$.
\end{expl}

\begin{expl}
Let $\M$ belongs to $\RMod_{\D}(\cat)$. 
Then $\M$ belongs to $\BMod_{\E|\D}(\cat)$.
The $\E$-$\D$ bimodule structure on $\M$ is defined as $(e \odot m) \odot d \xrightarrow{(s^{- \odot d}_{e, m})^{-1}} e \odot (m \odot d)$ for any $e \in \E, m \in \M$.
Since $(- \odot d, s^{- \odot d})$ belongs to $\Fun_{\E}(\M, \M)$ and the diagram (\ref{eMd-bimodule}) commutes, $M$ is an $\E$-$\D$ bimodule category.

The functor $e \odot - \odot d: \M \to \M$ equipped with the natural isomorphism
$s^{e \odot - \odot d}_{\tilde{e},-}: \tilde{e} \odot ((e \odot -) \odot d) \xrightarrow{s^{- \odot d}_{\tilde{e}, e\odot -}} (\tilde{e} \odot (e \odot -)) \odot d \xrightarrow{s^{e \odot -}_{\tilde{e},-}, 1} (e \odot (\tilde{e} \odot -)) \odot d$
is a left $\E$-module functor, where $s^{e \odot -}_{\tilde{e},-}: \tilde{e} \odot (e \odot - ) \simeq (\tilde{e} \otimes e) \odot -  \xrightarrow{r_{\tilde{e}, e}} (e \otimes \tilde{e}) \odot - \simeq e \odot (\tilde{e} \odot -)$ for $\tilde{e} \in \E$.
The object $\M$ both in $\cat$ and $\BMod_{\E|\D}(\fcat)$ equipped with the monoidal natural isomorphisms $u^{\E}_e = \id: e \odot - = e \odot -$ and $u^{\D}_e: - \odot T_{\D}(e) \simeq e \odot -$ belongs to $\BMod_{\E|\D}(\cat)$.
The monoidal natural isomorphism $u^{\D}_e$ satisfies the diagram (\ref{dig-bimodule2}) by the diagrams (\ref{rem2}) and (\ref{diag-e-e}).
\end{expl}

\begin{expl}
\label{11}
Let $\C, \D$ be multifusion categories over $\E$ and $(\M, u^{\C}, u^{\D}) \in \BMod_{\C|\D}(\cat)$. The $\D$-$\C$ bimodule structure on the category $\M^{L|\op|L}$ is defined as $d \odot^L m \odot^L c \coloneqq c^L \odot m \odot d^L$ for $d \in \D, c \in \C, m \in \M$. Then $(\M^{L|\op|L}, \tilde{u}^{\D}, \tilde{u}^{\C})$ belongs to $\BMod_{\D|\C}(\cat)$.
The left $\E$-module structure on $\M^{L|\op|L}$ is defined as $e \odot^L m \coloneqq e^L \odot m$ for $e \in \E$, $m \in \M$.
The monoidal natural isomorphism $\tilde{u}^{\D}$ is defined as $T_{\D}(e) \odot^L m = m \odot T_{\D}(e)^L \simeq m \odot T_{\D}(e^L) \xrightarrow{u^{\D}_{e^L}} e^L \odot m$.
The monoidal natural isomorphism $\tilde{u}^{\C}$ is defined as $m \odot^L T_{\C}(e) = T_{\C}(e)^L \odot m \simeq T_{\C}(e^L) \odot m \xrightarrow{u^{\C}_{e^L}} e^L \odot m$.
\end{expl}

\begin{expl}
\label{33}
Let $\C, \D, \CP$ be multifusion categories over $\E$, and $(\M, u^{\C}, u^{\D}) \in \BMod_{\C|\D}(\cat)$, $(\N, \bar{u}^{\C}, \bar{u}^{\CP}) \in \BMod_{\C|\CP}(\cat)$. Then $(\Fun^{\E}_{\C}(\M, \N), \tilde{u}^{\D}, \tilde{u}^{\CP})$ belongs to $\BMod_{\D|\CP}(\cat)$.
The left $\E$-module structure on $\Fun^{\E}_{\C}(\M, \N)$ is defined as $(e \odot f)(-) \coloneqq T_{\C}(e) \odot f(-)$, for $e \in \E, f \in \Fun_{\C}^{\E}(\M, \N)$.
The $\D$-$\CP$ bimodule structure on $\Fun^{\E}_{\C}(\M, \N)$ is defined as $(d \odot f \odot p)(-) \coloneqq f(- \odot d) \odot p$ for any $d \in \D, p \in \CP$. 
Let $v^{\M}_e \coloneqq (u^{\D}_e)^{-1} \circ u^{\C}_e$ and $v^{\N}_e \coloneqq (\bar{u}^{\CP}_e)^{-1} \circ \bar{u}^{\C}_e$.
The monoidal natural isomorphism $\tilde{u}^{\D}$ is defined as $(T_{\D}(e) \odot f)(-) = f(- \odot T_{\D}(e)) \xrightarrow{(v^{\M}_e)^{-1}} f(T_{\C}(e) \odot -) \xrightarrow{s^f} T_{\C}(e) \odot f(-) = (e \odot f)(-)$.
The monoidal natural isomorphism $\tilde{u}^{\CP}$ is defined as $(f \odot T_{\CP}(e))(-) = f(-) \odot T_{\CP}(e) \xrightarrow{(v^{\N}_e)^{-1}} T_{\C}(e) \odot f(-) = (e \odot f)(-)$.
\end{expl}

\subsection{Invertible bimodules in $\cat$}

\begin{defn}
Let $\C$ be a multifusion category over $\E$, and $(\M, u^{\M}) \in \RMod_{\C}(\cat)$, $(\N, u^{\N}) \in \LMod_{\C}(\cat)$ and $\D \in \cat$.
A \emph{balanced $\C$-module functor $F: \M \times \N \to \D$ in $\cat$} consists of the following data.
\begin{itemize}
\item
 $F: \M \times \N \to \D$ is an $\E$-bilinear bifunctor. That is, for each $n \in \N$, $(F(-,n), s^{F1}): \M \to \D$ is a left $\E$-module functor, where 
 \[ s^{F1}_{e,m}: F(e \odot m, n) \simeq e \odot F(m, n), \quad \forall e \in \E, m \in \M \]
is a natural isomorphism. For each $g: n \to n'$ in $\N$, $F(-, g): F(-, n) \Rightarrow F(-, n')$ is a left $\E$-module natural transformation.
And for each $m \in \M$, $(F(m, -), s^{F2}): \N \to \D$ is a left $\E$-module functor, where 
\[ s^{F2}_{e, n}: F(m, e \odot n) \simeq e \odot F(m, n), \quad \forall e \in \E, n \in \N \]
is a natural isomorphism. For each $f: c \to c'$ in $\C$, $F(f, -): F(c, -) \Rightarrow F(c', -)$ is a left $\E$-module natural transformation.
\item $F:\M \times \N \to \D$ is a balanced $\E$-module functor (recall Def.\,\ref{be-module functor}), 
where the balanced $\E$-module structure on $F$ is defined as
\[ \hat{b}_{m, e, n}: F(m \odot e, n) = F(e \odot m, n) \xrightarrow{s^{F1}_{e, m}} e \odot F(m, n) \xrightarrow{(s^{F2}_{e,n})^{-1}} F(m, e \odot n). \]
  \item 
  $F: \M \times \N \to \D$ is a balanced $\C$-module functor (recall Def.\,\ref{be-module functor}), where $b_{m, c, n}: F(m \odot c, n) \simeq F(m, c \odot n)$, $\forall m \in \M, c \in \C, n \in \N$, is the balanced $\C$-module structure on $F$.  
  And $b_{m, c, n}$ is a left $\E$-module natural isomorphism. That is, the following diagram commutes
\begin{equation}
\label{diag-em}
\begin{split}
\xymatrix@=4ex{
F(e \odot (m \odot c), n) \ar[r]^{s^{F1}_{e, m \odot c}} \ar[d]_{s^{- \odot c}_{e, m}, 1} & e \odot F(m \odot c, n) \ar[dd]^{1, b_{m, c, n}} \\
F((e \odot m) \odot c, n) \ar[d]_{b_{e \odot m, c, n}} & \\
F(e \odot m, c \odot n) \ar[r]_{s^{F1}_{e, m}} & e \odot F(m, c \odot n)
}
\end{split}
\end{equation}
where the functor $(- \odot c, s^{- \odot c}) \in \Fun_{\E}(\M, \M)$, $\forall c \in \C$.
\end{itemize}
such that the followng diagram commutes
\begin{equation}
\label{diag-b-hatb}
\begin{split}
 \xymatrix{
F(m \odot T_{\C}(e), n) \ar[rr]^{b_{m, T_{\C}(e)}, n} \ar[d]_{(u^{\M}_e)_m, 1} & & F(m, T_{\C}(e) \odot n) \ar[d]^{1, (u^{\N}_e)_n} \\
F(e \odot m, n ) \ar[r]_{s^{F1}_{e, m}} \ar@/^1pc/[rr]^{\hat{b}_{m, e, n}} & e \odot F(m, n) \ar[r]_{(s^{F2}_{e, n})^{-1}} & F(m, e \odot n) 
}
\end{split}
\end{equation}
We use $\Fun^{\bal|\E}_{\C}(\M, \N; \D)$ to denote the category of balanced $\C$-module functors in $\cat$, and natural transformations both in $\Fun^{\bal}_{\C}(\M, \N; \D)$ and $\cat$.

The \emph{tensor product of $\M$ and $\N$ over $\C$} is an object $\M \boxtimes_{\C} \N$ in $\cat$, together with a balanced $\C$-module functor $\boxtimes_{\C}: \M \times \N \rightarrow \M \boxtimes_{\C} \N$ in $\cat$, such that, for every object $\D$ in $\cat$, composition with $\boxtimes_{\C}$ induces an equivalence $\Fun_{\E}(\M \boxtimes_{\C} \N, \D) \simeq \Fun^{\bal | \E}_{\C}(\M, \N; \D)$.
\end{defn}

\begin{prop}
  For $e_1, e_2 \in \E$, $m \in \M, n \in \N$,
the following diagram commutes
\[ \xymatrix{
F(e_1 \odot m, e_2 \odot n) \ar[r]^{s^{F1}_{e_1, m}} \ar[d]_{s^{F2}_{e_2, n}} & e_1 \odot F(m, e_2 \odot n)   \ar[r]^{1, s^{F2}_{e_2, n}} & e_1 \odot e_2 \odot F(m,n)  \ar[ld]^(0.45){r_{e_1, e_2}, 1} \\
e_2 \odot F(e_1 \odot m, n) \ar[r]_{1, s^{F1}_{e_1, m}} & e_2 \odot e_1 \odot F(m, n)  &
}  \]
\end{prop}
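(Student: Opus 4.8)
The plan is to reduce the asserted hexagon to the single coherence axiom \eqref{wbx1} for the balanced $\E$-module structure $\hat{b}$ of $F$. Indeed, by the second bullet in the definition of a balanced $\C$-module functor in $\cat$, the pair $(F,\hat{b})$ is a balanced $\E$-module functor in the sense of Def.\,\ref{be-module functor}, so \eqref{wbx1} holds; recall moreover that $\hat{b}_{m,e,n}=(s^{F2}_{m,e,n})^{-1}\circ s^{F1}_{e,m,n}$ by definition, and that the right $\E$-action on $\M\in\RMod_{\C}(\cat)$ is $m\odot e:=e\odot m$, whose associativity constraint is $(e_1\otimes e_2)\odot m\xrightarrow{r_{e_1,e_2}\odot 1}(e_2\otimes e_1)\odot m\simeq e_2\odot(e_1\odot m)$, so that the braiding of $\E$ enters exactly once.

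First I would instantiate \eqref{wbx1} at the given $e_1,e_2,m,n$ and replace every occurrence of $\hat{b}$ by $(s^{F2})^{-1}\circ s^{F1}$. One side of the resulting equation contains the factor $s^{F1}_{e_1\otimes e_2,m,n}$; on the other side, $s^{F1}_{e_2,e_1\odot m,n}$ appears precomposed with $F$ of the $\M$-constraint $(e_1\otimes e_2)\odot m\to e_2\odot(e_1\odot m)$. I would rewrite this composite by splitting that constraint as $r_{e_1,e_2}\odot 1_m$ followed by the left $\E$-module associator of $\M$, then using the module-functor coherence hexagon of $F(-,n):\M\to\D$ for the ordered pair $(e_2,e_1)$ to absorb the associator, and finally transporting the braiding across via naturality of $s^{F1}_{-,m,n}$ in the $\E$-variable applied to $r_{e_1,e_2}:e_1\otimes e_2\to e_2\otimes e_1$; this turns it into the factors $s^{F1}_{e_1\otimes e_2,m,n}$, $s^{F1}_{e_1,m,n}$ and $r_{e_1,e_2}\odot 1_{F(m,n)}$, up to associators on $\D$. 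Symmetrically, I would rewrite the other ``long'' map $s^{F2}_{m,e_1\otimes e_2,n}$ using the module-functor coherence hexagon of $F(m,-):\N\to\D$, splitting it into $s^{F2}_{m,e_1,e_2\odot n}$ and $s^{F2}_{m,e_2,n}$, together with $F$ of the left $\E$-associator of $\N$.

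The factors $s^{F1}_{e_1\otimes e_2,m,n}$, $s^{F2}_{m,e_1\otimes e_2,n}$ and the images under $F$ of the $\M$- and $\N$-associators now cancel between the two sides; cancelling the remaining common prefactor and inverting yields exactly the claimed hexagon, the arrow labelled $r_{e_1,e_2},1$ being read off as the conjugate of $r_{e_1,e_2}\odot 1_{F(m,n)}$ by the $\D$-associativity constraints (here one uses that $\E$ is symmetric to merge the two resulting braiding contributions, one from the $\M$-constraint and one from the $\D$-bookkeeping). I expect the only real difficulty to be organizational: it is a long but mechanical diagram chase tracking associativity constraints and the single occurrence of the braiding, requiring no input beyond \eqref{wbx1} and the defining axioms of module functors and their structure morphisms.
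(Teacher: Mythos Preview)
Your approach is correct and is essentially the same as the paper's: both reduce the claim to the single axiom \eqref{wbx1} for the balanced $\E$-module structure $\hat b=(s^{F2})^{-1}\circ s^{F1}$, then expand using the module-functor coherence hexagons for $F(-,n)$ and $F(m,-)$ together with naturality of $s^{F1}$ in the $\E$-variable. The paper organizes this as one large diagram whose outer boundary is \eqref{wbx1}, whose two triangles are the module-functor axioms for $s^{F1}$ and $s^{F2}$, and whose remaining square is naturality; the desired pentagon is then the residual inner cell.

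One minor remark: your final parenthetical about needing symmetry of $\E$ to ``merge two resulting braiding contributions'' is a red herring. Only a single instance of $r_{e_1,e_2}$ enters, coming from the right-$\E$-module associator of $\M$, and it is transported to $r_{e_1,e_2}\odot 1_{F(m,n)}$ purely by naturality of $s^{F1}_{-,m}$ in the $\E$-slot; no second braiding appears and no appeal to $r_{e_2,e_1}\circ r_{e_1,e_2}=\id$ is needed.
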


\begin{proof}
Since $F: \M \times \N \to \D$ is a balanced $\E$-module functor, the following outward diagram commutes.
\[ \xymatrix@=4ex{
  F((e_1 \otimes e_2) \odot m, n) \ar[r]^{s^{F1}_{e_1 \otimes e_2, m}} \ar[d]_{r_{e_1, e_2}, 1} & (e_1 \otimes e_2) \odot F(m, n) \ar[d]^{r_{e_1, e_2}} \ar[r]^{(s^{F2}_{e_1 \otimes e_2, n})^{-1}} & F(m, (e_1 \otimes e_2) \odot n) \\
 F(e_2 \odot e_1 \odot m, n) \ar[r]^{s^{F1}_{e_2 \otimes e_1, m}} \ar[d]_{s^{F1}_{e_2, e_1 \odot m}} & e_2 \odot e_1 \odot F(m, n) &   \\
 e_2 \odot F(e_1 \odot m, n) \ar[r]_{(s^{F2}_{e_2, n})^{-1}} \ar[ru]_(0.65){1, s^{F1}_{e_1, m}} &   F(e_1 \odot m, e_2 \odot n) \ar[r]_{s^{F1}_{e_1, m}} & e_1 \odot F(m, e_2 \odot n) \ar[uu]_{(s^{F2}_{e_1, e_2 \odot n})^{-1}}  \ar@/_1pc/[luu]_(0.4){1, s^{F2}_{e_2, n}} 
} \]
The two triangles commute since $(F(-,n), s^{F1}): \M \to \D$ and $(F(m,-), s^{F2}): \N \to \D$ are left $\E$-module functors. The square commutes by the naturality of $s^{F1}$. Then the pentagon commutes.
\end{proof}

\begin{prop}
\label{app-rem-prop}
For $e \in \E, m \in \M, c \in \C, n \in \N$, the diagram
\begin{equation}
\label{diag-emne}
\begin{split}
 \xymatrix{
F((e \odot m) \odot c, n) \ar[r]^{b_{e \odot m, c, n}} \ar[d]_{(s^{- \odot c}_{e,m})^{-1}} & F(e \odot m, c \odot n) \ar[r]^(0.48){\hat{b}_{m, e, c \odot n}} & F(m, e \odot (c \odot n)) \ar[d]^{s^{c \odot -}_{e, n}} \\
F(e \odot (m \odot c), n) \ar[r]_{\hat{b}_{m \odot c, e, n}} & F(m \odot c, e \odot n) \ar[r]_(0.48){b_{m, c, e \odot n}} & F(m, c \odot (e \odot n))
} 
\end{split}
\end{equation}
commutes, where the functors $(- \odot c, s^{- \odot c}) \in \Fun_{\E}(\M, \M)$ and $(c \odot -, s^{c \odot -}) \in \Fun_{\E}(\N, \N)$, $\forall c \in \C$.
\end{prop}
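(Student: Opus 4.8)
The plan is to prove the commutativity of (\ref{diag-emne}) by unfolding the balanced $\E$-module structure $\hat{b}$ into the bilinear structure isomorphisms $s^{F1}$ and $s^{F2}$, and then reducing to a single elementary square. Recall that, by construction, $\hat{b}_{m',e,n'} = (s^{F2}_{e,n'})^{-1}\circ s^{F1}_{e,m'}$ for all $m'\in\M$, $n'\in\N$, $e\in\E$. Substituting this for the two occurrences $\hat{b}_{m,e,c\odot n}$ and $\hat{b}_{m\odot c,e,n}$ in (\ref{diag-emne}), both the upper and the lower path become composites built only out of $b$, $s^{F1}$, $s^{F2}$, $s^{-\odot c}$ and $s^{c\odot-}$.

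First I would apply diagram (\ref{diag-em}) --- the compatibility of the balanced $\C$-module structure $b$ with the $\E$-action in the $\M$-variable --- to rewrite the subcomposite $s^{F1}_{e,m}\circ b_{e\odot m,c,n}$ occurring along the upper path: precomposing with the canonical isomorphism $F((e\odot m)\odot c,n)\simeq F(e\odot(m\odot c),n)$, diagram (\ref{diag-em}) turns it into $(1\odot b_{m,c,n})\circ s^{F1}_{e,m\odot c}$. After this substitution the $\M$-variable part of the two paths coincides, and cancelling the common isomorphism $s^{F1}_{e,m\odot c}\circ(s^{-\odot c}_{e,m})^{-1}$ reduces the equality of the two paths of (\ref{diag-emne}) to the single square
\[
s^{c\odot-}_{e,n}\circ (s^{F2}_{e,c\odot n})^{-1}\circ (1\odot b_{m,c,n}) \;=\; b_{m,c,e\odot n}\circ (s^{F2}_{e,n})^{-1}\colon\; e\odot F(m\odot c,n)\longrightarrow F\bigl(m,c\odot(e\odot n)\bigr).
\]

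This last square expresses that, for fixed $m\in\M$ and $c\in\C$, the family $b_{m,c,-}$ is a morphism of left $\E$-module functors from $F(m\odot c,-)$ (with $\E$-module structure $s^{F2}$ taken at first argument $m\odot c$) to $F(m,-)\circ(c\odot-)$ (with $\E$-module structure the composite of $s^{c\odot-}$ with $s^{F2}$ at first argument $m$); it is the $\N$-variable analogue of (\ref{diag-em}) and is part of the requirement, built into the definition of a balanced $\C$-module functor in $\cat$, that $b$ be a left $\E$-module natural isomorphism. If one prefers not to invoke that clause directly, it can be re-derived by combining the hexagon axiom (\ref{wbx1}) for $b$ applied with $(c_1,c_2)=(T_{\C}(e),c)$ and with $(c_1,c_2)=(c,T_{\C}(e))$, naturality of $b$ in the $\C$-variable along the half-braiding $z_{e,c}$ of $T_{\C}$, the identity (\ref{diag-b-hatb}) relating $b$ at $T_{\C}(e)$ to $\hat{b}$ at $e$, and transport along the monoidal (hence left $\E$-module) natural isomorphisms $u^{\M}_e$ and $u^{\N}_e$. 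The main obstacle is purely bookkeeping: the module-associativity and unit constraints of the $\E$- and $\C$-actions on $\M$, $\N$ and $\D$ are suppressed in (\ref{diag-emne}), so in the full write-up one must carry them along and verify that each auxiliary cell used is either one of the stated module-functor coherences, a naturality square of $s^{F1}$ or $s^{F2}$, or the left $\E$-module-functor structure of $c\odot-\colon\N\to\N$; once these are pinned down the chase is routine.
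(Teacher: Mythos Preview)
Your reduction via unfolding $\hat b$ and invoking (\ref{diag-em}) is fine, but the endpoint is circular. The square you arrive at---that $b_{m,c,-}$ is a left $\E$-module natural transformation in the $\N$-variable---is \emph{not} part of the definition of a balanced $\C$-module functor in $\cat$: the clause ``$b_{m,c,n}$ is a left $\E$-module natural isomorphism'' is explicated in the definition \emph{only} by diagram (\ref{diag-em}), which is the $\M$-variable compatibility. The $\N$-variable analogue is precisely the Corollary that the paper states \emph{after} Proposition~\ref{app-rem-prop}, and it is derived there by combining (\ref{diag-em}) with (\ref{diag-emne}). So invoking it as an axiom to prove (\ref{diag-emne}) begs the question.

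Your fallback---rederiving the square from the hexagon (\ref{wbx1}) for $b$ at $(T_{\C}(e),c)$ and $(c,T_{\C}(e))$, naturality of $b$ along $z_{e,c}$, the identity (\ref{diag-b-hatb}), and transport along $u^{\M}_e$, $u^{\N}_e$---is exactly the paper's argument. The paper organizes this as a single large diagram (\ref{app-rem}) whose outer frame commutes by naturality of $b$ along $z_{e,c}$, whose top and bottom cells are (\ref{rem}) and (\ref{rem2}) (the source of the $u^{\M}$, $u^{\N}$ coherences), and whose left and right flanks are the hexagon for $b$ together with (\ref{diag-b-hatb}); the desired diagram (\ref{diag-emne}) is then the inner hexagon. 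So once you drop the circular shortcut and actually carry out the rederivation you sketch, you recover the paper's proof with a different bookkeeping order.
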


\begin{proof}
Consider the following diagram:
\begin{equation}
\label{app-rem}
\begin{split}
 \xymatrix@=4ex{
F(m \odot (T_{\C}(e) \otimes c) , n) \ar[r]^{z_{e,c}, 1} \ar[d]^{(u^{\M}_e)_m, 1} \ar@/_4.5pc/[dddd]_{b_{m, T_{\C}(e) \otimes c, n}} & F(m \odot (c \otimes T_{\C}(e)), n)  \ar@/^4.5pc/[dddd]^{b_{m, c \otimes T_{\C}(e), n}}  \ar[d]_{(u^{\M}_e)_{m \odot c}, 1} \\ 
F((e \odot m) \odot c, n) \ar[d]^{b_{e \odot m, c, n}} \ar[r]_{(s^{- \odot c}_{e,m})^{-1}} & 
 F(e \odot (m \odot c), n) \ar[d]_{\hat{b}_{m \odot c, e, n}}  \\
 F(e \odot m, c \odot n) \ar[d]^{\hat{b}_{m, e, c \odot n}} & F(m \odot c, e \odot n) \ar[d]_{b_{m, c, e \odot n}}  \\
 F(m, e \odot (c \odot n)) \ar[r]^{s^{c \odot -}_{e, n}}  & F(m, c \odot (e \odot n))  \\
F(m, (T_{\C}(e) \otimes c) \odot n)) \ar[r]_{1, z_{e,c}} \ar[u]_{1, (u^{\N}_e)_{c \odot n}} & F(m, (c \otimes T_{\C}(e)) \odot n)) \ar[u]^{1, (u^{\N}_e)_{n}}
} 
\end{split}
\end{equation}
Here $z$ is the central structure of the central functor $T_{\C}: \E \to \C$.
The middle-top and middle-down squares commute by the diagrams (\ref{rem}) and (\ref{rem2}).
 The leftmost diagram commutes by the diagram
 \[ 
 \xymatrix@=4ex{
 F(m \odot (T_{\C}(e) \otimes c), n) \ar[rr]^{b_{m, T_{\C}(e) \otimes c, n}} \ar[d] & & F(m, (T_{\C}(e) \otimes c) \odot n) \ar[d] \\
F((m \odot T_{\C}(e)) \odot c, n) \ar[r]^{b_{m \odot T_{\C}(e), c, n}} \ar[d]_{(u^{\M}_e)_m, 1} & F(m \odot T_{\C}(e), c \odot n) \ar[r]^{b_{m, T_{\C}(e), c \odot n}} \ar[d]^{(u^{\M}_e)_m, 1} & F(m, T_{\C}(e) \odot (c \odot n))   \ar[d]^{1, (u^{\N}_e)_{c \odot n}}\\
F((e \odot m) \odot c, n)  \ar[r]_{b_{e \odot m, c, n}} & F(e \odot m, c \odot n) \ar[r]_{\hat{b}_{m, e, c \odot n}} & F(m, e \odot (c \odot n)) 
 } \]
The top pentagon commutes by the balanced $\C$-module functor $F: \M \times \N \to \D$. The left-down square commutes by the naturality of the balanced $\C$-module structure $b$ on $F$.
The right-down square commutes by the diagram (\ref{diag-b-hatb}).
One can check that the rightmost diagram of (\ref{app-rem}) commutes. Then the middle hexagon of (\ref{app-rem}) commutes.
\end{proof}

\begin{cor}
By the commutativities of the diagrams (\ref{diag-em}) and (\ref{diag-emne}), the following diagram commutes
\[ 
\xymatrix@=4ex{
F(m \odot c, e \odot n) \ar[r]^{s^{F2}_{e, n}} \ar[d]_{b_{m, c, e \odot n}} & e \odot F(m \odot c, n) \ar[dd]^{1, b_{m, c, n}} \\
F(m, c \odot (e \odot n)) \ar[d]_{1, (s^{c \odot -}_{e, n})^{-1}} & \\
F(m, e \odot (c \odot n)) \ar[r]_{s^{F2}_{e, c \odot n}} & e \odot F(m, c \odot n)
} \]
\end{cor}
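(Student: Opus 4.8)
The plan is to obtain the square by pasting the two commuting diagrams (\ref{diag-em}) and (\ref{diag-emne}) together, after precomposing everything with the isomorphism $\hat{b}_{m\odot c,e,n}\colon F(e\odot(m\odot c),n)\to F(m\odot c,e\odot n)$; since all the structure morphisms involved are invertible, this loses no information.

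The one auxiliary ingredient is the defining formula of the balanced $\E$-module structure, $\hat{b}_{m',e,n}=(s^{F2}_{e,n})^{-1}\circ s^{F1}_{e,m'}$, equivalently $s^{F2}_{e,n}\circ\hat{b}_{m',e,n}=s^{F1}_{e,m'}$. Using it with $m'=m\odot c$, the composite along the top and right edges of the square, precomposed with $\hat{b}_{m\odot c,e,n}$, becomes $(1\odot b_{m,c,n})\circ s^{F1}_{e,m\odot c}$, which by (\ref{diag-em}) equals $s^{F1}_{e,m}\circ b_{e\odot m,c,n}\circ(s^{-\odot c}_{e,m},1)$. For the composite along the left and bottom edges, precompose again with $\hat{b}_{m\odot c,e,n}$: diagram (\ref{diag-emne}) identifies $(1\odot(s^{c\odot-}_{e,n})^{-1})\circ b_{m,c,e\odot n}\circ\hat{b}_{m\odot c,e,n}$ with $\hat{b}_{m,e,c\odot n}\circ b_{e\odot m,c,n}\circ(s^{-\odot c}_{e,m},1)$, and then postcomposing with $s^{F2}_{e,c\odot n}$ and using $s^{F2}_{e,c\odot n}\circ\hat{b}_{m,e,c\odot n}=s^{F1}_{e,m}$ once more yields exactly $s^{F1}_{e,m}\circ b_{e\odot m,c,n}\circ(s^{-\odot c}_{e,m},1)$ as well. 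As the two precomposed composites agree and $\hat{b}_{m\odot c,e,n}$ is invertible, the square commutes.

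I expect the only real difficulty to be orientation bookkeeping, i.e.\ keeping track of which of $s^{F1}$, $s^{F2}$, $\hat{b}$, $s^{-\odot c}$, $s^{c\odot-}$ appear with an inverse and in which argument of $F$, and checking that the precompositions by $\hat{b}_{m\odot c,e,n}$ (and the reassociation $(s^{-\odot c}_{e,m},1)$ coming from (\ref{diag-em}) and (\ref{diag-emne})) cancel consistently along both routes. Once the conventions are fixed, the argument is a purely formal stacking of (\ref{diag-em}) on top of (\ref{diag-emne}).
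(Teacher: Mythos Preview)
Your proof is correct and is exactly the argument the paper has in mind: the corollary is stated without proof, merely citing diagrams (\ref{diag-em}) and (\ref{diag-emne}), and your precomposition with $\hat b_{m\odot c,e,n}$ together with the defining identity $s^{F2}\circ\hat b=s^{F1}$ is precisely how one pastes those two squares. The only cosmetic point is that ``$(1\odot(s^{c\odot-}_{e,n})^{-1})$'' should be written as $F(1,(s^{c\odot-}_{e,n})^{-1})$, since it acts in the second argument of $F$ rather than on the outside; otherwise nothing needs changing.
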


\begin{expl}
\label{22}
Let $\C, \D, \CP$ be multifusion categories over $\E$, $(\M, u^{\C}, u^{\D}) \in \BMod_{\C|\D}(\cat)$ and $(\N, \bar{u}^{\D}, \bar{u}^{\CP}) \in \BMod_{\D|\CP}(\cat)$. Then $(\M \boxtimes_{\D} \N, \tilde{u}^{\C}, \tilde{u}^{\CP})$ belongs to $\BMod_{\C|\CP}(\cat)$.
The left $\E$-module structure on $\M \boxtimes_{\D} \N$ is defined as $e \odot (m \boxtimes_{\D} n) \coloneqq (e \odot m) \boxtimes_{\D} n$, for $e \in \E$, $m \boxtimes_{\D} n \in \M \boxtimes_{\D} \N$.
The $\C$-$\CP$ bimodule structure on $\M \boxtimes_{\D} \N$ is defined as $c \odot (m \boxtimes_{\D} n) \odot p \coloneqq (c \odot m) \boxtimes_{\D} (n \odot p)$, for $c \in \C, p \in \CP$.
The monoidal natural isomorphism $\tilde{u}^{\C}$ is induced by $T_{\C}(e) \odot (m \boxtimes_{\D} n) = (T_{\C}(e) \odot m) \boxtimes_{\D} n \xrightarrow{(u^{\C}_e)_m, 1} (e \odot m) \boxtimes_{\D} n = e \odot (m \boxtimes_{\D} n)$.
The monoidal isomorphism $\tilde{u}^{\CP}$ is induced by
$ (m \boxtimes_{\D} n) \odot T_{\CP}(e) = m \boxtimes_{\D} (n \odot T_{\CP}(e)) \xrightarrow{1, (\bar{u}^{\CP}_e)_n} m \boxtimes_{\D}(e \odot n) \xrightarrow{1, (\bar{u}^{\D}_e)^{-1}_n} m \boxtimes_{\D} (T_{\D}(e) \odot n) \xrightarrow{b^{-1}_{m, T_{\D}(e), n}} (m \odot T_{\D}(e)) \boxtimes_{\D} n \xrightarrow{(u^{\D}_e)_m, 1} (e \odot m) \boxtimes_{\D} n = e \odot (m \boxtimes_{\D} n)$,
where $b$ is the balanced $\D$-module structure on $\boxtimes_{\D}: \M \times \N \to \M \boxtimes_{\D} \N$.
\end{expl}

Let $\C$ be a multifusion category over $\E$ and $\M \in \LMod_{\C}(\cat)$. Then $\M$ is \emph{enriched} in $\C$. That is, there exists an object $[x, y]_{\C} \in \C$ and a natural isomorphism $\homm_{\M}(c \odot x, y) \simeq \homm_{\C}(c, [x, y]_{\C})$ for $c \in \C$, $x, y \in \M$.
The category $\C_A$ is enriched in $\C$ and we have $[x, y]_{\C} = (x \otimes_A y^R)^L$ for $x, y \in \C_A$ by \cite[Expl.\,7.9.8]{Etingof}. 
By Prop.\,\ref{E-free-C}, the diagram
\[ \xymatrix{
T_{\C}(e) \otimes x \otimes_A y^R \ar[r]^{c_{e, x \otimes_A y^R}} \ar[d]_{c_{e, x}, 1} & x \otimes_A y^R \otimes T_{\C}(e)  \\
x \otimes T_{\C}(e) \otimes_A y^R \ar[r] & x \otimes_A T_{\C}(e) \otimes y^R \ar[u]_{1, c_{e,y^R}}
} \]
commutes for $e \in \E, x, y \in \C_A$, where $c$ is the central structure of the central functor $T_{\C}: \E \to \C$.

Let $\C$ be a multifusion category over $\E$ and $A, B$ be separable algebras in $\C$. By Prop.\,\ref{prop-ACB-Fun}, we have the following statements.
\begin{itemize}
\item There is an equivalence ${}_A\!\C \boxtimes_{\C} \C_B \xrightarrow{\simeq} {}_A\!\C_B$, $x \boxtimes_{\C} y \mapsto x \otimes y$ in $\BMod_{\E|\E}(\cat)$.
\item There is an equivalence $\Fun_{\C}(\C_A, \C_B) \xrightarrow{\simeq} {}_A\!\C_B$, $f \mapsto f(A)$ in $\BMod_{\E|\E}(\cat)$, whose inverse is defined as $x \mapsto - \otimes_A x$.
\end{itemize} 

\begin{prop}
\label{bimodule-eq}
Let $\C, \B, \D$ be multifusion categories over $\E$ and $\M \in \BMod_{\C|\B}(\cat)$ and $\N \in \BMod_{\C|\D}(\cat)$ . 
The functor $\Phi: \M^{L|\op|L} \boxtimes_{\C} \N \to \Fun^{\E}_{\C}(\M, \N)$, $m \boxtimes_{\C} n \mapsto [-, m]^R_{\C} \odot n$, is an equivalence of $\B$-$\D$-bimodules in $\cat$.
\end{prop}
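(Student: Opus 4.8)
The plan is to construct $\Phi$ from the universal property of $\boxtimes_{\C}$ in $\cat$, prove it is an equivalence of underlying $\bk$-linear categories by reducing to the model module categories $\C_A$, $\C_B$, and then upgrade this to an equivalence in $\BMod_{\B|\D}(\cat)$ using the explicit structures of Expl.\,\ref{11}, Expl.\,\ref{22} and Expl.\,\ref{33}.

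\textbf{Construction of $\Phi$.} First I would check that $(m,n) \mapsto [-,m]^R_{\C} \odot n$ assembles into a balanced $\C$-module functor $\M^{L|\op|L} \times \N \to \Fun^{\E}_{\C}(\M,\N)$ in $\cat$, in the sense of the definition preceding the Proposition. For fixed $m,n$ the functor $[-,m]^R_{\C} \odot n : \M \to \N$ is the composite $\M \xrightarrow{[-,m]^R_{\C}} \C \xrightarrow{- \odot n} \N$; the second factor is left $\C$-linear and, by the centrality of $T_{\C}$ (as in the proof of Thm.\,\ref{C-C-bi-functor}), left $\E$-linear, so the composite lands in $\Fun^{\E}_{\C}(\M,\N)$, while the module and balancing structures are furnished by naturality of the co-representation $\homm_{\M}(c \odot x, y) \simeq \homm_{\C}(c, [x,y]_{\C})$ together with the rigidity of $\C$ and the associativity constraints of the $\C$-actions. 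The coherence axioms (\ref{wbx1}), (\ref{diag-em}), (\ref{diag-b-hatb}) then reduce to naturality of the co-representation and the module-category axioms. By the universal property of $\boxtimes_{\C}$ in $\cat$ this balanced functor factors uniquely as $\Phi \circ \boxtimes_{\C}$, which defines $\Phi$ in $\cat$.

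\textbf{$\Phi$ is an equivalence.} By Prop.\,\ref{p1} there are separable algebras $A, B$ in $\C$ with $\M \simeq \C_A$ and $\N \simeq \C_B$ in $\LMod_{\C}(\cat)$. Transporting along these equivalences and using $[x,y]_{\C} = (x \otimes_A y^R)^L$, hence $[x,m]^R_{\C} = x \otimes_A m^R$, one finds $\Phi(m \boxtimes_{\C} n) = \bigl(- \mapsto (- \otimes_A m^R) \otimes n\bigr) = - \otimes_A (m^R \otimes n)$. Under the equivalence $\forget : \Fun^{\E}_{\C}(\C_A,\C_B) \simeq \Fun_{\C}(\C_A,\C_B)$ of Rem.\,\ref{p3}, the equivalence $\Fun_{\C}(\C_A,\C_B) \xrightarrow{\simeq} {}_A\!\C_B$, $f \mapsto f(A)$, and the right-dual equivalence $\M^{L|\op|L} \simeq {}_A\!\C$, the functor $\Phi$ is then identified with the equivalence ${}_A\!\C \boxtimes_{\C} \C_B \xrightarrow{\simeq} {}_A\!\C_B$, $x \boxtimes_{\C} y \mapsto x \otimes y$, recalled just before the Proposition (it carries $m \boxtimes_{\C} n$ to $m^R \otimes n$ in both descriptions). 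Thus $\Phi$ is an equivalence of underlying categories; being a left $\E$-module functor, it is then an equivalence in $\cat$, since any quasi-inverse acquires a compatible left $\E$-module structure.

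\textbf{Bimodule structure, and the main obstacle.} By Expl.\,\ref{11} and Expl.\,\ref{22} the $\B$-$\D$-bimodule action on $\M^{L|\op|L} \boxtimes_{\C} \N$ sends $b \odot (m \boxtimes_{\C} n) \odot d$ to $(m \odot^L b) \boxtimes_{\C} (n \odot d)$, and by Expl.\,\ref{33} the one on $\Fun^{\E}_{\C}(\M,\N)$ sends $b \odot f \odot d$ to $f(- \odot b) \odot d$. The natural isomorphism $\Phi\bigl(b \odot (m \boxtimes_{\C} n) \odot d\bigr) \simeq b \odot \Phi(m \boxtimes_{\C} n) \odot d$ then follows from the co-representation isomorphism relating $[-, m \odot^L b]_{\C}$ with $[- \odot b, m]_{\C}$ (rigidity of $\B$) and the associativity of the $\C$-action on $\N$; $\E$-linearity of $\Phi$ is the case $\B = \D = \E$, again using the centrality of $T_{\C}$. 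Compatibility with the monoidal natural isomorphisms $\tilde u^{\B}$, $\tilde u^{\D}$ of Expl.\,\ref{22} and Expl.\,\ref{33}, and with the coherence (\ref{diag-emne}), is a routine pasting-diagram check. The delicate step is the construction of $\Phi$: verifying that $(m,n) \mapsto [-,m]^R_{\C} \odot n$ carries a balanced $\C$-module structure in $\cat$ and satisfies all the coherence diagrams, while correctly tracking the left-dual and opposite decorations on $\M^{L|\op|L}$. Once $\Phi$ is in hand, everything else follows cleanly from Prop.\,\ref{p1}, Rem.\,\ref{p3} and the enriched-hom computations recorded above.
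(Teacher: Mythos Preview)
Your proposal is correct and lands on the same bimodule-structure argument as the paper (the isomorphism $[-, m \odot b^L]^R_{\C} \simeq [- \odot b, m]^R_{\C}$ via the internal-hom adjunction, and the left $\E$-module structure read off from the left $\B$-module structure), but your route to the underlying equivalence is more laborious than the paper's. The paper does not construct $\Phi$ from the universal property of $\boxtimes_{\C}$ in $\cat$ at all: it simply cites \cite[Cor.\,2.2.5]{Liang} for the equivalence $\M^{L|\op|L} \boxtimes_{\C} \N \simeq \Fun_{\C}(\M,\N)$ in $\fcat$, and then invokes Rem.\,\ref{p3} to pass to $\Fun_{\C}^{\E}(\M,\N)$. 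Your reduction to $\C_A$, $\C_B$ via Prop.\,\ref{p1} and Prop.\,\ref{prop-ACB-Fun} is effectively a reconstruction of that cited result; it is valid and self-contained, but the ``delicate step'' you flag (building the balanced $\C$-module functor in $\cat$ and verifying all of (\ref{wbx1}), (\ref{diag-em}), (\ref{diag-b-hatb})) is precisely what the paper avoids by importing the equivalence wholesale from \cite{Liang} and only checking the diagram (\ref{CD-bim-fun}) afterwards. Your approach buys independence from the reference and makes the formula $\Phi(m \boxtimes_{\C} n) = [-,m]^R_{\C} \odot n$ manifestly well-defined; the paper's buys brevity.
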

\begin{proof}
There are equivalences of categories $\M^{L|\op|L} \boxtimes_{\C} \N \simeq \Fun_{\C}(\M, \N) \simeq \Fun_{\C}^{\E}(\M, \N)$ by \cite[Cor.\,2.2.5]{Liang} and Rem.\,\ref{p3}.
The $\B$-$\D$ bimodule structure on $\Phi$ is induced by
\[ (b \odot^L m) \boxtimes_{\C} (n \odot d) = (m \odot b^L) \boxtimes_{\C}( n \odot d) \mapsto [-, m \odot b^L]^R_{\C} \odot (n \odot d) \simeq ([- \odot b, m]^R_{\C} \odot n) \odot d = b \odot ([-, m]^R_{\C} \odot n) \odot d \]
for $m \in \M, n \in \N, b \in \B, d \in \D$,
where the equivalence is due to the canonical isomorphisms
$\homm_{\C}(c, [-, m \odot b^L]_{\C}) \simeq \homm_{\M}(c \odot -, m \odot b^L) \simeq \homm_{\M}(c \odot - \odot b, m) \simeq \homm_{\C}(c, [- \odot b, m]_{\C})$ for $c \in \C$.
The left $\E$-module structure on $\Phi$ is induced by the left $\B$-module structure on $\Phi$.  
Recall Expl.\,\ref{11}, \ref{22} and \ref{33}. It is routine to check that $\Phi$ satisfy the diagram (\ref{CD-bim-fun}).
\end{proof}

\begin{defn}
Let $\C, \D$ be multifusion categories over $\E$ and $\M \in \BMod_{\C|\D}(\cat)$.
$\M$ is right dualizable, if there exists an $\N \in \BMod_{\D|\C}(\cat)$ equipped with bimodule functors $u: \D \to \N \boxtimes_{\C} \M$ and $v: \M \boxtimes_{\D} \N \to \C$ in $\cat$ such that the composed bimodule functors
\[ \M \simeq \M \boxtimes_{\D} \D \xrightarrow{1_{\M} \boxtimes_{\D} u} \M \boxtimes_{\D} \N \boxtimes_{\C} \M \xrightarrow{v \boxtimes_{\C} 1_{\M}} \C \boxtimes_{\C} \M \simeq \M \]
\[ \N \simeq \D \boxtimes_{\D} \N \xrightarrow{u \boxtimes_{\D} 1_{\N}} \N \boxtimes_{\C} \M \boxtimes_{\D} \N \xrightarrow{1_{\N} \boxtimes_{\C} v} \N \boxtimes_{\C} \C \simeq \N \]
in $\cat$ are isomorphic to the identity functor.
In this case, the $\D$-$\C$ bimodule $\N$ in $\cat$ is left dualizable.
\end{defn}

\begin{prop}
The right dual of $\M$ in $\BMod_{\C|\D}(\cat)$ is given by a $\D$-$\C$ bimodule $\M^{L|\op|L}$ in $\cat$ equipped with two maps $u$ and $v$ defined as follows:
\begin{align}
u:& \D \to \Fun_{\C}(\M, \M) \simeq \M^{L|\op|L} \boxtimes_{\C} \M,  \qquad d \mapsto - \odot d, \nonumber \\
 v:& \M \boxtimes_{\D} \M^{L|\op|L} \to \C, \qquad x \boxtimes_{\D} y \mapsto [x, y]^R_{\C} \label{v-evaluation}
\end{align}
\end{prop}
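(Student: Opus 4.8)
The plan is to verify the three conditions of the preceding definition for the pair $(\M^{L|\op|L}, u, v)$: that $\M^{L|\op|L}$ is a $\D$-$\C$-bimodule in $\cat$, that $u$ and $v$ are bimodule functors in $\cat$, and that the two zig-zag composites displayed in that definition are isomorphic to the identity functors in $\cat$. The first point is Expl.\,\ref{11}. The tools for the rest are Expl.\,\ref{22} and \ref{33} (for the $\cat$-bimodule structures on $\M\boxtimes_{\D}\M^{L|\op|L}$ and on $\Fun^{\E}_{\C}(\M,\M)$), Prop.\,\ref{bimodule-eq} (which supplies the equivalence $\Phi\colon \M^{L|\op|L}\boxtimes_{\C}\M\xrightarrow{\simeq}\Fun^{\E}_{\C}(\M,\M)$ of $\D$-$\D$-bimodules in $\cat$ used implicitly in the formula for $u$), the enrichment of $\M$ over $\C$ with internal hom $[x,y]_{\C}$ and adjunction $\homm_{\M}(c\odot x,y)\simeq\homm_{\C}(c,[x,y]_{\C})$, the compatibility square between $[-,-]_{\C}$ and the central structure of $T_{\C}$ displayed before Prop.\,\ref{bimodule-eq}, and the underlying $\fcat$-level duality of finite semisimple module categories (\cite[Cor.\,2.2.5]{Liang} and \cite[Ch.\,7]{Etingof}).

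\textbf{$u$ is a bimodule functor in $\cat$.} Post-composing $u$ with $\Phi$ turns it into the functor $\D\to\Fun^{\E}_{\C}(\M,\M)$, $d\mapsto(-\odot d)$, which is the right-module structure functor of $\M$; under the equivalence $\LMod_{\C\boxtimes_{\E}\D^{\rev}}(\cat)\simeq\BMod_{\C|\D}(\cat)$ this is a monoidal functor over $\E$ out of $\D^{\rev}$. Give $\D$ its regular $\D$-$\D$-bimodule structure and $\Fun^{\E}_{\C}(\M,\M)$ the structure of Expl.\,\ref{33} with $\CP=\D$; then $u(d_1\otimes d\otimes d_2)\simeq((-\odot d_1)\odot d)\odot d_2\simeq d_1\odot u(d)\odot d_2$, and the diagram (\ref{CD-bim-fun}) for $u$, together with the left-$\E$-module compatibilities, follows from the monoidal and over-$\E$ coherence of $-\odot-$. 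Hence $u$ is a $\D$-$\D$-bimodule functor in $\cat$.

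\textbf{$v$ is a bimodule functor in $\cat$.} The bifunctor $\M\times\M^{L|\op|L}\to\C$, $(x,y)\mapsto[x,y]^{R}_{\C}$, is balanced over $\D$: for $d\in\D$ the right action $x\mapsto x\odot d$ on $\M$ and the left action $d\odot^{L}y=y\odot d^{L}$ on $\M^{L|\op|L}$ yield a canonical isomorphism $[x\odot d,y]_{\C}\simeq[x,d\odot^{L}y]_{\C}$ from the adjunction; it is $\E$-balanced because the square before Prop.\,\ref{bimodule-eq} says precisely that the left-$\E$-module structures of $[-,-]_{\C}$ in its two variables are matched up by the symmetric braiding $r$, which in turn is the centrality of $T_{\C}$. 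By the universal property of $\boxtimes_{\D}$ in $\cat$ this descends to a left $\E$-module functor $v\colon\M\boxtimes_{\D}\M^{L|\op|L}\to\C$. Its $\C$-$\C$-bimodule structure $c\odot v(x\boxtimes_{\D}y)\odot c'\simeq v\bigl((c\odot x)\boxtimes_{\D}(y\odot^{L}c')\bigr)$ is built from the two adjunction isomorphisms $[c\odot x,y]_{\C}\simeq c\odot[x,y]_{\C}$ and $[x,c'{}^{L}\odot y]_{\C}\simeq[x,y]_{\C}\odot c'$, and the source $\M\boxtimes_{\D}\M^{L|\op|L}$ already carries the required $\C$-$\C$-bimodule structure in $\cat$ by Expl.\,\ref{22}; checking (\ref{CD-bim-fun}) for $v$ is then a naturality exercise.

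\textbf{Triangle identities, and the main obstacle.} On underlying categories the claim that $(\M,\M^{L|\op|L},u,v)$ is a dual pair — i.e. that the composite $\M\simeq\M\boxtimes_{\D}\D\xrightarrow{1\boxtimes u}\M\boxtimes_{\D}\M^{L|\op|L}\boxtimes_{\C}\M\xrightarrow{v\boxtimes 1}\C\boxtimes_{\C}\M\simeq\M$ and its mirror are isomorphic to identities — is standard; concretely, modelling $\M$ as $(\C\boxtimes_{\E}\D^{\rev})_{A}$ for a separable algebra $A$ via Prop.\,\ref{p1}, $\Phi^{-1}(\id_{\M})$ is the canonical element attached to $A$, $v$ becomes a relative tensor product over $A$, and the first composite evaluates to $x\mapsto x\otimes_{A}A\simeq x$ (using $A\otimes_{A^{R}\otimes A}A^{R}\simeq\unit_{\C}$-type identities as in the proof of Prop.\,\ref{double-centralizer}); the second composite is symmetric. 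What must be added is that all of this lives in $\cat$: by the two previous steps and Expl.\,\ref{11}, \ref{22}, \ref{33}, every functor in sight and the coherence equivalences $\M\boxtimes_{\D}\D\simeq\M$ and $\C\boxtimes_{\C}\M\simeq\M$ are morphisms and equivalences in $\cat$, so the only remaining point is that the two zig-zag isomorphisms are left $\E$-module natural isomorphisms. I expect this to be the main obstacle; it should, however, reduce through the square before Prop.\,\ref{bimodule-eq} to the commutativities (\ref{dig-bimodule1}), (\ref{dig-bimodule2}) and the centrality of $T_{\C}$, so the argument is coherence bookkeeping rather than anything essentially new.
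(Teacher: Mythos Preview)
Your proposal is correct and follows the same overall strategy as the paper, but the paper's proof is much shorter: it simply cites \cite[Thm.\,4.6]{LiangFH} for the entire duality statement at the $\fcat$ level (triangle identities included) and then remarks that it is routine to check $u$ and $v$ are bimodule functors in $\cat$. You instead rebuild the $\fcat$-level duality by hand via the $(\C\boxtimes_{\E}\D^{\rev})_A$ model, which is fine but unnecessary given the citation.

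The point you flag as ``the main obstacle'' --- that the zig-zag isomorphisms be left $\E$-module natural isomorphisms --- is in fact automatic and needs no coherence bookkeeping. By the definitions of $\LMod_{\C}(\cat)$ and $\BMod_{\C|\D}(\cat)$, a 2-morphism is simply a $\C$- (resp.\ $\C$-$\D$-) module natural transformation, and the paper notes that any such is automatically a left $\E$-module natural transformation (because the $\E$-action factors through $T_{\C}$; cf.\ Prop.\,\ref{F-in-Cat_E} and Rem.\,\ref{p3}). Since the triangle isomorphisms supplied by \cite[Thm.\,4.6]{LiangFH} are already $\C$-$\D$-bimodule natural isomorphisms, they lie in $\cat$ for free once you have checked that $u$, $v$, and the intermediate objects are in $\cat$ --- which is exactly what your first three steps do.
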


\begin{proof}
By \cite[Thm.\,4.6]{LiangFH}, the object $\M^{L|\op|L}$ in $\BMod_{\D|\C}(\fcat)$, equipped with the maps $u$ and $v$, are the right dual of $\M$ in $\BMod_{\C|\D}(\fcat)$.
It is routine to check that $u$ is a $\D$-bimodule functor in $\cat$ and $v$ is a $\C$-bimodule functor in $\cat$.
\end{proof}

\begin{defn}
\label{defn-invertible}
Let $\C, \D$ be multifusion categories over $\E$.
An $\M \in \BMod_{\C|\D}(\cat)$ is \emph{invertible} if there is an equivalence $\D^{\rev} \simeq \Fun_{\C}^{\E}(\M, \M)$ of multifusion categories over $\E$.
If such an invertible $\M$ exists, $\C$ and $\D$ are said to be \emph{Morita equivalent in $\cat$}.
\end{defn}

\begin{prop}
Let $\M$ belong to $\BMod_{\C|\D}(\cat)$. The following conditions are equivalent.
\begin{itemize}
\item[(i)] $\M$ is invertible,
\item[(ii)] The functor $\D^{\rev} \rightarrow \Fun^{\E}_{\C}(\M, \M), d \mapsto - \odot d$ is an equivalence of multifusion categories over $\E$,
\item[(iii)] The functor $\C \rightarrow \Fun^{\E}_{|\D}(\M, \M), c \mapsto c \odot -$ is an equivalence of multifusion categories over $\E$.
\end{itemize}
\end{prop}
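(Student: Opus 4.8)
The plan is to prove the cycle (ii) $\Rightarrow$ (i) $\Rightarrow$ (iii) $\Rightarrow$ (ii), with the double-centralizer theorem (Prop.\,\ref{double-centralizer}) as the main engine and Prop.\,\ref{bimodule-eq}, Prop.\,\ref{p1}, Prop.\,\ref{A-fun-bim-eq} for the bookkeeping.

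The implication (ii) $\Rightarrow$ (i) holds essentially by Def.\,\ref{defn-invertible}: the assignment $d \mapsto -\odot d$ is a monoidal functor over $\E$ from $\D^{\rev}$ to $\Fun^{\E}_{\C}(\M,\M)$ --- each $-\odot d$ is a left $\E$-module endofunctor of $\M$, composition of such functors realizes the reversed tensor product of $\D$ through the right-$\D$-module associativity constraint of $\M$, and the whole package is compatible with the central functors --- so if it happens to be an equivalence then $\M$ is invertible. The symmetric remark applies to $c \mapsto c \odot -$ for (iii).

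For (i) $\Rightarrow$ (iii) I would first show that $\M$ is faithful as a left $\C$-module. By Prop.\,\ref{bimodule-eq} (with $\N = \M$ and $\B = \D$) there is an equivalence $\Fun^{\E}_{\C}(\M,\M) \simeq \M^{L|\op|L} \boxtimes_{\C} \M$ in $\cat$, so (i) gives $\D^{\rev} \simeq \M^{L|\op|L} \boxtimes_{\C} \M$; if a nonzero subobject $\unit^{i}_{\C}$ of $\unit_{\C}$ satisfied $\unit^{i}_{\C} \odot \M = 0$, it would annihilate the right tensor factor of $\M^{L|\op|L} \boxtimes_{\C} \M$, hence the unit object of $\D^{\rev}$, which is impossible. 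Now the double-centralizer theorem (Prop.\,\ref{double-centralizer}) yields $\C \simeq \Fun^{\E}_{\Fun^{\E}_{\C}(\M,\M)}(\M,\M)$ as multifusion categories over $\E$, and --- reading off its proof --- this equivalence is realized by the canonical functor $c \mapsto c \odot -$, where $\Fun^{\E}_{\C}(\M,\M)$ acts on $\M$ tautologically by evaluation. It then remains to identify $\Fun^{\E}_{\Fun^{\E}_{\C}(\M,\M)}(\M,\M)$ with $\Fun^{\E}_{|\D}(\M,\M)$, i.e.\ to check that the given right $\D$-action on $\M$ agrees, through the equivalence supplied by (i), with the tautological $\Fun^{\E}_{\C}(\M,\M)$-action. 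I would carry this out using the model $\M \simeq \C_{A}$ of Prop.\,\ref{p1} together with the identification $\Fun^{\E}_{\C}(\C_{A},\C_{A}) \simeq ({}_{A}\C_{A})^{\rev}$ of Prop.\,\ref{A-fun-bim-eq}: both actions present $\M$ as $\C_{A}$ equipped with a monoidal functor over $\E$ (from $\D^{\rev}$, resp.\ from $\Fun^{\E}_{\C}(\C_A,\C_A) \simeq ({}_{A}\C_{A})^{\rev}$) describing the action, and an equivalence of multifusion categories over $\E$ between $\D^{\rev}$ and $\Fun^{\E}_{\C}(\M,\M)$ carries one presentation to the other. Granting this, $\C \simeq \Fun^{\E}_{|\D}(\M,\M)$ via $c \mapsto c \odot -$, which is (iii).

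The implication (iii) $\Rightarrow$ (ii) is the mirror image: (iii) exhibits $\M$, as a left $\C$-module, as the tautological $\Fun^{\E}_{|\D}(\M,\M)$-module along $c \mapsto c \odot -$, so $\Fun^{\E}_{\C}(\M,\M) = \Fun^{\E}_{\Fun^{\E}_{|\D}(\M,\M)}(\M,\M)$, and the double-centralizer theorem applied to the (now faithful) right $\D$-module $\M$ identifies this with $\D^{\rev}$ via $d \mapsto -\odot d$, which is (ii), closing the cycle. The step I expect to be the genuine obstacle is the bookkeeping inside (i) $\Rightarrow$ (iii) that upgrades the a priori non-canonical equivalence of Def.\,\ref{defn-invertible} to one realized by the canonical structure functor; keeping track of the left $\E$-module structures and the monoidal structures simultaneously through Prop.\,\ref{p1}, Prop.\,\ref{A-fun-bim-eq} and Prop.\,\ref{double-centralizer} is where the care is needed, while the remaining manipulations are formal.
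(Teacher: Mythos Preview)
Your approach uses the same engine as the paper --- Prop.\,\ref{double-centralizer} --- but organizes the argument as a cycle (ii) $\Rightarrow$ (i) $\Rightarrow$ (iii) $\Rightarrow$ (ii), whereas the paper simply declares (i) $\Leftrightarrow$ (ii) ``by Def.\,\ref{defn-invertible}'' and then gets (ii) $\Leftrightarrow$ (iii) from the double-centralizer equivalence $\C \simeq \Fun^{\E}_{\Fun^{\E}_{\C}(\M,\M)}(\M,\M)$. The difference is entirely in how one reads Def.\,\ref{defn-invertible}: the paper evidently intends the equivalence $\D^{\rev} \simeq \Fun^{\E}_{\C}(\M,\M)$ to be the canonical action functor $d \mapsto -\odot d$ (this is exactly the monoidal functor over $\E$ that encodes the right $\D$-module structure in the paper's setup of $\RMod_{\D}(\cat)$ and $\BMod_{\C|\D}(\cat)$), so (i) and (ii) are literally the same statement. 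Under that reading, the ``genuine obstacle'' you flag --- upgrading an abstract equivalence to the canonical one inside (i) $\Rightarrow$ (iii) --- disappears, and the identification $\Fun^{\E}_{\Fun^{\E}_{\C}(\M,\M)}(\M,\M) \simeq \Fun^{\E}_{|\D}(\M,\M)$ is immediate because the tautological $\Fun^{\E}_{\C}(\M,\M)$-action \emph{is} the right $\D$-action under (ii).

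Your treatment is still correct, just longer; and it adds one point the paper's two-line proof suppresses: Prop.\,\ref{double-centralizer} requires $\M$ to be faithful, and you supply an argument for this, which the paper omits.
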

\begin{proof}
We obtain (i) $\Leftrightarrow$ (ii) by the Def.\,\ref{defn-invertible}.
Since $\Fun^{\E}_{\Fun^{\E}_{\C}(\M, \M)}(\M, \M)$ and $\C$ are equivalent as multifusion categories over $\E$ by Prop.\ref{double-centralizer}, we obtain (ii) $\Leftrightarrow$ (iii).
\end{proof}

\subsection{Characterization of Morita equivalence in $\cat$}
\begin{conv}
Throughout this subsection, we consider multifusion categories $\C$ over $\E$ with the property that $\E \to Z(\C)$ is fully faithful.
\end{conv}
Let $\C$ and $\D$ be multifusion categories over $\E$. 
We use $\beta$ and $\gamma$ to denote the central structures of the central functors $T_{\C}: \E \to \C$ and $T_{\D}: \E \to \D$ respectively.
\begin{thm}
\label{Morita-only-if}
 Let $\M$ be invertible in $\BMod_{\C|\D}(\cat)$. 
The left action of $Z(\C, \E)$ and the right action of $Z(\D, \E)$ on $\Fun^{\E}_{\C|\D}(\M, \M)$ induce an equivalence of multifusion categories over $\E$
\[ Z(\C, \E) \xrightarrow{L} \Fun^{\E}_{\C|\D}(\M, \M) \xleftarrow{R} Z(\D, \E) \]
Moreover, $Z(\C, \E)$ and $Z(\D, \E)$ are equivalent as braided multifusion categories over $\E$.
\end{thm}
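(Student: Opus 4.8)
The plan is to exhibit $L$ and $R$ as quasi-inverses to two forgetful functors out of $\Fun^{\E}_{\C|\D}(\M,\M)$, and then to compare the two descriptions of the braiding that they produce. First I would record that $\Fun^{\E}_{\C|\D}(\M,\M)$ is itself a multifusion category over $\E$: the monoidal structure is composition of functors, and the $\E$-central functor sends $e$ to $T_{\C}(e)\odot-$ (a $\C$-$\D$-bimodule endofunctor of $\M$ via the central structure of $T_{\C}$), exactly as in Expl.\,\ref{fun-C-E} and Expl.\,\ref{33}. The functor $L$ then sends $(z,\gamma_z)\in Z(\C,\E)$ to $z\odot-\colon\M\to\M$, whose left $\C$-module structure is induced by the half-braiding $\gamma_z$ precisely as in the proof of Thm.\,\ref{C-C-bi-functor}, whose right $\D$-module structure is the bimodule associativity of $\M$, and which is $\E$-compatible because $\gamma_z$ centralises $T_{\C}(\E)$; the monoidal and $\E$-structures on $L$ (namely $L(z)\circ L(z')\simeq L(z\otimes z')$ and $L(T_{\C}(e))=T_{\C}(e)\odot-\xrightarrow{u^{\C}_e}e\odot-$) come from the module-associativity constraints of $\M$. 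Symmetrically, $R$ sends $(w,\gamma_w)\in Z(\D,\E)$ to $-\odot w$, using the right $\D$-action of $\M$ and the half-braiding $\gamma_w$.

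To show $L$ is an equivalence, I would argue as follows. Any $f\in\Fun^{\E}_{\C|\D}(\M,\M)$ is in particular a right $\D$-module functor in $\cat$, hence an object of $\Fun^{\E}_{|\D}(\M,\M)$; since $\M$ is invertible, the functor $\C\xrightarrow{\sim}\Fun^{\E}_{|\D}(\M,\M)$, $c\mapsto c\odot-$, is an equivalence (condition (iii) of the Proposition after Def.\,\ref{defn-invertible}), so there is a unique $c_f\in\C$ with $f\simeq c_f\odot-$. The left $\C$-module structure $s^f$ on $f$, which is compatible with its right $\D$- and $\E$-module structures because $f$ is a $1$-morphism in $\BMod_{\C|\D}(\cat)$, transports along the fully faithful equivalence $\C\simeq\Fun^{\E}_{|\D}(\M,\M)$ to a natural isomorphism $c_f\otimes c\simeq c\otimes c_f$ satisfying the hexagon and centralising $T_{\C}(\E)$; thus $c_f\in Z(\C,\E)$. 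The assignment $f\mapsto c_f$ is quasi-inverse to $L$ (the monoidal and $\E$-structures matching by the same identifications), so $L$ is an equivalence of multifusion categories over $\E$. The argument for $R$ is identical, with condition (ii) ($\D^{\rev}\xrightarrow{\sim}\Fun^{\E}_{\C}(\M,\M)$, $d\mapsto-\odot d$) in place of (iii).

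Combining the two, $R^{-1}\circ L\colon Z(\C,\E)\to Z(\D,\E)$ is an equivalence of multifusion categories over $\E$. For the ``moreover'' — that it respects braidings — I would push both braidings onto $\Fun^{\E}_{\C|\D}(\M,\M)$: along $L$, the braiding of $Z(\C,\E)$ becomes $L(z)L(z')\simeq(z\otimes z')\odot-\xrightarrow{\gamma_z(z')\odot-}(z'\otimes z)\odot-\simeq L(z')L(z)$, and along $R$ the braiding of $Z(\D,\E)$ becomes the analogous family built from $\gamma_w$. Unwinding the definitions, each of these isomorphisms of $\C$-$\D$-bimodule endofunctors of $\M$ is determined by the single $\C$-$\D$-bimodule structure on $\M$ together with the half-braidings of the underlying objects $c_f,d_f$ — which, as produced in the previous paragraph, are themselves fixed by that bimodule structure and by the identification $c_f\odot-\simeq-\odot d_f$ — so a diagram chase shows the two braidings on $\Fun^{\E}_{\C|\D}(\M,\M)$ coincide, whence $Z(\C,\E)\simeq Z(\D,\E)$ as braided multifusion categories over $\E$. (Alternatively, by Thm.\,\ref{thm-3.19} the categories $Z(\C,\E)$ and $Z(\D,\E)$ are the $E_1$-centers of $\C$ and $\D$ in $\cat$, hence $E_2$-algebras, and an invertible bimodule exhibits $\C$ and $\D$ as Morita equivalent $E_1$-algebras, so their $E_1$-centers agree as $E_2$-algebras.) The main obstacle is exactly this last comparison: verifying that the families induced by $L$ and by $R$ literally agree requires propagating the half-braiding data through the $\E$-module compatibility squares, which is a lengthy if mechanical computation; everything else is bookkeeping with the module-functor axioms already set up for $\BMod_{\C|\D}(\cat)$.
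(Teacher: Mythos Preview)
Your proposal is correct and follows essentially the same route as the paper: both use invertibility to identify $\C\simeq\Fun^{\E}_{|\D}(\M,\M)$ (resp.\ $\D^{\rev}\simeq\Fun^{\E}_{\C}(\M,\M)$), observe that a half-braiding on $z\in\C$ corresponds exactly to a left $\C$-module structure on $z\odot-$, and then compare the two induced braidings on $\Fun^{\E}_{\C|\D}(\M,\M)$. The paper's braiding verification is slightly crisper than your ``diagram chase'' sketch --- it simply notes that the identification $z\odot-\simeq-\odot d$ is both a left $\C$-module and a right $\D$-module natural isomorphism, which immediately makes the relevant square commute --- but the content is the same.
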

\begin{proof}
Since $\M$ is invertible, the functor $\C \to \Fun_{\D^{\rev}}(\M, \M)$, $z \mapsto z \odot -$ is a monoidal equivalence over $\E$.
Then the induced monoidal equivalence $L: Z(\C, \E) \xrightarrow{(z, \beta_{z, -}) \mapsto (z \odot -, \beta_{z, -})} \Fun^{\E}_{\C|\D}(\M, \M)$ is constructed as follows.
\begin{itemize}
\item An object $z \in Z(\C, \E)$ is an object $z \in \C$, equipped with a half-braiding $\beta_{z,c}: z \otimes c \rightarrow c \otimes z$ for all $c \in \C$, such that the composition $z \otimes T_{\C}(e) \xrightarrow{\beta_{z,T_{\C}(e)}} T_{\C}(e) \otimes z \xrightarrow{\beta_{T_{\C}(e),z}} z \otimes T_{\C}(e), e \in \E$, equals to identity.

\item An object $z \odot -$ in $\Fun^{\E}_{\C|\D}(\M, \M)$ is an object $z \odot -$ in $\Fun^{\E}_{\D^{\rev}}(\M, \M)$ for $z \in \C$, equipped with a natural isomorphism $z \odot c \odot - \xrightarrow{\beta_{z,c}} c \odot z \odot -$ for $c \in \C, - \in \M$.
The left $\E$-module structure on $z \odot -$ is induced by Prop.\,\ref{F-in-Cat_E}.
Notice that $z \odot -$ satisfies the diagram (\ref{CD-bim-fun}) by the last diagram in Rem.\,\ref{CD-bim-fun-rem} and the equality $\beta_{T_{\C}(e),z} = \beta_{z, T_{\C}(e)}^{-1}$.
\end{itemize}

It is routine to check that $L$ is a monoidal functor over $\E$.
By the same reason, the functor $R: Z(\D, \E) \simeq Z(\D, \E)^{\rev} \xrightarrow{\simeq} \Fun^{\E}_{\C|\D}(\M, \M)$ is defined by $(a, \gamma_{a,-})\mapsto (- \odot a, \gamma_{a,-})$, where the second $\gamma_{a,-}$ is a natural isomorphism $- \odot a \odot d \xrightarrow{\gamma_{a, d}} - \odot d \odot a$ for $d \in \D$.
Thus $Z(\C, \E) \simeq Z(\D, \E)$.

Suppose $R^{-1} \circ L: Z(\C, \E) \to Z(\D, \E)$ carries $z, z'$ to $d, d'$, respectively. The diagram 
\[ \xymatrix{
z \odot (z' \odot x) \ar[r]^{\simeq} \ar[d]_{\beta_{z, z'}, 1_x} & (z' \odot x) \odot d \ar[d]^{\simeq} \ar[r]^{\simeq} & (x \odot d') \odot d \ar[d]^{1_x, \gamma_{d', d}} \\
z' \odot (z \odot x) \ar[r]_{\simeq} & z' \odot (x \odot d) \ar[r]_{\simeq} & (x \odot d) \odot d'
} \]
commutes for $x \in \M$.
Since the isomorphism $z \odot - \simeq - \odot d$ is a left $\C$-module natural isomorphism, the left square commutes. Since the isomorphism $z' \odot - \simeq - \odot d'$ is a right $\D$-module natural isomorphism, the right square commutes. 
Then the commutativity of the outer square implies that the equivalence $R^{-1} \circ L$ preserves braidings.
The equivalence $R^{-1} \circ L$, equipped with the monoidal natural isomorphism $L(T_{\C}(e)) = T_{\C}(e) \odot - \xrightarrow{v^{\M}_e} - \odot T_{\D}(e) = R(T_{\D}(e))$, is the braided equivalence over $\E$.
\end{proof}

\begin{lem}
\label{lem-4.17}
Let $\C$ be a fusion category over $\E$ such that the central functor $T_{\C}: \E \to \C$ is fully faithful. Let $\forget: Z(\C, \E) \rightarrow \C$ and $I_{\C}: \C \rightarrow Z(\C, \E)$ denote the forgetful functor and its right adjoint. 
\begin{itemize}
\item[(1)] There is a natural isomorphism $I_{\C}(x) \cong [\unit_{\C}, x]_{Z(\C, \E)}$ for all $x \in \C$.
\item[(2)] The object $A \coloneqq I_{\C}(\unit_{\C})$ is a connected \'{e}tale algebra in $Z(\C, \E)$; moreover for any $x \in \C$, the object $I_{\C}(x)$ has a natural structure of a right $A$-module.
\item[(3)] The functor $I_{\C}$ induces an equivalence of fusion categories $\C \simeq Z(\C, \E)_A$ over $\E$. 
Notice that $Z(\C, \E)_A$ is the category of right $A$-modules in $Z(\C,\E)$.
\end{itemize}
\end{lem}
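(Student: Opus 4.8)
The plan is to reduce Lemma \ref{lem-4.17} to its classical analogue over $\vect$ (i.e. the statement that $\C \simeq Z(\C)_A$ for $A = I(\unit_{\C})$ the canonical Lagrangian-type \'{e}tale algebra, which is well-documented in the literature, e.g. \cite{DMNO} and \cite{Etingof}), and then to upgrade every piece of structure to the relative setting over $\E$ by tracking the central functor $T_{\C}\colon \E \to \C$ and its interaction with the half-braidings in $Z(\C,\E)$. Throughout I use that $Z(\C,\E) = \E'|_{Z(\C)}$ is a non-degenerate braided fusion category over $\E$ with $Z(\C,\E)' = \E$, as established in Expl.\,\ref{center-c}, and that $Z(\C,\E)$ is enriched in itself and that $\C$, being a left $Z(\C,\E)$-module via $\forget$, is enriched in $Z(\C,\E)$ in the sense recalled just before the lemma.

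For part (1), I would argue that $I_{\C}$, being right adjoint to the monoidal functor $\forget$, satisfies $\homm_{\C}(\forget(y), x) \cong \homm_{Z(\C,\E)}(y, I_{\C}(x))$ naturally in $y \in Z(\C,\E)$ and $x \in \C$; unwinding the enriched-hom adjunction $\homm_{\C}(y \odot \unit_{\C}, x) \simeq \homm_{Z(\C,\E)}(y, [\unit_{\C}, x]_{Z(\C,\E)})$ together with $y \odot \unit_{\C} = \forget(y) \otimes \unit_{\C} \cong \forget(y)$ identifies the two right adjoints, hence $I_{\C}(x) \cong [\unit_{\C}, x]_{Z(\C,\E)}$ by Yoneda. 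For part (2), the statement that $A \coloneqq I_{\C}(\unit_{\C})$ is a connected \'{e}tale algebra is exactly the classical fact (the unit adjunction makes $A$ an algebra, connectedness is $\homm(\unit, A) = \homm(\unit,\unit) = \bk$, commutativity and separability follow because $\forget$ is braided separable and $\C$ is semisimple — all of this is in \cite[Prop.\,3.22]{DMNO} or can be cited from there); the right $A$-module structure on $I_{\C}(x)$ comes from the lax-monoidal structure on the right adjoint $I_{\C}$, namely $I_{\C}(x) \otimes A = I_{\C}(x) \otimes I_{\C}(\unit_{\C}) \to I_{\C}(x \otimes \unit_{\C}) = I_{\C}(x)$, and this is unchanged from the $\vect$ case. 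The only new point is to note that $A$ lies in $Z(\C,\E)$ and not merely in $Z(\C)$, but that is automatic since $I_{\C}$ lands in $Z(\C,\E)$ by definition.

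For part (3), the functor $\C \to Z(\C,\E)_A$ sends $x$ to $I_{\C}(x)$ with its canonical right $A$-module structure from part (2); its would-be inverse is $Z(\C,\E)_A \to \C$, $M \mapsto M \otimes_A \forget(?)$ — more precisely the free-module/forgetful adjunction $\forget \colon Z(\C,\E) \to \C$ factors as $Z(\C,\E) \to Z(\C,\E)_A \to \C$ where the second functor is $M \mapsto$ (the image of $M$ under the canonical functor $Z(\C,\E)_A \simeq$ modules, composed with $\forget$), and the classical argument (\cite[Prop.\,3.24]{DMNO} or the discussion around \cite[Thm.\,7.10.?]{Etingof}) shows this is an equivalence because $A$ is the internal end of $\unit_{\C}$ and $\C$ is an indecomposable $Z(\C,\E)$-module category (here full faithfulness of $T_{\C}\colon \E \to \C$, hence of $\E \to Z(\C)$, is used to guarantee indecomposability, cf. the Remark after Expl.\,\ref{ex3}). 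What remains, and what is genuinely new, is to equip this equivalence with the structure of a monoidal functor \emph{over $\E$}: the monoidal structure $I_{\C}(x) \otimes_A I_{\C}(y) \simeq I_{\C}(x \otimes y)$ is the standard one on the right adjoint of a monoidal functor after passing to $A$-modules, and the structure of monoidal functor over $\E$ amounts to producing a monoidal natural isomorphism $I_{\C}(T_{\C}(e)) \simeq T_{Z(\C,\E)_A}(e) = T_{Z(\C,\E)}(e) \otimes A$, which I would extract from $I_{\C}(T_{\C}(e)) \cong [\unit_{\C}, T_{\C}(e)]_{Z(\C,\E)} \cong [\unit_{\C}, \unit_{\C}]_{Z(\C,\E)} \otimes T_{Z(\C,\E)}(e) = A \otimes T_{Z(\C,\E)}(e)$ using that $T_{\C}(e) \cong e \odot \unit_{\C}$ and the enrichment is compatible with the $\E$-action (the commuting diagram displayed right before the statement of the lemma is exactly the coherence needed here).

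I expect the main obstacle to be part (3): checking that the equivalence $\C \simeq Z(\C,\E)_A$ is not just an equivalence of fusion categories but an equivalence \emph{over $\E$}, i.e. that it intertwines $T_{\C}$ with $T_{Z(\C,\E)_A}$ compatibly with the central (monoidal-over-$\E$) structures. Concretely this requires verifying that the isomorphism $I_{\C}(T_{\C}(e)) \simeq T_{Z(\C,\E)}(e) \otimes A$ is a morphism of right $A$-modules and is monoidal in $e$, and that the resulting triangle of monoidal natural isomorphisms over $\E$ commutes — a diagram chase involving the half-braidings $\beta$ of $Z(\C,\E)$, the central structure of $T_{\C}$, and the lax structure of $I_{\C}$. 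Everything else is either a direct transcription of \cite{DMNO}/\cite{Etingof} or a routine check, so I would state parts (1) and (2) with brief justifications and citations and devote the bulk of the written proof to the $\E$-equivariance in part (3).
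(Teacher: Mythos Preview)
Your proposal is essentially the same approach as the paper's, with one substantive difference in part (3) that you should correct.

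The paper's proof of (3) does not go through indecomposability of $\C$ as a $Z(\C,\E)$-module. Instead, the key step is that the hypothesis ``$T_{\C}\colon \E \to \C$ fully faithful'' implies, via \cite[Lem.\,3.12]{DNO}, that the forgetful functor $\forget\colon Z(\C,\E) \to \C$ is \emph{surjective} (essentially surjective on objects). This surjectivity is precisely the hypothesis of \cite[Lem.\,3.5]{DMNO}, which then yields directly that $A = I_{\C}(\unit_{\C})$ is connected \'etale and that $I_{\C}\colon \C \to Z(\C,\E)_A$ is a \emph{monoidal} equivalence. Concretely, surjectivity lets one write any $x \in \C$ as $\forget(z)$ and then compute $I_{\C}(x) \cong z \otimes A$, from which the monoidal structure map $\mu_{x,y}\colon I_{\C}(x \otimes y) \to I_{\C}(x) \otimes_A I_{\C}(y)$ is visibly an isomorphism. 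Your route through indecomposability and the Ostrik-type theorem (\cite[Thm.\,7.10.1]{Etingof}) only gives an equivalence of \emph{module categories}; the lax monoidal structure on $I_{\C}$ that you invoke is not automatically strong, and you would still need surjectivity (or an equivalent argument) to upgrade it. So replace the indecomposability step by the surjectivity step from \cite[Lem.\,3.12]{DNO}.

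Your treatment of the over-$\E$ structure in (3) matches the paper exactly: the paper gives the structure isomorphism as $I_{\C}(T_{\C}(e)) = [\unit_{\C}, T_{\C}(e)]_{Z(\C,\E)} \simeq T_{\C}(e) \otimes A$, identifying $Z(\C,\E)_A$ as a fusion category over $\E$ via Expl.\,\ref{Ex-A-bimodule}, and leaves the coherence verification as routine. Your parts (1) and (2) are identical to the paper's.
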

\begin{proof}
For any $z \in Z(\C, \E), x \in \C$, we have the equivalences
$\homm_{Z(\C, \E)}(z, I_{\C}(x)) \simeq \homm_{\C}(z, x) \simeq \homm_{Z(\C, \E)}(z, [\unit_{\C}, x]_{Z(\C, \E)})$. By Yoneda lemma, we obtain $I_{\C}(x) \cong [\unit_{\C}, x]_{Z(\C, \E)}$.

Since $T_{\C}: \E \to \C$ is fully faithful, the forgetful functor $\forget: Z(\C, \E) \to \C$ is surjective by \cite[Lem.\,3.12]{DNO}.
By \cite[Lem.\,3.5]{DMNO}, the object $A$ is a connected \'etale algebra and there is a monoidal equivalence $\C \simeq Z(\C, \E)_A$. 
More explicitly,  for any object $x \in \C$, the object $I_{\C}(x) = [\unit_{\C}, x]_{Z(\C, \E)}$ is a right $A$-module and the monoidal functor 
\[ I_{\C} = [\unit_{\C}, -]_{Z(\C, \E)}: \C \rightarrow Z(\C, \E)_A  \]
is a monoidal equivalence.
The left $A$-module structure on $I_{\C}(x)$ is given by
$ A \otimes I_{\C}(x) \xrightarrow{\beta_{A, I_{\C}(x)}} I_{\C}(x) \otimes A \rightarrow I_{\C}(x)$.
One can check that for $x = \forget(z) \in \C$ with $z \in Z(\C, \E)$, one have $I_{\C}(x) \cong z \otimes A$ (as $A$-modules).
The monoidal structure on $I_{\C}$ is induced by
\[ \mu_{x,y}: I_{\C}(x \otimes y) = [\unit_{\C}, \forget(z) \otimes y]_{Z(\C, \E)} \simeq z \otimes [\unit_{\C}, y]_{Z(\C, \E)} = z \otimes A \otimes_A I_{\C}(y) = I_{\C}(x) \otimes_A I_{\C}(y) \]
for $x, y \in \C$.
Since $\forget$ is surjective, $\mu_{x, y}$ is always an isomorphism. 
$Z(\C, \E)_A$ can be identified with a subcategory of the fusion category ${}_A\!{Z(\C, \E)}_A$.
Recall the central structure on the functor $\E \to {}_A\!{Z(\C, \E)}_A$ by Ex.\,\ref{Ex-A-bimodule}. 
The structure of monoidal functor over $\E$ on $I_{\C}$ is induced by $I_{\C}(T_{\C}(e)) = [\unit_{\C}, T_{\C}(e)]_{Z(\C,\E)} \simeq T_{\C}(e) \otimes A$.
\end{proof}

\begin{lem}
\label{lem-FPdim}
Let $\C$ and $\D$ be fusion categories over $\E$ such that the central functors $T_{\C}: \E \to \C$ and $T_{\D}: \E \to \D$ are fully faithful. 
Suppose that $Z(\C, \E)$ is equivalent to $Z(\D, \E)$ as braided fusion categories over $\E$. 
We have $\FPdim(\C) = \FPdim(\D)$ and $\FPdim(I_{\C}(\unit_{\C})) = \FPdim(I_{\D}(\unit_{\D})) = \frac{\FPdim(\C)}{\FPdim(\E)}$, where $\FPdim$ is the Frobenius-Perron dimension.
\end{lem}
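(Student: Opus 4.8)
The plan is to reduce both identities to the description $\C \simeq Z(\C, \E)_A$ with $A = I_{\C}(\unit_{\C})$ established in Lem.\,\ref{lem-4.17}, together with the multiplicativity of the Frobenius--Perron dimension. First I would compute $\FPdim(Z(\C, \E))$. By Expl.\,\ref{center-c}, $Z(\C, \E) = \E'|_{Z(\C)}$ is the centralizer of $\E$ inside the Drinfeld center $Z(\C)$; recall that $Z(\C)$ is a non-degenerate braided fusion category with $\FPdim(Z(\C)) = \FPdim(\C)^2$ (see \cite{Etingof}). Since $T_{\C}: \E \to \C$ is fully faithful, so is the braided lift $T'_{\C}: \E \to Z(\C)$ (the implication recorded in the Remark following the definition of $\Alg_{E_1}(\cat)$), whence $\E$ is realized as a fusion subcategory of $Z(\C)$. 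The double centralizer theorem in a non-degenerate braided fusion category (see \cite{DNO, DMNO}) then gives $\FPdim(\E) \cdot \FPdim(\E'|_{Z(\C)}) = \FPdim(Z(\C))$, i.e.
\[ \FPdim(Z(\C, \E)) = \frac{\FPdim(\C)^2}{\FPdim(\E)} . \]

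Next I would invoke Lem.\,\ref{lem-4.17}: $A = I_{\C}(\unit_{\C})$ is a connected \'etale algebra in $Z(\C, \E)$ and $I_{\C}$ induces an equivalence $\C \simeq Z(\C, \E)_A$. Since the Frobenius--Perron dimension of the category of modules over a connected \'etale algebra satisfies $\FPdim(Z(\C, \E)_A) = \FPdim(Z(\C, \E)) / \FPdim(A)$ (see \cite{DMNO}), combining with the displayed formula gives
\[ \FPdim\bigl(I_{\C}(\unit_{\C})\bigr) = \FPdim(A) = \frac{\FPdim(Z(\C, \E))}{\FPdim(\C)} = \frac{\FPdim(\C)}{\FPdim(\E)} , \]
and the identical computation for $\D$ yields $\FPdim(I_{\D}(\unit_{\D})) = \FPdim(\D)/\FPdim(\E)$.

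Finally, a braided equivalence $Z(\C, \E) \simeq Z(\D, \E)$ (indeed, any equivalence of the underlying categories) forces $\FPdim(Z(\C, \E)) = \FPdim(Z(\D, \E))$, hence $\FPdim(\C)^2 = \FPdim(\D)^2$ by the first displayed formula; since Frobenius--Perron dimensions are positive reals, $\FPdim(\C) = \FPdim(\D)$, and then the second display (applied to both $\C$ and $\D$) gives $\FPdim(I_{\C}(\unit_{\C})) = \FPdim(I_{\D}(\unit_{\D})) = \FPdim(\C)/\FPdim(\E)$. The one step that requires genuine care is the computation of $\FPdim(Z(\C, \E))$: one must use the non-degeneracy of the Drinfeld center and apply the double centralizer theorem correctly, relying on full faithfulness of $T'_{\C}$ so that the relevant fusion subcategory of $Z(\C)$ genuinely has Frobenius--Perron dimension $\FPdim(\E)$. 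Everything else is routine bookkeeping with the multiplicativity of $\FPdim$ under taking the center and under passing to modules over a connected \'etale algebra.
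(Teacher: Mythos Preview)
Your proof is correct and follows essentially the same route as the paper. Both compute $\FPdim(Z(\C,\E)) = \FPdim(\C)^2/\FPdim(\E)$ via the centralizer formula in the non-degenerate center $Z(\C)$ (you phrase it as the double-centralizer identity for $\E \subset Z(\C)$, the paper applies \cite[Thm.\,3.14]{EGNO1} to $Z(\C,\E)=\E'\subset Z(\C)$; these are the same computation), and both then obtain $\FPdim(I_{\C}(\unit_{\C}))$ from the equivalence $\C\simeq Z(\C,\E)_A$ of Lem.\,\ref{lem-4.17} together with the module-category dimension formula (you cite \cite{DMNO}, the paper cites \cite[Lem.\,6.2.4]{Etingof}).
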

\begin{proof}
$Z(\C, \E)$ is a subcategory of $Z(\C)$. By \cite[Thm.\,3.14]{EGNO1}, we obtain the equation
\[ \FPdim(Z(\C, \E))\FPdim(Z(\C, \E)') = \FPdim(Z(\C)) \FPdim(Z(\C, \E) \cap Z(\C)')\]
Since the equations $Z(\C, \E)' = \E$, $Z(\C)' = \vect$ and $\FPdim(Z(\C)) = \FPdim(\C)^2$ (recall \cite[Thm.\,7.16.6]{Etingof}) hold, we get the equation
\begin{equation}
\label{eq-overE-center}
 \FPdim(Z(\C, \E)) = \frac{\FPdim(\C)^2}{\FPdim(\E)} 
\end{equation}
 Since $Z(\C, \E) \simeq Z(\D, \E)$ and the numbers $\FPdim(\C)$ and $\FPdim(\D)$ are positive, $\FPdim(\C) = \FPdim(\D)$.
  
Since $\forget: Z(\C, \E) \to \C$ is surjective, we get the equation
\[  \FPdim(I_{\C}(\unit_{\C})) = \frac{\FPdim(Z(\C, \E))}{\FPdim(\C)} = \frac{\FPdim(\C)}{\FPdim(\E)} \]
by \cite[Lem.\,6.2.4]{Etingof} and the equation (\ref{eq-overE-center}).
Then we have $\FPdim(I_{\C}(\unit_{\C})) = \FPdim(I_{\D}(\unit_{\D}))$.
\end{proof}

\begin{lem}
\label{lem-xtod}
Suppose that $f: Z(\C) \xrightarrow{\simeq} Z(\D)$ is an equivalence of braided multifusion categories and $u_e: f(T_{\C}(e)) \simeq T_{\D}(e)$ is a monoidal natural isomorphism in $Z(\D)$ for all $e \in \E$. Then $f$ induces an equivalence $Z(\C, \E) \simeq Z(\D, \E)$ of braided multifusion categories over $\E$.
\end{lem}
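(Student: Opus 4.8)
The plan is to show that $f$ carries the centralizer $Z(\C,\E)=\E'|_{Z(\C)}$ onto the centralizer $Z(\D,\E)=\E'|_{Z(\D)}$, and that the family $\{u_e\}_{e\in\E}$ promotes the resulting restriction to an equivalence of braided multifusion categories over $\E$ in the sense of $\Alg_{E_2}(\cat)$. Write $\A\subseteq Z(\C)$ for the full fusion subcategory spanned by the image of $T_{\C}$, so that $Z(\C,\E)=\A'|_{Z(\C)}$ by definition, and likewise for $\D$.

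First I would record the transport principle: for any full subcategory $\A\subseteq Z(\C)$ one has $f(\A'|_{Z(\C)})=f(\A)'|_{Z(\D)}$. This holds because membership of an object $x$ in $\A'|_{Z(\C)}$ is the condition $c_{a,x}\circ c_{x,a}=\id_{x\otimes a}$ for all $a\in\A$, which is expressed purely in terms of the braiding; conjugating by the monoidal constraint $J^f$ and using the braided-functor axiom, $f$ sends this identity to $c_{f(a),f(x)}\circ c_{f(x),f(a)}=\id_{f(x)\otimes f(a)}$, and applying the same reasoning to the inverse equivalence $f^{-1}$ gives $x\in\A'|_{Z(\C)}\Leftrightarrow f(x)\in f(\A)'|_{Z(\D)}$. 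Next, the isomorphisms $u_e\colon f(T_{\C}(e))\xrightarrow{\simeq}T_{\D}(e)$ together with semisimplicity show that $f(\A)$ and the subcategory of $Z(\D)$ spanned by $T_{\D}(\E)$ consist of the same objects up to isomorphism, hence have the same centralizer in $Z(\D)$. Combining these two observations,
\[ f\bigl(Z(\C,\E)\bigr)=f(\A)'|_{Z(\D)}=\bigl(T_{\D}(\E)\bigr)'|_{Z(\D)}=Z(\D,\E), \]
so $f$ restricts to an equivalence of underlying categories $\bar f\colon Z(\C,\E)\xrightarrow{\simeq}Z(\D,\E)$.

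Finally I would equip $\bar f$ with the remaining structure. Since $Z(\C,\E)$ and $Z(\D,\E)$ are full subcategories closed under the tensor products of $Z(\C)$ and $Z(\D)$ and inherit their braidings, the monoidal constraint $J^f$ and the braiding-compatibility of $f$ restrict verbatim, so $\bar f$ is a $\bk$-linear braided monoidal equivalence. By the displayed equality the objects $f(T_{\C}(e))$ and $T_{\D}(e)$ both lie in $Z(\D,\E)$, so each $u_e$ is a morphism there, and $u=\{u_e\}$ is by hypothesis a monoidal natural isomorphism $\bar f\circ T_{\C}\Rightarrow T_{\D}$. Since $T_{\C}$ and $T_{\D}$ are exactly the structure functors exhibiting $Z(\C,\E)$ and $Z(\D,\E)$ as objects of $\Alg_{E_2}(\cat)$ (Expl.\,\ref{center-c}), this is precisely the data of a braided monoidal functor over $\E$ on $\bar f$, and there is no braiding compatibility to impose on $u$ because a monoidal natural transformation between braided monoidal functors is automatically braided. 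Hence $\bar f$ is an equivalence of braided multifusion categories over $\E$.

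I expect the only genuinely non-formal point to be the diagram chase hidden in the transport principle once the auxiliary isomorphisms $u_e$ are brought in: to turn ``$f(x)$ centralizes $f(T_{\C}(e))$'' into ``$f(x)$ centralizes $T_{\D}(e)$'' one pastes the braided-functor coherence for $f$ with the naturality squares of $u_e$ on both sides, using crucially that $u_e$ is a morphism in $Z(\D)$ and therefore intertwines the half-braiding of $f(T_{\C}(e))$ with that of $T_{\D}(e)$. Everything else -- closure of the centralizers under tensor products, restriction of the monoidal and braided structures, and the identification of the ``over $\E$'' data -- is routine bookkeeping.
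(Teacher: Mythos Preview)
Your proposal is correct and follows essentially the same route as the paper. The paper's proof is exactly the single diagram chase you anticipate in your last paragraph: it writes out the six-square diagram built from the braided-functor axiom for $f$ (your ``transport principle''), naturality of the half-braiding $\gamma_{f(x),-}$ with respect to $u_e$, and the fact that $u_e$ is a morphism in $Z(\D)$, and reads off $\gamma_{T_{\D}(e),f(x)}\circ\gamma_{f(x),T_{\D}(e)}=\id$ from $\beta_{T_{\C}(e),x}\circ\beta_{x,T_{\C}(e)}=\id$. Your decomposition into the general transport $f(\A')=f(\A)'$ followed by the observation that isomorphic objects in $Z(\D)$ have identical centralizers is a clean way to package the same computation; the paper simply fuses the two steps into one diagram.
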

\begin{proof}
Suppose that $f: Z(\C) \to Z(\D)$ maps $(x, \beta_{x, -})$ to $(f(x), \gamma_{f(x), -})$. If the object $(x, \beta_{x, -})$ belongs to $Z(\C, \E)$, the object $(f(x), \gamma_{f(x), -})$ belongs to $Z(\D, \E)$ by the commutativity of the following diagram.
\[\xymatrix{
f(x \otimes T_{\C}(e)) \ar[d]_{\beta_{x,T_{\C}(e)}} \ar[r] & f(x) \otimes f(T_{\C}(e)) \ar[r]^{1,u_e} \ar[d]^{\gamma_{f(x), f(T_{\C}(e))}} & f(x) \otimes T_{\D}(e) \ar[d]^{\gamma_{f(x), T_{\D}(e)}} \\
f(T_{\C}(e) \otimes x) \ar[d]_{\beta_{T_{\C}(e), x}} \ar[r]  & f(T_{\C}(e)) \otimes f(x) \ar[r]^{u_e, 1} \ar[d]^{\gamma_{f(T_{\C}(e)), f(x)}} & T_{\D}(e) \otimes f(x) \ar[d]^{\gamma_{T_{\D}(e), f(x)}} \\
f(x \otimes T_{\C}(e)) \ar[r] & f(x) \otimes f(T_{\C}(e)) \ar[r]^{1,u_e} & f(x) \otimes T_{\D}(e)
}\]
Since $f$ is the braided functor, the left two squares commute.
The right-upper square commutes by the naturality of $\gamma_{f(x), -}$.
The right-down square commutes by reason that $u_e$ is a natural isomorphism in $Z(\D)$. 
Since the equation $\beta_{T_{\C}(e),x} \circ \beta_{x,T_{\C}(e)} = \id$ holds, we obtain the equation $\gamma_{T_{\D}(e),f(x)} \circ \gamma_{f(x),T_{\D}(e)} = \id$. Then $f$ induces an equivalence $Z(\C, \E) \simeq Z(\D, \E)$.
\end{proof}

\begin{expl}
\label{4.18}
Let $\C$ be a fusion category over $\E$ and $A$ a separable algebra in $\C$.
By \cite[Rem.\,7.16.3]{Etingof}, there is a monoidal equivalence $\Phi: Z(\C) \to Z({{}_A\!{\C}_A})$, $(z, \beta_{z,-}) \mapsto (z \otimes A, \beta_{z \otimes A})$,
where $\beta_{z \otimes A}$ is induced by
\[ z \otimes A \otimes_A x \cong z \otimes x \xrightarrow{\beta_{z, x}} x \otimes z \cong x \otimes_A A \otimes z \xrightarrow{1, \beta^{-1}_{z,A}} x \otimes_A z \otimes A, \quad \forall x \in {}_A\!\C_A \]
$\Phi$ induces the monoidal equivalence $Z(\C, \E) = \E'|_{Z(\C)} \simeq \E'|_{Z({}_A\!\C_A)} = Z({}_A\!\C_A, \E)$.
Recall the central structure on the functor $I: \E \to {}_A\!{\C}_A$ in Ex.\,\ref{Ex-A-bimodule}. 
We obtain $\Phi(T_{\C}(e)) = T_{\C}(e) \otimes A = I(e)$. 
Then $Z(\C, \E) \simeq Z({{}_A\!{\C}_A}, \E)$ is the monoidal equivalence over $\E$.

Let $\C_A$ be an indecomposable left $\C$-module in $\fcat$. By \cite[Prop.\,8.5.3]{Etingof}, $\Phi: Z(\C) \simeq Z({}_A\!\C_A)$ is the equivalence of braided fusion categories. By Lem.\ref{lem-xtod}, $\Phi: Z(\C, \E) \simeq Z({}_A\!\C_A, \E)$ is the equivalence of braided fusion categories over $\E$.
\end{expl}

\begin{lem}
\label{rem-FPdim}
Let $\C$ be a fusion category over $\E$ and $\M$ an indecomposable left $\C$-module in $\fcat$. Then $\FPdim(\C) = \FPdim(\Fun_{\C}(\M, \M))$.
\end{lem}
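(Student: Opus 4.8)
The plan is to reduce the statement to the Morita invariance of the Drinfeld center, for which all the needed ingredients already appear above. First, since $\M$ is an indecomposable left $\C$-module in $\fcat$ and $\bk$ is algebraically closed of characteristic zero, I would invoke \cite[Thm.\,7.10.1]{Etingof} to present $\M$ as $\C_A$ for a connected separable algebra $A$ in $\C$ (connectedness of $A$ being equivalent to indecomposability of $\M$; in particular ${{}_A\!{\C}_A}$ is then a fusion, not merely multifusion, category). By Prop.\,\ref{A-fun-bim-eq} this identifies $\Fun_{\C}(\M, \M) \simeq \Fun_{\C}(\C_A, \C_A) \simeq ({{}_A\!{\C}_A})^{\rev}$, and since passing to the reversed tensor product leaves the fusion ring (hence the Frobenius--Perron dimension) unchanged, it suffices to prove $\FPdim({{}_A\!{\C}_A}) = \FPdim(\C)$.

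Next I would compare centers. By Expl.\,\ref{4.18} (which rests on \cite[Rem.\,7.16.3]{Etingof}) there is a monoidal equivalence $Z(\C) \simeq Z({{}_A\!{\C}_A})$; in particular these two fusion categories have isomorphic fusion rings and therefore the same Frobenius--Perron dimension. Applying the identity $\FPdim(Z(\B)) = \FPdim(\B)^2$ of \cite[Thm.\,7.16.6]{Etingof} to both $\B = \C$ and $\B = {{}_A\!{\C}_A}$ then gives $\FPdim(\C)^2 = \FPdim(Z(\C)) = \FPdim(Z({{}_A\!{\C}_A})) = \FPdim({{}_A\!{\C}_A})^2$. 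Since Frobenius--Perron dimensions are positive real numbers, taking square roots yields $\FPdim(\C) = \FPdim({{}_A\!{\C}_A}) = \FPdim(\Fun_{\C}(\M, \M))$.

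I do not anticipate a genuine obstacle here, only bookkeeping: every nontrivial input — the classification of semisimple module categories, the identification of $\Fun_{\C}(\C_A, \C_A)$ with $A$-bimodules, the Morita invariance of the center, and the square formula for the dimension of a center — has already been recalled or used above. The one point that deserves a word of care is verifying that $A$ may be chosen connected, so that the intermediate categories are honestly fusion; this is precisely where the indecomposability hypothesis on $\M$ enters. (One could instead cite the Morita invariance of the Frobenius--Perron dimension of fusion categories directly, but routing through the center keeps the argument self-contained within the results cited in this paper.)
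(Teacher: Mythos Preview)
Your proof is correct and follows the same route as the paper: present $\M$ as $\C_A$, identify $\Fun_{\C}(\M,\M)$ with $({}_A\!\C_A)^{\rev}$ via Prop.\,\ref{A-fun-bim-eq}, invoke the Morita invariance of the center from Expl.\,\ref{4.18}, apply the dimension-of-center formula, and take positive square roots. The only difference is cosmetic: the paper routes the comparison through the $\E$-relative centers $Z(\C,\E)\simeq Z({}_A\!\C_A,\E)$ and the identity $\FPdim(Z(\C,\E))=\FPdim(\C)^2/\FPdim(\E)$ of equation~(\ref{eq-overE-center}), whereas you use the ordinary Drinfeld centers and $\FPdim(Z(\C))=\FPdim(\C)^2$ --- the extra factor of $\FPdim(\E)$ cancels, so the two arguments are interchangeable.
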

\begin{proof}
Since $\M$ is a left $\C$-module in $\fcat$, there is a separable algebra $A$ in $\C$ such that $\M \simeq \C_A$.
Recall the equivalences $Z(\C, \E) \simeq Z({}_A\!\C_A, \E)$ in Expl.\,\ref{4.18} and ${}_A\!\C_A \simeq \Fun_{\C}(\C_A, \C_A)^{\rev}$ in Prop.\,\ref{A-fun-bim-eq}. Then we get the equations
\[\frac{\FPdim(\C)^2}{\FPdim(\E)} = \FPdim(Z(\C, \E)) = \FPdim(Z({}_A\!\C_A, \E)) = \frac{\FPdim({}_A\!\C_A)^2}{\FPdim(\E)} = \frac{\FPdim(\Fun_{\C}(\C_A, \C_A))^2}{\FPdim(\E)} \]
The first and third equations are due to the equation (\ref{eq-overE-center}).
Since the Frobenius-Perron dimensions are positive, the result follows.
\end{proof}

Thm.\,8.12.3 of \cite{Etingof} says that two finite tensor categories $\C$ and $\D$ are Morita equivalent if and only if $Z(\C)$ and $Z(\D)$ are equivalent as braided tensor categories. 
The statement and the proof idea of Thm.\,\ref{Thm-Morita-eq} comes from which of Thm.\,8.12.3 in \cite{Etingof}.

\begin{thm}
\label{Thm-Morita-eq}
Let $\C$ and $\D$ be fusion categories over $\E$ such that the central functors $T_{\C}: \E \to \C$ and $T_{\D}: \E \to \D$ are fully faithful. $\C$ and $\D$ are Morita equivalent in $\cat$ if and only if $Z(\C, \E)$ and $Z(\D, \E)$ are equivalent as braided fusion categories over $\E$.
\end{thm}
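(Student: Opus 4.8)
The plan is to adapt the argument of \cite[Thm.\,8.12.3]{Etingof} to the setting over $\E$, using the relative centers $Z(-,\E)$ in place of Drinfeld centers and Morita theory in $\cat$ in place of Morita theory in $\fcat$.

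The ``only if'' direction is immediate from Thm.\,\ref{Morita-only-if}: if $\C$ and $\D$ are Morita equivalent in $\cat$, then by Def.\,\ref{defn-invertible} there is an invertible bimodule $\M\in\BMod_{\C|\D}(\cat)$, and Thm.\,\ref{Morita-only-if} yields a braided equivalence $Z(\C,\E)\simeq Z(\D,\E)$ over $\E$; since $T_{\C}$ and $T_{\D}$ are fully faithful, $\E\to Z(\C)$ and $\E\to Z(\D)$ are fully faithful, so by Expl.\,\ref{center-c} both relative centers are (honest) braided fusion categories over $\E$, as required.

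For the ``if'' direction, suppose $\phi\colon Z(\C,\E)\xrightarrow{\simeq}Z(\D,\E)$ is a braided equivalence over $\E$ and set $\CZ\coloneqq Z(\D,\E)$; by Expl.\,\ref{center-c}, $\CZ$ is non-degenerate over $\E$, i.e. $\CZ'=\E$. Applying Lem.\,\ref{lem-4.17} to $\C$ and to $\D$ and transporting the first equivalence along $\phi$, we get equivalences of fusion categories over $\E$
\[ \C\ \simeq\ \CZ_{A},\qquad \D\ \simeq\ \CZ_{B}, \]
where $A\coloneqq\phi(I_{\C}(\unit_{\C}))$ and $B\coloneqq I_{\D}(\unit_{\D})$ are connected \'etale algebras in $\CZ$ with $\FPdim(A)=\FPdim(B)=\FPdim(\C)/\FPdim(\E)$ (by Lem.\,\ref{lem-FPdim}). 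It thus suffices to show that $\CZ_{A}$ and $\CZ_{B}$ are Morita equivalent in $\cat$, and for this I would exhibit an explicit invertible bimodule. The candidate is $\M\coloneqq\CZ_{A\otimes B}$, the category of right modules in $\CZ$ over the \'etale algebra $A\otimes B$ (whose multiplication uses the braiding of $\CZ$, so that $A\otimes B$ is again \'etale); it carries a left $\CZ_{A}$-action by relative tensor product over $A$, a right $\CZ_{B}$-action by relative tensor product over $B$, the left $\E$-module structure $e\odot(-)\coloneqq T_{\CZ}(e)\otimes(-)$, and comparison isomorphisms $u^{\CZ_{A}},\,u^{\CZ_{B}}$ built from the central structures of $\CZ$ exactly as in Expl.\,\ref{expl-ABC-E}. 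One checks that $\M$, so equipped, satisfies the coherence diagrams (\ref{dig-bimodule1}) and (\ref{dig-bimodule2}), hence $\M\in\BMod_{\CZ_{A}|\CZ_{B}}(\cat)$. To see that $\M$ is invertible one reduces, by Def.\,\ref{defn-invertible} and Rem.\,\ref{p3}, to producing an equivalence $(\CZ_{B})^{\rev}\simeq\Fun^{\E}_{\CZ_{A}}(\M,\M)$ of multifusion categories over $\E$; writing $\M\simeq(\CZ_{A})_{R}$ for the internal endomorphism algebra $R$ in $\CZ_{A}$ of a generator of $\M$ and invoking Prop.\,\ref{A-fun-bim-eq} to identify $\Fun_{\CZ_{A}}(\M,\M)\simeq\bigl({}_{R}(\CZ_{A})_{R}\bigr)^{\rev}$, the required equivalence ${}_{R}(\CZ_{A})_{R}\simeq\CZ_{B}$ over $\E$ would follow from the non-degeneracy of $\CZ$ over $\E$ together with a Frobenius--Perron dimension count: by Morita invariance of the dimension (Lem.\,\ref{rem-FPdim}) both sides equal $\FPdim(\CZ_{A})=\FPdim(\C)=\FPdim(\D)=\FPdim(\CZ_{B})$, and the \'etale-algebra bookkeeping of equation (\ref{eq-overE-center}) and Expl.\,\ref{4.18}, combined with Lem.\,\ref{lem-xtod}, identifies the equivalence and endows it with the structure of a monoidal functor over $\E$.

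I expect this last identification to be the main obstacle. Setting up the $\CZ_{A}$--$\CZ_{B}$-bimodule structure on $\M$ and verifying the coherence diagrams (\ref{dig-bimodule1})--(\ref{dig-bimodule2}) is lengthy but is a routine translation of the corresponding ungraded facts; what requires genuine work is showing that $\Fun^{\E}_{\CZ_{A}}(\M,\M)$ is equivalent to $\D^{\rev}$ \emph{over $\E$}. This forces one to use that $\CZ=Z(\C,\E)$ is non-degenerate over $\E$ --- equivalently, that $A$ and $B$ are ``Lagrangian over $\E$'' in the sense $\FPdim(A)^{2}\,\FPdim(\E)=\FPdim(\CZ)$ --- and to run an anyon-condensation--type argument for \'etale algebras in the braided fusion category $\CZ$ over $\E$ in order to conclude that, up to equivalence over $\E$, the only bimodule category of the correct Frobenius--Perron dimension is $\CZ_{B}$ itself.
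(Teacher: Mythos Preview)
Your ``only if'' direction matches the paper exactly. For the ``if'' direction you take a genuinely different route, and the difference is worth spelling out.

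The paper does \emph{not} work symmetrically with two \'etale algebras $A,B$ in a common center $\CZ$. Instead it pulls back only $I_{\D}(\unit_{\D})$ along the given braided equivalence to obtain a single connected \'etale algebra $L\in Z(\C,\E)$, so that $\D\simeq Z(\C,\E)_{L}$ over $\E$ by Lem.~\ref{lem-4.17}. It then \emph{forgets} $L$ to $\C$, where it may decompose as $L=\bigoplus_{i}L_{i}$ into indecomposable algebras, and exhibits $\C_{L_{i}}$ (for any $i$) as the invertible bimodule. The key step is the commutative square
\[
\xymatrix{
Z(\C,\E)\ar[rr]^-{z\mapsto z\otimes L_{i}}\ar[d]_{z\mapsto z\otimes L}&& Z({}_{L_{i}}\!\C_{L_{i}},\E)\ar[d]^{\forget}\\
Z(\C,\E)_{L}\ar[rr]_-{\pi_{i}\circ\forget}&& {}_{L_{i}}\!\C_{L_{i}}
}
\]
in which the top arrow is an equivalence by Expl.~\ref{4.18} and the bottom arrow is surjective; the dimension count of Lem.~\ref{lem-FPdim} and Lem.~\ref{rem-FPdim} then forces $\pi_{i}\circ\forget$ to be an equivalence, whence $\D\simeq Z(\C,\E)_{L}\simeq{}_{L_{i}}\!\C_{L_{i}}\simeq\Fun_{\C}(\C_{L_{i}},\C_{L_{i}})^{\rev}$ over $\E$.

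Your two-algebra approach with $\M=\CZ_{A\otimes B}$ is in the spirit of the Witt-group arguments of \cite{DMNO} and can in principle be made to work, but as written it has a real gap at precisely the point you flag. A dimension equality $\FPdim\bigl(\Fun_{\CZ_{A}}(\M,\M)\bigr)=\FPdim(\CZ_{B})$ does not by itself produce an equivalence: you must first supply an explicit monoidal functor over $\E$ between the two sides and a reason (surjectivity, or full faithfulness) that promotes the dimension match to an equivalence. Moreover, the image of $B$ in $\CZ_{A}$ need not remain indecomposable, so $\M$ may split as a left $\CZ_{A}$-module and Lem.~\ref{rem-FPdim} cannot be invoked directly; this is exactly the decomposition $L=\bigoplus_{i}L_{i}$ that the paper confronts head-on and your sketch omits. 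Your appeal to Lem.~\ref{lem-xtod} is also misplaced: that lemma restricts a braided equivalence of full Drinfeld centers to relative centers and does not help compare $\Fun^{\E}_{\CZ_{A}}(\M,\M)$ with $\CZ_{B}$. What the paper's asymmetric approach buys is precisely the missing ingredient in your plan---an explicit surjective functor $\pi_{i}\circ\forget$ to which the FP-dimension count can legitimately be applied.
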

\begin{proof}
The "only if" direction is proved in Thm.\,\ref{Morita-only-if}.

Let $\C, \D$ be fusion categories over $\E$ such that there is an equivalence $a: Z(\C, \E) \xrightarrow{\simeq} Z(\D, \E)$ as braided fusion categories over $\E$. 
Since $I_{\D}(\unit_{\D})$ is a connected \'{e}tale algebra in $Z(\D, \E)$, $L \coloneqq a^{-1}(I_{\D}(\unit_{\D}))$ is a connected \'{e}tale algebra in $Z(\C, \E)$. 
By Lem.\,\ref{lem-4.17}, there is an equivalence 

\[ \D \simeq Z(\C, \E)_L \] 
of fusion categories over $\E$.

By \cite[Prop.\,2.7]{DMNO}, the category $\C_L$ of $L$-modules in $\C$ is semisimple.
Note that the algebra $L$ is indecomposable in $Z(\C, \E)$ but $L$ might be decomposable as an algebra in $\C$, i.e. the category ${{}_L\!{\C}_L}$ is a multifusion category. It has a decomposition
\[ {{}_L\!{\C}_L} = \bigoplus_{i, j \in J} \big({{}_L\!{\C}_L}\big)_{ij} \]
where $J$ is a finite set and each $\big({{}_L\!{\C}_L} \big)_{ii}$ is a fusion category. 
Let $L = \bigoplus_{i \in J} L_i$ be the decomposition of $L$ such that ${{}_{L_i}\!{\C}_{L_i}}  \simeq \big({{}_L\!{\C}_L}\big)_{ii}$. Here $L_i, i \in J$, are indecomposable algebras in $\C$ such that the multiplication of $L$ is zero on $L_i \otimes L_j, i \neq j$. 

Next we want to show that there is an equivalence $Z(\C, \E)_L \simeq {{}_{L_i}\!{\C}_{L_i}}$ of fusion categories over $\E$.
Consider the following commutative diagram of monoidal functors over $\E$:
\[ \xymatrix{
Z(\C, \E) \ar[rr]^{z \mapsto z \otimes L_i} \ar[d]_{z \mapsto z \otimes L} & & Z({{}_{L_i}\!{\C}_{L_i}}, \E) \ar[d]^{\forget} \\
\qquad \quad Z(\C,\E)_L \subset {}_L\!Z(\C, \E)_L \ar[r]_(0.75){\forget} &  {{}_L\!{\C}_L}  \ar[r]_{\pi_i} & {{}_{L_i}\!{\C}_{L_i}} 
} \]
$\pi_i$ is projection and $\pi_i(x \otimes L) = x \otimes L_i$.
The top arrow is the equivalence by Expl.\,\ref{4.18}.
Next we calculate the Frobenius-Perron dimensions of the categories $Z(\C, \E)_L$ and ${{}_{L_i}\!{\C}_{L_i}}$:
\[ \FPdim(Z(\C, \E)_L) = \frac{\FPdim(Z(\C, \E))}{\FPdim(L)} = \FPdim(\C) = \FPdim({{}_{L_i}\!{\C}_{L_i}}) \]
The first equation is due to \cite[Lem.\,3.11]{DMNO}.
The second equation is due to $\FPdim(L) = \FPdim(I_{\C}(\unit_{\C}))= \FPdim(Z(\C, \E))/\FPdim(\C)$ by Lem.\,\ref{lem-FPdim}.
The third equation is due to Lem.\,\ref{rem-FPdim}.
Since $\pi_i \circ \forget$ is also surjective, $\pi_i \circ \forget$ is an equivalence.
Then we have monoidal equivalences over $\E$: $\D \simeq Z(\C, \E)_L \simeq {{}_{L_i}\!{\C}_{L_i}} \simeq \Fun_{\C}(\C_{L_i}, \C_{L_i})^{\rev}$.
\end{proof}

\subsection{Modules over a braided fusion category over $\E$}
Let $\C$ and $\D$ be braided fusion categories over $\E$. 
In this subsection, fusion categories $\M$ over $\E$ with the property that $\E \to Z(\M)$ is fully faithful.

\begin{defn}
\label{defn-m-over-braided-fusion}
The 2-category $\LMod_{\C}(\Alg(\cat))$ consists of the following data.
\begin{itemize}
\item A class of objects in $\LMod_{\C}(\Alg(\cat))$. An object $\M \in \LMod_{\C}(\Alg(\cat))$ is a fusion category $\M$ over $\E$ equipped with a braided monoidal functor $\phi_{\M}: \overline{\C} \rightarrow Z(\M, \E)$ over $\E$.

\item For objects $\M, \N$ in $\LMod_{\C}(\Alg(\cat))$, a 1-morphism $F: \M \rightarrow \N$ in $\LMod_{\C}(\Alg(\cat))$ is a monoidal functor $F: \M \rightarrow \N$ equipped with a monoidal isomorphism $u^{\M\N}: F \circ \phi_{\M} \Rightarrow \phi_{\N}$ such that the diagram
\begin{equation}
\label{braided-functor-E}
\begin{split}
 \xymatrix{
F(\phi_{\M}(c) \otimes m) \ar[r] \ar[d]_{\beta^{\M}_{c,m}} & F(\phi_{\M}(c)) \otimes F(m) \ar[r]^{u^{\M\N}_c, 1} & \phi_{\N}(c) \otimes F(m) \ar[d]^{\beta^{\N}_{c,F(m)}} \\
F(m \otimes \phi_{\M}(c)) \ar[r] & F(m) \otimes F(\phi_{\M}(c)) \ar[r]_{1, u^{\M\N}_c} & F(m) \otimes \phi_{\N}(c)
} 
\end{split}
\end{equation}
commutes for $c \in \overline{\C}, m \in \M$, where $(\phi_{\M}(c), \beta^{\M})  \in Z(\M, \E)$ and $(\phi_{\N}(c), \beta^{\N}) \in Z(\N, \E)$.

\item For 1-morphisms $F, G: \M \rightrightarrows \N$ in $\LMod_{\C}(\Alg(\cat))$, a 2-morphism $\alpha: F \Rightarrow G$ in $\LMod_{\C}(\Alg(\cat))$ is a monoidal isomorphism $\alpha$ such that the diagram 
\[ 
\xymatrixcolsep{0.5pc}
\xymatrixrowsep{1.2pc}
\xymatrix{
F(\phi_{\M}(c)) \ar[rr]^{\alpha_{\phi_{\M}(c)}} \ar[rd]_{u^{\M\N}_c} && G(\phi_{\M}(c)) \ar[ld]^{\tilde{u}^{\M\N}_c} \\
&\phi_{\N}(c) &
} \]
commutes for $c \in \overline{\C}$, where $u^{\M\N}$ and $\tilde{u}^{\M\N}$ are the monoidal isomorphisms on $F$ and $G$ respectively.
\end{itemize}
\end{defn}

\begin{rem}
If $F: \M \rightarrow \N$ is a 1-morphism in $\LMod_{\C}(\Alg(\cat))$, $F$ is a left $\overline{\C}$-module functor and a monoidal functor over $\E$.
By Lem.\,\ref{EAB}, the left $\overline{\C}$-module structure $s^F$ on $F$ is defined as
$F(c \odot m) = F(\phi_{\M}(c) \otimes m) \rightarrow F(\phi_{\M}(c)) \otimes F(m)  \xrightarrow{u^{\M\N}_c, 1} \phi_{\N}(c) \otimes F(m) = c \odot F(m)$ for all $c \in \overline{\C}, m \in \M$.
Let $u^{\C\M}: \phi_{\M} \circ T_{\C} \Rightarrow T_{\M}$ and $u^{\C\N}: \phi_{\N} \circ T_{\C} \Rightarrow T_{\N}$ be the structures of monoidal functors over $\E$ on $\phi_{\M}$ and $\phi_{\N}$ respectively.
The structure of monoidal functor over $\E$ on $F$ is induced by the composition
$v: F \circ T_{\M} \xRightarrow{1, (u^{\C\M})^{-1}} F \circ \phi_{\M} \circ T_{\C}  \xRightarrow{u^{\M\N}, 1} \phi_{\N} \circ T_{\C} \xRightarrow{u^{\C\N}} T_{\N}$.
\end{rem}

 The 2-category $\RMod_{\D}(\Alg(\cat))$ consists of the following data.
\begin{itemize}
\item An object $\M \in \RMod_{\D}(\Alg(\cat))$ is a fusion category $\M$ over $\E$ equipped with a braided monoidal functor $\phi_{\M}: \D \rightarrow Z(\M, \E)$ over $\E$.
\item 1-morphisms and 2-morphisms are similar with which in the Def.\,\ref{defn-m-over-braided-fusion}.
\end{itemize}
And the 2-category $\BMod_{\C|\D}(\Alg(\cat))$ consists of the following data.
\begin{itemize}
\item An object $\M \in \BMod_{\C|\D}(\Alg(\cat))$ is a fusion category $\M$ over $\E$ equipped with a braided monoidal functor $\phi_{\M}: \overline{\C} \boxtimes_{\E} \D \rightarrow Z(\M, \E)$ over $\E$.   
An object $\M \in \BMod_{\C|\D}(\Alg(\cat))$ is closed if $\phi_{\M}$ is an equivalence.
\item 1-morphisms and 2-morphisms are similar with which in the Def.\,\ref{defn-m-over-braided-fusion}.
\end{itemize}

\section{Factorization homology}
In this section, Sec.\,5.1 recalls the definitions of unitary categories, unitary fusion categories and unitary modular tensor categories over $\E$ (see \cite[Def.\,3.15, 3.16, 3.21]{TL}). Sec.\,5.2 recalls the theory of factorization homology. Sec.\,5.3 and Sec.\,5.4 compute the factorization homology of stratified surfaces with coefficients given by $\UMTCE$'s.

\subsection{Unitary categories}
\begin{defn}
A \emph{$\ast$-category} $\C$ is a $\Cb$-linear category equipped with a functor $\ast: \C \rightarrow \C^{\op}$ which acts as the identity map on objects and is anti-linear and involutive on morphisms. More explicitly, for any objects $x, y \in \C$, there is a map $\ast: \homm_{\C}(x, y) \to \homm_{\C}(y, x)$, such that
\[ (g \circ f)^{\ast} = f^{\ast} \circ g^{\ast}, \quad  (\lambda f)^{\ast} = \bar{\lambda} f^{\ast}, \quad (f^{\ast})^{\ast} = f \]
for $f: u \to v, g: v \to w, h: x \to y$, $\lambda \in \Cb^{\times}$. 
Here $\Cb$ denotes the field of complex numbers.

A \emph{$\ast$-functor} between two $\ast$-categories $\C$ and $\D$ is a $\Cb$-linear functor $F: \C \to \D$ such that $F(f^{\ast}) = F(f)^{\ast}$ for all $f \in \homm_{\C}(x, y)$.
A $\ast$-category is called \emph{unitary} if it is finite and the $\ast$-operation is positive, i.e. $f \circ f^{\ast} = 0$ implies $f=0$.
\end{defn}


\begin{defn}
A \emph{unitary fusion category} $\C$ is both a fusion category and a unitary category such that $\ast$ is compatible with the monoidal structures, i.e.
\[ (g \otimes h)^{\ast} = g^{\ast} \otimes h^{\ast}, \quad \forall g: v \to w, h: x \to y \]
\[ \alpha^{\ast}_{x, y, z} = \alpha_{x,y,z}^{-1}, \quad \gamma^{\ast}_x = \gamma^{-1}_x, \quad \rho^{\ast}_x = \rho^{-1}_x \]
for $x, y, z, v, w \in \C$, where $\alpha$, $\gamma, \rho$ are the associativity, the left unit and the right unit constraints respectively.
A unitary braided fusion category is a unitary fusion category $\C$ with a braiding $c$ such that $c^{\ast}_{x, y} = c^{-1}_{x, y}$ for any $x, y \in \C$.

A \emph{monoidal $\ast$-functor} between unitary fusion categories is a monoidal functor $(F, J): \C \to \D$, such that $F$ is a $\ast$-functor and $J^{\ast}_{x,y} = J^{-1}_{x,y}$ for $x, y \in \C$.
A \emph{braided $\ast$-functor} between unitary braided fusion categories is both a monoidal $\ast$-functor and a braided functor.
\end{defn}

\begin{rem}
Let $\C$ be a unitary fusion category. $\C$ admits a canonical spherical structure.
The unitary center $Z^{\ast}(\C)$ is defined as the fusion subcategory of the Drinfeld center $Z(\C)$, where $(x, c_{x,-}) \in Z^{\ast}(\C)$ if $c^{\ast}_{x,-} = c^{-1}_{x,-}$. $Z^{\ast}(\C)$ is a unitary braided fusion category and $Z^{\ast}(\C)$ is braided equivalent to $Z(\C)$ by \cite[Prop.\,5.24]{GHR}.
\end{rem}

\begin{defn}
A \emph{unitary $\E$-module category} $\C$ is an object $\C$ in $\cat$ 
such that $\C$ is a unitary category, and
the $\ast$ is compatible with the $\E$-module structure, i.e.
\[ (i \odot j)^{\ast} = i^{\ast} \odot j^{\ast}, \qquad \lambda^{\ast}_{e, \tilde{e}, x} = \lambda^{-1}_{e, \tilde{e}, x}, \qquad l^{\ast}_x = l^{-1}_x \]
for $i: e \to \tilde{e} \in \E, j: x \to y \in \C$, where $\lambda$ and $l$ are the module associativity and the unit constraints respectively.
Notice that symmetric fusion categories are all unitary.

Let $\C, \D$ be unitary $\E$-module categories. 
An \emph{$\E$-module $\ast$-functor} is an $\E$-module functor $(F, s): \C \to \D$ such that $F$ is a $\ast$-functor and $s^{\ast}_{e, x} = s^{-1}_{e,x}$ for $e \in \E, x \in \C$.
\end{defn}

\begin{rem}
Let $\C$ be an indecomposable unitary $\E$-module category.  
Then the full subcategory $\Fun^{\ast}_{\E}(\C, \C) \subset \Fun_{\E}(\C, \C)$ of $\E$-module $\ast$-functors is a unitary fusion category.
And the embedding $\Fun^{\ast}_{\E}(\C, \C) \to \Fun_{\E}(\C, \C)$ is the monoidal equivalence by \cite[Thm,\,5.3]{GHR}.
\end{rem}

\begin{defn}
\label{defn-UMTCE}
A \emph{unitary fusion category over $\E$} is a unitary fusion category $\A$ equipped with a braided $\ast$-functor $T_{\A}': \E \to Z(\A)$ such that the central functor $\E \to \A$ is fully faithful.
A \emph{unitary braided fusion category over $\E$} is a unitary braided fusion category $\C$ equipped with a braided $\ast$-embedding $T_{\C}: \E \to \C'$.
A \emph{unitary modular tensor category over $\E$ (or $\UMTCE$)} is a unitary braided fusion category $\C$ over $\E$ such that $\C' \simeq \E$.
\end{defn}

Let $\C$ be a unitary fusion category.
\begin{defn}
Let $(A, m: A \otimes A \to A, \eta: \unit_{\C} \to A)$ be an algebra in $\C$. 
A \emph{$\ast$-Frobenius algebra in $\C$} is an algebra $A$ in $\C$ such that the comultiplication $m^{\ast}: A \to A \otimes A$ is an $A$-bimodule map. 
Let $A$ be a $\ast$-Frobenius algebra in $\C$ and $\M$ a unitary left $\C$-module category. 
A \emph{left $\ast$-A-module in $\M$} is a left $A$-module $(M, q: A \otimes M \to M)$ such that $q^{\ast}: M \to A \otimes M$ is a left $A$-module map. 
\end{defn}

\begin{rem}
A $\ast$-Frobenius algebra in $\C$ is separable.
The full subcategory ${}_A\!\M^{\ast} \subset {}_A\!\M$ of left $\ast$-$A$-modules in $\M$ is a unitary category.
The embedding ${}_A\!\M^{\ast} \to {}_A\!\M$ is an equivalence.
Similarly, one can define $\M_A^{\ast}$ and ${}_A\!\M^{\ast}_A$.
\end{rem}

If the object $(x^L, \ev_x: x^L \otimes x \to \unit_{\C}, \coev_x: \unit_{\C} \to x \otimes x^L)$ is a left dual of $x$ in $\C$, then $(x^L, \coev^{\ast}_x: x \otimes x^L \to \unit_{\C}, \ev^{\ast}_x: \unit_{\C} \to x^L \otimes x)$ is the right dual of $x$ in $\C$.
Here we choose the duality maps $\ev_x$ and $\coev_x$ are \emph{normalized}.
That is, the induced composition
\[ \homm_{\C}(\unit_{\C}, x \otimes -) \xrightarrow{\ev_x} \homm_{\C}(x^L, -) \xrightarrow{\ev^{\ast}_x} \homm_{\C}(\unit_{\C}, - \otimes x) \]
is an isometry.
Then the normalized left dual $x^L$ is unique up to canonical unitary isomorphism. 
Let $(A, m, \eta)$ be a $\ast$-Frobenius algebra in $\C$. The object $(A, \eta^{\ast} \circ m: A \otimes A \to \unit_{\C}, m^{\ast} \circ \eta: \unit_{\C} \to A \otimes A)$ is the left (or right) dual of $A$ in $\C$.

\begin{defn}
A $\ast$-Frobenius algebra $A$ in $\C$ is \emph{symmetric} if the two morphisms $\Phi_1 = \Phi_2$ in $\homm_{\C}(A, A^L)$, where
\[ \Phi_1 \coloneqq [(\eta^{\ast} \circ m) \otimes \id_{A^L}] \circ (\id_A \otimes \coev_A) \quad \hbox{and} \quad \Phi_2 \coloneqq [\id_{A^L} \otimes (\eta^{\ast} \circ m)] \circ (\ev^{\ast}_A \otimes \id_A) \]
\end{defn}

The following proposition comes from Hao Zheng's lessons. 
\begin{prop}
Let $\M$ be a unitary left $\C$-module category. Then there exists a symmetric $\ast$-Frobenius algebra $A$ such that $\M \simeq \C^{\ast}_A$ as unitary left $\C$-module categories.
\end{prop}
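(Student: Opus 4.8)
The plan is to reduce the statement to the classical structure theory of module categories over a fusion category, enhanced with a $\ast$-structure. First I would recall that, by the general (non-unitary) theory quoted earlier from \cite[Thm.\,7.10.1]{Etingof}, there is a separable algebra $A$ in $\C$ together with an equivalence $\eta\colon \M \simeq \C_A$ of left $\C$-modules. The issue is that this $A$ is only separable, not yet $\ast$-Frobenius or symmetric, and $\eta$ need not respect the unitary structures. So the first real step is to upgrade $A$: I would invoke the standard unitarization argument (this is exactly the content of "Hao Zheng's lessons" alluded to) showing that in a unitary fusion category every separable (equivalently, semisimple) algebra can be replaced, within its Morita class realizing $\M$, by a connected-on-each-summand $\ast$-Frobenius algebra, and moreover by a \emph{symmetric} one. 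Concretely, one uses that $\M$ is unitary to equip the internal hom $[m,m]_{\C}$ (for a suitable choice of $m\in\M$) with the algebra structure coming from composition; the $\ast$-structure on $\Fun^{\ast}_{\C}(\M,\M)$ and on $\M$ then induces $m^{\ast}$-type morphisms making $[m,m]_{\C}$ into a $\ast$-Frobenius algebra, and a rescaling of the (co)unit — using the canonical spherical/pivotal structure on the unitary fusion category $\C$ recalled in the remark above — makes it symmetric. I would set $A\coloneqq [m,m]_{\C}$ for this choice.

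Next I would identify $\M$ with $\C^{\ast}_A$ as \emph{unitary} left $\C$-modules. By the preceding remark, the embedding $\C^{\ast}_A \hookrightarrow \C_A$ is an equivalence of left $\C$-modules, so it suffices to produce a left $\C$-module equivalence $\M \simeq \C_A$ that intertwines the $\ast$-structures. The evaluation/unit data of the internal hom give the canonical functor $\M \to \C_A$, $x \mapsto [m,x]_{\C}$ (a right $A$-module via composition), which is a left $\C$-module equivalence by \cite[Expl.\,7.9.8]{Etingof} together with the faithfulness of $m$; that it lands in $\C^{\ast}_A$ and is a $\ast$-functor follows because all the structure morphisms ($\ev$, $\coev$, module associativity) were taken normalized/unitary in the unitary setting, so their images under $[m,-]_{\C}$ are again unitary. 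Thus the composite $\M \xrightarrow{\sim} \C^{\ast}_A \hookrightarrow \C_A$ is the desired equivalence, and reading it in $\C^{\ast}_A$ gives the claim.

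The main obstacle is the middle step: producing the \emph{symmetric} $\ast$-Frobenius algebra and checking that $[m,m]_{\C}$ genuinely carries all three pieces of structure (associative algebra, $\ast$-Frobenius, symmetric) in a mutually compatible way, with the (co)unit normalized against the canonical spherical structure. This is where one must be careful: separability alone is automatic, $\ast$-Frobenius requires that $m^{\ast}$ be a bimodule map (which uses positivity of the $\ast$-operation to fix the rescaling uniquely), and symmetry — the equality $\Phi_1 = \Phi_2$ in $\homm_{\C}(A,A^L)$ from the definition above — is precisely the statement that the Frobenius form is spherically symmetric, which I expect to follow from the canonical spherical structure on the unitary fusion category $\C$ but needs an explicit diagram chase. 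Everything after that is routine transport of structure through equivalences that have already been established in the unitary versions of the module-category results recalled in this section.
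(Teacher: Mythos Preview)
The paper does not actually supply a proof of this proposition: it is stated immediately after the sentence ``The following proposition comes from Hao Zheng's lessons'' and left without argument, so there is no paper-proof to compare against. Your outline is the standard route one would take and is sound: realize $A$ as the internal end $[m,m]_{\C}$ for a generating object $m$ of the unitary module category, use positivity of the $\ast$-structure on $\M$ (and the induced inner-product/adjoint structure on hom-spaces) to make the comultiplication $m^{\ast}$ a bimodule map, and then invoke the canonical spherical structure on the unitary fusion category $\C$ to normalize the Frobenius form so that $\Phi_1=\Phi_2$. You have correctly located the only nontrivial step, namely verifying symmetry; this is indeed a diagram chase using that normalized duals in a unitary fusion category satisfy $x^{L}\cong x^{R}$ via a unitary, compatible with the spherical trace. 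One small caveat: to get $[m,-]_{\C}\colon \M\to \C_A$ to be an equivalence you need $m$ to generate $\M$ as a $\C$-module (equivalently, meet every indecomposable summand), not merely to be ``faithful'' in an unspecified sense; in the decomposable case you take $m$ to be a direct sum of one nonzero object from each component, and $A$ is then a direct sum of indecomposable symmetric $\ast$-Frobenius algebras.
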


\subsection{Factorization homology for stratified surfaces}

The theory of factorization homology (of stratified spaces) is in \cite{AF, AFT2, Francis}.  

\begin{defn}
Let $\mathrm{Mfld}^{\ori}_n$ be the topological category whose objects are oriented $n$-manifolds without boundary. For any two oriented $n$-manifolds $M$ and $N$, the morphism space $\homm_{\mathrm{Mfld}^{\ori}_n}(M, N)$ is the space of all orientation-preserving embeddings $e: M \to N$, endowed with the compact-open topology. We define $\Mfld^{\ori}_n$ to be the symmetric monoidal $\infty$-category associated to the topological category $\mathrm{Mfld}^{\ori}_n$. The symmetric monoidal structure is given by disjoint union.
\end{defn}

\begin{defn}
The symmetric monoidal $\infty$-category $\Disk^{\ori}_n$ is the full subcategory of $\Mfld^{\ori}_n$ whose objects are disjoint union of finitely many $n$-dimensional Euclidean spaces $\coprod_I \Rb^n$ equipped with the standard orientation.
\end{defn}

\begin{defn}
Let $\V$ be a symmetric monoidal $\infty$-category. An \emph{$n$-disk algebra} in $\V$ is a symmetric monoidal functor $A: \Disk^{\ori}_n \to \V$.
\end{defn}

Let $\V_{\uty}$ be the symmetric monoidal (2,1)-category of unitary categories. The tensor product of $\V_{\uty}$ is Deligne tensor product $\boxtimes$. 
Expl.\,3.5 of \cite{LiangFH} gives examples of 0-, 1-, 2-disk algebras in $\V_{\uty}$.
A unitary braided fusion category gives a 2-disk algebra in $\V_{\uty}$. A 1-disk algebra in $\V_{\uty}$ is a unitary monoidal category. A 0-disk algebra in $\V_{\uty}$ is a pair $(\CP, p)$, where $\CP$ is a unitary category and $p \in \CP$ is a distinguished object.
We guess that the $n$-disk algebra in $\V_{\uty}$ equipped with the compatible $\E$-module structure, is the $n$-disk algebra both in $\V_{\uty}$ and $\cat$, for $n = 0, 1, 2$.

\begin{ass}
Let $\V^{\E}_{\uty}$ be the symmetric monoidal (2,1)-category of unitary $\E$-module categories.
We assume that a unitary braided fusion category over $\E$ gives a 2-disk algebra in $\V^{\E}_{\uty}$, 
a unitary fusion category over $\E$ gives a 1-disk algebra in $\V^{\E}_{\uty}$, 
and a unitary $\E$-module category equipped with a distinguished object gives a 0-disk algebra in $\V^{\E}_{\uty}$.
\end{ass}

\begin{defn}
An \emph{(oriented) stratified surface} is a pair
 $(\Sigma, \Sigma \xrightarrow{\pi} \{0, 1, 2\})$ 
 where $\Sigma$ is an oriented surface and $\pi$ is a map. The subspace $\Sigma_i \coloneqq \pi^{-1}(i)$ is called the \emph{$i$-stratum} and its connected components are called \emph{$i$-cells}. These data are required to satisfy the following properties.
\begin{enumerate}
\item[(1)] $\Sigma_0$ and $\Sigma_0 \cup \Sigma_1$ are closed subspaces of $\Sigma$.
\item[(2)] For each point $x \in \Sigma_1$, there exists an open neighborhood $U$ of $x$ such that $(U, U \cap \Sigma_1, U \cap \Sigma_0) \cong (\Rb^2, \Rb^1, \emptyset)$.
\item[(3)] For each point $x \in \Sigma_0$, there exists an open neighborhood $V$ of $x$ and a finite subset $I \subset S^1$, such that $(V, V \cap \Sigma_1, V \cap \Sigma_0) \cong (\Rb^2, C(I)\backslash \{\hbox{cone point} \}, \{\hbox{cone point} \})$, where $C(I)$ is the open cone of $I$ defined by $C(I) = I \times [0,1) / I \times \{0\}$.
\item[(4)] Each 1-cell is oriented, and each 0-cell is equipped with the standard orientation.
\end{enumerate}
\end{defn}

There are three important types of stratified 2-disks shown in \cite[Expl.\,3.14]{LiangFH}.

\begin{defn}
We define $\mathrm{Mfld}^{\str}$ to be the topological category whose objects are stratified surfaces and morphism space between two stratified surfaces M and N are embeddings $e: M \rightarrow N$ that preserve the stratifications, and the orientations on 1-, 2-cells. We define $\Mfld^{\str}$ to be the symmetric monoidal $\infty$-category associated to the topological category $\mathrm{Mfld}^{\str}$. The symmetric monoidal structure is given by disjoint union.
\end{defn}

\begin{defn}
Let $M$ be a stratified surface.
We define $\Disk^{\str}_M$ to be the full subcategory of $\Mfld^{\str}$ consisting of those disjoint unions of stratified 2-disks that admit at least one morphism into $M$.
\end{defn}

\begin{defn}
Let $\V$ be a symmetric monoidal $\infty$-category. A \emph{coefficient} on a stratified surface $M$ is a symmetric monoidal functor $A: \Disk^{\str}_M \rightarrow \V$.
\end{defn}
A coefficient $A$ provides a map from each $i$-cell of $M$ to an $i$-disk algebra in $\V$.

\begin{defn}
Let $\V$ be a symmetric monoidal $\infty$-category, $M$ a stratified surface, 
and $A: \Disk_{M}^{\str} \rightarrow \V$ a coefficient.
The \emph{factorization homology} of $M$ with coefficient in $A$ is an object of $\V$ defined as follows:
\[ \int_{M} A := \mathrm{Colim}\big((\Disk_{M}^{\str})_{/M} \xrightarrow{i} \Disk_{M}^{\str} \xrightarrow{A} \V \big) \] 
where $(\Disk_{M}^{\str})_{/M}$ is the over category of stratified 2-disks embedded in $M$. And the notation $\mathrm{Colim}\big((\Disk_{M}^{\str})_{/M} \xrightarrow{A \circ i} \V \big)$ denotes the colimit of the functor $A \circ i$.
\end{defn}

\begin{defn}
A \emph{collar-gluing} for an oriented $n$-manifold $M$ is a continuous map $f: M \rightarrow [-1, 1]$ to the closed interval such that restriction of $f$ to the preimage of $(-1, 1)$ is a manifold bundle. 
We denote a collar-gluing $f: M \rightarrow [-1, 1]$ by the open cover $M_{-} \cup_{M_{0} \times \mathbb{R}} M_{+} \simeq M$, where $M_{-} = f^{-1}([-1, 1))$, $M_{+} = f^{-1}((-1, 1])$ and $M_{0} = f^{-1}(0)$.
\end{defn}

\begin{thm}(\cite{AF} Lem.\,3.18).
Suppose $\V$ is presentable and the tensor product $\otimes: \V \times \V \rightarrow \V$ preserves small colimits for both variables. Then the factorization homology satisfies $\otimes$-excision property.
That is, for any collar-gluing $M_{-} \cup_{M_{0} \times \mathrm{R}} M_{+} \simeq M$, there is a canonical equivalence:
\[ \int_{M} A \simeq \int_{M_{-}} A \bigotimes\limits_{\int_{M_0 \times \mathbb{R}} A} \int_{M_{+}} A \]
\end{thm}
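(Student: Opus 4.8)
The plan is to run the standard Ayala--Francis argument, which reduces $\otimes$-excision to a bar-construction computation along the collar direction. Fix the collar-gluing $f\colon M\to[-1,1]$, so $M_{-}=f^{-1}([-1,1))$, $M_{+}=f^{-1}((-1,1])$, $M_{0}\times\Rb\cong f^{-1}((-1,1))$, and $f$ is a fibre bundle over $(-1,1)$. The open interval $(-1,1)$ is an oriented $1$-manifold, and stacking of open subintervals endows it, hence (via the bundle structure) the product $M_{0}\times\Rb$, with an $E_{1}$-structure; applying the symmetric monoidal functor $\int_{(-)}A$ turns $B\coloneqq\int_{M_{0}\times\Rb}A$ into an associative algebra object of $\V$, and the two collar inclusions $M_{0}\times(-1,0)\hookrightarrow M_{-}$, $M_{0}\times(0,1)\hookrightarrow M_{+}$ make $\int_{M_{-}}A$ a right $B$-module and $\int_{M_{+}}A$ a left $B$-module. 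By definition the right-hand side of the statement is then the geometric realization of the two-sided bar construction $\mathrm{Bar}_{\bullet}\bigl(\int_{M_{-}}A,\,B,\,\int_{M_{+}}A\bigr)\colon\Delta^{\op}\to\V$ with $n$-simplices $\int_{M_{-}}A\otimes B^{\otimes n}\otimes\int_{M_{+}}A$; this realization exists and commutes with the relevant colimits because $\V$ is presentable and $\otimes$ preserves colimits variable-wise.

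Next I would build the comparison map. For each $n$ choose disjoint slices $M_{0}\times\{t_{1}\},\dots,M_{0}\times\{t_{n}\}$ with $-1<t_{1}<\dots<t_{n}<1$; deleting small collar neighbourhoods of them cuts $M$ into $M_{-}$, $n-1$ copies of $M_{0}\times\Rb$, and $M_{+}$, all disjointly embedded in $M$. Symmetric monoidality of $\int_{(-)}A$ gives a map $\int_{M_{-}}A\otimes B^{\otimes n}\otimes\int_{M_{+}}A\to\int_{M}A$, and as $n$ varies (face maps merging adjacent slabs, degeneracies inserting a trivial slab) these assemble into a simplicial map $\mathrm{Bar}_{\bullet}\to\int_{M}A$, hence a map $\int_{M_{-}}A\bigotimes_{B}\int_{M_{+}}A\to\int_{M}A$ on realizations. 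To show it is an equivalence I would pass to the defining colimit presentations $\int_{M}A=\colim_{(U\hookrightarrow M)\in(\Disk_{M}^{\str})_{/M}}A(U)$ and likewise for the three pieces and the realized bar construction, and identify them using that $A$ sends disjoint unions of disks to tensor products. The geometric input is that any finite disjoint union of stratified disks $U\hookrightarrow M$ can, after an isotopy (a finite configuration is compact and $f$ is a bundle over $(-1,1)$, so general position applies), be pushed off a generic slice $M_{0}\times\{t\}$, and in fact arranged so that each connected disk lies inside one slab $f^{-1}\bigl((t_{i},t_{i+1})\bigr)$. This produces a functor $\Delta^{\op}\to(\Disk_{M}^{\str})_{/M}$ whose associated colimit is exactly the realization of $\mathrm{Bar}_{\bullet}$, and one then argues that this functor is final, so the two colimits agree.

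The main obstacle is precisely the finality step carried out at the level of $\infty$-categories rather than $\pi_{0}$: one must show that for a fixed configuration $U\hookrightarrow M$ the space of ways to adapt it to the bar pattern --- choosing slices avoiding $U$, assigning each disk to a slab, all modulo isotopy rel the combinatorial slab pattern --- is contractible. This is where the real work lies, combining a Morse-theoretic ``push the disks off a hypersurface'' lemma with the fact that $\Disk^{\str}$ is an $\infty$-category of embeddings, so all these choices live in contractible spaces of embeddings and isotopies; everything else (the $E_{1}$- and module structures, the simplicial identities, compatibility with colimit-preservation of $\otimes$) is bookkeeping once this is in place. Since this is exactly the content of \cite{AF}, Lem.\,3.18, in practice I would simply invoke that result, as the excerpt does.
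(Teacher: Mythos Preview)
Your sketch is a faithful outline of the Ayala--Francis argument, but note that the paper does not prove this theorem at all: it simply quotes it from \cite{AF} (Lem.\,3.18) without any argument, as the statement header already indicates. So there is nothing to compare against; your proposal supplies exactly the proof the paper deliberately outsources.
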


\begin{rem}
If $U$ is contractible, there is an equivalence $\int_{U} A \simeq A$ in $\V$.
\end{rem}

Generalization of the $\otimes$-excision property is the \emph{pushforward property.}
Let $M$ be an oriented $m$-manifold, $N$ an oriented $n$-manifold, possibly with boundary, and $A$ an $m$-disk algebra in a $\otimes$-presentable $\infty$-category $\V$. Let $f: M \rightarrow N$ be a continuous map which fibers over the interior and the boundary of $N$.
There is a pushforward functor $f_*$ sends an $m$-disk algebra $A$ on $M$ to the $n$-disk algebra $f_* A$ on $N$.
Given an embedding $e: U \rightarrow N$ where $U = \mathbb{R}^{n}$ or $\mathbb{R}^{n-1} \times [0, 1)$, an $n$-disk algebra $f_{*}A$ is defined as $(f_{*} A)(U) \coloneqq \int_{f^{-1}(e(U))} A$. 
Then there is a canonical equivalence in $\V$
\begin{equation}
\label{thm-pushford}
\int_{N} f_{*} A \simeq \int_{M} A 
\end{equation}

\subsection{Preparation}

\begin{lem}
\label{lem-pre1}
 Let $\C$ be a multifusion category over $\E$ such that $\E \to Z(\C)$ is fully faithful. Then the functor $\C \boxtimes_{Z(\C, \E)} \C^{\rev} \to \Fun_{\E}(\C, \C)$ given by $a \boxtimes_{Z(\C, \E)} b \mapsto a \otimes - \otimes b$ is an equivalence of multifusion categories over $\E$.
\end{lem}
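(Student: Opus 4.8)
The plan is to build the functor $\Lambda$, promote it to a monoidal functor over $\E$, and then prove it is an equivalence by reducing to the representation–theoretic results of Section~4 (mainly Thm.~\ref{C-C-bi-functor}, Prop.~\ref{bimodule-eq}, Prop.~\ref{double-centralizer}) together with a Frobenius--Perron dimension count.

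\emph{Constructing $\Lambda$.} First I would check that $(a,b)\mapsto a\otimes-\otimes b$ assembles into a balanced $Z(\C,\E)$-module functor $\C\times\C^{\rev}\to\Fun_\E(\C,\C)$ in $\cat$. For an object $z$ of $Z(\C,\E)$, with underlying object $\forget(z)\in\C$ and half-braiding $\gamma_{z,-}$, the balancing isomorphism is $1\otimes\gamma_{z,-}\otimes 1\colon a\otimes\forget(z)\otimes-\otimes b\Rightarrow a\otimes-\otimes\forget(z)\otimes b$; it satisfies the hexagon (\ref{wbx1}) by the hexagon axiom for half-braidings, and it is a left $\E$-module natural isomorphism precisely because $z$ lies in $Z(\C,\E)=\E'|_{Z(\C)}$, so that $\gamma_{z,-}$ restricted to the image of $T_\C$ agrees with the central structure of $T_\C$ (this is also exactly what makes the diagram relating the $Z(\C,\E)$-balancing to the $\E$-balancing commute). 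By the universal property of $\boxtimes_{Z(\C,\E)}$ in $\cat$ this descends to a left $\E$-module functor $\Lambda\colon\C\boxtimes_{Z(\C,\E)}\C^{\rev}\to\Fun_\E(\C,\C)$ with $\Lambda(a\boxtimes b)=a\otimes-\otimes b$.

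\emph{Monoidality over $\E$.} The source is monoidal because $\C$ is an algebra in $\RMod_{Z(\C,\E)}(\cat)$ and $\C^{\rev}$ an algebra in $\LMod_{Z(\C,\E)}(\cat)$, with tensor product $(a_1\boxtimes b_1)\otimes(a_2\boxtimes b_2)=(a_1\otimes a_2)\boxtimes(b_2\otimes b_1)$; applying $\Lambda$ gives $a_1\otimes a_2\otimes-\otimes b_2\otimes b_1=(a_1\otimes-\otimes b_1)\circ(a_2\otimes-\otimes b_2)$, so $\Lambda$ is strictly monoidal for the composition product on $\Fun_\E(\C,\C)$. The unit $\unit_\C\boxtimes\unit_\C$ goes to $\id_\C$, and the structure of a monoidal functor over $\E$ is given by $T_\C(e)\boxtimes\unit_\C\mapsto T_\C(e)\otimes-=T^e$, matching the $\E$-algebra structure of $\Fun_\E(\C,\C)$ from Expl.~\ref{ex2}. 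So $\Lambda$ is a monoidal functor over $\E$.

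\emph{Equivalence.} This is the main step. Since $\E\to Z(\C)$ is fully faithful, $Z(\C,\E)$ is a braided fusion category over $\E$ with simple unit, $\forget\colon Z(\C,\E)\to\C$ is a central functor, and $\C$ is a faithful object of $\LMod_{Z(\C,\E)}(\cat)$. By Prop.~\ref{p1} (or Lem.~\ref{lem-4.17}, with the connected étale algebra $A:=I_\C(\unit_\C)$) we have $\C\simeq Z(\C,\E)_A$ in $\LMod_{Z(\C,\E)}(\cat)$ and over $\E$; hence, as $\C$–$\C$-bimodules in $\cat$, $\C\boxtimes_{Z(\C,\E)}\C^{\rev}$ is identified with an $\Fun$-category over $Z(\C,\E)$ by Prop.~\ref{bimodule-eq}, while $\Fun_\E(\C,\C)$ is, by Thm.~\ref{C-C-bi-functor} (which says $\C$ is an invertible $(\C\boxtimes_\E\C^{\rev})$–$Z(\C,\E)$-bimodule, cf.\ Def.~\ref{defn-invertible}) together with Prop.~\ref{double-centralizer}, identified with the same category; one then checks these two identifications are intertwined by $\Lambda$, which shows $\Lambda$ is fully faithful. (Equivalently and more directly: compare the coend formula for $\homm$ in $\C\boxtimes_{Z(\C,\E)}\C^{\rev}$ with the end computing left $\E$-module natural transformations $a_1\otimes-\otimes b_1\Rightarrow a_2\otimes-\otimes b_2$; they agree because the "overlap" of the two one-sided $\C$-actions on $\C$ is exactly $Z(\C,\E)$ — this is the categorified Azumaya identity $A\otimes_R A^{\op}\cong\mathrm{End}_R(A)$.) Finally, $\FPdim(\C\boxtimes_{Z(\C,\E)}\C^{\rev})=\FPdim(\C)^2/\FPdim(Z(\C,\E))$ and, by Lem.~\ref{rem-FPdim}, $\FPdim(\Fun_\E(\C,\C))$ agree by equation (\ref{eq-overE-center}), so the fully faithful $\Lambda$ is essentially surjective, hence an equivalence of multifusion categories over $\E$.

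The hard part will be this last step — specifically the full faithfulness of $\Lambda$, i.e.\ showing that a left $\E$-module natural transformation $a_1\otimes-\otimes b_1\Rightarrow a_2\otimes-\otimes b_2$ is exactly a morphism in the relative tensor product $\C\boxtimes_{Z(\C,\E)}\C^{\rev}$. This is where the half-braidings of $Z(\C,\E)$, the central structure of $T_\C$, and all the compatibility diagrams of $\BMod_{\C|\C}(\cat)$ are used; routing it through the module-category identification with an $\Fun$-category over $Z(\C,\E)$ is meant precisely to reduce it to Prop.~\ref{bimodule-eq} and Prop.~\ref{double-centralizer} rather than redoing that bookkeeping by hand.
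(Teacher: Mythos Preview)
Your plan uses the same key inputs as the paper --- Thm.~\ref{C-C-bi-functor}, Prop.~\ref{bimodule-eq}, and the faithfulness of $\C$ as a $(\C^{\rev}\boxtimes_\E\C)$-module --- but the paper organizes them more directly. Instead of constructing $\Lambda$ first and then separately proving it is an equivalence, the paper \emph{writes down an equivalence as a composite of two known equivalences} and only afterwards identifies the composite with $a\boxtimes b\mapsto a\otimes-\otimes b$. Concretely: since $\C$ is faithful in $\LMod_{\C^{\rev}\boxtimes_\E\C}(\cat)$ and $\Fun^\E_{\C|\C}(\C,\C)\simeq Z(\C,\E)$ by Thm.~\ref{C-C-bi-functor}, the evaluation map (\ref{v-evaluation}) furnishes an equivalence $\C\boxtimes_{Z(\C,\E)}\C^{\op}\xrightarrow{\sim}\C^{\rev}\boxtimes_\E\C$, $a\boxtimes b^L\mapsto[a,b^L]^R_{\C^{\rev}\boxtimes_\E\C}$; composing with the equivalence $\C^{\op}\boxtimes_\E\C\xrightarrow{\sim}\Fun_\E(\C,\C)$ (Prop.~\ref{bimodule-eq} with base $\E$) and chasing the internal homs yields exactly $a\otimes-\otimes b$. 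No separate full-faithfulness argument or dimension count is needed.

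Your FPdim step is therefore redundant --- once you ``check these two identifications are intertwined by $\Lambda$'' you already have $\Lambda$ as a composite of equivalences --- and it also has a gap: Lem.~\ref{rem-FPdim} requires the module to be indecomposable, but $\C$ need not be indecomposable as an $\E$-module even when $\E\to Z(\C)$ is fully faithful (e.g.\ $\C=\vect_G$, $\E=\vect$), and the formula $\FPdim(\C\boxtimes_{Z(\C,\E)}\C^{\rev})=\FPdim(\C)^2/\FPdim(Z(\C,\E))$ is not established anywhere in the paper. Drop the dimension detour and just run the composite-of-equivalences argument.
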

\begin{proof}
$\C^{\rev}$ and $\C$ are the same as categories.
 The composed equivalence (as categories):
\[\C \boxtimes_{Z(\C, \E)} \C \xrightarrow{\id \boxtimes_{Z(\C, \E)} \delta^L} \C \boxtimes_{Z(\C, \E)} \C^{\op}  \xrightarrow{v} \C^{\rev} \boxtimes_{\E} \C \]
 carries $a \boxtimes_{Z(\C, \E)} b \mapsto a \boxtimes_{Z(\C, \E)} b^L \mapsto [a, b^L]_{\C^{\rev} \boxtimes_{\E} \C}^R$, where $v$ is induced by Thm.\,\ref{C-C-bi-functor} and Eq.\,(\ref{v-evaluation}).
Notice that the object $\C$ in $\LMod_{\C^{\rev} \boe \C}(\cat)$ is faithful.
 The composed equivalence
 \[ \C^{\rev} \boxtimes_{\E} \C \xrightarrow{\delta^R \boxtimes_{\E} \id} \C^{\op} \boxtimes_{\E} \C \rightarrow \Fun_{\E}(\C, \C)    \]
 \[ c \boxtimes_{\E} d \mapsto c^R \boxtimes_{\E} d \mapsto [-, c^R]^R_{\E} \odot d \]
maps $[a, b^L]_{\C^{\rev} \boxtimes_{\E} \C}^R$ to a functor $f \in \Fun_{\E}(\C, \C)$. 
Note that 
$\homm_{\C}([x, c^R]_{\E}^R \odot d, y) \simeq \homm_{\E}([x, c^R]^R_{\E}, [d, y]_{\E}) \simeq \homm_{\E}(\unit_{\E}, [d, y]_{\E} \otimes [x, c^R]_{\E}) \simeq \homm_{\C^{\op} \boe \C}(c^R \boe d, x \boe y) \simeq \homm_{\C^{\rev} \boxtimes_{\E} \C}(c \boxtimes_{\E} d, x^L \boxtimes_{\E} y)$,
which implies
\[ \homm_{\C}(f(x), y) \simeq \homm_{\C^{\rev} \boxtimes_{\E} \C}([a, b^L]_{\NN}^R, x^L \boxtimes_{\E} y) \simeq 
  \homm_{\C}(a \otimes x \otimes b, y)\]
 i.e. $f \simeq a \otimes - \otimes b$. 
 Here the second equivalence above holds by the equivalence $(x^L \boxtimes_{\E} y) \otimes [a, b^L]_{\NN} \simeq  [a, (x^L \boe y) \odot b^L]_{\NN} =  [a, y \otimes b^L \otimes x^L]_{\NN}$.

Then the functor $\Phi: \C \boxtimes_{Z(\C, \E)} \C^{\rev} \rightarrow \Fun_{\E}(\C, \C), a \boxtimes_{Z(\C, \E)} b \mapsto a \otimes - \otimes b$ is a monoidal equivalence.
Recall the central structures of the functors $T_{\C \boxtimes_{Z(\C, \E)}\C^{\rev}}: \E \to \C \boxtimes_{Z(\C, \E)} \C^{\rev}$ and $T: \E \to \Fun_{\E}(\C, \C)$ in Expl.\,\ref{ex3} and Expl.\,\ref{ex2} respectively. 
The structure of monoidal functor over $\E$ on $\Phi$ is induced by $\Phi \circ T_{\C \boxtimes_{Z(\C, \E)} \C^{\rev}}(e) = T_{\C}(e) \otimes - \otimes \unit_{\C} \simeq T_{\C}(e) \otimes - = T^e$.
\end{proof}

\begin{lem}
\label{lem-functor}
Let $\C$ be a multifusion category over $\E$ such that $\E \to Z(\C)$ is fully faithful and $\X$ a left $\C$-module.
There is an equivalence in $\cat$
\[ \C \boxtimes_{Z(\C, \E)} \Fun_{\C}(\X, \X) \simeq \Fun_{\E}(\X, \X) \]
\end{lem}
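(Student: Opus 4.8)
The statement is exactly the generalization of Lem.\,\ref{lem-pre1} from the regular module $\X=\C$ (where $\Fun_{\C}(\C,\C)\simeq\C^{\rev}$) to an arbitrary left $\C$-module, so the plan is to imitate that proof. First I would construct the comparison functor. The left $\C$-action on $\X$ gives, for each $a\in\C$, a functor $a\odot-\colon\X\to\X$ which is a left $\E$-module functor because the central structure $z$ of $T_{\C}$ supplies natural isomorphisms $a\odot(T_{\C}(e)\odot x)\simeq(T_{\C}(e)\otimes a)\odot x\simeq T_{\C}(e)\odot(a\odot x)$; this assembles into a monoidal functor $L_{0}\colon\C\to\Fun_{\E}(\X,\X)$. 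On the other hand every left $\C$-module functor is in particular a left $\E$-module functor, so there is a forgetful functor $R_{0}\colon\Fun_{\C}(\X,\X)\to\Fun_{\E}(\X,\X)$, which is monoidal since in both categories the tensor product is composition of functors. The images of $L_{0}$ and $R_{0}$ commute up to coherent isomorphism, namely the module structure $s^{f}$ of $f\in\Fun_{\C}(\X,\X)$, and on $Z(\C,\E)$ they agree: an object $(z,\beta)\in Z(\C,\E)$ gives the left $\C$-module functor $z\odot-$ via its half-braiding $\beta$, and the two module structures coincide by the relation $\beta_{T_{\C}(e),z}=\beta_{z,T_{\C}(e)}^{-1}$. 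Hence $L_{0}$ and $R_{0}$ descend to a monoidal functor over $\E$
\[
\Phi\colon\C\zz\Fun_{\C}(\X,\X)\longrightarrow\Fun_{\E}(\X,\X),\qquad a\zz f\longmapsto a\odot f(-),
\]
the structure of monoidal functor over $\E$ on $\Phi$ being the one induced by the isomorphisms $u^{\C}$ of Expl.\,\ref{ex2} and Expl.\,\ref{ex3}.

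It then remains to prove $\Phi$ is an equivalence, and here I would first invoke Prop.\,\ref{p1} to assume $\X=\C_{A}$ for a separable algebra $A$ in $\C$, so that $\Fun_{\C}(\X,\X)\simeq({{}_A\!{\C}_A})^{\rev}$ as multifusion categories over $\E$ by Prop.\,\ref{A-fun-bim-eq}, and $Z({{}_A\!{\C}_A},\E)\simeq Z(\C,\E)$ over $\E$ by Expl.\,\ref{4.18}. There are two natural ways to finish. The first follows Lem.\,\ref{lem-pre1} verbatim: using Rem.\,\ref{p3} and Prop.\,\ref{bimodule-eq} (applied once with $\C$ and once with $\E$ playing the role of the coefficient category) one gets $\Fun_{\C}(\X,\X)\simeq\X^{L|\op|L}\boxtimes_{\C}\X$ and $\Fun_{\E}(\X,\X)\simeq\X^{L|\op|L}\boxtimes_{\E}\X$, and one then identifies $\C\zz(\X^{L|\op|L}\boxtimes_{\C}\X)$ with $\X^{L|\op|L}\boxtimes_{\E}\X$ through the same chain of enrichment- and duality-equivalences used in the proof of Lem.\,\ref{lem-pre1} (with $\C$ there replaced by $\C_{A}$), relying on Thm.\,\ref{C-C-bi-functor}, the evaluation functor of Eq.\,(\ref{v-evaluation}) and the internal homs $[-,-]$ of the module $\C_{A}$. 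The second, more robust route is to check directly that $\Phi$ is fully faithful — an identification of $\homm$-spaces carried out by composing with the dominant free-module functor $\C\to\C_{A}$ and using the adjunctions defining $\boxtimes_{\C}$ and $\zz$ — and then to finish with a Frobenius--Perron dimension count: both sides are multifusion categories over $\E$, and combining Eq.\,(\ref{eq-overE-center}), Lem.\,\ref{rem-FPdim}, Lem.\,\ref{lem-FPdim} and the multiplicativity of $\FPdim$ under the relative tensor product one gets $\FPdim(\C\zz\Fun_{\C}(\C_{A},\C_{A}))=\FPdim(\E)=\FPdim(\Fun_{\E}(\C_{A},\C_{A}))$, so the fully faithful monoidal functor $\Phi$ is essentially surjective, hence an equivalence.

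I expect the main obstacle to be precisely this last part, proving $\Phi$ is an equivalence; and within it the genuinely delicate point is keeping track of all the left $\C$-, right $\C$-, and left $\E$-module structures, together with the central structures of $T_{\C}$ and the half-braidings of $Z(\C,\E)$, as they are transported through the relative tensor products $\boxtimes_{\C}$, $\boxtimes_{\E}$ and $\zz$, so that the resulting chain of equivalences is compatible with the monoidal structures and with the structure over $\E$ (if instead one takes the dimension-count route, the delicate point shifts to the $\homm$-space computation establishing full faithfulness). The construction of $\Phi$ and the reduction to $\X=\C_{A}$ are routine given Prop.\,\ref{p1} and the material of Sec.\,4.
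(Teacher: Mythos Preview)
Your approach differs from the paper's. The paper gives a three-line argument resting on an external citation: it invokes Cor.\,3.6.18 of \cite{SL} for the equivalence $\Fun_{\C}(\X,\X)\simeq\Fun_{\C\boe\C^{\rev}}(\C,\Fun_{\E}(\X,\X))$, then rewrites $\C\zz\Fun_{\C}(\X,\X)$ as a chain of relative tensor products in which Lem.\,\ref{lem-pre1} replaces $\C\zz\C^{\rev}$ by $\Fun_{\E}(\C,\C)\simeq\C^{\op}\boe\C$, and closes with the contraction $\C^{\op}\boe\C\boxtimes_{\C\boe\C^{\rev}}\Fun_{\E}(\X,\X)\simeq\Fun_{\E}(\X,\X)$. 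No explicit comparison functor is built and no dimension count appears. Your route~1 is in the same spirit but dualised (module-category tensor products instead of functor categories); it could plausibly be completed, but you have not said how the $Z(\C,\E)$-balancing is transported through $\Fun_{\C}(\X,\X)\simeq\X^{L|\op|L}\boxtimes_{\C}\X$, which is exactly the work the paper outsources to \cite{SL}.

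Route~2 has a concrete error. The claim $\FPdim(\Fun_{\E}(\C_{A},\C_{A}))=\FPdim(\E)$ is false: Lem.\,\ref{rem-FPdim} needs the module to be indecomposable over the acting category, and $\C_{A}$, while indecomposable over $\C$, is essentially never indecomposable over $\E$ (take $\E=\vect$, $\C$ any fusion category with more than one simple, $A=\unit_{\C}$: then $\Fun_{\vect}(\C,\C)$ is a genuine matrix multifusion category with $\FPdim>1$). The ``multiplicativity of $\FPdim$ under the relative tensor product'' you invoke likewise fails for multifusion outputs, so the left-hand side is not $\FPdim(\E)$ either. The two sides do turn out to have equal Frobenius--Perron dimension, but that equality is a \emph{consequence} of the lemma you are trying to prove, so using it as an input would be circular.
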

\begin{proof}
Corollary 3.6.18 of \cite{SL} says that there is an equivalence
\[ \Fun_{\C}(\X, \X) \simeq \Fun_{\C \boxtimes_{\E} \C^{\rev}}(\C, \Fun_{\E}(\X, \X)) \]
We have equivalences $\C \boxtimes_{Z(\C, \E)} \C^{\rev} \boxtimes_{\C^{\rev}} \C^{\op} \boxtimes_{\C \boe \C^{\rev}} \Fun_{\E}(\X, \X) \simeq \Fun_{\E}(\C, \C) \boxtimes_{\C \boe \C^{\rev}} \Fun_{\E}(\X, \X) \simeq \C^{\op} \boe \C \boxtimes_{\C \boe \C^{\rev}} \Fun_{\E}(\X, \X) \simeq \Fun_{\E}(\X, \X)$.
The first equivalence holds by the Lem.\,\ref{lem-pre1}.
\end{proof}

\begin{lem}
\label{Lem-last-step-main1}
Let $\C$ be a semisimple finite left $\E$-module. There is an equivalence $\C^{\op} \boxtimes_{\Fun_{\E} (\C, \C)} \C \simeq \E$ in $\cat$.
\end{lem}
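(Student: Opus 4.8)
The plan is to deduce the equivalence from two results already established: Prop.\,\ref{bimodule-eq}, which identifies a relative tensor product of module categories with a functor category, and the double centralizer theorem Prop.\,\ref{double-centralizer}. Write $\D := \Fun_{\E}(\C,\C)$, which is a multifusion category over $\E$ by Expl.\,\ref{ex2}. First I would exhibit $\C$ as an object of $\BMod_{\D|\E}(\cat)$: the left $\D$-action is evaluation, $f \odot c := f(c)$, and the right $\E$-action is the canonical one ($\C$ is automatically an $\E$-bimodule in $\cat$). The compatibility diagrams (\ref{dig-bimodule1}) and (\ref{dig-bimodule2}) then reduce to the fact that every $f \in \D$ is an $\E$-module functor. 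In the statement of the lemma, $\C^{\op}$ is to be understood as the right $\D$-module $\C^{L|\op|L}$ of Expl.\,\ref{11}.

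I would then apply Prop.\,\ref{bimodule-eq}, with its base multifusion category taken to be $\D$, with both side categories equal to $\E$, and with $\M = \N = \C$, obtaining an equivalence
\[ \C^{L|\op|L} \boxtimes_{\D} \C \;\simeq\; \Fun^{\E}_{\D}(\C,\C) \]
of $\E$-$\E$-bimodules in $\cat$. It remains to identify $\Fun^{\E}_{\D}(\C,\C)$ with $\E$. For this I would apply Prop.\,\ref{double-centralizer} with the ambient multifusion category over $\E$ taken to be $\E$ itself, equipped with the central functor $\id_{\E}$ (which is central because $\E$ is symmetric), and with module category $\C$. Since $\E$ is a fusion category, $\unit_{\E}$ is simple, so any nonzero $\C \in \LMod_{\E}(\cat)$ is faithful (as $\unit_{\E} \odot c \simeq c$); hence Prop.\,\ref{double-centralizer} gives $\E \simeq \Fun^{\E}_{\Fun^{\E}_{\E}(\C,\C)}(\C,\C)$ as multifusion categories over $\E$. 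Using Rem.\,\ref{p3} to replace $\Fun^{\E}_{\E}(\C,\C)$ by $\Fun_{\E}(\C,\C) = \D$ (compatibly with the action on $\C$), this yields $\Fun^{\E}_{\D}(\C,\C) \simeq \E$ as multifusion categories over $\E$, hence in particular in $\cat$ (a monoidal functor over $\E$ is an $\E$-module functor by Lem.\,\ref{EAB}). Composing the two equivalences gives $\C^{\op} \boxtimes_{\D} \C \simeq \E$ in $\cat$.

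Everything beyond these two applications is routine bookkeeping, but there are two places that require care rather than formal manipulation: (i) checking that $\C$ with the evaluation and canonical $\E$-actions really is an object of $\BMod_{\D|\E}(\cat)$, and — the point I expect to be easiest to get wrong — making sure the right $\D$-module structure on $\C^{\op}$ intended by the statement matches that of $\C^{L|\op|L}$, since the statement is terse about which bimodule structures it uses; and (ii) verifying that the equivalence $\Fun^{\E}_{\E}(\C,\C) \simeq \Fun_{\E}(\C,\C)$ of Rem.\,\ref{p3} is compatible with the left actions on $\C$, so that $\Fun^{\E}_{\D}(\C,\C)$ may be identified with $\Fun^{\E}_{\Fun^{\E}_{\E}(\C,\C)}(\C,\C)$. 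One should also record the implicit hypothesis $\C \neq 0$, without which the statement fails.
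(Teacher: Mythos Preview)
Your proposal is correct and follows essentially the same route as the paper: both arguments rest on Prop.\,\ref{double-centralizer} applied with base $\E$ to identify $\E$ with the $\Fun_{\E}(\C,\C)$-linear endofunctors of $\C$. The only cosmetic difference is that the paper first swaps to $\C \boxtimes_{\Fun_{\E}(\C,\C)^{\rev}} \C^{\op}$ and invokes the evaluation map \eqref{v-evaluation} directly (obtaining the explicit formula $a \boxtimes b \mapsto [b,a]^R_{\E}$, which it later uses in Thm.\,\ref{main-thm1}), whereas you reach the functor-category model via Prop.\,\ref{bimodule-eq}; these are two names for the same map, so the content is identical.
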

\begin{proof}
The left $\Fun_{\E}(\C, \C)$-action on $\C$ is defined as $f \odot x \coloneqq f(x)$ for $f \in \Fun_{\E}(\C, \C), x \in \C$.
The composed equivalence
\[  \C^{\op} \boxtimes_{\Fun_{\E}(\C, \C)} \C \simeq \C \boxtimes_{\Fun_{\E}(\C, \C)^{\rev}} \C^{\op} \simeq \E \]
carries $a \boxtimes_{\Fun_{\E}(\C, \C)} b \mapsto b \boxtimes_{\Fun_{\E}(\C, \C)^{\rev}} a \mapsto [b, a]_{\E}^R$, where the second equivalence is due to Thm.\,\ref{double-centralizer} and Eq.\,(\ref{v-evaluation}).
\end{proof}

\subsection{Computation of factorization homology}

Modules over a fusion category over $\E$ and modules over a braided fusion category over $\E$ can be generalized to the unitary case automatically.
Let $\C$ be a unitary fusion category over $\E$.
A closed object in $\LMod_{\C}(\V^{\E}_{\uty})$ is an object $\M \in \V^{\E}_{\uty}$ equipped with a monoidal equivalence $(\psi, u): \C \to \Fun_{\E}(\M, \M)$ over $\E$ such that $\psi$ is a monoidal $\ast$-functor and $u^{\ast}_e = u^{-1}_e$ for $e \in \E$.
Let $\A$ and $\B$ be unitary braided fusion categories over $\E$.
A closed object in $\BMod_{\A|\B}(\Alg(\V^{\E}_{\uty}))$ is a unitary fusion category $\M$ over $\E$ equipped with a braided monoidal equivalence $(\phi, u): \overline{\A} \boe \B \to Z(\M, \E)$ over $\E$ such that $\phi$ is a braided $\ast$-functor and $u^{\ast}_e = u^{-1}_e$ for $e \in \E$.

\begin{defn}
A coefficient system $A: \Disk^{\str}_M \rightarrow \V^{\E}_{\uty}$ on a stratified surface $M$ is called \emph{anomaly-free in $\cat$} if the following conditions are satisfied:
\begin{itemize}
\item The target label for a 2-cell is given by a $\UMTCE$.
\item The target label for a 1-cell between two adjacent 2-cells labeled by $\A$(left) and $\B$(right) is given by a closed object in $\BMod_{\A|\B}(\Alg(\V^{\E}_{\uty}))$. 
\item The target label for a 0-cell as the one depicted in Figure \ref{f3} is given by a 0-disk algebra $(\CP, p)$ in $\V^{\E}_{\uty}$, where the unitary $\E$-module category $\CP$ is equipped with the structure of a closed left $\int_{M\setminus \{0\}} A$-module, i.e. 
\[ \int_{M\setminus \{0\}} A \simeq  \Fun_{\E}(\CP, \CP)  \]
\end{itemize}
\end{defn}

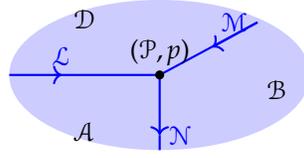
\begin{figure}
\center
\begin{tikzpicture}
\fill[fill=blue!20!white] (0,0) ellipse (2 and 1);
\draw [thick, blue, ->] (-2,0)--(-1.3,0)  node [above] {$\CL$};
\draw [thick, blue, -] (-1.4,0)--(0,0);
\draw [thick, blue, -] (0,0)--(1.3,0.7) node [left]{$\M$};
\draw [thick, blue, ->] (1.3, 0.7)--(0.7, 0.37);
\draw (-1, 0.5) node[above]{$\D$};
\draw (-1, -0.5) node[below]{$\A$};
\draw (1.3, -0.2) node[right]{$\B$};
\draw [thick, blue, ->] (0,0)--(0,-0.8) node[right]{$\N$};
\draw [thick, blue, -] (0,-0.8)--(0,-1) ;
\fill[fill=black] (0,0) circle [radius=0.06cm] node[above]{$(\CP, p)$};
\end{tikzpicture}
\caption{The figure depicts a stratified 2-disk with an anomaly-free coefficient system $A$ in $\cat$ determined by its target labels.}
 \label{f3}
\end{figure}

\begin{expl}
A stratified 2-disk $M$ is shown in Fig.\,\ref{f3}. An anomaly-free coefficient system $A$ on $M$ in $\cat$ is determined by its target labels shown in Fig.\,\ref{f3}
\begin{itemize}
\item The target labels for 2-cells: $\A$, $\B$ and $\D$ are $\UMTCE$'s.
\item 
The target labels for 1-cells: $\CL$ is a closed object in 
$\BMod_{\A|\D}(\Alg(\V^{\E}_{\uty}))$, $\M$ a closed object in $\BMod_{\D|\B}(\Alg(\V^{\E}_{\uty}))$ and $\N$ is a closed object in $\BMod_{\A|\B}(\Alg(\V^{\E}_{\uty}))$. 
\item The target labels for 0-cells: $(\CP, p)$ is a closed left module over $\CL \boxtimes_{\A^{\rev} \boe \D} (\M \boxtimes_{\B} \N^{\rev})$.
\end{itemize}
\end{expl}

The data of the coefficient system $A: \Disk^{\str}_{M} \to \V^{\E}_{\uty}$ shown in Fig.\,\ref{f3} are denoted as
\[ A = (\A, \B, \D; \CL, \M, \N; (\CP, p)) \]

\begin{expl}
\label{expl-disk-points}
Let $\C$ be a $\UMTCE$.
Consider an open disk $\mathring{\mathbb{D}}$ with two 0-cells $p_1$, $p_2$. And a coefficient system assigns $\C$ to the unique 2-cell and assigns $(\C, x_1), (\C, x_2)$ to the 0-cells $p_1, p_2$, respectively. 
By the $\otimes$-excision property, we have
\[ \int_{(\mathring{\mathbb{D}}; \emptyset; p_1 \sqcup p_2)} (\C; \emptyset; (\C, x_1), (\C, x_2)) \simeq \big( \C; \emptyset; (\C, x_1) \otimes_{\C} (\C, x_2) \big) \simeq \big(\C; \emptyset; (\C, x_1 \otimes x_2) \big)  \]
Notice the equivalence $\C \otimes_{\C} \C \simeq \C$ is defined as $x \otimes_{\C} y \mapsto x \otimes y$, whose inverse is defined as $m \mapsto \unit_{\C} \otimes_{\C} m$ for $x, y, m \in \C$.

Consider an open disk $\mathring{\mathbb{D}}$ with finitely many 0-cells $p_1, \dots, p_n$. And a coefficient system assigns $\C$ to the unique 2-cell and assigns $(\C, x_1), \dots, (\C, x_n)$ to the 0-cells $p_1, \dots, p_n$, respectively. 
We have
\[ \int_{(\mathring{\mathbb{D}}; \emptyset; p_1, \dots, p_n)} (\C; \emptyset; (\C, x_1), \dots, (\C, x_n)) \simeq \big(\C; \emptyset; (\C, x_1 \otimes \dots \otimes x_n) \big)  \]
\end{expl}

\begin{thm}
\label{main-thm1}
    Let $\C$ be a $\UMTCE$ and $x_1, \dots, x_n \in \C$. Consider the stratified sphere $S^2$ without $1$-stratum but with finitely many 0-cells $p_1, \dots, p_n$. Suppose a coefficient system assigns $\C$ to the unique 2-cell and assigns $(\C, x_1), \dots, (\C, x_n)$ to the 0-cells $p_1, \dots, p_n$, respectively. We have 
\begin{equation}
\label{thm}
\int_{(S^2; \emptyset; p_1, \dots ,p_n)} (\C; \emptyset; (\C, x_1), \dots, (\C, x_n)) \simeq \big(\E, [\unit_{\C}, x_1 \otimes \dots \otimes x_n ]_{\E}\big) 
\end{equation}
\end{thm}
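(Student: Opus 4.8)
# Proof Proposal

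\textbf{The plan} is to compute the factorization homology by a collar-gluing that splits the stratified sphere into two hemispheres, reducing to the known computation on open disks from Expl.\,\ref{expl-disk-points} together with the key identifications $\Fun_{\E}(\C,\C)\simeq \C\boxtimes_{Z(\C,\E)}\C^{\rev}$ (Lem.\,\ref{lem-pre1}) and $\C^{\op}\boxtimes_{\Fun_{\E}(\C,\C)}\C\simeq \E$ (Lem.\,\ref{Lem-last-step-main1}). First I would write $S^2 = \mathbb{D}_- \cup_{S^1\times\Rb} \mathbb{D}_+$ as a collar-gluing, arranging all $n$ points to lie in $\mathbb{D}_-$ (we may always isotope the embedded 0-cells into one hemisphere since they are finite in number and $S^2$ minus a point is an open disk). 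Then by the $\otimes$-excision theorem (\cite{AF} Lem.\,3.18),
\[
\int_{(S^2;\,\emptyset;\, p_1,\dots,p_n)} (\C;\emptyset;(\C,x_1),\dots,(\C,x_n))
\simeq \Bigl(\int_{\mathbb{D}_-} A\Bigr) \bigotimes_{\int_{S^1\times\Rb} A} \Bigl(\int_{\mathbb{D}_+} A\Bigr).
\]

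\textbf{Next} I would evaluate the three pieces. By Expl.\,\ref{expl-disk-points}, $\int_{\mathbb{D}_-} A \simeq (\C;\emptyset;(\C, x_1\otimes\cdots\otimes x_n))$, i.e.\ the object $\C \in \cat$ with distinguished object $x\coloneqq x_1\otimes\cdots\otimes x_n$. Since $\mathbb{D}_+$ is contractible and carries no strata other than the 2-cell, $\int_{\mathbb{D}_+}A \simeq \C$. For the gluing region $S^1\times\Rb$, the factorization homology of a braided fusion category over the annulus (viewed as $S^1\times\Rb$) is its Hochschild homology / the Drinfeld-type center in this setting; concretely, the pushforward along $S^1\times\Rb \to \Rb$ shows $\int_{S^1\times\Rb}\C \simeq \Fun_{\E}(\C,\C)$ as an $E_1$-algebra in $\cat$ (this is the $\E$-linear analogue of $\int_{S^1}\A \simeq$ the trace, realized here through the standard identification of the full center; it is also where Lem.\,\ref{lem-pre1} enters, identifying $\Fun_{\E}(\C,\C)\simeq\C\boxtimes_{Z(\C,\E)}\C^{\rev}$ so that the bimodule structure on the two hemisphere copies of $\C$ is the evident left/right $\C$-action). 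The relative tensor product then reads
\[
\int_{S^2} A \simeq \C \boxtimes_{\Fun_{\E}(\C,\C)} \C,
\]
with the distinguished object being the image of $x\boxtimes_{\Fun_{\E}(\C,\C)}\unit_{\C}$.

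\textbf{Finally} I would apply Lem.\,\ref{Lem-last-step-main1}: the equivalence $\C^{\op}\boxtimes_{\Fun_{\E}(\C,\C)}\C \simeq \E$ carries $a\boxtimes b \mapsto [b,a]_{\E}^R$, and since $\C$ is a $\UMTCE$ the opposite and reverse decorations are harmless (a $\UMTCE$ is unitary, so duals are well-behaved and $\C^{\op}\simeq\C$ as an $\E$-module category). Tracing the distinguished object through this equivalence, $x\boxtimes_{\Fun_{\E}(\C,\C)}\unit_{\C}$ maps to the internal hom $[\unit_{\C},x]_{\E}^{\,R} = [\unit_{\C}, x_1\otimes\cdots\otimes x_n]_{\E}$ (up to the canonical identification of the normalized internal hom with its right dual, which for a unitary category is a canonical unitary isomorphism). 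This yields the claimed equivalence $\bigl(\E,\,[\unit_{\C}, x_1\otimes\cdots\otimes x_n]_{\E}\bigr)$.

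\textbf{The main obstacle} I expect is the identification $\int_{S^1\times\Rb}\C \simeq \Fun_{\E}(\C,\C)$ together with the precise bimodule structure under which the gluing is taken: one must check that the two embeddings of $\mathbb{D}_\pm$ into $S^2$ induce precisely the left and right module actions of $\Fun_{\E}(\C,\C)\simeq \C\boxtimes_{Z(\C,\E)}\C^{\rev}$ on the two hemisphere copies of $\C$, so that $\boxtimes_{\int_{S^1\times\Rb}A}$ is the $\boxtimes_{\Fun_{\E}(\C,\C)}$ of Lem.\,\ref{Lem-last-step-main1}. This is the $\cat$-enriched version of the annulus/center computation; the bookkeeping of which $\C$ is acted on from which side, and the compatibility of the $\E$-module structures, is the delicate point, but it is forced by the orientations and by the universal property of $\boxtimes_{\E}$, so no genuinely new idea is needed. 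Tracking the distinguished object (a point-defect) through each equivalence is routine but must be done carefully, as it is the content of the formula.
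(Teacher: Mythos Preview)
Your proposal is correct and arrives at the same endpoint as the paper --- the relative tensor product $\C \boxtimes_{\Fun_{\E}(\C,\C)} \C$ with distinguished object identified via Lem.\,\ref{Lem-last-step-main1} and the pivotal isomorphism $[x,y]^R_{\E}\cong[y,x]_{\E}$ (Lem.\,\ref{thm-internal-inverse}) --- but the route is organized a bit differently. The paper does not invoke the annulus identity $\int_{S^1\times\Rb}\C\simeq\Fun_{\E}(\C,\C)$ as a black box; instead it \emph{derives} it by a two-step pushforward: first add an auxiliary $1$-cell $S^1\setminus\{p\}$ through the marked point and project $S^2$ to a closed stratified $2$-disk (so the $2$-cell label becomes $\C\boxtimes_{\E}\overline{\C}\simeq Z(\C,\E)$), then project that disk vertically onto $[-1,1]$, where the fibre over the interior is precisely $\C\boxtimes_{Z(\C,\E)}\C^{\rev}\simeq\Fun_{\E}(\C,\C)$ by Lem.\,\ref{lem-pre1}. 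Your direct collar-gluing $S^2=\mathbb{D}_-\cup_{S^1\times\Rb}\mathbb{D}_+$ is cleaner to state, but the step you flag as the ``main obstacle'' --- the annulus computation together with the correct bimodule structures on the two hemisphere copies of $\C$ --- is exactly what the paper's projection argument supplies; so in practice you would end up reproducing that argument. Note also a small bookkeeping point: the paper's distinguished object is $\unit_{\C}\boxtimes_{\Fun_{\E}(\C,\C)}(x_1\otimes\cdots\otimes x_n)$ rather than your $x\boxtimes\unit_{\C}$, and the final identification uses Lem.\,\ref{thm-internal-inverse} explicitly rather than an appeal to unitarity.
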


\begin{proof}
If we map the open stratified disk $(\mathring{\mathbb{D}}; \emptyset; p_1, \dots, p_n)$ to the open stratified disk $(\mathring{\mathbb{D}}; \emptyset; p)$ and map the points $p_1, \dots, p_n$ to the point $p$. We have the following equivalence by Expl.\,\ref{expl-disk-points}
\[ \int_{(\mathring{\mathbb{D}}; \emptyset; p_1, \dots, p_n)} (\C; \emptyset; (\C, x_1), \dots, (\C, x_n))  \simeq \int_{(\mathring{\mathbb{D}}; \emptyset; p)} (\C; \emptyset; (\C, x_1 \otimes \dots \otimes x_n)) \]

On the stratified sphere $(S^2; \emptyset; p)$, we add an oriented 1-cell $S^1 \setminus {p}$ from $p$ to $p$, labelled by the 1-disk algebra $\C$ obtained by forgetting its 2-disk algebra structure.
We project the stratified sphere $(S^2; S^1\setminus {p}; p)$ directly to a closed stratified 2-disk $(\mathbb{D}; S^1 \setminus {p}; p)$ as shown in Fig.\,\ref{f1} (a). 
Notice that this projection preserves the stratification.
Applying the pushforward property (\ref{thm-pushford}) and the $\otimes$-excision property, we reduce the problem to the computation of the factorization homology of the stratified 2-disk. 
\[ \int_{(S^2; \emptyset; p_1, \dots, p_n)} (\C; \emptyset; (\C, x_1), \dots, (\C, x_n)) \simeq \int_{(\mathbb{D}; S^1\setminus p; p)} \big(\C \boe \overline{\C}; \C; (\C, x_1 \otimes \dots \otimes x_n)\big) \]
Notice that $\C \boxtimes_{\E} \overline{\C} \simeq Z(\C, \E)$.
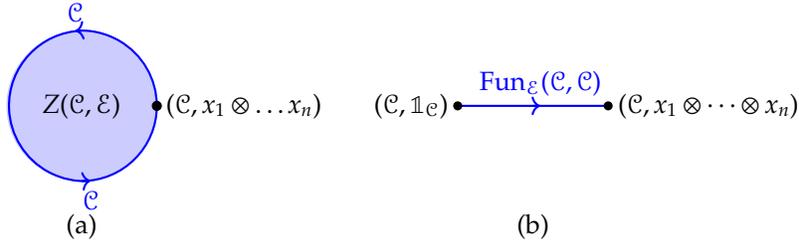
\begin{figure}
\center    
\begin{tikzpicture}
\fill[fill=blue!20!white] (0,0) circle [radius = 1cm];
\draw[thick, blue, ->] (0,-1) arc(-90:95:1cm) node[above] {$\C$};
\draw[thick, blue, ->] (0,1) arc(92:275:1cm) node [below] {$\C$};
\filldraw (1,0) circle [radius=0.06cm] node [right] {$(\C, x_1 \otimes \dots x_n)$};
\draw [thick, blue, ->] (5,0)--(6.1,0)  node [above] {$\Fun_{\E}(\C, \C)$};   
\draw [thick, blue] (6,0)--(7,0)node [right] {\textcolor{black}{$(\C, x_1 \otimes \dots \otimes x_n)$}}; 
\fill[fill=black] (7,0) circle [radius=0.06cm];
\fill[fill=black!] (5,0) circle [radius =0.06cm] node [left] {$(\C, \unit_{\C})$};
\node at (0,-1.6) {(a)}; \node at (6, -1.6) {(b)};
\node at (0, 0) {$Z(\C, \E)$};
 \end{tikzpicture}
 \caption{The figure depicts the two steps in computing the factorization homology of a sphere with the coefficient system given by a $\UMTCE$.}
 \label{f1}
 \end{figure}
Next we project the stratified 2-disk vertically onto the closed interval $[-1, 1]$ as shown in Fig.\,\ref{f1} (b). Notice that $\C \boxtimes_{Z(\C, \E)} \C^{\rev} \simeq \Fun_{\E}(\C, \C)$. 
The final result is expressed as a tensor product:
\[ \int_{(S^2; \emptyset; p_1, \dots , p_n)} (\C; \emptyset; (\C, x_1), \dots, (\C, x_n)) \simeq \big(\C \boxtimes_{\Fun_{\E}(\C, \C)} \C, \unit_{\C} \boxtimes_{\Fun_{\E}(\C, \C)}( x_1 \otimes \dots \otimes x_n)\big)  \]
By Lem.\,\ref{Lem-last-step-main1} and Lem.\,\ref{thm-internal-inverse},
the composed equivalence
 \[\C \boxtimes_{\Fun_{\E}(\C, \C)} \C \simeq \C^{\op} \boxtimes_{\Fun_{\E}(\C, \C)} \C \simeq \C \boxtimes_{\Fun_{\E}(\C, \C)^{\rev}} \C^{\op} \simeq \E\]
carries $x \boxtimes_{\Fun_{\E}(\C, \C)} y \mapsto x^R \boxtimes_{\Fun_{\E}(\C, \C)} y \mapsto y \boxtimes_{\Fun_{\E}(\C, \C)^{\rev}} x^R \mapsto [y, x^R]_{\E}^R \cong [x^R, y]_{\E}$. 
Taking $x = \unit_{\C}$ and $y = x_1 \otimes \dots \otimes x_n$ in the above composed equivalence, we obtain Eq.\,(\ref{thm}).  
\end{proof}

\begin{thm}
\label{main-thm2}
    Let $\C$ be a $\UMTCE$ and $x_1, \dots, x_n \in \C$. 
     Let $\Sigma_g$ be a closed stratified surface of genus $g$ without 1-stratum but with finitely many 0-cells $p_1, \dots, p_n$. 
    Suppose a coefficient system assigns $\C$ to the unique 2-cell and assigns $(\C, x_1), \dots, (\C, x_n)$ to the 0-cells $p_1, \dots, p_n$, respectively. We have 
\begin{equation}
\label{thm2}
\int_{(\Sigma_g; \emptyset; p_1, \dots ,p_n)} (\C; \emptyset; (\C, x_1), \dots, (\C, x_n)) \simeq \big(\E, [\unit_{\C}, x_1 \otimes \dots \otimes x_n \otimes (\eta^{-1}(A) \otimes_{T(A)} \eta^{-1}(A) )^{\otimes g}]_{\E}  \big) 
\end{equation} 
where $A$ is a symmetric $\ast$-Frobenius algebra in $\E$ such that there exists an equivalence $\eta: \C \simeq \E_A$ in $\V^{\E}_{\uty}$ and $T: \E \rightarrow \C'$ is the braided embedding.
\end{thm}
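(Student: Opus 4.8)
The plan is to imitate the proof of Thm.~\ref{main-thm1}, reducing the genus-$g$ case to the genus-$0$ case by a handle decomposition, exactly as in the genus-$g$ computation of \cite{LiangFH}. First I would collapse the punctures: choosing an open disk $U\subset\Sigma_g$ that contains $p_1,\dots,p_n$ and no other $0$-cell and applying Expl.~\ref{expl-disk-points} inside $U$, the $\otimes$-excision property yields
\[
\int_{(\Sigma_g;\emptyset;p_1,\dots,p_n)}(\C;\emptyset;(\C,x_1),\dots,(\C,x_n)) \;\simeq\; \int_{(\Sigma_g;\emptyset;p)}(\C;\emptyset;(\C,y)), \qquad y := x_1 \otimes \cdots \otimes x_n .
\]

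Next I would strip off the handles. Present $\Sigma_g$ as $S^2$ with $g$ handles attached: remove $2g$ disjoint open disks from $S^2\setminus\{p\}$ and glue in $g$ cylinders $S^1\times[0,1]$ joining them in pairs, stratifying each handle as in \cite[Expl.~3.14]{LiangFH} so that $\Sigma_g$ becomes a stratified surface whose $2$-cells carry $\C$ and whose auxiliary $1$-cells carry the $1$-disk algebra underlying $\C$. The attaching circles give a collar-gluing, so by $\otimes$-excision and the pushforward formula~(\ref{thm-pushford}) the computation reduces to the annulus algebra $\int_{S^1\times\Rb}\C$, which — by the same folding argument used in Thm.~\ref{main-thm1} — is $\C\boe\overline{\C}\simeq Z(\C,\E)$ (Expl.~\ref{center-c}), the cylinder $\int_{S^1\times[0,1]}\C$ appearing as the regular $Z(\C,\E)$-bimodule. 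Carrying out the bookkeeping as in \cite{LiangFH}, each handle deposits exactly one tensor factor of a fixed \emph{handle object} $\mathbb{G}\in\C$ on the unique $2$-cell, so that
\[
\int_{(\Sigma_g;\emptyset;p)}(\C;\emptyset;(\C,y)) \;\simeq\; \int_{(S^2;\emptyset;p)}\bigl(\C;\emptyset;(\C,\, y\otimes\mathbb{G}^{\otimes g})\bigr),
\]
and Thm.~\ref{main-thm1} then evaluates the right-hand side to $\bigl(\E,[\unit_\C,\, y\otimes\mathbb{G}^{\otimes g}]_\E\bigr)$.

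The one genuinely new point — and, I expect, the main obstacle — is to identify $\mathbb{G}$ with $\eta^{-1}(A)\otimes_{T(A)}\eta^{-1}(A)$. Here I would use the equivalence $\eta\colon\C\simeq\E_A$ of unitary $\E$-module categories (the Proposition of Sec.~5.1, applied with $\C$ replaced by $\E$, so $A$ is a symmetric $\ast$-Frobenius algebra in $\E$), together with $\Fun_\E(\C,\C)\simeq({{}_A\!{\E}_A})^{\rev}$ and $Z(\C,\E)\simeq\C\boe\overline{\C}$. Transporting the cylinder bimodule and the pair-of-pants (monoidal) structure on $\int_{S^1\times\Rb}\C$ through these equivalences and unwinding the two relative tensor products involved — $\boe$ over $\E$ and $\otimes_A$ over the algebra $A$ in $\E$ — identifies the object left on the $2$-cell as $\eta^{-1}(A)\otimes_{T(A)}\eta^{-1}(A)$, where $\eta^{-1}(A)$ carries the right $T(A)$-module structure obtained by pushing the regular right $A$-module $A\in\E_A$ through $\eta^{-1}$; this uses that $T\colon\E\to\C'$ is braided monoidal, so right $A$-actions in $\E$ become right $T(A)$-actions in $\C$, and such a module is canonically a left $T(A)$-module because $T(A)\in\C'$ is transparent. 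The delicate part will be to perform this identification inside $\cat$, i.e.\ compatibly with all the $\E$-module structures rather than merely in $\V^{\E}_{\uty}$; once $\mathbb{G}\simeq\eta^{-1}(A)\otimes_{T(A)}\eta^{-1}(A)$ is in hand, substituting into the displayed equivalence of the previous paragraph gives~(\ref{thm2}).
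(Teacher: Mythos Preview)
Your overall strategy—cut handles one at a time, replace each cylinder by a pair of pointed disks, reduce to $S^2$, and then invoke Thm.~\ref{main-thm1}—is exactly the paper's approach. Two points need correction or sharpening.

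First, the annulus algebra $\int_{S^1\times\Rb}\C$ is $\Fun_{\E}(\C,\C)$, not $Z(\C,\E)$. In the proof of Thm.~\ref{main-thm1} the projection $S^2\to\mathbb{D}$ produces the $2$-cell label $\C\boe\overline{\C}\simeq Z(\C,\E)$; the annulus value arises from the \emph{second} projection $\mathbb{D}\to[-1,1]$ and is $\C\boxtimes_{Z(\C,\E)}\C^{\rev}\simeq\Fun_{\E}(\C,\C)$ (Lem.~\ref{lem-pre1}). The two happen to be equivalent as plain $\E$-module categories (both are $\simeq\C\boe\C$), so your outline is not derailed, but their $E_1$-structures differ, and it is the unit $\id_\C\in\Fun_\E(\C,\C)$—not an object of $Z(\C,\E)$—whose image under $\Fun_\E(\C,\C)\simeq\C\boe\C$ you must track to obtain the handle contribution. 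Your phrase ``the regular $Z(\C,\E)$-bimodule'' for the cylinder is accordingly off.

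Second, your plan to identify $\mathbb{G}$ by ``transporting the cylinder bimodule and pair-of-pants structure'' is vaguer than what is actually needed, and the route through $Z(\C,\E)$ is a detour. The paper's argument is a direct object chase: via $\eta:\C\simeq\E_A$ one has
\[
\Fun_\E(\E_A,\E_A)\;\simeq\;{}_A\E_A\;\simeq\;{}_A\E\boe\E_A\;\simeq\;\E_A\boe\E_A
\]
(Prop.~\ref{prop-ACB-Fun}, Lem.~\ref{equivalencethm}, Lem.~\ref{lem-AEA}), carrying $\id\mapsto A\mapsto p\boe q$ with $p\boe q=\colim\bigl((A\otimes A)\boe A\rightrightarrows A\boe A\bigr)$ the explicit colimit supplied by the inverse in Lem.~\ref{equivalencethm}. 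One then computes $p\otimes q$ in $\C$ by commuting this colimit with the one defining factorization homology and using $\eta^{-1}(A\otimes A)\simeq\eta^{-1}(A)\otimes T(A)$ (since $\eta$ is an $\E$-module equivalence and $T(A)\in\C'$), which collapses the coequalizer to $\eta^{-1}(A)\otimes_{T(A)}\eta^{-1}(A)$. No abstract bimodule analysis is required; the whole identification is this concrete chain.
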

\begin{proof}
Since $\C$ is a unitary $\E$-module category, there exists a symmetric $\ast$-Frobenius algebra $A$ in $\C$ such that $\C \simeq^{\eta} \E_A$ in $\V^{\E}_{\uty}$.
Notice that Eq.\,(\ref{thm2}) holds for genus $g = 0$ by Thm.\,\ref{main-thm1}. Now we assume $g > 0$. The proof of Thm.\,\ref{main-thm1} implies that $\int_{S^1 \times \mathbb{R}} \C \simeq \Fun_{\E}(\C, \C)$. 
By Prop.\ref{prop-ACB-Fun}, Lem.\,\ref{equivalencethm} and Lem.\,\ref{lem-AEA}, the composed equivalence of categories 
\[ \Fun_{\E}(\E_A, \E_A)  \simeq {}_A\!\E_A   \simeq {}_A\!\E \boe \E_A   \simeq \E_A \boe \E_A  \]
 carries $\id \mapsto A  \mapsto  \bar{p} \boe \bar{q}   \mapsto \bar{p} \boe \bar{q}$,
 where $\bar{p} \boe \bar{q} \coloneqq \mathrm{colim} \big((A \otimes A) \boxtimes_{\E} A \rightrightarrows A \boxtimes_{\E} A \big)$.
Then the equivalence $\Fun_{\E}(\C, \C) \simeq \C \boxtimes_{\E} \C$ carries $\id_{\C} \mapsto p \boe q \coloneqq \colim \big(\eta^{-1}(A \otimes A) \boxtimes_{\E} \eta^{-1}(A) \rightrightarrows \eta^{-1}(A) \boxtimes_{\E} \eta^{-1}(A) \big)$.

Therefore, we have $\int_{S^1 \times \mathbb{R}} \C \simeq \Big(\C \boxtimes_{\E} \C, p \boe q \big)$.
As a consequence, when we compute the factorization homology, we can replace a cylinder $S^1 \times \mathbb{R}$ by two open 2-disks with two 0-cells as shown on the Fig.\,\ref{cylinder}, both of which are labelled by 
$\Big(\C \boxtimes_{\E} \C, p \boe q \Big)$,
or labelled by $(\C, p)$ and $(\C, q)$.

In this way, the genus is reduced by one.
By induction, we obtain the equation
\begin{align*}
& \int_{(\Sigma_g; \emptyset; p_1, \dots ,p_n)} (\C; \emptyset; (\C, x_1), \dots, (\C, x_n)) \simeq \int_{(\Sigma_{g-1}; \emptyset; p_1, \dots, p_n, p_{n+1}, p_{n+2})} \big( \C; \emptyset; (\C, x_1), \dots, (\C, x_n), (\C, p), (\C, q) \big) \\
&\simeq \int_{(\Sigma_0; \emptyset; p_1, \dots, p_n, \dots, p_{n+2g-1}, p_{n+2g})} \big( \C; \emptyset; (\C, x_1), \dots, (\C, x_n), (\C \boe \C, p \boe q)^g \big) \\
&\simeq \big( \E, [\unit_{\C}, x_1 \otimes \dots \otimes x_n \otimes (p \otimes q)^{\otimes g} ]_{\E} \big)   
 \end{align*}
where the notation $(\C \boe \C, p \boe q)^g$ denotes $g$ copies of $(\C \boe \C, p \boe q)$ and
\begin{align*}
p \otimes q & \simeq \colim \big( \eta^{-1}(A \otimes A) \otimes \eta^{-1}(A) \rightrightarrows \eta^{-1}(A) \otimes \eta^{-1}(A) \big) \\
& \simeq \colim \big( \eta^{-1}(A) \otimes T(A) \otimes \eta^{-1}(A) \rightrightarrows \eta^{-1}(A) \otimes \eta^{-1}(A) \big) \\
& \simeq \eta^{-1}(A) \otimes_{T(A)} \eta^{-1}(A)
\end{align*}
Since factorization homology and $p \boe q$ are both defined by colimits, we exchange the order of two colimits in the first equivalence.
The second equivalence is induced by the composed equivalence $\eta^{-1}(A \otimes A) \simeq A \odot \eta^{-1}(A) = T(A) \otimes \eta^{-1}(A) \simeq \eta^{-1}(A) \otimes T(A)$. 
Since $T(A)$ is an algebra in $\C$, we obtain the last equivalence.
\end{proof}

\begin{expl}
The unitary category $\Hb$ denotes the category of finite dimensional Hilbert spaces.
 Let $\E = \Hb$ and $\C = \UMTC$. 
We want to choose an algebra $A \in \Hb$ such that $\C \simeq^{\eta} \Hb_A$.
Suppose that $\eta^{-1}(A) \cong A$ and $T(A) \cong A$. Then $\eta^{-1}(A) \otimes_{T(A)} \eta^{-1}(A) \cong A \otimes_A A \cong A$ and
\[ \int_{(\Sigma_g; \emptyset; p_1, \dots ,p_n)} (\C; \emptyset; (\C, x_1), \dots, (\C, x_n))   \simeq \big(\Hb, \homm(\unit_{\C}, x_1 \otimes \dots \otimes x_n \otimes  A^{\otimes g})  \big) 
\]
The set $\mO(\C)$ denotes the set of isomorphism classes of simple objects in $\C$.
If $\eta^{-1}(A) = \oplus_{i \in \mO(\C)} i^R \otimes i = T(A)$, the distinguished object is $\homm(\unit_{\C}, x_1 \otimes \dots \otimes x_n \otimes (\oplus_i i^R \otimes i)^{\otimes g})$.
If $A = \oplus_{i \in \mO(\C)} \Cb$ and $\eta^{-1}(A) = \oplus_{i \in \mO(\C)} i = T(A)$, the distinguished object is $\homm(\unit_{\C}, x_1 \otimes \dots \otimes (\oplus_{i \in \mO(\C)} i)^{\otimes g})$.
\end{expl}

 \begin{figure}
 \center
\begin{tikzpicture}
\fill[fill=blue!20!white] (0,0) rectangle (2,1);
\fill[fill=blue!20!white] (2,0.5) ellipse (0.2cm and 0.5cm);
\fill[fill=white] (0,0.5) ellipse (0.2cm and 0.5cm);
\draw[thick, blue, ->] (0,1)--(1.1,1) node [above] {$\M$};
\draw[thick,blue] (2,0.5) ellipse (0.2cm and 0.5cm);
\draw[thick, blue] (0,0.5) ellipse (0.2cm and 0.5cm);
\draw[thick, blue] (1,1)--(2,1);
\draw[thick, blue] (0,0) --(0.3,0) node [above] {$\C$}--(2,0);
\fill[fill=blue!20!white] (6,1) [out= 0, in= 130] to (7,0.5) [out =220, in=0] to (6,0)--(6,1);
\fill[fill=blue!20!white] (9.5,1) [out=180, in=50] to (8.5, 0.5) [out=-50, in=180] to (9.5,0)--(9.5,1);
\fill[white] (6,0.5) ellipse (0.2cm and 0.5cm); \fill[white] (9.5, 0.5) ellipse (0.2cm and 0.5cm);
\draw[thick, blue,->] (6,1) [out= 0, in= 130] to (7,0.5); 
\draw[thick, blue] (7,0.5) [out= 220, in= 0] to (6,0) ; 
\draw[thick, blue, ->] (9.5,1) [out=180, in= 50] to (8.5,0.5); 
\draw[thick, blue] (8.5,0.5)[out=-50, in=180] to (9.5,0);
\draw[thick, blue] (6,0.5) ellipse (0.2cm and 0.5cm); 
\draw[thick, blue] (9.5, 0.5) ellipse (0.2cm and 0.5cm);
\fill[black] (7, 0.5) circle [radius=0.06cm] node [right] {\small{$\X \boxtimes_{\E} \X^{\op}$}}; 
\fill[black] (8.5, 0.5) circle [radius=0.06cm];
\node[blue] at (6.5, 1.1) {$\M$}; \node[blue] at (9,1.1) {$\M^{\rev}$};
\node[blue] at (6.5, 0.4) {$\C$}; \node[blue] at (9, 0.4) {$\C$};
\node at (1, -0.4) {(a)}; \node at (8, -0.4) {(b)};
\end{tikzpicture} 
 \caption{Figure (a) shows a stratified cylinder with a coefficient system $(\C; \M; \emptyset)$, where $\C$ is a $\UMTCE$ and $\M$ is closed in $\BMod_{\C|\C}(\Alg(\V^{\E}_{\uty}))$. Figure (b) shows a disjoint union of two open disks with 2-cells labeled by $\C$, 1-cells labeled by $\M$ and $\M^{\rev}$, and 0-cells labeled by $\X$ and $\X^{\op}$.}
\label{cylinder}
\end{figure}

\begin{thm}
\label{main-thm3}
Let $(S^1 \times \Rb; \Rb)$ be the stratified cylinder shown in Fig.\,\ref{cylinder}.
in which the target label $\C$ is a $\UMTCE$ and the target label $\M$ is closed in $\BMod_{\C|\C}(\Alg(\V^{\E}_{\uty}))$. We have 
\[ \int_{(S^1 \times \Rb; \Rb)}(\C; \M; \emptyset) \simeq \Fun_{\E}(\X, \X) \]
where $\X$ is the unique (up to equivalence) left $\C$-module in $\cat$ such that $\M \simeq \Fun_{\C}(\X, \X)$. 
\end{thm}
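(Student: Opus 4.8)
The plan is to cut the stratified cylinder into two stratified $2$-disks by a collar-gluing, evaluate the factorization homology of each piece by the basic stratified-disk computations, and recognize the resulting relative tensor product by means of the structural results of Sections~3 and~4. Concretely, as in the proof of Thm.~\ref{main-thm2}, I would cut $(S^1\times\Rb;\Rb)$ by a collar-gluing into two stratified $2$-disks $M_-\cong M_+\cong\Rb^2$, where $M_-$ carries the $1$-cell $\M$ as a diameter and $M_+$ carries only the $2$-cell $\C$, with overlap $M_0\times\Rb$ a disjoint union of two contractible strips. By the basic computations for stratified $2$-disks (\cite[Expl.~3.14]{LiangFH}) one has $\int_{M_-}\simeq\M$ (the underlying category of the bimodule), $\int_{M_+}\simeq\C$, and $\int_{M_0\times\Rb}\simeq\C\boxtimes_{\E}\C^{\rev}$ as a monoidal category (one factor reversed by the relative orientations of the two strips), so the $\otimes$-excision property gives
\[ \int_{(S^1\times\Rb;\Rb)}(\C;\M;\emptyset)\;\simeq\;\M\boxtimes_{\C\boxtimes_{\E}\C^{\rev}}\C, \]
with $\C$ on the right the regular $\C$-$\C$-bimodule and the $\C$-$\C$-bimodule structure on $\M$ the one encoded by $\phi_{\M}$.

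Next I would reorganize this tensor product, exactly as in the identification $\int_{S^1\times\Rb}\C\simeq\Fun_{\E}(\C,\C)$ of the proof of Thm.~\ref{main-thm2}. Since $\C$ is a $\UMTCE$ it is non-degenerate over $\E$, so by Expl.~\ref{center-c} there is a braided equivalence $Z(\C,\E)\simeq\C\boxtimes_{\E}\overline{\C}$, and $\C\boxtimes_{\E}\overline{\C}\simeq\C\boxtimes_{\E}\C^{\rev}$ monoidally via the standard identification of $\C^{\rev}$ with $\overline{\C}$ through the braiding of $\C$. Transporting along this equivalence, the regular $\C$-$\C$-bimodule $\C$ becomes $\C$ viewed as a $Z(\C,\E)$-module through the forgetful functor $Z(\C,\E)\to\C$, while closedness of $\M$, i.e.\ the equivalence $\phi_{\M}\colon\overline{\C}\boxtimes_{\E}\C\xrightarrow{\ \simeq\ }Z(\M,\E)$, exhibits the bimodule structure on $\M$ as the $Z(\C,\E)$-module structure coming from $Z(\M,\E)\to\M$. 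Hence the displayed tensor product becomes $\C\boxtimes_{Z(\C,\E)}\M$, and combining $\M\simeq\Fun_{\C}(\X,\X)$ with Lem.~\ref{lem-functor} (which states $\C\boxtimes_{Z(\C,\E)}\Fun_{\C}(\X,\X)\simeq\Fun_{\E}(\X,\X)$) completes the computation; equivalently, one may repackage this via the two-disk splitting of Fig.~\ref{cylinder}(b), distributing $\Fun_{\E}(\X,\X)\simeq\X\boxtimes_{\E}\X^{\op}$ across the cut. The existence and uniqueness up to equivalence of the left $\C$-module $\X$ with $\M\simeq\Fun_{\C}(\X,\X)$ follow from the Morita theory of $\cat$: closedness gives $Z(\M,\E)\simeq Z(\C,\E)$ as braided fusion categories over $\E$, hence $\M$ is Morita equivalent to $\C$ in $\cat$ by Thm.~\ref{Thm-Morita-eq}, and the double-centralizer equivalence of Prop.~\ref{double-centralizer} recovers $\X$ from $\M$.

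I expect the main obstacle to be this reorganization step. Excision and the basic disk computations yield a relative tensor product over $\C\boxtimes_{\E}\C^{\rev}$ against the regular bimodule, and the work is to verify that the braided equivalence $\C\boxtimes_{\E}\C^{\rev}\simeq Z(\C,\E)$ really intertwines the regular $\C$-$\C$-bimodule structure on $\C$ with the forgetful $Z(\C,\E)$-module structure, and the $\phi_{\M}$-bimodule structure on $\M$ with the $Z(\C,\E)$-module structure appearing in Lem.~\ref{lem-functor}. Carrying these module structures (and the coherence data for the relative tensor products involved) through the identification is the only place where genuine computation is required; the braiding of $\C$ enters precisely here, allowing the two one-sided $\C$-actions to be merged into a single action of the relative center.
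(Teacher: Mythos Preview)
Your proposal is correct and follows essentially the same route as the paper: both obtain $\int_{(S^1\times\Rb;\Rb)}(\C;\M;\emptyset)\simeq\C\boxtimes_{Z(\C,\E)}\M$ via excision and the identification $\C\boxtimes_{\E}\overline{\C}\simeq Z(\C,\E)$, then apply Lem.~\ref{lem-functor} together with $\M\simeq\Fun_{\C}(\X,\X)$, the latter being supplied by Thm.~\ref{Thm-Morita-eq}. Your write-up is in fact more detailed than the paper's own proof, which records the chain of equivalences without spelling out the collar-gluing or the reorganization step you rightly flag as the only nontrivial point.
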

\begin{proof}
By the equivalences $Z(\M^{\rev}, \E) \simeq \C \boe \overline{\C} \simeq Z(\C, \E)$, there exists a $\C$-module $\X$ such that $\M \simeq \Fun_{\C}(\X, \X)$ by Thm.\,\ref{Thm-Morita-eq}.
Therefore, we have $\int_{(S^1 \times \Rb; \Rb)}(\C; \M; \emptyset) \simeq \C \boxtimes_{Z(\C, \E)} \M \simeq \C \boxtimes_{Z(\C, \E)} \Fun_{\C}(\X, \X) \simeq \Fun_{\E}(\X, \X)$, which maps $\unit_{\C} \boxtimes_{Z(\C, \E)} \unit_{\M}$ to $\id_{\X}$.
The last equivalence is due to Thm.\,\ref{lem-functor}.
\end{proof}

\begin{conj}
Given any closed stratified surface $\Sigma$ and an anomaly-free coefficient system $A$ in $\cat$ on $\Sigma$, we have $\int_{\Sigma} A \simeq (\E, u_{\Sigma})$, where $u_{\Sigma}$ is an object in $\E$. 
\end{conj}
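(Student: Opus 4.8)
The plan is to prove the conjecture by induction on a cell decomposition of $\Sigma$, reducing the global computation to the three local models of stratified $2$-disks of \cite[Expl.\,3.14]{LiangFH} (the disk meeting only a $2$-cell, the disk with a single $1$-cell, and the disk of Fig.\,\ref{f3}) by means of the $\otimes$-excision and pushforward properties, exactly as in the proofs of Thm.\,\ref{main-thm1}, Thm.\,\ref{main-thm2} and Thm.\,\ref{main-thm3}. Since $\E$ is the tensor unit of $\cat$, it suffices to show that each collar-gluing of $\Sigma$ produces relative tensor products that collapse to $\E$, so that every label valued in an $\E$-algebra is eventually absorbed.

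First I would record the three local reductions. For an open disk meeting only a $2$-cell labelled by a $\UMTCE$ $\C$, contractibility gives factorization homology $\simeq \C$; pushing a cylinder forward and applying $\otimes$-excision yields $\int_{S^1 \times \Rb} \C \simeq \C \boxtimes_{Z(\C,\E)} \C^{\rev} \simeq \Fun_{\E}(\C,\C)$ as in Thm.\,\ref{main-thm1} (using $\C \boe \overline{\C} \simeq Z(\C,\E)$ and Lem.\,\ref{lem-pre1}), and the computation over a once-punctured genus-$g$ surface is the one carried out in the proof of Thm.\,\ref{main-thm2}. For an open disk meeting a single $1$-cell between $\UMTCE$'s $\A$ (left) and $\B$ (right) with the $1$-cell labelled by a closed $\M \in \BMod_{\A|\B}(\Alg(\V^{\E}_{\uty}))$, the equivalence $Z(\M^{\rev},\E) \simeq \A \boe \overline{\B}$ coming from closedness, together with Thm.\,\ref{Thm-Morita-eq} and Thm.\,\ref{lem-functor}, identifies the local integral with a functor category $\Fun_{\E}(\X,\X)$ for a suitable module $\X$, exactly as in the proof of Thm.\,\ref{main-thm3}. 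For a disk around a $0$-cell as in Fig.\,\ref{f3} with label $(\CP,p)$, the anomaly-free condition $\int_{M\setminus\{0\}} A \simeq \Fun_{\E}(\CP,\CP)$ says precisely that $\CP$ is a faithful module realizing this functor category, so filling the $0$-cell back in forms $\CP^{\op} \boxtimes_{\Fun_{\E}(\CP,\CP)} \CP$, which is $\E$ by Lem.\,\ref{Lem-last-step-main1}; the distinguished object is the image of $p$ and the surrounding data under the composed equivalence, i.e.\ an iterated internal hom in $\E$ as in Thm.\,\ref{main-thm1}.

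Next I would assemble the pieces. Choosing a collar-gluing $\Sigma \simeq \Sigma_{-} \cup_{\Sigma_{0} \times \Rb} \Sigma_{+}$ with $\Sigma_{0}$ a disjoint union of circles lying in the $2$-strata, selected so that $\Sigma_{-}$ is a finite disjoint union of model stratified $2$-disks containing all the $0$- and $1$-cells and $\Sigma_{+}$ is unstratified, the $\otimes$-excision property gives $\int_{\Sigma} A \simeq \int_{\Sigma_{-}} A \bigotimes\limits_{\int_{\Sigma_{0} \times \Rb} A} \int_{\Sigma_{+}} A$. Here $\int_{\Sigma_{+}} A$ is handled by the unstratified genus analysis of Thm.\,\ref{main-thm2}, $\int_{\Sigma_{0} \times \Rb} A$ is a finite product of factors $\Fun_{\E}(\C_i,\C_i)$ (one per circle of $\Sigma_0$, by the proof of Thm.\,\ref{main-thm1}), and $\int_{\Sigma_{-}} A$ is a product of the three local values above. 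Iterating — absorbing a cylinder by replacing $S^1 \times \Rb$ by two punctured disks as in Thm.\,\ref{main-thm2}, and applying the $0$-cell reduction once all strata of a component have been collected into one disk — each relative tensor product collapses to $\E$ by Lem.\,\ref{Lem-last-step-main1} and the double-centralizer statement of Prop.\,\ref{double-centralizer}, after which all remaining $\boxtimes_{\E}$'s disappear since $\E$ is the unit of $\cat$. This leaves $\int_{\Sigma} A \simeq (\E, u_{\Sigma})$ with $u_{\Sigma}$ an explicit object of $\E$ built from iterated internal homs.

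The hard part will be the global bookkeeping of these colimits: one must verify that the iterated relative tensor products produced by successive $\otimes$-excisions do not depend on the chosen collar-gluings and handle orderings, and that the algebra and (bi)module structures appearing at every gluing are compatible in $\cat$ — not merely in $\fcat$ or $\V_{\uty}$ — so that the chain of reductions of balanced tensor products over $Z(\C,\E)$ to those over $\Fun_{\E}(\C,\C)$ and finally to $\E$ can actually be carried out at each stage. A secondary difficulty is to make precise, for an arbitrary closed stratified surface, the decomposition into exactly the three model disks and to check that the $0$-cell anomaly-free condition is genuinely consistent with the iterated $1$- and $2$-cell data surrounding that $0$-cell; this coherence check on the notion of an anomaly-free coefficient system is where the real content of the conjecture is concentrated.
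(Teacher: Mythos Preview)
The statement you are attempting to prove is labelled a \emph{Conjecture} in the paper and is not accompanied by any proof; it is stated immediately after Thm.\,\ref{main-thm3} as an open expectation rather than an established result. There is therefore no paper proof to compare your attempt against.

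Your outline is a reasonable strategy and is clearly modelled on the proofs of Thm.\,\ref{main-thm1}--Thm.\,\ref{main-thm3}, but it is a plan rather than a proof, and you have honestly identified the genuine obstacles yourself. The two issues you flag at the end --- the well-definedness and coherence of the iterated relative tensor products under different choices of collar-gluings, and the compatibility of the $0$-cell anomaly-free condition with the surrounding $1$- and $2$-cell data for an \emph{arbitrary} stratified surface --- are not merely bookkeeping: they are precisely the content that distinguishes the special cases treated in the three main theorems from the general statement. In particular, your step ``choosing a collar-gluing $\Sigma \simeq \Sigma_{-} \cup_{\Sigma_{0}\times\Rb} \Sigma_{+}$ with $\Sigma_{0}$ a disjoint union of circles lying in the $2$-strata, selected so that $\Sigma_{-}$ is a finite disjoint union of model stratified $2$-disks containing all the $0$- and $1$-cells'' is not obviously available for a general stratification (the $1$-stratum may form a graph with arbitrary valences at $0$-cells, and the circles of $\Sigma_0$ cannot in general be chosen disjoint from the $1$-stratum while still isolating each $0$-cell), so the reduction to the three local models is not justified as written. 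Until those points are resolved, the argument remains a heuristic supporting the conjecture rather than a proof of it.
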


\appendix
\section{Appendix}

 \subsection{Central functors and other results}

Let $\D$ be a braided monoidal category with the braiding $c$ and $\M$ a monoidal category. 
\begin{defn}
\label{central defn 1}
A \emph{central structure} of a monoidal functor $F: \D \rightarrow \M$ is a braided monoidal functor $F': \D \rightarrow Z(\M)$ such that $F = \forget \circ F'$, where $\forget: Z(\M) \rightarrow \M$ is the forgetful functor. 
\end{defn}
A \emph{central functor} is a monoidal functor equipped with a central structure.
For any monoidal functor $F: \D \rightarrow \M$, the central structure of $F$ given in Def.\,\ref{central defn 1} 
is equivalent to the central structure of $F$ given in the Def.\,\ref{central defn 2}.

\begin{defn}
\label{central defn 2}
A \emph{central structure} of a monoidal functor $F: \D \rightarrow \M$ is a natural isomorphism $\sigma_{d, m}: F(d) \otimes m \rightarrow m \otimes F(d)$, $d \in \D$, $m \in \M$ which is natural in both variables such that the diagrams 
\begin{equation}
\label{center1}
\begin{split}
\xymatrixcolsep{0.5pc}
\xymatrixrowsep{1.2pc}
\xymatrix{
F(d) \otimes m \otimes m' \ar[rr]^{\sigma_{d, m \otimes m'}} \ar[dr]_{\sigma_{d, m}, 1}&  & m \otimes m' \otimes F(d)\\
  & m \otimes F(d) \otimes m'\ar[ur]_{1, \sigma_{d, m'}}&}
\end{split}
\end{equation} 
\begin{equation}
\label{center2}
\begin{split}
\xymatrix{
F(d) \otimes F(d') \otimes m \ar[d]_{J_{d, d'}, 1} \ar[r]^{1, \sigma_{d', m}} &F(d) \otimes m \otimes F(d') \ar[r]^{\sigma_{d, m}, 1} & m \otimes F(d) \otimes F(d') \ar[d]^{1, J_{d,d'}} \\
F(d \otimes d') \otimes m \ar[rr]^{\sigma_{d \otimes d', m}} & & m \otimes F(d \otimes d')
 }
\end{split}
\end{equation}
\begin{equation}
\label{center3}
\begin{split}
\xymatrix{
F(d) \otimes F(d') \ar[r]^{J_{d, d'}} \ar[d]_{\sigma_{d, F(d')}} & F(d \otimes d') \ar[d]^{F(c_{d, d'})} \\
 F(d') \otimes F(d) \ar[r]^{J_{d', d}} & F(d' \otimes d)
}
\end{split}
\end{equation}
commute for any $d, d' \in \D$ and $m, m' \in \M$, where $J$ is the monoidal structure of $F$. 
\end{defn}

\begin{prop}
Suppose $F: \D \to \M$ is a central functor. For any $d \in \D, m \in \M$, the following two diagrams commute
\begin{equation} 
\label{diagram-c}
\begin{split}
\xymatrixcolsep{0.5pc}
\vcenter{\xymatrix{
F(d) \otimes \unit_{\M} \ar[rr]^{\sigma_{d, \unit_{\M}}} \ar[rd]_{r_{F(d)}} & & \unit_{\M} \otimes F(d) \ar[ld]^{l_{F(d)}} \\
& F(d) &
}}
\qquad 
\vcenter{\xymatrix{
F(\unit_{\D}) \otimes m \ar[rr]^{\sigma_{\unit_{\D}, m}} \ar[rd]_{l_m} & & m \otimes F(\unit_{\D}) \ar[ld]^{r_m} \\
& m &
}}
 \end{split}
 \end{equation}
 Here $l_m: F(\unit_{\D}) \otimes m = \unit_{\M} \otimes m \to m$ and $r_m: m \otimes F(\unit_{\D}) = m \otimes \unit_{\M} \to m$, $m \in \M$ are the unit isomorphisms of the monoidal category $\M$. 
\end{prop}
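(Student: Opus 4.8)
The plan is to pass to the description of the Drinfeld center given in Def.\,\ref{central defn 1}, which is available via the equivalence of the two notions of central structure recorded just after Def.\,\ref{central defn 1}. Concretely, I replace the datum $\sigma$ by the braided monoidal functor $F'\colon \D \to Z(\M)$ with $\forget \circ F' = F$ and $F'(d) = (F(d), \sigma_{d,-})$, so that for each $d \in \D$ the pair $(F(d), \sigma_{d,-})$ is an object of $Z(\M)$, i.e.\ $\sigma_{d,-}$ is a half-braiding on $F(d)$. Since the monoidal functors considered in this paper are strictly unital, one also has $F(\unit_{\D}) = \unit_{\M}$ and $F'(\unit_{\D}) = \unit_{Z(\M)}$. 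With this in hand, both triangles become instances of one standard coherence fact about half-braidings, namely that a half-braiding is canonical against the unit object.

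For the left triangle, the assertion $l_{F(d)} \circ \sigma_{d, \unit_{\M}} = r_{F(d)}$ says precisely that $\sigma_{d, \unit_{\M}}$ is the canonical isomorphism $F(d) \otimes \unit_{\M} \cong \unit_{\M} \otimes F(d)$. Equivalently, this is the braiding of $Z(\M)$ evaluated on the pair $(F'(d), \unit_{Z(\M)})$, and in any braided monoidal category the braiding with the unit object is the canonical unitor isomorphism. If I argue directly, I would use the hexagon (\ref{center1}) with $m = m' = \unit_{\M}$ together with the naturality of $\sigma_{d,-}$ along the unit isomorphism $\unit_{\M} \otimes \unit_{\M} \xrightarrow{\ \sim\ } \unit_{\M}$: a routine application of Mac Lane's coherence theorem (equivalently, strictifying $\M$) reduces this to an identity of the form $\sigma_{d, \unit_{\M}} = \sigma_{d, \unit_{\M}} \circ \sigma_{d, \unit_{\M}}$, modulo the unitors, and invertibility of $\sigma_{d, \unit_{\M}}$ then forces it to be the canonical isomorphism.

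For the right triangle, I use $F(\unit_{\D}) = \unit_{\M}$: then $\sigma_{\unit_{\D}, m}\colon \unit_{\M} \otimes m \to m \otimes \unit_{\M}$ is the half-braiding component of $F'(\unit_{\D}) = \unit_{Z(\M)}$, and the unit object of $Z(\M)$ carries the canonical half-braiding $r_m^{-1} \circ l_m$ (the unique half-braiding on $\unit_{\M}$, by the same coherence argument as above). Hence $\sigma_{\unit_{\D}, m} = r_m^{-1} \circ l_m$, which is exactly the stated commutativity $r_m \circ \sigma_{\unit_{\D}, m} = l_m$; one could also re-run the argument of the previous paragraph with the two tensor factors interchanged, using (\ref{center2}) in place of (\ref{center1}). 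I do not expect a genuine obstacle: the content of both triangles is the elementary fact that half-braidings are canonical against the unit, and the only care required is the unitor bookkeeping in the non-strict setting, which is disposed of by coherence. Of the two, the left triangle is marginally the more substantial step, since there the triviality of $\sigma_{d,\unit_{\M}}$ must actually be extracted from the hexagon; the right triangle is then essentially formal.
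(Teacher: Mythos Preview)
Your treatment of the left triangle is essentially the paper's: the paper uses the hexagon (\ref{center1}) with $m=m'=\unit_{\M}$, naturality of $\sigma_{d,-}$ along the unitor $\unit_{\M}\otimes\unit_{\M}\to\unit_{\M}$, and the triangle identities of $\M$ to force $\sigma_{d,\unit_{\M}}$ to be the canonical isomorphism. Your Drinfeld-center phrasing (``the braiding of $Z(\M)$ against the unit is canonical'') is a correct repackaging of that same diagram chase.

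For the right triangle there is a genuine gap in your primary argument. Invoking $F'(\unit_{\D})=\unit_{Z(\M)}$ is circular: passing from $\sigma$ to a \emph{strictly unital} braided monoidal functor $F'\colon\D\to Z(\M)$ requires checking that the half-braiding carried by $F'(\unit_{\D})=(\unit_{\M},\sigma_{\unit_{\D},-})$ is the canonical one, which is precisely the statement at issue. Your parenthetical escape route, that $\unit_{\M}$ admits a \emph{unique} half-braiding ``by the same coherence argument'', is false in general: the hexagon with $m=m'=\unit_{\M}$ only pins down $\gamma_{\unit_{\M}}$, not $\gamma_m$ for arbitrary $m$, and indeed half-braidings on $\unit_{\M}$ are in bijection with monoidal natural automorphisms of $\id_{\M}$ (take $\M=\rep(G)$ with nontrivial $Z(G)$ for a counterexample). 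The correct route is the one you list as an alternative, and it is exactly what the paper does: use (\ref{center2}) with $d=d'=\unit_{\D}$ together with naturality of $\sigma_{-,m}$ in the $\D$-variable along the unitor $\unit_{\D}\otimes\unit_{\D}\to\unit_{\D}$; this collapses to $\sigma_{\unit_{\D},m}\circ\sigma_{\unit_{\D},m}=\sigma_{\unit_{\D},m}$ modulo unitors, whence the claim. The point is that (\ref{center2}) is genuinely extra information beyond $\sigma_{\unit_{\D},-}$ being a half-braiding, and it cannot be replaced by the $Z(\M)$-reformulation without begging the question.
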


\begin{proof}
Consider the diagram:
\[ \xymatrix@=4ex{
& F(d) \otimes (\unit_{\M} \otimes \unit_{\M}) \ar[r]^{\sigma_{d, \unit_{\M} \otimes \unit_{\M}}} \ar[d]^{1, l_{\unit_{\M}}} & (\unit_{\M} \otimes \unit_{\M}) \otimes F(d) \ar[d]^{l_{\unit_{\M}}, 1} \ar[rd] & \\
(F(d) \otimes \unit_{\M}) \otimes \unit_{\M} \ar[ru] \ar[r]^(0.55){r_{F(d)}, 1} \ar[rd]_(0.4){\sigma_{d, \unit_{\M}}, 1} & F(d) \otimes \unit_{\M} \ar[r]^{\sigma_{d, \unit_{\M}}} & \unit_{\M} \otimes F(d) & \unit_{\M} \otimes (\unit_{\M} \otimes F(d)) \ar[l]_(0.55){l_{\unit_{\M} \otimes F(d)}} \\
 & (\unit_{\M} \otimes F(d)) \otimes \unit_{\M} \ar[r] \ar[u]_{l_{F(d)}, 1} & \unit_{\M} \otimes (F(d) \otimes \unit_{\M}) \ar[ru]_{1, \sigma_{d, \unit_{\M}}} \ar[lu]_(0.4){l_{F(d) \otimes \unit_{\M}}} &
}\]
The outward hexagon commutes by the diagram (\ref{center1}).
The left-upper, right-upper and middle-bottom triangles commute by the monoidal category $\M$.
The middle-up square commutes by the naturality of the central structure $\sigma_{d,m}: F(d) \otimes m \to m \otimes F(d)$, $\forall d \in \D, m \in \M$.
The right-down square commutes by the naturality of the unit isomorphism $l_m:\unit_{\M} \otimes m \simeq m$, $m \in \M$.
Then the left-down triangle commutes. Since $- \otimes \unit_{\M} \simeq \id_{\M}$ is the natural isomorphism, the left triangle of (\ref{diagram-c}) commutes.

Consider the diagram:
\[ \xymatrix@=4ex{
& m \otimes F(\unit_{\D} \otimes \unit_{\D}) \ar[ld]_{1, J}  \ar[d]^{1, r_{\unit_{\D}}}& \\
m \otimes F(\unit_{\D}) \otimes F(\unit_{\D}) \ar[r]^(0.6){1, r_{F(\unit_{\D})}} \ar@/_0.4pc/[r]_(0.6){r_{m \otimes F(\unit_{\D})}} & m \otimes F(\unit_{\D})  & F(\unit_{\D} \otimes \unit_{\D}) \otimes m \ar[lu]_{\sigma_{\unit_{\D} \otimes \unit_{\D}, m}} \ar[dd]^{J,1} \ar[ld]^{r_{\unit_{\D}}, 1} \\
& F(\unit_{\D}) \otimes m  \ar[u]^{\sigma_{\unit_{\D}, m}} & \\ 
F(\unit_{\D}) \otimes m \otimes F(\unit_{\D}) \ar[uu]^{\sigma_{\unit_{\D}, m}, 1} \ar@/^0.5pc/[ru]^(0.4){r_{F(\unit_{\D}) \otimes m}} \ar[ru]_{1,r_m} & & F(\unit_{\D}) \otimes F(\unit_{\D}) \otimes m  \ar[ll]^{1, \sigma_{\unit_{\D}, m}} \ar@/_0.5pc/[lu]_(0.4){r_{F(\unit_{\D})}, 1} \ar[lu]^(0.6){1, l_m}
}\]
The outward diagram commutes by the diagram (\ref{center2}).
The right-upper square commutes by the naturality of the central structure $\sigma_{d, m}: F(d) \otimes m \to m \otimes F(d)$, $d \in D, m \in \M$.
The left square commutes by the naturality of the unit isomorphism $r_m: m \otimes \unit_{\M} \simeq m$, $m \in \M$.
The left-upper and right-down triangles commute by the monoidal functor $F$.
Three parallel arrows equal by the triangle diagrams of the monoidal category $\M$.
Then the bottom triangle commutes. Since $F(\unit_{\D}) \otimes - = \unit_{\M} \otimes - \simeq \id_{\M}$ is the natural isomorphism, the right triangle of (\ref{diagram-c}) commutes.
\end{proof}

Let $A$ be a separable algebra in a multifusion category $\C$ over $\E$. We use ${}_A\!\C$ (or $\C_A$, ${}_A\!\C_A$) to denote the category of left $A$-modules (or right $A$-modules, $A$-bimodules) in $\C$.

\begin{prop}
\label{E-free-C}
Let $\C$ be a multifusion category over $\E$ and $A$ a separable algebra in $\C$. Then the diagram 
\[ \xymatrix{
T_{\C}(e) \otimes x \otimes_A y^R \ar[r]^{c_{e, x \otimes_A y^R}} \ar[d]_{c_{e, x}, 1} & x \otimes_A y^R \otimes T_{\C}(e)  \\
x \otimes T_{\C}(e) \otimes_A y^R \ar[r]_{h} & x \otimes_A T_{\C}(e) \otimes y^R \ar[u]_{1, c_{e,y^R}}
} \]
commutes for $e \in \E$, $x, y \in \C_A$, where $c$ is the central structure of the central functor $T_{\C}: \E \to \C$.
\end{prop}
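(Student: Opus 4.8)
The plan is to reduce the commutativity of the square — which a priori involves relative tensor products over $A$ — to an equality of morphisms between ordinary tensor products in $\C$, where it becomes the hexagon axiom (\ref{center1}) for the central structure $c$ of the central functor $T_{\C}$.

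First I would invoke the universal property of the relative tensor product. Recall that $x\otimes_A y^R$ is the coequalizer of the pair $x\otimes A\otimes y^R\rightrightarrows x\otimes y^R$ built from the right $A$-action on $x$ and the left $A$-action on $y^R$; write $q\colon x\otimes y^R\twoheadrightarrow x\otimes_A y^R$ for the canonical epimorphism. Since $\C$ is multifusion, $z\otimes-$ and $-\otimes z$ admit adjoints for every $z\in\C$, hence preserve coequalizers; therefore $1_{T_{\C}(e)}\otimes q$ and $q\otimes 1_{T_{\C}(e)}$ are again epimorphisms, and all four objects $T_{\C}(e)\otimes(x\otimes_A y^R)$, $(x\otimes_A y^R)\otimes T_{\C}(e)$, $x\otimes T_{\C}(e)\otimes_A y^R$, $x\otimes_A T_{\C}(e)\otimes y^R$ are computed as coequalizers of the corresponding pairs of ordinary tensor products (for the last two, the $A$-actions on $x\otimes T_{\C}(e)$ and on $T_{\C}(e)\otimes y^R$ are the ones twisted by the half-braiding $c_{e,A}$, as in Expl.\,\ref{Ex-A-bimodule}, and $h$ is the comparison isomorphism induced on coequalizers from the associativity constraint of $\C$ together with $c_{e,A}$). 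It thus suffices to check the identity after precomposing both composites with the epimorphism $1_{T_{\C}(e)}\otimes q\colon T_{\C}(e)\otimes x\otimes y^R\twoheadrightarrow T_{\C}(e)\otimes(x\otimes_A y^R)$.

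Next I would compute the two precomposed composites at the level of ordinary tensor products. Using naturality of $c_{e,-}$ with respect to $q$, the clockwise composite becomes $(q\otimes 1_{T_{\C}(e)})\circ c_{e,\,x\otimes y^R}$. Unfolding the canonical projections onto $x\otimes T_{\C}(e)\otimes_A y^R$ and $x\otimes_A T_{\C}(e)\otimes y^R$ and the comparison map $h$ — all induced from associators and $q$ — and using functoriality of $\otimes_A$ (so that $c_{e,x}\otimes_A 1_{y^R}$ and $1_x\otimes_A c_{e,y^R}$ commute with the quotient maps), the counterclockwise composite becomes $(q\otimes 1_{T_{\C}(e)})\circ(1_x\otimes c_{e,y^R})\circ(c_{e,x}\otimes 1_{y^R})$, modulo the associativity constraints of $\C$ which I suppress. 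Hence the square reduces to the identity
\[ c_{e,\,x\otimes y^R}=(1_x\otimes c_{e,y^R})\circ(c_{e,x}\otimes 1_{y^R}), \]
which is exactly diagram (\ref{center1}) applied to $\sigma=c$, $d=e$, $m=x$, $m'=y^R$.

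The conceptual content therefore sits entirely in (\ref{center1}); the main obstacle I expect is the bookkeeping of the previous two paragraphs. One must verify that $c_{e,x}$ is a morphism of right $A$-modules $T_{\C}(e)\otimes x\to x\otimes T_{\C}(e)$ and $c_{e,y^R}$ a morphism of left $A$-modules $T_{\C}(e)\otimes y^R\to y^R\otimes T_{\C}(e)$, so that the arrows of the square genuinely descend to the relative tensor products; and that $h$, together with the various quotient maps, interacts with $1_{T_{\C}(e)}\otimes q$ precisely so as to yield the two sides of (\ref{center1}). Both verifications reduce to the naturality of $c_{e,-}$ and to the hexagon axioms (\ref{center1})–(\ref{center2}) with a tensor factor replaced by $A$, which reconcile "braiding $A$ past $T_{\C}(e)$" with "letting $A$ act on $x$ (resp.\ $y^R$) directly"; once these identifications are in hand the reduction is routine.
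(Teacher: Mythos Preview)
Your proposal is correct and follows essentially the same approach as the paper: both proofs verify that $c_{e,x}$ and $c_{e,y^R}$ are $A$-module morphisms, use the coequalizer description of $\otimes_A$ together with the fact that tensoring in $\C$ preserves these coequalizers to reduce the square to the level of ordinary tensor products, and then invoke the hexagon axiom~(\ref{center1}). The paper presents the reduction via explicit coequalizer diagrams rather than prose, but the logical content is identical.
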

\begin{proof}
The functor $y \mapsto y^R$ defines an equivalence of right $\C$-modules $(\C_A)^{\op|L} \simeq {}_A\!{\C}$.
For $x \in \C_A$, we use $p_x$ to denote the right $A$-action on $x$. For $y^R \in {}_A\!{\C}$, we use $q_{y^R}$ to denote the left $A$-action on $y^R$.
Obviously, $T_{\C}(e) \otimes x$ belongs to $\C_A$ and $y^R \otimes T_{\C}(e)$ belongs to ${}_A\!{\C}$. 
The right $A$-action on $x \otimes T_{\C}(e)$ is induced by $x \otimes T_{\C}(e) \otimes A \xrightarrow{1,c_{e,A}} x \otimes A \otimes T_{\C}(e) \xrightarrow{p_x, 1} x \otimes T_{\C}(e)$.
The left $A$-action on $T_{\C}(e) \otimes y^R$ is induced by $A \otimes T_{\C}(e) \otimes y^R \xrightarrow{c^{-1}_{e, A},1} T_{\C}(e) \otimes A \otimes y^R \xrightarrow{1,q_{y^R}} T_{\C}(e) \otimes y^R$.
It is routine to check that $c_{e, x}$ is a morphism in $\C_A$ and $c_{e, y^R}$ is a morphism in ${}_A\!{\C}$.

The morphism $c_{e, x \otimes_A y^R}$ is induced by
\[ \xymatrix{
T_{\C}(e) \otimes x \otimes A \otimes y^R \ar@<.5ex>[r]^{1,p_x,1} \ar@<-.5ex>[r]_{1,1,q_{y^R}} \ar[d]_{c_{e, x \otimes A \otimes y^R}} & T_{\C}(e) \otimes x \otimes y^R \ar[d]^{c_{e, x \otimes y^R}} \ar[r] & T_{\C}(e) \otimes x \otimes_A y^R \ar[d]^{c_{e, x \otimes_A y^R}} \\
x \otimes A \otimes y^R \otimes T_{\C}(e) \ar@<.5ex>[r]^{p_x,1,1} \ar@<-.5ex>[r]_{1,q_{y^R},1} & x \otimes y^R \otimes T_{\C}(e) \ar[r] & x \otimes_A y^R \otimes T_{\C}(e) 
} \]
The composition $(1_x \otimes_A c_{e,y^R}) \circ h \circ (c_{e,x} \otimes_A 1_{y^R})$ is induced by
\[ \xymatrix{
T_{\C}(e) \otimes x \otimes A \otimes y^R \ar@<.5ex>[r]^{1,p_x,1} \ar@<-.5ex>[r]_{1,1,q_{y^R}} \ar@<-5ex>@/_3pc/[ddd]_{c_{e, x \otimes A \otimes y^R}} \ar[d]_{c_{e,x},1,1} & T_{\C}(e) \otimes x \otimes y^R \ar[d]^{c_{e,x},1} \ar[r]  & T_{\C}(e) \otimes x \otimes_A y^R \ar[d]^{c_{e,x},1} \\
x \otimes T_{\C}(e) \otimes A \otimes y^R \ar@<.5ex>[r]^{p_{x \otimes T_{\C}(e)}} \ar@<-.5ex>[r]_{1,1,q_{y^R}} \ar[d]_{1,c_{e,A},1} & x \otimes T_{\C}(e) \otimes y^R \ar[d]^{1} \ar[r] & x \otimes T_{\C}(e) \otimes_A y^R \ar[d]^{h} \\
x \otimes A \otimes T_{\C}(e) \otimes y^R \ar@<.5ex>[r]^{p_x,1,1} \ar@<-.5ex>[r]_{1,q_{T_{\C}(e) \otimes y^R}} \ar[d]_{1,1,c_{e,y^R}} & x \otimes T_{\C}(e) \otimes y^R \ar[r] \ar[d]^{1,c_{e,y^R}} & x \otimes_A T_{\C}(e) \otimes y^R \ar[d]^{1,c_{e,y^R}} \\
x \otimes A \otimes y^R \otimes T_{\C}(e) \ar@<.5ex>[r]^{p_x,1,1} \ar@<-.5ex>[r]_{1,q_{y^R},1} & x \otimes y^R \otimes T_{\C}(e) \ar[r] & x \otimes_A y^R \otimes T_{\C}(e) 
} \]
Since $c_{e, x \otimes y^R} = (1_x \otimes c_{e,y^R}) \circ (c_{e,x} \otimes 1_{y^R})$, the composition $(1_x \otimes_A c_{e,y^R}) \circ h \circ (c_{e,x} \otimes_A 1_{y^R})$ equals to $c_{e, x \otimes_A y^R}$ by the universal property of coequalizers.
\end{proof}

\begin{prop}
\label{A-fun-bim-eq}
Let $\C$ be a multifusion category over $\E$ and $A$ a separable algebra in $\C$.
There is an equivalence $\Fun_{\C}(\C_A, \C_A) \simeq ({{}_A\!{\C}_A})^{\rev}$ of multifusion categories over $\E$.
\end{prop}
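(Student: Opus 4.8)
The plan is to upgrade the classical monoidal equivalence $\Fun_{\C}(\C_A, \C_A) \simeq ({}_A\!\C_A)^{\rev}$ to an equivalence of multifusion categories over $\E$ by supplying the compatibility data with the two central functors. First I would recall the underlying equivalence (an Eilenberg--Watts type statement, see e.g.\ \cite[Prop.\,7.11.1]{Etingof}): every left $\C$-module endofunctor of $\C_A$ is isomorphic to $- \otimes_A M$ for a unique $A$-bimodule $M$, and $G \colon F \mapsto F(A)$ is an equivalence $\Fun_{\C}(\C_A, \C_A) \xrightarrow{\simeq} {}_A\!\C_A$ with inverse $M \mapsto - \otimes_A M$. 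Since $(F \circ F')(A) \cong F'(A) \otimes_A F(A)$ for $F = - \otimes_A M$ and $F' = - \otimes_A N$, the functor $G$ carries composition of module functors to the \emph{reversed} relative tensor product on ${}_A\!\C_A$, which gives the monoidal equivalence onto $({}_A\!\C_A)^{\rev}$, with monoidal structure the evident comparison isomorphisms $G(F) \otimes^{\rev} G(F') = F'(A) \otimes_A F(A) \cong (F \circ F')(A) = G(F \circ F')$.

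Next I would pin down the two $\E$-structures. On the right, ${}_A\!\C_A$ is a multifusion category over $\E$ via the central functor $I \colon e \mapsto T_{\C}(e) \otimes A$ of Expl.\,\ref{Ex-A-bimodule}, and, using that $\E$ is symmetric (so $\overline{\E} = \E$) together with Expl.\,\ref{ex1}, its reverse $({}_A\!\C_A)^{\rev}$ carries the central functor given by the same underlying functor $I$ equipped with the inverse half-braiding. On the left, by Rem.\,\ref{p3} the forgetful functor $\Fun^{\E}_{\C}(\C_A, \C_A) \to \Fun_{\C}(\C_A, \C_A)$ is an equivalence of multifusion categories over $\E$, so the relevant central functor is $\hat{T} \colon e \mapsto \hat{T}^{e} = T_{\C}(e) \odot -$ of Expl.\,\ref{fun-C-E}; and since the left $\E$-action on $\C_A$ is $e \odot x = T_{\C}(e) \otimes x$ (Expl.\,\ref{E-module-cat1}), $\hat{T}^{e}$ is the plain functor $T_{\C}(e) \otimes -$ equipped with its canonical left $\C$-module structure, built from associators and the central structure $z$ of $T_{\C}$.

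The heart of the argument is the identification $G(\hat{T}^{e}) = \hat{T}^{e}(A) = T_{\C}(e) \otimes A = I(e)$: a direct check shows that the left $A$-module structure that the construction of $G$ puts on $\hat{T}^{e}(A)$ is exactly the one defining $I(e)$ in Expl.\,\ref{Ex-A-bimodule} (both being $A \otimes T_{\C}(e) \otimes A \xrightarrow{z^{-1}_{e,A},\,1} T_{\C}(e) \otimes A \otimes A \to T_{\C}(e) \otimes A$), so I would take $u_e \coloneqq \id_{I(e)}$. It then remains to verify that $u_e$ satisfies the structure-of-monoidal-functor-over-$\E$ axiom, equivalently that $G$ carries the half-braiding of $\hat{T}^{e}$ in $Z(\Fun^{\E}_{\C}(\C_A, \C_A))$ — which is assembled from the left $\C$-module structures of the functors and from $z$ — to the reversed half-braiding of $I(e)$ in $Z(({}_A\!\C_A)^{\rev})$ described in Expl.\,\ref{Ex-A-bimodule}. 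Unwinding both sides through $G(g) = g(A)$ and the coequalizer presentation of $\otimes_A$, and using Prop.\,\ref{E-free-C} to move $T_{\C}(e)$ across the relative tensor products, both reduce to the same morphism built from $z_{e,-}$; granting this, $G$ is a monoidal equivalence over $\E$, hence an equivalence of multifusion categories over $\E$.

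The main obstacle I anticipate is not conceptual but organizational: the underlying equivalence is standard, so the real work lies in the bookkeeping of the last step — correctly tracking the reversal of the tensor product (composition of module functors against $\otimes_A$), and checking at the level of explicit morphisms that the left $A$-module structures placed on $T_{\C}(e) \otimes A$ on the two sides literally coincide, so that $u_e = \id$ is legitimate rather than concealing a nontrivial comparison. Once the half-braidings are seen to match, everything else follows formally.
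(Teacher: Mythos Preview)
Your proposal is correct and follows essentially the same approach as the paper. The only cosmetic difference is the direction of the equivalence: the paper works with $\Phi\colon ({}_A\!\C_A)^{\rev} \to \Fun_{\C}(\C_A,\C_A)$, $x \mapsto - \otimes_A x$, so its structure isomorphism $\Phi(I(e)) \simeq \hat T^e$ involves $c_{e,-}^{-1}$, whereas by using the inverse $G$ you arrange $G(\hat T^e) = I(e)$ on the nose and take $u_e = \id$; the verification of the central-structure compatibility then proceeds identically via the coequalizer presentation of $\otimes_A$ and Prop.\,\ref{E-free-C}.
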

\begin{proof}
By \cite[Prop.\,7.11.1]{Etingof}, 
the functor $\Phi: ({{}_A\!{\C}_A})^{\rev} \to \Fun_{\C}(\C_A, \C_A)$ is defined as $x \mapsto - \otimes_A x$ and the inverse of $\Phi$ is defined as $f \mapsto f(A)$.
The monoidal structure on $\Phi$ is defined as
\[ \Phi(x \otimes^{\rev}_A y) = - \otimes_A (y \otimes_A x) \simeq (- \otimes_A y) \otimes_A x = \Phi(x) \circ \Phi(y) \]
for $x, y \in  ({{}_A\!{\C}_A})^{\rev}$. 
Recall the central structures on the functors $I: \E \to ({}_A\!{\C}_A)^{\rev}$ and $\hat{T}: \E \to \Fun_{\C}(\C_A, \C_A)$ in Expl.\,\ref{Ex-A-bimodule} and Expl.\,\ref{fun-C-E} respectively.
The structure of monoidal functor over $\E$ on $\Phi$ is induced by
\[ \Phi(I(e)) = \Phi(T_{\C}(e) \otimes^{\rev} A) = - \otimes_A (A \otimes T_{\C}(e)) \cong - \otimes T_{\C}(e) \xrightarrow{c_{e, -}^{-1}} T_{\C}(e) \otimes - = \hat{T}^e \]
for $e \in \E$, where $c$ is the central structure of the functor $T_{\C}: \E \to \C$. 
Next we want to check that $\Phi$ is a monoidal functor over $\E$. Consider the diagram for $e \in \E, x \in  ({{}_A\!{\C}_A})^{\rev}$:
\[ \xymatrix{
\Phi(I(e) \otimes^{\rev}_A x) \ar[r] \ar[d]_{\Phi(\sigma_{e, x})} & \Phi(I(e)) \circ \Phi(x) \ar[r] & \hat{T}^e \circ \Phi(x) \ar[d]^{\bar{\sigma}_{e, \Phi(x)}} \\
\Phi(x \otimes_A^{\rev} I(e)) \ar[r] & \Phi(x) \circ \Phi(I(e)) \ar[r] & \Phi(x) \circ \hat{T}^e
} \]
The central structure $\sigma_{e,x}$ is induced by $x \otimes_A A \otimes T_{\C}(e) \xrightarrow{c^{-1}_{e, x \otimes_A A}} T_{\C}(e) \otimes x \otimes_A A \cong T_{\C}(e) \otimes A \otimes_A x \xrightarrow{c_{e, A}, 1} A \otimes T_{\C}(e) \otimes_A x$. The central structure $\bar{\sigma}_{e, \Phi(x)}$ is induced by $T_{\C}(e) \otimes (- \otimes_A x) \simeq (T_{\C}(e) \otimes -) \otimes_A x$.
The commutativity of the above diagram is due to the commutativity of the following diagram
\[ \xymatrix{
- \otimes_A T_{\C}(e) \otimes x \otimes_A A \ar[d]_{\cong} \ar[r]^{1, c_{e ,x \otimes_A A}}& - \otimes_A x \otimes_A A \otimes T_{\C}(e) \ar[r]^{c^{-1}_{e, - \otimes_A x \otimes_A A}} \ar@{.>}[d]_{\Phi(\sigma_{e,x})} & T_{\C}(e) \otimes - \otimes_A x \otimes_A A \ar[d]^{\cong} \\
- \otimes_A T_{\C}(e) \otimes A \otimes_A x \ar[r]_{1, c_{e, A}, 1} &- \otimes_A A \otimes T_{\C}(e) \otimes_A x \ar[r]_{c^{-1}_{e, - \otimes_A A}, 1} & T_{\C}(e) \otimes - \otimes_A A \otimes_A x
} \]
The upper horizontal composition $c^{-1}_{e, -\otimes_A x} \circ (1 \otimes_A c_{e, x})$ is induced by
\[ \xymatrix{
- \otimes A \otimes T_{\C}(e) \otimes x \ar@<.5ex>[r]^{p_-,1,1} \ar@<-.5ex>[r]_{1,q_{T_{\C}(e) \otimes x}} \ar[d]_{1,1,c_{e,x}} \ar@<-5ex>@/_2pc/[dd]_{c^{-1}_{e, -\otimes A},1} & - \otimes T_{\C}(e) \otimes x \ar[r] \ar[d]^{1, c_{e,x}} \ar@<5ex>@/^2pc/@{.>}[dd]|(0.3){c^{-1}_{e,-},1}& - \otimes_A T_{\C}(e) \otimes x \ar[d]^{1, c_{e,x}} \\
- \otimes A \otimes x \otimes T_{\C}(e) \ar@<.5ex>[r]^{p_-,1,1} \ar@<-.5ex>[r]_{1,q_x,1} \ar[d]_{c^{-1}_{e, -\otimes A \otimes x}} & - \otimes x \otimes T_{\C}(e) \ar[r] \ar[d]^{c^{-1}_{e, - \otimes x}}& - \otimes_A x \otimes T_{\C}(e) \ar[d]^{c^{-1}_{e ,- \otimes_A x}} \\
T_{\C}(e) \otimes - \otimes A \otimes x \ar@<.5ex>[r]^{1,p_-,1} \ar@<-.5ex>[r]_{1,1,q_x} & T_{\C}(e) \otimes - \otimes x \ar[r] & T_{\C}(e) \otimes - \otimes_A x
} \]
Here $(-, p_-)$ and $(A, m)$ belong to $\C_A$ and $(x, p_x, q_x)$ belong to ${{}_A\!{\C}_A}$. $q_{T_{\C}(e) \otimes x}$ is defined as $A \otimes T_{\C}(e) \otimes x \xrightarrow{c_{e, A}^{-1}, 1} T_{\C}(e) \otimes A \otimes x \xrightarrow{1,q_x} T_{\C}(e) \otimes x$. 
The lower horizontal composition $c^{-1}_{e, - \otimes_A A}  \circ (1 \otimes_A c_{e, A})$ is induced by
\[ \xymatrix{
- \otimes A \otimes T_{\C}(e) \otimes A \ar@<.5ex>[r]^{p_-, 1, 1} \ar@<-.5ex>[r]_{1,q_{T_{\C}(e) \otimes A}} \ar[d]_{1,1,c_{e,A}} \ar@<-5ex>@/_2pc/[dd]_{c^{-1}_{e, - \otimes A},1} & - \otimes T_{\C}(e) \otimes A \ar[r] \ar[d]^{1, c_{e, A}} \ar@<5ex>@/^2pc/@{.>}[dd]|(0.3){c^{-1}_{e,-},1} & - \otimes_A T_{\C}(e) \otimes A \ar[d]^{1, c_{e, A}} \\
- \otimes A \otimes A \otimes T_{\C}(e) \ar@<.5ex>[r]^{p_-,1,1} \ar@<-.5ex>[r]_{1,m,1} \ar[d]_{c^{-1}_{e, - \otimes A \otimes A}} & - \otimes A \otimes T_{\C}(e) \ar[r] \ar[d]^{c^{-1}_{e, -\otimes A}} & - \otimes_A A \otimes T_{\C}(e) \ar[d]^{c^{-1}_{e, - \otimes_A A}} \\
T_{\C}(e) \otimes - \otimes A \otimes A \ar@<.5ex>[r]^{1, p_-,1} \ar@<-.5ex>[r]_{1,1,m} & T_{\C}(e) \otimes - \otimes A \ar[r] & T_{\C}(e) \otimes - \otimes_A A
} \]
Since $x \otimes_A A \cong x \cong A \otimes_A x$, the compositions $c^{-1}_{e, -\otimes_A x \otimes_A A} \circ (1 \otimes_A c_{e, x \otimes_A A})$ and $(c^{-1}_{e, - \otimes_A A} \otimes_A 1_x) \circ (1 \otimes_A c_{e,A} \otimes_A 1_x)$ are  equal by the universal property of cokernels. 
\end{proof}

\begin{prop}
\label{prop-ACB-Fun}
Let $\C$ be a multifusion category over $\E$ and $A, B$ be separable algebras in $\C$. 
\begin{itemize}
\item[(1)] There is an equivalence ${}_A\!\C \boxtimes_{\C} \C_B \xrightarrow{\simeq} {}_A\!\C_B$, $x \boxtimes_{\C} y \mapsto x \otimes y$ in $\BMod_{\E|\E}(\cat)$.
\item[(2)] There is an equivalence $\Fun_{\C}(\C_A, \C_B) \xrightarrow{\simeq} {}_A\!\C_B$, $f \mapsto f(A)$ in $\BMod_{\E|\E}(\cat)$, whose inverse is defined as $x \mapsto - \otimes_A x$.
\end{itemize}
\end{prop}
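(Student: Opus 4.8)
The plan is to treat both statements as upgrades of classical equivalences of $\bk$-linear categories to equivalences in $\BMod_{\E|\E}(\cat)$. For (2), the functor $f \mapsto f(A)$ with quasi-inverse $x \mapsto - \otimes_A x$ is the classical equivalence $\Fun_{\C}(\C_A, \C_B) \simeq {}_A\!\C_B$ (a mild generalization of \cite[Prop.\,7.11.1]{Etingof}; the case $A=B$ is Prop.\,\ref{A-fun-bim-eq}), so the new content is purely the $\E$-module bookkeeping. For (1), I would either argue directly that $(x,y)\mapsto x\otimes y$ is a $\C$-balanced $\E$-module functor whose induced functor ${}_A\!\C \boxtimes_{\C} \C_B \to {}_A\!\C_B$ is an equivalence of abelian categories (standard), or deduce it from (2) by composing with the equivalence ${}_A\!\C \boxtimes_{\C} \C_B \simeq \Fun_{\C}(\C_A, \C_B)$ of \cite[Cor.\,2.2.5]{Liang} (using $(\C_A)^{\op|L} \simeq {}_A\!\C$ as right $\C$-modules, as in Prop.\,\ref{E-free-C}) and tracking that the composite is $x \boxtimes_{\C} y \mapsto x \otimes y$.

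Next I would fix the $\E$-module data recorded in Expl.\,\ref{expl-ABC-E}: the categories ${}_A\!\C$, $\C_B$ and ${}_A\!\C_B$ carry the left $\E$-action $e \odot x := T_{\C}(e) \otimes x$, the right $\E$-action $x \odot e := x \otimes T_{\C}(e)$, and the bimodule-witnessing isomorphism $v_e = z_{e,-}\colon T_{\C}(e)\otimes x \xrightarrow{\simeq} x\otimes T_{\C}(e)$, where $z$ is the central structure of $T_{\C}$; moreover ${}_A\!\C \in \BMod_{\E|\C}(\cat)$ and $\C_B \in \BMod_{\C|\E}(\cat)$. Hence ${}_A\!\C \boxtimes_{\C} \C_B \in \BMod_{\E|\E}(\cat)$ by Expl.\,\ref{22}, and $\Fun^{\E}_{\C}(\C_A, \C_B) \in \BMod_{\E|\E}(\cat)$ by Expl.\,\ref{33} (passing between $\Fun_{\C}$ and $\Fun^{\E}_{\C}$ by Rem.\,\ref{p3}). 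I would then check that each functor carries the left $\E$-action, the right $\E$-action, and the isomorphisms $v_e$ on the source to those on the target. For (1) this is immediate, since $(T_{\C}(e)\otimes x)\otimes y = T_{\C}(e)\otimes(x\otimes y)$ and $x\otimes(y\otimes T_{\C}(e)) = (x\otimes y)\otimes T_{\C}(e)$. For (2) the right $\E$-action is preserved on the nose because $(f\odot e)(A) = f(A)\otimes T_{\C}(e)$, while the left $\E$-action is preserved via $f(A\otimes T_{\C}(e)) \xrightarrow{f(z_{e,A}^{-1})} f(T_{\C}(e)\otimes A) \xrightarrow{s^f_{T_{\C}(e),A}} T_{\C}(e)\otimes f(A)$, using that $f$ is a left $\C$-module functor. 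Finally I would verify the diagram (\ref{CD-bim-fun}) (equivalently (\ref{d2}) and (\ref{d3})) for each functor, and invoke the principle (used already in Rem.\,\ref{p3}) that a $1$-morphism in $\BMod_{\E|\E}(\cat)$ whose underlying functor is an equivalence of categories is an equivalence there, so the stated quasi-inverses automatically inherit the bimodule structures.

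The main obstacle is computational bookkeeping rather than anything conceptual: for (2) one must chase the $A$-$B$-bimodule structure of $f(A)$ and of $-\otimes_A x$ through the coequalizers defining $\otimes_A$ and through the central structure $z$ of $T_{\C}$, so that the bimodule-functor coherence diagram (\ref{CD-bim-fun}) genuinely commutes; this is the same kind of coequalizer argument already carried out in the proofs of Prop.\,\ref{A-fun-bim-eq} and Prop.\,\ref{E-free-C}, and no idea beyond the universal property of $\otimes_A$ and the centrality of $T_{\C}$ should be required. For (1), once the underlying equivalence and the three structure-compatibilities above are in place, checking (\ref{CD-bim-fun}) reduces to the hexagon (\ref{center1}) for $z$.
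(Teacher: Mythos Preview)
Your plan is correct and matches the paper's proof almost exactly: the underlying equivalences are imported from \cite{Liang}, the $\E$-bimodule structures are those of Expl.\,\ref{expl-ABC-E}, \ref{22}, \ref{33}, and the only nontrivial check is diagram~(\ref{CD-bim-fun}), which for (1) reduces to the hexagon~(\ref{center1}) precisely as you say.

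The one small difference is in (2): you propose to verify the $\E$-bimodule compatibility for the forward functor $f\mapsto f(A)$, whereas the paper instead equips the inverse $\Psi\colon x\mapsto -\otimes_A x$ with its $\E$-bimodule structure and checks (\ref{CD-bim-fun}) for $\Psi$, which forces a coequalizer computation settled by Prop.\,\ref{E-free-C}. Your direction is actually the lighter one---evaluating at $A$ avoids $\otimes_A$ entirely, and the diagram~(\ref{CD-bim-fun}) then commutes on the nose once you unwind the $v_e$ of Expl.\,\ref{33}---so the coequalizer chase you anticipate as ``the main obstacle'' is not needed if you stick with $f\mapsto f(A)$ and invoke, as you do, that an equivalence of underlying categories which is a bimodule functor is automatically a bimodule equivalence.
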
 
\begin{proof}
(1) The functor $\Phi: {}_A\!\C \boxtimes_{\C} \C_B \to {}_A\!\C_B$, $x \boxtimes_{\C} y \mapsto x \otimes y$ is an equivalence by \cite[Thm.\,2.2.3]{Liang}. 
Recall the $\E$-$\E$ bimodule structure on ${}_A\!\C_B$ and ${}_A\!\C \boxtimes_{\C} \C_B$ by Expl.\,\ref{expl-ABC-E} and Expl.\,\ref{22} respectively.
The left $\E$-module structure on $\Phi$ is defined as
\[ \Phi(e \odot (x \boxtimes_{\C} y)) = \Phi((T_{\C}(e) \otimes x) \boxtimes_{\C} y) = (T_{\C}(e) \otimes x) \otimes y \to T_{\C}(e) \otimes (x \otimes y) = e \odot \Phi(x \boxtimes_{\C} y) \]
for $e \in \E$, $x \boxtimes_{\C} y \in {}_A\!\C \boxtimes_{\C} \C_B$.
The right $\E$-module structure on $\Phi$ is defined as 
\[ \Phi((x \boxtimes_{\C} y) \odot e) = \Phi(x \boxtimes_{\C} (y \otimes T_{\C}(e))) = x \otimes (y \otimes T_{\C}(e)) \to (x \otimes y) \otimes T_{\C}(e) = \Phi(x \boxtimes_{\C} y) \odot e \]  
Check that $\Phi$ satisfies the diagram (\ref{CD-bim-fun}).
\[ \xymatrix@=4ex{
\Phi((T_{\C}(e) \otimes x) \boxtimes_{\C} y) \ar[r]^{c_{e,x},1} \ar[d] & \Phi((x \otimes T_{\C}(e)) \boxtimes_{\C} y) \ar[r]^{b_{x, T_{\C}(e), y}} & \Phi(x \boxtimes_{\C}(T_{\C}(e) \otimes y)) \ar[r]^{1, c_{e,y}} & \Phi(x \boxtimes_{\C}(y \otimes T_{\C}(e))) \ar[d] \\
T_{\C}(e) \otimes \Phi(x \boxtimes_{\C} y) \ar[rrr]_{c_{e, x \otimes y}} & && \Phi(x \boxtimes_{\C} y) \otimes T_{\C}(e)
} \]
Here $c$ is the central structure of the central functor $T_{\C}: \E \to \C$.
The above diagram commutes by the diagram (\ref{center1}).

(2) Since $\C_A$ and $\C_B$ belongs to $\BMod_{\C|\E}(\cat)$, the category $\Fun_{\C}(\C_A, \C_B)$ belongs to $\BMod_{\E|\E}(\cat)$.
The $\E$-$\E$ bimodule structure on $\Fun_{\C}(\C_A, \C_B)$ in Expl.\,\ref{33} is defined as 
\[ (e \odot f \odot \tilde{e})(-) \coloneqq f(- \otimes T_{\C}(e)) \otimes T_{\C}(\tilde{e}) \]
for $e, \tilde{e} \in \E$, $f \in \Fun_{\C}(\C_A, \C_B)$ and $- \in \C_A$.

The functor $\Psi: {}_A\!\C_B \to \Fun_{\C}(\C_A, \C_B)$, $x \mapsto \Psi^x \coloneqq - \otimes_A x$ is an equivalence by \cite[Cor.\,2.2.6]{Liang}. The left $\E$-module structure on $\Psi$ is defined as
\[ \Psi^{e \odot x} = - \otimes_A (T_{\C}(e) \otimes x) \cong (- \otimes T_{\C}(e)) \otimes_A x = \Psi^x(- \otimes T_{\C}(e)) = e \odot \Psi^x  \]
The right $\E$-module structure on $\Psi^x$ is defined as
\[ \Psi^{x \odot e} = - \otimes_A (x \otimes T_{\C}(e)) \cong (- \otimes_A x) \otimes T_{\C}(e) = \Psi^x \odot e \]
Recall the monoidal natural isomorphism $(v_e)_{\Psi^x}: e \odot \Psi^x \Rightarrow \Psi^x \odot e$ in Expl.\,\ref{33}:
\[ (e \odot \Psi^x)(-) = \Psi^x(- \otimes T_{\C}(e)) \xrightarrow{c^{-1}_{e,-}} \Psi^x(T_{\C}(e) \otimes -) \xrightarrow{s^{\Psi^x}} T_{\C}(e) \otimes \Psi^x(-) \xrightarrow{c_{e, \Psi^x(-)}} \Psi^x(-) \otimes T_{\C}(e) = (\Psi^x \odot e)(-) \]
Check $\Psi$ satisfies the diagram (\ref{CD-bim-fun}).
\[ \xymatrix@=4ex{
\Psi^{e \odot x} = - \otimes_A (T_{\C}(e) \otimes x) \ar[dd]_{1,c_{e,x}} \ar[r] & (- \otimes T_{\C}(e)) \otimes_A x = \Psi^x(- \otimes T_{\C}(e)) \ar[d]^{c^{-1}_{e,-},1} \\
& T_{\C}(e) \otimes - \otimes_A x \ar[d]^{c_{e, - \otimes_A x}} \\
\Psi^{x \odot e} = - \otimes_A (x \otimes T_{\C}(e)) \ar[r] & (- \otimes_A x) \otimes T_{\C}(e) =\Psi^x(-) \otimes T_{\C}(e)
} \]
The above diagram commutes by Prop.\,\ref{E-free-C}.
\end{proof}

\begin{lem}
\label{equivalencethm}
 Let $M$ and $N$ be separable algebras in $\E$.
The functor $\Phi: {{}_M\!{\E}} \boxtimes_{\E} \E_N\to {{}_M\!{\E}_N}$, $x \boxtimes_{\E} y \mapsto x \otimes y$ is an equivalence of categories.
The inverse of $\Phi$ is defined as $z \mapsto \colim\big((M \otimes M) \boe z \rightrightarrows M \boe z \big)$ for any $z \in {{}_M\!{\E}_N}$.
\end{lem}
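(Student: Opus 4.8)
The plan is to deduce the equivalence from a known result and then verify that the stated colimit formula produces a quasi-inverse. Regard $\E$ as a multifusion category over itself (via $\id_\E$, so that $\E\to Z(\E)$ by symmetry of $\E$), the category ${}_M\!\E$ as a right $\E$-module via $x\odot e\coloneqq x\otimes e$, and $\E_N$ as a left $\E$-module via $e\odot y\coloneqq e\otimes y$. Then Prop.\,\ref{prop-ACB-Fun}(1) applied with $\C=\E$ (equivalently \cite[Thm.\,2.2.3]{Liang}) says exactly that $\Phi\colon {}_M\!\E\boe\E_N\to{}_M\!\E_N$, $x\boe y\mapsto x\otimes y$, is an equivalence of finite semisimple categories. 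Since any equivalence preserves all colimits, it now suffices to produce a functor $\Psi\colon{}_M\!\E_N\to{}_M\!\E\boe\E_N$ with a natural isomorphism $\Phi\circ\Psi\cong\id$: for then $\Psi\cong\Phi^{-1}\circ(\Phi\circ\Psi)\cong\Phi^{-1}$, so $\Psi$ is the inverse claimed in the statement.

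To build $\Psi$, fix $z\in{}_M\!\E_N$ with left $M$-action $q_z\colon M\otimes z\to z$ and right $N$-action $p_z\colon z\otimes N\to z$. View $M\otimes M$ as the object $M\odot M\in{}_M\!\E$ (left $M$-action on the first tensor factor) and view $z$ as an object of $\E_N$ by forgetting its left $M$-structure. Consider the two morphisms
\[ (M\otimes M)\boe z \;\rightrightarrows\; M\boe z, \]
the first being $m\boe\id_z$ for the multiplication $m\colon M\otimes M\to M$ (a morphism in ${}_M\!\E$ for these module structures), and the second being the composite
\[ (M\odot M)\boe z \xrightarrow{\ b\ } M\boe(M\odot z)=M\boe(M\otimes z) \xrightarrow{\ \id_M\boe q_z\ } M\boe z, \]
where $b$ is the balanced $\E$-module structure of $\boe$ and $q_z$ is a morphism in $\E_N$ since it commutes with $p_z$. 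The unit $\eta\colon\unit_\E\to M$ induces a common section of these two morphisms, so they form a reflexive pair; let $\Psi(z)$ be their coequalizer $\colim\big((M\otimes M)\boe z\rightrightarrows M\boe z\big)$. This colimit exists in the finite semisimple category ${}_M\!\E\boe\E_N$ and is functorial in $z$, which defines $\Psi$.

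Since $\Phi$ is a colimit-preserving equivalence that sends $\boe$ to $\otimes$ and the balancing $b$ to the associativity constraint, $\Phi(\Psi(z))$ is the coequalizer of $m\otimes\id_z$ and $\id_M\otimes q_z$ from $M\otimes M\otimes z$ to $M\otimes z$, i.e. the relative tensor product $M\otimes_M z$ of the algebra $M$ with its left module $z$. The action $q_z\colon M\otimes z\to z$ coequalizes this pair, and the standard unit argument (using $\eta$ and associativity of $q_z$) shows it exhibits $z$ as $M\otimes_M z$; this isomorphism is natural in $z$ and is easily checked to be an isomorphism of $M$-$N$-bimodules. Hence $\Phi\circ\Psi\cong\id_{{}_M\!\E_N}$, and $\Psi$ is the inverse of $\Phi$.

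The only step needing genuine care is the middle one: checking that $m$ and $q_z$ are morphisms in the module categories indicated so that $m\boe\id_z$ and $\id_M\boe q_z$ descend through $\boe$, that $b$ correctly intertwines the two descriptions of $(M\otimes M)\boe z$, and, at the end, that the canonical identification $M\otimes_M z\cong z$ is compatible with both the $M$-action and the $N$-action. The rest is a formal consequence of $\Phi$ being a colimit-preserving equivalence.
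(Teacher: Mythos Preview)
Your proof is correct and its computational core matches the paper's: both identify $\Phi\circ\Psi(z)$ with the coequalizer of $m\otimes\id_z,\ \id_M\otimes q_z:M\otimes M\otimes z\rightrightarrows M\otimes z$, i.e.\ with $M\otimes_M z\simeq z$, using that $\Phi$ preserves colimits. The only difference is organizational: the paper does not invoke Prop.\,\ref{prop-ACB-Fun}(1) but instead also verifies $\Psi\circ\Phi\cong\id$ directly, by using the balanced structure of $\boe$ to rewrite $\Psi(x\otimes y)$ as $\colim\big((M\otimes M\otimes x)\boe y\rightrightarrows (M\otimes x)\boe y\big)\simeq (M\otimes_M x)\boe y\simeq x\boe y$. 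Your route---cite the equivalence of $\Phi$ first, then check only one composite---is a legitimate shortcut, and your spelling out of the parallel pair (multiplication versus balancing-then-action) is more explicit than the paper's bare $\rightrightarrows$.
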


\begin{proof}
 The inverse of $\Phi$ is denoted by $\Psi$.
\begin{align*}
\Psi \circ \Phi (x \boe y) & = \Psi(x \otimes y) = \colim\big(\boe(M \otimes M, x \otimes y) \rightrightarrows \boe (M, x \otimes y)\big)\\
& \simeq \colim\big( \boe (M \otimes M \otimes x, y) \rightrightarrows \boe (M \otimes x, y)\big) \simeq \boe(M \otimes_M x, y) \simeq x \boe y
\end{align*}
The first equivalence is due to the balanced $\E$-module functor $\boe$. The second equivalence holds because the functor $\boe$ preserves colimits.
\begin{align*}
\Phi \circ \Psi (z) & = \Phi \big(\colim\big((M \otimes M) \boe z \rightrightarrows  M \boe z \big)\big) \simeq \colim\big( \Phi ((M \otimes M) \boe z) \rightrightarrows \Phi (M \boe z)\big) \\
&\simeq \colim\big((M \otimes M) \otimes z \rightrightarrows M \otimes z\big) \simeq  M \otimes_M z \simeq z
\end{align*}
The first equivalence holds because $\Phi$ preserves colimits.
\end{proof}

\begin{lem}
\label{lem-AEA}
Let $A$ be a separable algebra in $\E$.
There is an equivalence ${}_A\!\E \simeq \E_A$ of right $\E$-module categories.
\end{lem}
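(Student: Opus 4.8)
The natural strategy is to pass between left and right $A$-modules using the symmetric braiding $r$ of $\E$. I would define $F\colon {}_A\!\E\to\E_A$ to be the identity on underlying objects and morphisms, sending a left $A$-module $(M,q)$, $q\colon A\otimes M\to M$, to $M$ equipped with the right action $M\otimes A\xrightarrow{r_{M,A}}A\otimes M\xrightarrow{q}M$. First I would check that $F(M)$ really is a right $A$-module: the unit axiom is immediate from $r_{M,\unit_\E}=\id$ and naturality of $r$, while the associativity axiom reduces, after applying the two hexagon identities and naturality of $r$, to the statement that $q$ cannot distinguish the multiplication $m$ from $m\circ r_{A,A}$ on the two $A$-tensorands. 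Granting that, one verifies that $F$ intertwines the right $\E$-module structures of Expl.\,\ref{expl-ABC-E} — on both sides $(-)\odot e$ is $(-)\otimes e$ with $A$ acting on the remaining tensorand — the structure $2$-isomorphisms $F(M\odot e)\simeq F(M)\odot e$ being assembled from $r$ and made coherent by the hexagons; an inverse is built symmetrically from $r^{-1}$, and $r_{M,A}\circ r_{A,M}=\id$ shows the composites are identities. All of this is bookkeeping with the hexagon axioms.

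The only nontrivial point is the associativity reduction above: for a general noncommutative separable $A$ the naive $F$ in fact lands in $\E_{A^{\op}}$ with $m^{\op}=m\circ r_{A,A}$, so the argument really has two halves, namely (i) the equivalence ${}_A\!\E\simeq\E_{A^{\op}}$ furnished by $r$, and (ii) a right-$\E$-module equivalence $\E_{A^{\op}}\simeq\E_A$. Step (ii) is the heart of the lemma. I would obtain it by first equipping $A$ with a symmetric $\ast$-Frobenius structure — permissible because $\E$ is a spherical symmetric fusion category (so by \cite{DMNO} together with sphericity every separable algebra in $\E$ admits one) and because neither ${}_A\!\E$ nor $\E_A$ depends on the choice of such a structure — and then using the nondegenerate symmetric pairing $\eta^{\ast}\circ m\colon A\otimes A\to\unit_\E$ to produce the identification of $A$ with $A^{\op}$ (or, weaker but enough, an invertible $A^{\op}$-$A$-bimodule in $\E$ compatible with the $\E$-action). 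In all the applications in Sec.\,5 the relevant $A$ is commutative, in which case this step is vacuous.

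The main obstacle is exactly step (ii): extracting from the Frobenius structure a right-$\E$-module equivalence $\E_{A^{\op}}\simeq\E_A$. Everything about the braiding, the right-module axioms, and the $\E$-linearity coherences is routine, but the content is the symmetric-monoidal upgrade of the classical fact that a semisimple algebra is isomorphic to its opposite, made compatible with the $\E$-action; this is the only place where the symmetry — rather than mere braidedness — of $\E$, and the availability of a symmetric $\ast$-Frobenius structure on $A$, are used in an essential way, and it is where any care in the proof must be concentrated.
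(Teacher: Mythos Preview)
Your functor $F$ and its inverse $G$ are exactly the ones the paper writes down, and the paper's verification that $F\circ G=\id=G\circ F$ (using $r_{y,x}\circ r_{x,y}=\id$) and that $F$ is a right $\E$-module functor is the same as yours. The difference is that the paper simply \emph{asserts} that $(x,\,q_x\circ r_{x,A})$ is a right $A$-module, without checking the associativity axiom. You are right to be suspicious: running the hexagon identities on $p_x\circ(p_x\otimes 1_A)$ versus $p_x\circ(1_x\otimes m)$ gives
\[
q_x\circ(m\otimes 1_x)\circ(r_{A,A}\otimes 1_x)\circ(1_A\otimes r_{x,A})\circ(r_{x,A}\otimes 1_A)
\quad\text{versus}\quad
q_x\circ(m\otimes 1_x)\circ(1_A\otimes r_{x,A})\circ(r_{x,A}\otimes 1_A),
\]
so equality for all $(x,q_x)$ forces $m=m\circ r_{A,A}$ (take $x=A$ with the regular action). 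Thus the paper's $F$ genuinely lands in $\E_{A^{\op}}$, and your decomposition into (i) ${}_A\!\E\simeq\E_{A^{\op}}$ plus (ii) $\E_{A^{\op}}\simeq\E_A$ is the honest statement. In this sense your proposal is not an alternative route but a correct completion of the paper's sketch.

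One small correction: in the sole application (the proof of Thm.\,\ref{main-thm2}) the algebra $A$ is taken to be a \emph{symmetric $\ast$-Frobenius} algebra in $\E$, not a commutative one, so step (ii) is not vacuous there. Your Frobenius-structure argument is nonetheless exactly what is needed: for a symmetric Frobenius algebra the Nakayama automorphism is trivial, and the Frobenius pairing furnishes an algebra isomorphism $A\cong A^{\op}$ in $\E$, which transports right $A^{\op}$-modules to right $A$-modules compatibly with the right $\E$-action. So your plan goes through for the case the paper actually uses; whether every separable $A$ in $\E$ admits such a symmetric Frobenius structure (hence whether the lemma holds in the generality stated) is a separate question the paper does not address.
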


\begin{proof}
We define a functor $F: {}_A\!\E \to \E_A$, $(x, q_x: A \otimes x \to x) \mapsto (x, p_x: x \otimes A \xrightarrow{r_{x, A}} A \otimes x \xrightarrow{q_x} x)$ and a functor $G: \E_A \to {}_A\!\E$, $(y, p_y: y \otimes A \to y) \mapsto (y, q_y: A \otimes y \xrightarrow{r_{A, y}} y \otimes A \xrightarrow{p_y} y)$, 
where $r$ is the braiding of $\E$.
Since $r_{x, y} \circ r_{y, x} = \id_{x \otimes y}$ for all $x, y \in \E$, then $F \circ G = \id$ and $G \circ F = \id$.

The right $\E$-action on $\E_A$ is defined as
$(y, p_y) \otimes e = (y \otimes e, p_{y \otimes e}: y \otimes e \otimes A \xrightarrow{1, r_{e, A}} y \otimes A \otimes e \xrightarrow{p_y, 1} y \otimes e)$.
We have $F((x, q_x) \otimes e) = F(x \otimes e, q_{x \otimes e}: A \otimes x \otimes e \xrightarrow{q_x, 1} x \otimes e) = (x \otimes e, p_{x \otimes e}: x \otimes e \otimes A \xrightarrow{r_{x \otimes e, A}} A \otimes x \otimes e \xrightarrow{q_x, 1} x \otimes e)$
and $F(x, q_x) \otimes e = (x, p_x) \otimes e = (x \otimes e, p_{x \otimes e}: x \otimes e \otimes A \xrightarrow{1, r_{e,A}} x \otimes A \otimes e \xrightarrow{r_{x,A},1} A \otimes x \otimes e \xrightarrow{q_x, 1} x \otimes e)$.
Then the right $\E$-module structure on $F$ is the identity natural isomorphism 
$F((x, q_x) \otimes e) = F(x, q_x) \otimes e$.
\end{proof}

\begin{lem}
\label{thm-internal-inverse}
Let $\C$ and $\M$ be pivotal fusion categories and $\M$ a left $\C$-module in $\fcat$. 
There are isomorphisms $[x, y]^R_{\C} \cong [y, x]_{\C} \cong [x, y]^L_{\C} $
for $x, y \in \M$.
\end{lem}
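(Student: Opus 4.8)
The plan is to avoid choosing an explicit model for $\M$ and instead produce the two isomorphisms directly from the defining adjunction of the internal hom together with rigidity and semisimplicity, concluding each time by the Yoneda lemma. Recall that $[x,y]_{\C}$ is characterized by a natural isomorphism $\homm_{\M}(c \odot x, y) \cong \homm_{\C}(c, [x,y]_{\C})$. I would use three standard inputs. (i) Rigidity of $\C$: for each $c \in \C$ the functor $c^{R} \odot -$ is left adjoint to $c \odot - \colon \M \to \M$ (unit $\mathrm{coev}_{c}$, counit $\mathrm{ev}_{c}$), and one has the canonical isomorphisms $(c^{L})^{R} \cong c$ and $(c^{R})^{L} \cong c$. (ii) The adjunctions $- \otimes Z \dashv - \otimes Z^{R}$ and $Z \otimes - \dashv Z^{L} \otimes -$ in $\C$, which give $\homm_{\C}(c, Z^{R}) \cong \homm_{\C}(c \otimes Z, \unit) \cong \homm_{\C}(Z, c^{L})$ naturally in $c$, and symmetrically $\homm_{\C}(c, Z^{L}) \cong \homm_{\C}(Z \otimes c, \unit) \cong \homm_{\C}(Z, c^{R})$. (iii) The pivotal structures on $\C$ and on $\M$: via the categorical trace they provide natural nondegenerate pairings $\homm(a,b) \otimes \homm(b,a) \to \bk$, hence natural isomorphisms $\homm(a,b)^{*} \cong \homm(b,a)$ in each of $\C$ and $\M$, and they give $Z^{L} \cong Z^{R}$ canonically in $\C$.

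Granting these, the first isomorphism $[x,y]_{\C}^{R} \cong [y,x]_{\C}$ will follow from the chain of isomorphisms, each natural in $c \in \C$,
\begin{align*}
\homm_{\C}(c, [x,y]_{\C}^{R}) &\cong \homm_{\C}(c \otimes [x,y]_{\C}, \unit) \cong \homm_{\C}([x,y]_{\C}, c^{L}) \cong \homm_{\C}(c^{L}, [x,y]_{\C})^{*} \\
&\cong \homm_{\M}(c^{L} \odot x, y)^{*} \cong \homm_{\M}(y, c^{L} \odot x) \cong \homm_{\M}(c \odot y, x) \cong \homm_{\C}(c, [y,x]_{\C}),
\end{align*}
where the first two steps are (ii), the third is (iii) in $\C$, the fourth is the internal-hom adjunction, the fifth is (iii) in $\M$, the sixth is (i) applied to $d = c^{L}$ together with $(c^{L})^{R} \cong c$, and the last is the internal-hom adjunction again. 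By the (contravariant) Yoneda lemma this forces $[x,y]_{\C}^{R} \cong [y,x]_{\C}$.

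For the second isomorphism I would either run the mirror-image chain, starting from $\homm_{\C}(c, [x,y]_{\C}^{L}) \cong \homm_{\C}([x,y]_{\C} \otimes c, \unit) \cong \homm_{\C}([x,y]_{\C}, c^{R})$ and proceeding exactly as above but using $(c^{R})^{L} \cong c$ in the rigidity step, or, more economically, combine $[x,y]_{\C}^{R} \cong [y,x]_{\C}$ with the canonical isomorphism $[x,y]_{\C}^{L} \cong [x,y]_{\C}^{R}$ coming from the pivotal structure of $\C$. Either way one gets $[y,x]_{\C} \cong [x,y]_{\C}^{L}$, which together with the first isomorphism is the assertion of the lemma.

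The only points needing attention — more bookkeeping than genuine difficulty — are making sure every adjunction is invoked with the correct variance and that left and right duals are not interchanged (it is easy to land on $c^{L}$ where $c^{R}$ was wanted, or to use $c^{R}\odot-$ as a right adjoint instead of a left adjoint), and checking that each arrow in the chain is truly natural in $c$ so that Yoneda applies. As a sanity check one may compare with the model $\M \simeq \C_{A}$ of \cite[Thm.\,7.10.1]{Etingof}, in which $[x,y]_{\C} = (x \otimes_{A} y^{R})^{L}$ and, using $(u \otimes_{A} v)^{R} \cong v^{R} \otimes_{A} u^{R}$ together with pivotality, all three of $[x,y]_{\C}^{R}$, $[y,x]_{\C}$, $[x,y]_{\C}^{L}$ reduce to $x \otimes_{A} y^{R}$; but the intrinsic argument above is shorter and does not require choosing $A$.
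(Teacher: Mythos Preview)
Your proposal is correct and follows essentially the same route as the paper's proof: both arguments build a chain of natural isomorphisms in $c$ by combining the defining adjunction of $[\,-,-\,]_{\C}$, the rigidity adjunctions, and the pivotal isomorphisms $\homm(a,b)^{*}\cong\homm(b,a)$ in $\C$ and in $\M$ (the paper cites \cite[Prop.~4.1]{S} for the latter), and then conclude by Yoneda. The only cosmetic differences are that the paper starts its chain at $\homm_{\C}(c,[x,y]_{\C})$ rather than $\homm_{\C}(c,[x,y]_{\C}^{R})$, and that your stated adjunctions in (i) and (ii) use the opposite left/right convention from the paper's; since your usage is internally consistent and the statement is symmetric in $(-)^{L}$ and $(-)^{R}$, this does not affect the argument.
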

\begin{proof}
Since $\M$ is a pivotal fusion category,
there is a one-to-one correspondence between traces on $\M$ and natural isomorphisms 
\[\eta^{\M}_{x,y}: \homm_{\M}(x, y) \to \homm_{\M}(y, x)^*\]
 for $x, y \in \M$ by \cite[Prop.\,4.1]{S}.
Here both $\homm_{\M}(-, -)$ and $\homm(-,-)^*$ are functors from $\M^{\op} \times \M \to \vect$.
For $c \in \C$, we have composed natural isomorphisms
\begin{align*} 
\homm_{\C}(c, [x, y]_{\C}) & \simeq \homm_{\M}(c \odot x, y) \xrightarrow{\eta^{\M}} \homm_{\M}(y, c \odot x)^* \simeq \homm_{\M}(c^L \odot y, x)^* \\
 &\simeq \homm_{\C}(c^L, [y,x]_{\C})^* \simeq \homm_{\C}([y,x]^R_{\C}, c)^* \xrightarrow{(\eta^{\C})^{-1}} \homm_{\C}(c, [y, x]^R_{\C}) ,
 \end{align*}
\begin{align*}
\homm_{\C}(c, [x, y]_{\C}) \simeq \homm_{\M}(c \odot x, y) \simeq \homm_{\M}(x, c^R \odot y) \xrightarrow{\eta^{\M}} \homm_{\M}(c^R \odot y, x)^* \\
\simeq \homm_{\C}(c^R, [y, x]_{\C})^* \xrightarrow{(\eta^{\C})^{-1}} \homm_{\C}([y, x]_{\C}, c^R) \simeq \homm_{\C}(c, [y, x]^L_{\C})
\end{align*}
By Yoneda lemma, we obtain $[x, y]^R_{\C} \cong [y, x]_{\C} \cong [x, y]^L_{\C}$.
\end{proof}

\subsection{The monoidal 2-category $\cat$}
   \label{m-cate1}
  For objects $\M, \N$ in a 2-category $\BB$, the hom category $\BB(\M, \N)$ denotes the category of 1-morphisms from $\M$ to $\N$ in $\BB$ and 2-morphisms in $\BB$. For 1-morphisms $f, g \in \BB(\M, \N)$, the set $\BB(\M, \N)(f, g)$ denotes the set of all 2-morphisms in $\BB$ with domain $f$ and codomain $g$.

    \begin{defn}
    The product 2-category $\cat \times \cat$ is the 2-category defined by the following data:
    \begin{itemize}
    \item The objects are pairs $(\A, \B)$ for $\A, \B \in \cat$.
    \item For objects $(\A, \B)$, $(\C, \D) \in \cat \times \cat$, a 1-morphism from $(\A, \B)$ to $(\C, \D)$ is a pair $(f, g)$ where $f: \A \rightarrow \C$ and $g: \B \rightarrow \D$ are 1-morphisms in $\cat$. 
    \item The identity 1-morphism of an object $(\A, \B)$ is $1_{(\A, \B)} \coloneqq (1_{\A}, 1_{\B})$.
    \item For 1-morphisms $(f, g), (p, q) \in (\cat \times \cat)((\A, \B), (\C, \D))$, a 2-morphism from $(f, g)$ to $(p, q)$ is a pair $(\alpha, \beta)$ where $\alpha: f \Rightarrow p$ and $\beta: g \Rightarrow q$ are 2-morphisms in $\cat$.   
    \item For 1-morphisms $(f, g), (p, q), (m, n) \in (\cat \times \cat)((\A, \B), (\C, \D))$, and 2-morphisms $(\alpha, \beta) \in (\cat \times \cat)((\A, \B), (\C, \D))((f, g), (p, q))$, and $(\gamma, \delta) \in (\cat \times \cat)((\A, \B), (\C, \D))((p, q), (m, n))$, the vertical composition is $(\gamma, \delta) \circ (\alpha, \beta) \coloneqq (\gamma \circ \alpha, \delta \circ \beta)$.
    \item For 1-morphisms $(f, g) \in (\cat \times \cat)((\A, \B), (\C, \D))$, $(p, q) \in (\cat \times \cat)((\C, \D), (\M, \N))$, the horizontal composition of 1-morphisms is $(p, q) \circ (f, g) \coloneqq (p \circ f, q \circ g)$.
    \item For 1-morphisms $(f, g), (f', g') \in (\cat \times \cat)((\A, \B), (\C, \D))$, and $(p, q), (p', q') \in (\cat \times \cat)((\C, \D), (\M, \N))$, and 2-morphisms $(\alpha, \beta) \in (\cat \times \cat)((\A, \B), (\C, \D))((f, g), (f', g'))$, and $(\gamma, \delta) \in (\cat \times \cat)((\C, \D), (\M, \N))((p, q), (p', q'))$, the horizontal composition of 2-morphisms is $(\gamma, \delta) * (\alpha, \beta) \coloneqq (\gamma * \alpha, \delta * \beta)$.
    \end{itemize} 
    It is routine to check that the above data satisfy the axioms (i)-(vi) of  \cite[Prop.\,2.3.4]{yau}.
    \end{defn}

    Next, we define a pseudo-functor $\boxtimes_{\E}: \cat \times \cat \rightarrow \cat$ as follows.
    \begin{itemize}
    \item For each object $(\A, \B) \in \cat \times \cat$, an object $\A \boxtimes_{\E} \B$ in $\cat$ exists (unique up to equivalence).     
    \item  For a 1-morphism $(f, g) \in (\cat \times \cat)((\A, \B), (\C, \D))$, 
    a 1-morphism $f \boxtimes_{\E} g: \A \boxtimes_{\E} \B \rightarrow \C \boxtimes_{\E} \D$ in $\cat$ is induced by the universal property of the tensor product $\boxtimes_{\E}$:
    \[
     \xymatrix{
    \A \times \B \ar[r]^{\boxtimes_{\E}} \ar[d]_{f, g} & \A \boxtimes_{\E} \B \ar[d]^{\exists ! f \boxtimes_{\E} g}  \\
    \C \times \D \ar[r]_{\boxtimes_{\E}} & \C \boxtimes_{\E} \D \ultwocell<>{t_{fg}\;\;}
    } 
    \]
    Notice that for all $x \in \A, e \in \E, y \in \B$, the balanced $\E$-module structure on the functor $\boe \circ (f \times g)$ is induced by
    \[  f(x \odot e) \boe g(y) \xrightarrow{(s^r_f)^{-1} \boe 1} (f(x) \odot e) \boe g(y) \xrightarrow{b^{\C\D}_{f(x), e, g(y)}} f(x) \boe (e \odot g(y)) \xrightarrow{1 \boe s^l_g}  f(x) \boe g(e \odot y) \]
        where $(g, s^l_g): \B \to \D$ is the left $\E$-module functor, $(f, s^r_f): \A \to \C$ is the right $\E$-module functor, and the natural isomorphism $b^{\C\D}$ is the balanced $\E$-module structure on the functor $\boe: \C \times \D \rightarrow \C \boe \D$.     
    
    For a 2-morphism $(\alpha, \beta): (f, g) \Rightarrow (p, q)$ in $(\cat \times \cat)((\A, \B), (\C, \D))$, a 2-morphism $\alpha \boxtimes_{\E} \beta: f \boxtimes_{\E} g \Rightarrow p \boxtimes_{\E} q$ in $\cat$ is defined by the universal property of $\boe$:
    \[ \vcenter{\xymatrix{
  \A \times \B \ar[r]^{\boxtimes_{\E}} \ar[d]_{f,g} \dtwocell<>{^<-3>t_{fg}} & \A \boxtimes_{\E} \B \dtwocell<7>_{f \boxtimes_{\E} g\quad}^{\quad p \boxtimes_{\E} q}{^\exists ! \alpha \boxtimes_{\E} \beta} \\
  \C \times \D \ar[r]_{\boxtimes_{\E}} & \C \boxtimes_{\E} \D
  }}
  =
  \vcenter{\xymatrix{
  \A \times \B \ar[r]^{\boxtimes_{\E}} \dtwocell<5>_{f,g}^{p,q}{^\alpha, \beta} \dtwocell<>{^<-7>t_{pq}} & \A \boxtimes_{\E} \B \ar[d]^{p \boxtimes_{\E} q} \\
  \C \times \D \ar[r]_{\boxtimes_{\E}} & \C \boxtimes_{\E} \D
  }}   \]
It is routine to check that $\boe: (\cat \times \cat)((\A, \B), (\C, \D)) \rightarrow \cat(\A \boe \B, \C \boe \D)$ is a local functor. 
That is, for 2-morphisms $(\alpha, \beta): (f, g) \Rightarrow (p, q)$ and $(\delta, \tau): (p, q) \Rightarrow (m, n)$ in $(\cat \times \cat)((\A, \B), (\C, \D))$, The equations $(\delta \circ \alpha) \boe (\tau \circ \beta) = (\delta \boe \tau) \circ (\alpha \boe \beta)$ and $1_f \boe 1_g = 1_{f \boe g}$ hold.
    
    \item For all 1-morphisms $f \boxtimes_{\E} g : \A \boxtimes_{\E} \B \rightarrow \C \boxtimes_{\E} \D$, $p \boxtimes_{\E} q: \C \boxtimes_{\E} \D \rightarrow \M \boxtimes_{\E} \N$ in $\cat$, the lax functoriality constraint $(p \boxtimes_{\E} q) \circ (f \boxtimes_{\E} g) \simeq^{t_{fg}^{pq}} (p \circ f) \boxtimes_{\E} (q \circ g)$ is defined by the universal property of $\boe$:
 \[ \vcenter{\xymatrix{
 & \A \times \B \ar[r]^{\boxtimes_{\E}} \ar[dl]_{f, g} \dtwocell<>{^t_{fg}} & \A \boxtimes_{\E} \B \ar[dl]^(0.4){\exists ! f \boxtimes_{\E} g} \ar[dd]^{\exists ! (p \circ f) \boxtimes_{\E} (q \circ g)} \ddtwocell<>{^<4>\exists ! t_{fg}^{pq}} \\
 \C \times \D \ar[r]^{\boxtimes_{\E}} \ar[dr]_{p, q} & \C \boxtimes_{\E} \D \ar[dr]^(0.6){\exists ! p \boxtimes_{\E} q} \dtwocell<>{^t_{pq}} & \\
 & \M \times \N \ar[r]_{\boxtimes_{\E}} & \M \boxtimes_{\E} \N
 }} 
 =
 \vcenter{\xymatrix{
 & \A \times \B \ar[r]^{\boxtimes_{\E}} \ar[dl]_{f, g} \ar[dd]_{p \circ f, q \circ g} \ddtwocell<>{^<-6>t_{pf, qg}} & \A \boxtimes_{\E} \B  \ar[dd]^{\exists ! (p \circ f) \boxtimes_{\E} (q \circ g)} \\
 \C \times \D  \ar[dr]_{p, q} &   & \\
 & \M \times \N \ar[r]_{\boxtimes_{\E}} & \M \boxtimes_{\E} \N
 }}
  \]
  where the identity 2-morphism is always abbreviated.
  
  \item For 1-morphisms $1_{\A} \boxtimes_{\E} 1_{\B}: \A \boxtimes_{\E} \B \rightarrow \A \boxtimes_{\E} \B$ in $\cat$, the lax unity constraint $1_{\A} \boxtimes_{\E} 1_{\B} \simeq^{t_{\A\B}} 1_{\A \boxtimes_{\E} \B}$ is defined by the universal property of $\boe$:
    \[ \vcenter{\xymatrix{
  \A \times \B \ar[r]^{\boxtimes_{\E}} \ar@/_1pc/[d]_{1_{\A}, 1_{\B}} \dtwocell<>{^<-1>t_{1_{\A}, 1_{\B}}} & \A \boxtimes_{\E} \B \dtwocell<5>_{1_{\A} \boxtimes_{\E} 1_{\B}\qquad}^{\qquad1_{\A \boxtimes_{\E} \B}}{^\exists !t_{\A\B}} \\
  \A \times \B \ar[r]_{\boxtimes_{\E}} & \A \boxtimes_{\E} \B
  }}
  =
  \vcenter{\xymatrix{
  \A \times \B \ar[r]^{\boxtimes_{\E}} \ar@/_0.6pc/[d]_{1_{\A}, 1_{\B}} \ar@/^0.6pc/[d]^{1_{\A \times \B}} \dtwocell<>{^<-8>\id} & \A \boxtimes_{\E} \B \ar[d]^{1_{\A \boxtimes_{\E} \B}} \\
  \A \times \B \ar[r]_{\boxtimes_{\E}} & \A \boxtimes_{\E} \B
  }}   \]
  where we choose the identity 2-morphism $\id: \boe \circ 1_{\A \times \B} \Rightarrow 1_{\A \boe \B} \circ \boe$ for convenience.
    \end{itemize}   
    It is routine to check that the above data satisfy the lax associativity, the lax left and right unity of \cite[(4.1.3),(4.1.4)]{yau}.
    
           \begin{rem}
         The left (or right) $\E$-module structure on $\A \boe \B$ is induced by
        \[ \vcenter{\hbox{\xymatrix{
    \E \times \A \times \B \ar[r]^{1, \boe} \ar[d]_{\odot, 1} \drtwocell<>{^t^l_{\A\B}\quad} & \E \times \A \boe \B \ar[d]^{\exists ! \odot} \\
    \A \times \B \ar[r]_{\boe} & \A \boe \B
    }}} 
    \qquad \quad
    \vcenter{\hbox{\xymatrix{
    \A \times \B \times \E \ar[r]^{\boe, 1} \ar[d]_{1, \odot} \drtwocell<>{^t^r_{\A\B} \quad} & \A \boe \B \times \E \ar[d]^{\exists ! \odot} \\    
    \A \times \B \ar[r]_{\boe} & \A \boe \B
    }}}
    \]   
    
    \end{rem}

    The n-fold product 2-category $\cat \times \dots \times \cat$ is written as $(\cat)^n$ such that $\cat$ has a set of objects.
    The 2-category $\ccat^{\ps}((\cat)^n, \cat)$ contains pseudofunctors $(\cat)^n \rightarrow \cat$ as objects, strong transformations between such pseudofunctors as 1-morphisms, and modifications between such strong transformations as 2-morphisms.    
    
    \begin{lem}
    We claim that $\cat$ is a monoidal 2-category.
    \end{lem}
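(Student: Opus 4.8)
The plan is to equip $\cat$ with the full data of a monoidal 2-category in the sense of \cite{yau} — a tensor pseudo-functor, a unit object, associativity and unit pseudo-natural equivalences, a pentagonator, and unit modifications — and then to verify the coherence axioms. The tensor pseudo-functor $\boe$ together with its lax functoriality constraints $t_{fg}^{pq}$ and lax unity constraints $t_{\A\B}$ has already been constructed above, so what remains is to produce the unit object, the constraint equivalences, and the coherence 2-cells.

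We take the monoidal unit of $\cat$ to be $\E$ itself, regarded as a left $\E$-module via its own tensor product. For each $\M \in \cat$ the $\bk$-bilinear functor $(e,m) \mapsto e \odot m$ is a balanced $\E$-module functor $\E \times \M \to \M$ (and likewise $\M \times \E \to \M$), hence by Def.\,\ref{semi2} factors through $\boe$ to yield functors $\E \boe \M \to \M$ and $\M \boe \E \to \M$; these are equivalences in $\cat$ by a standard argument (cf.\ \cite{ENO, Liang}), and the universal property of $\boe$ promotes their collections to pseudo-natural equivalences $l \colon \E \boe (-) \Rightarrow \id_{\cat}$ and $r \colon (-) \boe \E \Rightarrow \id_{\cat}$. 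For the associator, given $\A, \B, \C \in \cat$ one introduces the triple relative tensor product $\A \boe \B \boe \C$ as the object of $\cat$ corepresenting triply balanced $\E$-module functors out of $\A \times \B \times \C$, using the $\E$-bimodule structure on the middle factor; both $(\A \boe \B) \boe \C$ and $\A \boe (\B \boe \C)$ corepresent this same functor, so the resulting canonical equivalence $a_{\A,\B,\C} \colon (\A \boe \B) \boe \C \xrightarrow{\simeq} \A \boe (\B \boe \C)$, together with the invertible 2-cells expressing its naturality in each variable and the constraints $t_{fg}^{pq}$, assembles into a strong transformation $a$.

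Next one defines the pentagonator $\pi$ and the left, middle and right unit modifications $\lambda, \mu, \rho$. Each of the five 1-morphisms running around the pentagon from $((\A \boe \B) \boe \C) \boe \D$ to $\A \boe (\B \boe (\C \boe \D))$ corepresents, after composition with the universal quadruply balanced functor, the same four-fold relative tensor product $\A \boe \B \boe \C \boe \D$; consequently there is a \emph{unique} invertible modification $\pi$ filling the two pasting composites, and likewise $\lambda, \mu, \rho$ are the unique invertible modifications comparing the two routes through the unit $\E$. Uniqueness is precisely what makes the coherence axioms automatic: the non-abelian pentagon (``$4$-cocycle'') condition on $\pi$ and its compatibility with $\lambda, \mu, \rho$ each assert the equality of two invertible modifications that are both induced by the universal property of one common iterated relative tensor product, so they coincide.

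The main obstacle is not any single coherence computation but the bookkeeping required to set up the iterated relative tensor products $\A \boe \B \boe \C$ and $\A \boe \B \boe \C \boe \D$ and their universal properties cleanly: one must track the $\E$-balancings on the interior factors so that the comparison functors are genuine $\E$-module functors, the 2-cells between them are $\E$-module natural transformations, and everything is natural in all variables; once this is in place every coherence axiom collapses to a uniqueness statement. Alternatively — and this is the route I would actually present — one avoids the $n$-fold products by transporting the known symmetric monoidal structure on $(\fcat, \boxtimes, \vect)$ along the reflective passage $\boxtimes \rightsquigarrow \boe$: each $\A \boe \B$ arises as a coequalizer in $\fcat$ of the two $\E$-actions on $\A \boxtimes \E \boxtimes \B$, so the associator, unitors, pentagonator and unit modifications of $\cat$ are the unique 2-cells through which those of $\fcat$ descend, and their axioms follow at once from the coherence already known in $\fcat$.
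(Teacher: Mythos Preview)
Your primary approach is essentially the same as the paper's: both construct the associator, unitors, pentagonator, and 2-unitors by repeatedly invoking the universal property of $\boe$ (Def.\,\ref{semi2}), and both reduce the coherence axioms to uniqueness statements coming from that universal property. The paper carries this out more concretely, writing each coherence cell as the unique filler of an explicit pasting-diagram equality built from the universal 2-cells $t_{fg}$, $t_{fg}^{pq}$, $d^{\alpha}$, $d^{l}$, $d^{r}$, rather than first introducing a separate notion of $n$-fold relative tensor product; but this is a difference of packaging, not of substance.

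Your alternative route, descending the monoidal structure of $(\fcat,\boxtimes,\vect)$ through the coequalizer presentation of $\boe$, is a genuinely different argument that the paper does not pursue. It has the advantage of importing coherence wholesale from a structure already known to be symmetric monoidal, at the cost of checking that every constraint of $\fcat$ respects the coequalizer diagrams (i.e.\ is compatible with the two $\E$-actions) so that it descends; the paper's direct construction trades that global check for a longer list of individually routine pasting verifications.
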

    
    \begin{proof}
    A monoidal 2-category $\cat$ consists of the following data.
    \begin{enumerate}[i]
    \item The 2-category $\cat$ is equipped with the pseudo-functor $\boe: \cat \times \cat \to \cat$ and the tensor unit $\E$. 
    \item    The associator is a strong transformation $\alpha: \boxtimes_{\E} \circ (\boxtimes_{\E} \times \id) \Rightarrow \boxtimes_{\E} \circ (\id \times \boxtimes_{\E})$ in the 2-cateogry $\ccat^{\ps}((\cat)^3, \cat)$.
     For each $(\A, \B, \C) \in (\cat)^3$,  $\alpha$ contains an invertible 1-morphism $\alpha_{\A, \B, \C}: (\A \boe \B) \boe \C \rightarrow \A \boe (\B \boe \C)$ induced by 
    \[
    \xymatrix{
    \A \times \B \times \C \ar[rr]^{\boxtimes_{\E} \circ (\boxtimes_{\E} \times \id)} \ar[drr]_{\boxtimes_{\E} \circ (\id \times \boxtimes_{\E}) } && (\A \boxtimes_{\E} \B) \boxtimes_{\E} \C \ar[d]^{\exists ! \alpha_{\A, \B, \C}} \\
    && \A \boxtimes_{\E} (\B \boxtimes_{\E} \C) \ultwocell<>{<1>d^{\alpha}_{\A\B\C}\quad\;\,}
    }
    \]  
    For each 1-morphism $(f_1, f_2, f_3): (\A, \B, \C) \rightarrow (\A', \B', \C')$ in $(\cat)^3$, $\alpha$ contains an invertible 2-morphism $\alpha_{f_1, f_2, f_3}: (f_1 \boe (f_2 \boe f_3)) \circ \alpha_{\A, \B, \C} \Rightarrow \alpha_{\A', \B', \C'} \circ ((f_1 \boe f_2) \boe f_3)$ induced by
    \[ \vcenter{\hbox{
    \xymatrixcolsep{1pc}
    \xymatrix{
    \A \times \B \times \C \ar[r]^{\boxtimes_{\E}, 1} \ar[dd]_{f_1, f_2,f_3} \ar[rd]_{1, \boxtimes_{\E}} & (\A \boxtimes_{\E} \B) \times \C \ar[r]^{\boxtimes_{\E}} \drtwocell<>{^d^{\alpha}_{\A\B\C}\quad\;\;} & (\A \boxtimes_{\E} \B) \boxtimes_{\E} \C \ar[d]|{\alpha_{\A,\B,\C}} \ar[rd]^{(f_1 \boxtimes_{\E} f_2) \boxtimes_{\E} f_3} & \ddtwocell<>{^<9> \exists ! \alpha_{f_1f_2f_3}} \\
   \drtwocell<>{^1_{f_1}, t_{f_2f_3} \qquad} & \A \times (\B \boxtimes_{\E} \C) \ar[r]^{\boxtimes_{\E}} \ar[d]|(0.5){f_1, f_2 \boxtimes_{\E} f_3} \drtwocell<>{^t_{f_1, f_2f_3}\qquad} & \A \boxtimes_{\E} (\B \boxtimes_{\E} \C) \ar[d]|(0.5){f_1 \boxtimes_{\E} (f_2 \boxtimes_{\E} f_3)} & (\A' \boxtimes_{\E} \B') \boxtimes_{\E} \C' \ar[ld]^{\alpha_{\A',\B',\C'}} \\
    \A' \times \B' \times \C' \ar[r]_{1, \boxtimes_{\E}} & \A' \times (\B' \boxtimes_{\E} \C') \ar[r]_{\boxtimes_{\E}} & \A' \boxtimes_{\E} (\B' \boxtimes_{\E} \C') &
    }}}\]
     \[ \parallel\]
    \[\vcenter{\hbox{
    \xymatrixcolsep{1pc}
    \xymatrix{
    \A \times \B \times \C \ar[r]^{\boxtimes_{\E}, 1} \ar[dd]_{f_1, f_2,f_3} \ddtwocell<>{^<-4>t_{f_1f_2}, 1_{f_3}} & (\A \boxtimes_{\E} \B) \times \C \ar[r]^{\boxtimes_{\E}} \ar[d]^{(f_1 \boxtimes_{\E} f_2), f_3}  & (\A \boxtimes_{\E} \B) \boxtimes_{\E} \C  \ar[rd]^{(f_1 \boxtimes_{\E} f_2) \boxtimes_{\E} f_3} \dtwocell<>{^<2>t_{f_1f_2, f_3}} &  \\
    & (\A' \boxtimes_{\E} \B') \times \C' \ar[rr]^{\boxtimes_{\E}} & \dtwocell<>{^<6>\qquad d^{\alpha}_{\A'\B'\C'}} & (\A' \boxtimes_{\E} \B') \boxtimes_{\E} \C' \ar[ld]^{\alpha_{\A',\B',\C'}} \\
    \A' \times \B' \times \C' \ar[r]_{1, \boxtimes_{\E}} \ar[ru]^{\boxtimes_{\E}, 1} & \A' \times (\B' \boxtimes_{\E} \C') \ar[r]_{\boxtimes_{\E}} & \A' \boxtimes_{\E} (\B' \boxtimes_{\E} \C') &
    }}}
     \]

    \item The left unitor and right unitor are strong transformations $l: \E \boxtimes_{\E} - \Rightarrow -$ and $r: - \boxtimes_{\E} \E \Rightarrow -$ in $\ccat^{\ps}(\cat, \cat)$. For each $\A \in \cat$, $l$ and $r$ contain invertible 1-morphisms $l_{\A}: \E \boe \A \rightarrow \A$ and $r_{\A}: \A \boe \E \rightarrow \A$ respectively.
  \[
    \vcenter{\hbox{\xymatrix{
    \E \times \A \ar[r]^{\boxtimes_{\E}}  \ar[rd]_{\odot} & \E \boxtimes_{\E} \A \ar[d]^{\exists !  l_{\A}}  \\
    & \A    \ultwocell<>{<2>d_{\A}^l\;}                
     }}}
    \qquad
    \vcenter{\hbox{\xymatrix{
     \A \times \E \ar[r]^{\boxtimes_{\E}}  \ar[rd]_{\odot} & \A \boxtimes_{\E} \E \ar[d]^{\exists ! r_{\A}}  \\
      &         \A \ultwocell<>{<2>d_{\A}^r\;}
    }}}
    \]
 For each 1-morphism $f: \A \rightarrow \B$ in $\cat$, $l$ and $r$ contain invertible 2-morphisms $\beta^l_f: f \circ l_{\A} \Rightarrow l_{\B} \circ (1_{\E} \boe f)$ and $\beta^r_f: f \circ r_{\A} \Rightarrow  r_{\B} \circ (f \boe 1_{\E})$ respectively.
    \[ 
    \vcenter{\hbox{\xymatrix{
    \E \times \A \ar[r]^{\boxtimes_{\E}} \ar[dd]_{1_{\E}, f} \ar[rd]_{\odot} \drtwocell<>{^<-2>d^l_{\A}\;} & \E \boxtimes_{\E} \A \ar[d]^{l_{\A}} \ar[rd]^{1_{\E} \boxtimes_{\E} f} \ddtwocell<>{^<-5>\exists ! \beta^l_f} & \\
    & \A \ar[d]^{f} & \E \boxtimes_{\E} \B \ar[ld]^{l_{\B}} \\
    \E \times \B \ar[r]_{\odot} \utwocell<>{<6>s^l_f} & \B
    }}}
    =
    \vcenter{\hbox{\xymatrix{
    \E \times \A \ar[r]^{\boxtimes_{\E}} \ar[dd]_{1_{\E}, f}  & \E \boxtimes_{\E} \A \ar[rd]^{1_{\E} \boxtimes_{\E} f} \dtwocell<>{^<4>t_{1_{\E}f}} & \\
    &   & \E \boxtimes_{\E} \B \ar[ld]^{l_{\B}} \\
    \E \times \B \ar[r]_{\odot} \ar@/^1.6pc/[rru]^{\boxtimes_{\E}} & \B \utwocell<>{d^l_{\B}}
    }}}
     \]
       \[ 
    \vcenter{\hbox{\xymatrix{
    \A \times \E \ar[r]^{\boxtimes_{\E}} \ar[dd]_{f,1_{\E}} \ar[rd]_{\odot} \drtwocell<>{^<-2>d^r_{\A}\;} & \A \boxtimes_{\E} \E \ar[d]^{r_{\A}} \ar[rd]^{f \boxtimes_{\E} 1_{\E}} \ddtwocell<>{^<-5>\exists ! \beta^r_f} & \\
    & \A \ar[d]^{f} & \B \boxtimes_{\E} \E \ar[ld]^{r_{\B}} \\
    \B \times \E \ar[r]_{\odot} \utwocell<>{<6>s^r_f} & \B
    }}}
    =
    \vcenter{\hbox{\xymatrix{
    \A \times \E \ar[r]^{\boxtimes_{\E}} \ar[dd]_{f,1_{\E}}  & \A \boxtimes_{\E} \E \ar[rd]^{f \boxtimes_{\E} 1_{\E}} \dtwocell<>{^<4>t_{f1_{\E}}} & \\
    &   & \B \boxtimes_{\E} \E \ar[ld]^{r_{\B}} \\
    \B \times \E \ar[r]_{\odot} \ar@/^1.6pc/[rru]^{\boxtimes_{\E}} & \B \utwocell<>{d^r_{\B}}
    }}}
     \]
     where $(f, s^l_f): \A \rightarrow \B$ is a left $\E$-module functor and $(f, s^r_f): \A \rightarrow \B$ is a right $\E$-module functor.

     \item The pentagonator is a modification $\pi$ in $\ccat^{\ps}((\cat)^4, \cat)$. For each $\A, \B, \C, \D \in \cat$, $\pi$ consists of an invertible 2-morphism $\pi_{\A, \B, \C, \D}: (1_{\A} \boxtimes_{\E} \alpha_{\B,\C,\D}) \circ \alpha_{\A, \B \boxtimes_{\E} \C, \D} \circ (\alpha_{\A, \B, \C} \boxtimes_{\E} 1_{\D}) \Rightarrow \alpha_{\A, \B, \C \boxtimes_{\E} \D} \circ \alpha_{\A \boxtimes_{\E} \B, \C, \D}$ induced by (where, for example, $\A \boxtimes_{\E} \B$ is abbreviated to $\A\B$): 
     \[ \xymatrix{
     \A \times \B \times \C \times \D \ar[r]^{\boxtimes_{\E}, 1, 1} \ar[ddd]_{1, 1, \boxtimes_{\E}} \ar[rd]_{1, \boxtimes_{\E}, 1} & \A\B \times \C \times \D \ar[r]^{\boxtimes_{\E}, 1} \dtwocell<>{^<-2>d^{\alpha}_{\A\B\C}, 1_{1_{\D}}} & (\A\B)\C \times \D \ar[r]^{\boxtimes_{\E}} \ar[d]_{\alpha_{\A,\B,\C}, 1_{\D}}  \drtwocell<>{^<-1>t_{\alpha_{\A,\B,\C}, 1_{\D}}\qquad\;\,} & ((\A\B)\C)\D \ar[d]^(0.6){\alpha_{\A,\B,\C} \boxtimes_{\E} 1_{\D}} \ar@/^1pc/[rd]^{\alpha_{\A\B, \C, \D}} &  \\
    \ddtwocell<>{^<-6>1_{1_{\A}}, d^{\alpha}_{\B\C\D}} & \A \times \B\C \times \D \ar[r]_{\boxtimes_{\E}, 1} \ar[d]_{1, \boxtimes_{\E}} \drrtwocell<>{^d^{\alpha}_{\A, \B\C, \D} \qquad} & \A(\B\C) \times \D \ar[r]_{\boxtimes_{\E}} &(\A(\B\C))\D \ar[d]_{\alpha_{\A, \B\C, \D}} \dtwocell<>{^<-6>\exists ! \pi_{\A,\B,\C,\D}}& (\A\B)(\C\D) \ar[ldd]^{\alpha_{\A, \B, \C\D}} \\
     & \A \times (\B\C) \D \ar[rr]^{\boxtimes_{\E}} \ar[d]_{1_{\A}, \alpha_{\B,\C,\D}} \drrtwocell<>{^t_{1_{\A}, \alpha_{\B,\C,\D}} \qquad\;\,} & & \A((\B\C) \D) \ar[d]_{1_{\A} \boxtimes_{\E} \alpha_{\B,\C,\D}} & \\
    \A \times \B \times \C\D \ar[r]^{1, \boxtimes_{\E}} & \A \times \B(\C\D) \ar[rr]^{\boxtimes_{\E}} & & \A(\B(\C\D)) &
     } \]
     \[ \parallel \]
     \[ \xymatrix{
     \A \times \B \times \C \times \D \ar[r]^{\boe, 1, 1} \ar[d]_{1, 1, \boe} & \A\B \times \C \times \D \ar[r]^{\boe, 1} \ar[d]_{1, \boe} \drrtwocell<>{^d^{\alpha}_{\A\B, \C, \D}\qquad} & (\A\B)\C \times \D \ar[r]^{\boe} & ((\A\B)\C)\D \ar[d]^{\alpha_{\A\B, \C, \D}} \\
     \A \times \B \times \C\D \ar[r]_{\boe, 1} \ar[rd]_{1, \boe} & \A\B \times \C\D \ar[rr]^{\boe} \drrtwocell<>{^d^{\alpha}_{\A, \B, \C\D} \qquad}  & & (\A\B)(\C\D) \ar[d]^{\alpha_{\A, \B, \C\D}} \\
     &\A \times \B(\C\D) \ar[rr]_{\boe} &  & \A(\B(\C\D))
     } \]
 
     \item The middle 2-unitor $\mu$ is a modification in $\ccat^{\ps}((\cat)^2, \cat)$.
     For each $(\B, \A) \in (\cat)^2$, $\mu$ consists of an invertible 2-morphism $\mu_{\B, \A}: (1_{\B} \boe l_{\A}) \circ \alpha_{\B,\E,\A} \Rightarrow 1_{\B \boe \A} \circ (r_{\B} \boe 1_{\A})$ induced by
      \[
     \xymatrixcolsep{0.8pc}
     \xymatrix{
     \B \times \E \times \A \ar[r]^{\boe, 1} \ar[d]_{1, \boe} \ar@/_4pc/[dd]_{1_{\B}, \odot} & \B \boe \E \times \A \ar[r]^{\boe} \dtwocell<>{^<3>d^{\alpha}_{\B\E\A}\quad\;}& (\B \boe \E) \boe \A \ar[dl]_{\alpha_{\B,\E,\A}} \ar[d]^{r_{\B} \boe 1_{\A}} \ddtwocell<>{^<6>\exists ! \mu_{\B,\A}} & \\
    \dtwocell<>{^<5>1_{1_{\B}}, d^l_{\A}}  \B \times \E \boe \A \ar[r]^{\boe} \ar[d]^{1, l_{\A}} \dtwocell<>{^<-9>t_{1_{\B}, l_{\A}}} & \B \boe (\E \boe \A) \ar[dr]_{1_{\B} \boe l_{\A}}&  \B \boe \A \ar[d]^{1_{\B \boe \A}} & \\
      \B \times \A \ar[rr]_{\boe} & & \B \boe \A &
     }\]
  \[ \parallel \]
   \[ \xymatrixcolsep{0.7pc}
     \xymatrix{
     \B \times \E \times \A \ar[r]^{\boe, 1} \ar[dd]_{1_{\B}, \odot} \ar[rd]_{\odot, 1_{\A}} \dtwocell<>{^<-9>d^r_{\B},1_{1_{\A}}} & \B \boe \E \times \A \ar[r]^{\boe} \ar[d]^{r_{\B}, 1_{\A}} \dtwocell<>{^<-8>t_{r_{\B},1_{\A}}} & (\B \boe \E) \boe \A \ar[d]^{r_{\B} \boe 1_{\A}} \\
     & \B \times \A \ar[r]_{\boe} \dtwocell<>{^<6>b^{\B\A}} \ar[dr]_{\boe} & \B \boe \A \ar[d]^{1_{\B \boe \A}} \dtwocell<>{^<3> \id} \\
     \B \times \A \ar[rr]_{\boe}&  &  \B \boe \A 
     }  \]
    where $b^{\B\A}$ is the balanced $\E$-module structure on the functor $\boe: \B \times \A \rightarrow \B \boe \A$. 

      \item The left 2-unitor $\lambda$ is a modification in $\ccat^{\ps}((\cat)^2, \cat)$. For each $(\B, \A) \in (\cat)^2$, $\lambda$ consists of an invertible 2-morphism $\lambda_{\B\A}: l_{\B \boe \A} \circ \alpha_{\E,\B,\A} \Rightarrow l_{\B} \boe 1_{\A}$ induced by
     \[ 
     \indent\hspace{-1em}
      \vcenter{\hbox{
     \xymatrixcolsep{0.8pc}
     \xymatrix{
     \E \times \B \times \A \ar[r]^{\boe, 1} \ar[rd]_{1, \boe} \ar@/_2pc/[rdd]_{\odot, 1} & \E \boe \B \times \A \ar[r]^{\boe} \drtwocell<>{^d^{\alpha}_{\E\B\A}\quad\;} & (\E \boe \B) \boe \A \ar[d]^{\alpha_{\E,\B,\A}} \ar@/^4.5pc/[dd]^(0.8){l_{\B} \boe 1_{\A}} \ddtwocell<>{^<-8>\exists!\lambda_{\B,\A}}  \\
     & \E \times \B \boe \A \ar[r]^{\boe} \ar[dr]_{\odot} \dtwocell<>{^<-11>d^l_{\B\A}} \dtwocell<>{^<4>t^l_{\B\A}} & \E \boe (\B \boe \A) \ar[d]^{l_{\B \boe \A}}  \\
     & \B \times \A \ar[r]_{\boe} & \B \boe \A 
     }}} 
      =
      \indent\hspace{-1em}
     \vcenter{\hbox{
     \xymatrixcolsep{0.7pc}
     \xymatrix{
     \E \times \B \times \A \ar[r]^{\boe, 1} \ar@/_1pc/[ddr]_{\odot,1} \ddrtwocell<>{^<-3>d^l_{\B},1_{1_{\A}} \qquad} & \E \boe \B \times \A \ar[r]^{\boe} \ar[dd]^{l_{\B},1_{\A}} \ddtwocell<>{^<-9>t_{l_{\B},1_{\A}}} & (\E \boe \B) \boe \A \ar[dd]^{l_{\B} \boe 1_{\A}} \\
     & &  \\
      &\B \times \A \ar[r]_{\boe}  &  \B \boe \A 
     }}}     
     \]

     \item The right 2-unitor $\rho$ is a modification in $\ccat^{\ps}((\cat)^2, \cat)$.
     For each $(\B, \A)$ in $(\cat)^2$, $\rho$ consists of an invertible 2-morphism $\rho_{\B, \A}: (1_{\B} \boe r_{\A}) \circ \alpha_{\B,\A,\E} \Rightarrow r_{\B \boe \A}$ induced by
     \[ 
     \indent\hspace{-1em} 
     \vcenter{\hbox{
     \xymatrixcolsep{0.8pc}
     \xymatrix{
     \B \times \A \times \E \ar[r]^{\boe, 1} \ar[rd]_{1, \boe} \ar@/_2pc/[rdd]_{1, \odot} & \B \boe \A \times \E \ar[r]^{\boe} \drtwocell<>{^d^{\alpha}_{\B\A\E}\quad\;}& (\B \boe \A) \boe \E \ar[d]^{\alpha_{\B,\A,\E}} \ar@/^4.5pc/[dd]^(0.8){r_{\B \boe \A}} \ddtwocell<>{^<-8>\exists!\rho_{\B,\A}}  \\
    \dtwocell<>{^<-8>1_{1_{\B}}, d^r_{\A}} & \B \times \A \boe \E \ar[r]^{\boe} \ar[d]^{1, r_{\A}} \dtwocell<>{^<-7>t_{1_{\B}, r_{\A}}} & \B \boe (\A \boe \E) \ar[d]^{1_{\B} \boe r_{\A}}  \\
     & \B \times \A \ar[r]_{\boe} & \B \boe \A 
     }}} 
     \; =
     \indent\hspace{-1em}
      \vcenter{\hbox{
     \xymatrixcolsep{0.7pc}
     \xymatrix{
     \B \times \A \times \E \ar[r]^{\boe, 1} \ar[dd]^{1, \odot}  & \B \boe \A \times \E \ar[r]^{\boe} \ar[ddr]_{\odot} \ddtwocell<>{^<-11>d^r_{\B\A}} \ddtwocell<>{^<4>t^r_{\B\A}} & (\B \boe \A) \boe \E \ar[dd]^{r_{\B \boe \A}} \\
     & &  \\
     \B \times \A \ar[rr]_{\boe}&  &  \B \boe \A 
     }}}     
     \]
     \end{enumerate}
  It is routine to check that $\alpha$, $l$, $r$ satisfy the lax unity and the lax naturality of \cite[Def.\,4.2.1]{yau}, and $\pi$, $\mu$, $\lambda$, $\rho$ satisfy the modification axiom of \cite[Def.\,4.4.1]{yau}.  
    It is routine to check that the above data satisfy the non-abelian 4-cocycle condition, the left normalization and the right normalization of \cite[(11.2.14), (11.2.16), (11.2.17)]{yau}.                  
    \end{proof}

\subsection{The symmetric monoidal 2-category $\cat$}
\label{m-cate2}
Let $(\cat, \boe, \E, \alpha, l, r, \pi, \mu, \lambda, \rho)$ be the monoidal 2-category. 
 For objects $\A, \B \in \cat$, the braiding $\tau$ consists of an invertible 1-morphism $\tau_{\A,\B}: \A \boe \B \rightarrow \B \boe \A$ defined as 
\[ \xymatrix{
\A \times \B \ar[r]^{\boe} \ar[d]_{s_{\A, \B}}  \drtwocell<>{^d^{\tau}_{\A, \B}\quad} & \A \boe \B  \ar[d]^{\exists ! \tau_{\A\B}}  \\
\B \times \A \ar[r]_{\boe} & \B \boe \A
} \]
where $s$ switches the two objects.
For objects $\A, \B, \C \in \cat$, the left hexagonator $R_{-|--}$ and the right hexagonator $R_{--|-}$ consist of invertible 2-morphisms $R_{\A|\B, \C}$ and $R_{\A, \B|\C}$ respectively.

\[
\xymatrix{
\A \times \B \times \C \ar[r]^{\boe, 1} \ar[dr]^{1, \boe} \ar[dd]_{s_{\A, \B \times \C}} & \A \boe \B \times \C \ar[r]^{\boe} \drtwocell<>{^d^{\alpha}_{\A,\B,\C} \quad\;\;} & (\A \boe \B) \boe \C \ar[d]_{\alpha_{\A, \B, \C}} \ar[rd]^{\tau_{\A\B}, 1} & \\
& \A \times \B \boe \C \ar[r]^{\boe} \ar[d]_{\tau_{\A, \B \boe \C}}  \drtwocell<>{^d^{\tau}_{\A,\B\C} \quad\;\;} & \A \boe (\B \boe \C) \ar[d]_{\tau_{\A, \B \boe \C}} & (\B \boe \A) \boe \C \ar[d]^{\alpha_{\B, \A, \C}} \dtwocell<>{^<9>\exists ! R_{\A|\B, \C}} \\
\B \times \C \times \A \ar[r]^{\boe, 1} \ar[dr]_{1, \boe} & \B \boe \C \times \A \ar[r]^{\boe} \drtwocell<>{^d^{\alpha}_{\B,\C,\A} \quad\;\;} & (\B \boe \C) \boe \A \ar[d]_{\alpha_{\B,\C,\A}} & \B \boe (\A \boe \C) \ar[dl]^{1, \tau_{\A, \C}} \\
& \B \times \C \boe \A \ar[r]_{\boe} & \B \boe (\C \boe \A) &
}\]
\[\parallel \]
\[\xymatrix{
\A \times \B \times \C \ar[r]^{\boe, 1} \ar[d]_{s_{\A,\B}, 1} \ar@/_4pc/[dd]_{s_{\A, \B \times \C}} \drtwocell<>{^d^{\tau}_{\A\B}, 1 \quad} & \A \boe \B \times \C \ar[r]^{\boe} \ar[d]^{\tau_{\A\B}, 1}  \drtwocell<>{^t_{\tau, 1}\;\;} & (\A \boe \B) \boe \C \ar[d]^{\tau_{\A,\B}, 1}  \\
\B \times \A \times \C \ar[r]^{\boe, 1} \ar[d]_{1, s_{\A,\C}} \ar[dr]^{1, \boe} \ddtwocell<>{^<-7>1, d^{\tau}_{\A,\C}} & \B \boe \A \times \C \ar[r]^{\boe} \drtwocell<>{^d^{\alpha}_{\B,\A,\C} \quad\;\;} & (\B \boe \A) \boe \C \ar[d]^{\alpha_{\B,\A,\C}} \\
\B \times \C \times \A  \ar[rd]_{1, \boe} & \B \times \A \boe \C \ar[r]^{\boe} \ar[d]^{1, \tau_{\A,\C}} \drtwocell<>{^t_{1, \tau}\;\;} & \B \boe (\A \boe \C) \ar[d]^{1, \tau_{\A, \C}} \\
& \B \times \C \boe \A \ar[r]_{\boe} & \B \boe (\C \boe \A) 
}\]
\[ \xymatrix{
\A \times \B \times \C \ar[r]^{1, \boe} \ar[dr]_{\boe, 1} \ar[dd]_{s_{\A \times \B, \C}} & \A \times \B \boe \C  \ar[r]^{\boe} \drtwocell<>{^d^{\alpha^{-1}}_{\A,\B,\C} \quad\;\;} & \A \boe (\B \boe \C) \ar[d]^{\alpha^{-1}_{\A,\B,\C}} \ar[rd]^{1, \tau_{\B,\C}} & \\
& \A \boe \B \times \C \ar[r]^{\boe} \ar[d]_{s_{\A \boe \B, \C}} \drtwocell<>{^d^{\tau}_{\A\B, \C} \quad\;\;} & (\A \boe \B) \boe \C \ar[d]_{\tau_{\A \boe \B, \C}}  \dtwocell<>{^<-9> \exists ! R_{\A, \B|\C}} & \A \boe (\C \boe \B) \ar[d]^{\alpha^{-1}_{\A,\C,\B}}  \\
\C \times \A \times \B \ar[r]^{1, \boe} \ar[rd]_{\boe, 1} & \C \times \A \boe \B \ar[r]^{\boe} \drtwocell<>{^d^{\alpha^{-1}}_{\C,\A, \B} \quad\;\;} & \C \boe (\A \boe \B)  \ar[d]^{\alpha^{-1}_{\C,\A,\B}} & (\A \boe \C) \boe \B  \ar[ld]^{\tau_{\A,\C}, 1}  \\
& \C \boe \A \times \B  \ar[r]_{\boe} & (\C \boe \A) \boe \B & 
} \]
\[ \parallel \]
\[ \xymatrix{
\A \times \B \times \C \ar[r]^{1, \boe} \ar[d]_{1, s_{\B,\C}}  \ar@/_4pc/[dd]_{s_{\A \times \B, \C}} \drtwocell<>{^1, d^{\tau}_{\B,\C} \quad\;\;} & \A \times \B \boe \C \ar[r]^{\boe} \ar[d]^{1, \tau_{\B,\C}} \drtwocell<>{^t_{1, \tau}\;\;} & \A \boe (\B \boe \C) \ar[d]^{1, \tau_{\B,\C}}  \\
\A \times \C \times \B \ar[r]^{1, \boe} \ar[d]_{s_{\A,\C}, 1} \ar[dr]^{\boe, 1} \ddtwocell<>{^<-7> d^{\tau}_{\A,\C}, 1} & \A \times \C \boe \B  \ar[r]^{\boe} \drtwocell<>{^d^{\alpha^{-1}}_{\A,\C,\B} \quad\;\;} &  \A \boe (\C \boe \B) \ar[d]^{\alpha^{-1}_{\A,\C,\B}} \\
\C \times \A \times \B \ar[dr]_{\boe, 1} & \A \boe \C \times \B  \ar[r]^{\boe} \ar[d]^{\tau_{\A,\C}, 1} \drtwocell<>{^t_{\tau, 1}\;\;} &  (\A \boe \C) \boe \B \ar[d]^{\tau_{\A,\C}, 1} \\
& \C \boe \A \times \B  \ar[r]_{\boe} & (\C \boe \A) \boe \B
} \]
For objects $\A, \B \in \cat$, the syllepsis $\nu$ consists of an invertible 2-morphism $\nu_{\A,\B}$ defined as
\[ \vcenter{\hbox{\xymatrix{
\A \times \B \ar[r]^{\boe} \ar[d]_{s_{\A, \B}} \drtwocell<>{^d^{\tau}_{\A,\B} \quad}& \A \boe \B \ar[d]^{\tau_{\A, \B}} \ar@/^3.6pc/[dd]^{1_{\A \boe \B}} \ddtwocell<>{^<-6>\exists !\nu_{\A,\B}} \\
\B \times \A \ar[d]_{s_{\B, \A}} \ar[r]_{\boe} \drtwocell<>{^d^{\tau}_{\B, \A} \quad} & \B \boe \A \ar[d]^{\tau_{\B,\A}}  \\
\A \times \B \ar[r]_{\boe} & \A \boe \B
}}} 
=
\vcenter{\hbox{\xymatrix{
\A \times \B \ar[r]^{\boe} \ar[d]_{s_{\A, \B}} \ar@/^2pc/[dd]^{1_{\A \times \B}} & \A \boe \B  \ar[dd]^{1_{\A \boe \B}}  \\
\B \times \A \ar[d]_{s_{\B, \A}}   &    \\
\A \times \B \ar[r]_{\boe} & \A \boe \B
}}} \]
where we choose the identity 2-morphism $\id: \boe \circ 1_{\A \times \B} \Rightarrow 1_{\A \boe \B} \circ \boe$ for convenience. 
It is routine to check that $(\cat, \tau, R_{-|--}, R_{--|-}, \nu)$ is a symmetric monoidal 2-category \cite[Def.\,12.1.6, 12.1.15, 12.1.19]{yau}.

\end{document}